\newtheorem{theorem}{Theorem}[section]
\newtheorem{lemma}[theorem]{Lemma}
\newtheorem{lem}[theorem]{Lemma}
\newtheorem{proposition}[theorem]{Proposition}
\newtheorem{prop}[theorem]{Proposition}
\newtheorem{cor}[theorem]{Corollary}
\newtheorem{example}[theorem]{Example}
\newtheorem{remark}[theorem]{Remark}
\newtheorem{definition}[theorem]{Definition}
\newtheorem{rem}[theorem]{Remark}
\newtheorem{defi}[theorem]{Definition}
\newtheorem{observation}[theorem]{Observation}
\def\eps{\varepsilon }
\def\D{\partial }
\newcommand{\na}{{\nabla}}
\renewcommand{\div}{{\rm div}}
\newcommand{\RR}{\mathbb{R}}
\newcommand{\cO}{\mathcal{O}}
\newcommand{\CC}{\mathbb{C}}
\newcommand{\cE}{\mathcal{E}}
\newcommand{\dt}{\frac{d}{dt}}
\newcommand{\CalB}{\mathcal{B}}
\newcommand{\CalF}{\mathcal{F}}
\newcommand{\CalO}{\mathcal{O}}
\newcommand{\CalS}{\mathcal{S}}
\newcommand{\BbbA}{{\mathbb A}}
\newcommand{\II}{{\mathbb I}}
\newcommand{\MM}{{\mathbb M}}
\newcommand{\const}{\text{\rm constant}}
\newcommand{\e}{{\epsilon}}
\newcommand{\Span}{{\rm span }  \,}
\newcommand{\Rank}{{\rm rank} \, }
\def\bb1{{1\!\!1}}
\def\CalO{\mathcal{O}}
\def\cP{\mathcal{P}}
\def\bU{{\bar{U}}}
\def\bW{{\bar{W}}}
\def\R{\Re e}
\def\I{\Im m}
\def\I{\Im m}
\def\cQ{\mathcal{Q}}
\def\cT{\mathcal{T}}
\def\cF{\mathcal{F}}
\def\cB{\mathcal{B}}
\def\cZ{\mathcal{Z}}
\def\diag{\mbox{diag}}
\newcommand{\iprod}[1]{\langle{#1}\rangle}
\newcommand{\wprod}[1]{\langle{#1}\rangle}
\begin{document}

\title[Stability of boundary layers]
{Long-time stability of large-amplitude noncharacteristic boundary
layers for hyperbolic--parabolic systems}
\author[T. Nguyen and K. Zumbrun]{Toan Nguyen and Kevin Zumbrun}

\date{Last Updated:  April 5, 2008}

\thanks{ This work was supported in part by the National Science Foundation award number DMS-0300487.}

\address{Department of Mathematics, Indiana University, Bloomington, IN 47402}
\email{nguyentt@indiana.edu}
\address{Department of Mathematics, Indiana University, Bloomington, IN 47402}
\email{kzumbrun@indiana.edu}

\begin{abstract}
Extending investigations of Yarahmadian and Zumbrun in
the strictly parabolic case,
we study time-asymptotic stability of arbitrary (possibly large)
amplitude noncharacteristic boundary layers of a class of
hyperbolic-parabolic systems including the Navier--Stokes equations of
compressible gas- and magnetohydrodynamics, establishing that
linear and nonlinear stability are both equivalent to an
Evans function, or generalized spectral stability, condition.
The latter is readily checkable numerically, and analytically
verifiable in certain favorable cases;
in particular, it has been shown by Costanzino, Humpherys,
Nguyen, and Zumbrun to hold for sufficiently large-amplitude
layers for isentropic ideal gas dynamics, with general adiabiatic
index $\gamma \ge 1$.
Together with these previous results, our results thus give
nonlinear stability of large-amplitude isentropic boundary layers,
the first such result for compressive (``shock-type'') layers
in other than the nearly-constant case.
The analysis, as in the strictly parabolic case,
proceeds by derivation of detailed pointwise Green function
bounds, with substantial new technical difficulties associated
with the more singular, hyperbolic behavior in the high-frequency/short
time regime.
\end{abstract}

\maketitle

\tableofcontents


\section{Introduction}
In this paper, we study the stability of boundary layers assuming
that the boundary layer solution is {\it noncharacteristic},
which means, roughly, that signals are transmitted into or
out of but not along the boundary.
In the context of gas dynamics or magnetohydrodynamics (MHD),
this corresponds to the situation of a porous boundary
with prescribed inflow or outflow conditions accomplished
by suction or blowing,
a scenario that has been suggested as a means to reduce
drag along an airfoil by stabilizing laminar flow;
see
Example \ref{aeroexam} below.

We consider a boundary layer, or stationary solution,
\begin{equation}\label{profile}
\tilde U=\bU(x), \quad \lim_{z\to +\infty} \bU(z)=U_+, \quad \bU(0)=\bar
U_0
\end{equation}
of a system of conservation laws on the quarter-plane
\begin{equation}\label{hyper-parabolic}
\tilde U_t +  F(\tilde U)_{x} = (B(\tilde U)\tilde U_{x})_{x}, \quad x,t>0,
\end{equation}
$\tilde U,F\in \mathbb{R}^n$, $B\in\mathbb{R}^{n \times n}$, with initial
data $\tilde U(x,0)=\tilde U_0(x)$ and Dirichlet type
boundary conditions specified in \eqref{inBC}, \eqref{outBC} below.
A fundamental question
connected to the physical motivations from aerodynamics
%
is whether or not such boundary layer
solutions are {\it stable} in the sense of PDE, i.e., whether or not
a sufficiently small perturbation of $\bU$ remains close to $\bU$,
or converges time-asymptotically to $\bU$, under the evolution of
\eqref{hyper-parabolic}.
That is the question we address here.

Our main result, in the general spirit of \cite{ZH,MaZ3,MaZ4,Z3,HZ,YZ}, is to
reduce the questions of linear and nonlinear stability to verification
of a simple and numerically well-posed {\it Evans function}, or
generalized spectral stability, condition,
which can then be checked either numerically
or by the variety of methods available for study of eigenvalue ODE;
see, for example,
\cite{Br1, Br2, BrZ, BDG, HuZ2, PZ, FS, BHRZ, HLZ, HLyZ1,HLyZ2, CHNZ}.
Together with the results of \cite{CHNZ}, this yields in particular
nonlinear stability of sufficiently large-amplitude boundary-layers of
the compressible Navier--Stokes equations of
isentropic ideal gas dynamics, with adiabatic index $\gamma \ge 1$,
the first such result for a large compressive, or ``shock-type'', boundary
layers.
The main new difficulty beyond the strictly parabolic case of \cite{YZ}
is to treat the more singular, hyperbolic behavior in the
high-frequency regime, both in obtaining pointwise Green function
bounds, and in deriving energy estimates by which the nonlinear
analysis is closed.

\subsection{Equations and assumptions.}
We consider the general hyperbolic-parabolic system of conservation
laws \eqref{hyper-parabolic} in conserved variable $\tilde U$, with
$$\tilde U = \begin{pmatrix}\tilde u\\
\tilde v\end{pmatrix}, \quad B=\begin{pmatrix}0 & 0 \\
b_1 & b_2\end{pmatrix}, \quad \sigma(b_2)\ge \theta>0,$$
$\tilde u\in \RR$, and $\tilde v\in \RR^{n-1}$, where,
here and elsewhere, $\sigma$ denotes spectrum of a
linearized operator or matrix.
Here for simplicity, we have restricted to the case (as in standard gas
dynamics and MHD) that the
hyperbolic part (equation for $\tilde u$) consists of a single
scalar equation.
As in \cite{MaZ3}, the results extend in
straightforward fashion to the case $\tilde u\in \RR^k$, $k>1$,
with $\sigma(A^{11})$ strictly positive or strictly negative.

Following \cite{MaZ4,Z3},
we assume that equations \eqref{hyper-parabolic} can be written,
alternatively, after a triangular change of coordinates
\begin{equation}\label{Wcoord}
\tilde W:=\tilde W(\tilde U) =\begin{pmatrix}\tilde w^I(\tilde u)\\
 \tilde w^{II}(\tilde u, \tilde v)\end{pmatrix},
\end{equation}
in {\em the
quasilinear, partially symmetric hyperbolic-parabolic form}
\begin{equation}\label{symmetric-form}\tilde A^0 \tilde W_t + \tilde A\tilde W_x = (\tilde B
\tilde W_x)_x + \tilde G,
\end{equation}
where, defining $\tilde W_+:=\tilde W(U_+)$,
\medskip

(A1) $\tilde A(\tilde W_+),\tilde A^0,\tilde A^{11}$ are symmetric,
$A^0$ block diagonal, $\tilde A^0\ge \theta_0>0$,
\medskip

(A2) no eigenvector of $\tilde A(\tilde A^0)^{-1}(\tilde W_+)$ lies in the
kernel of $\tilde B(\tilde A^0)^{-1}(\tilde W_+)$,
\medskip

(A3)
$\tilde B=\begin{pmatrix}0 & 0 \\ 0 & \tilde b\end{pmatrix}$,
$\tilde b\ge \theta>0$, and
$\tilde G=\begin{pmatrix}0\\\tilde g \end{pmatrix}$ with
$\tilde g(\tilde W_x,\tilde W_x)=\cO(|\tilde W_x|^2).$

Along with the above structural assumptions, we make the following
technical hypotheses:
\medskip

(H0) $F, B, \tilde A^0, \tilde A, \tilde B, \tilde W(\cdot),
\tilde g(\cdot,\cdot) \in C^{4}$.
\medskip

(H1) $\tilde A^{11}$ (scalar) is either strictly positive or strictly
negative, that is, either $\tilde A^{11}\ge \theta_1>0,$ or $\tilde
A^{11}\le -\theta_1<0$. (We shall 
call these cases {\em the
inflow case} or {\em the outflow case}, correspondingly.)
\medskip

(H2) The eigenvalues of $dF^{11}(U_+)$ are real, distinct, and nonzero.
\medskip

(H3) Solution $\bU$ is unique.
\medskip

\noindent Condition (H1) corresponds to noncharacteristicity, while
(H2) is the condition for the hyperbolicity of $U_+$. The
assumptions (A1)-(A3) and (H0)-(H3) are satisfied for gas dynamics
and MHD with van der Waals equation of state under inflow or outflow
conditions; see discussions in \cite{MaZ4,CHNZ,GMWZ5,GMWZ6}.

We also assume:

(B) Dirichlet boundary conditions in $\tilde W$-coordinates:
\begin{equation}\label{inBC}
(\tilde w^I, \tilde w^{II})(0,t)=\tilde h(t):=(\tilde h_1,\tilde h_2)(t)\end{equation}
for the inflow case, and
\begin{equation}\label{outBC}
\tilde w^{II}(0,t)=\tilde h(t)\end{equation} for the outflow case.
\\

This is sufficient for the main physical applications;
the situation of more general, Neumann- and mixed-type
boundary conditions on the parabolic variable $v$ can
be treated as discussed in \cite{GMWZ5,GMWZ6}.

\begin{example}\label{aeroexam}
\textup{
The main example we have in mind consists of {\it laminar solutions}
$(\rho, u, e)(x_1,t)$ of the compressible
Navier--Stokes equations
\begin{equation}
\label{NSeq}
\left\{ \begin{aligned}
 & \D_t \rho +  \div (\rho u) = 0
 \\
 &\D_t(\rho  u) + \div(\rho u^tu)+ \na p =
\eps \mu \Delta u + \eps(\mu+\eta) \nabla \div u
 \\
 &
 \D_t(\rho E) + \div\big( (\rho E  +p)u\big)=
\eps\kappa \Delta T +
\eps \mu \div\big( (u\cdot \nabla) u\big) \\
&
\qquad \qquad \qquad \qquad
\qquad \qquad
+ \eps(\mu+\eta) \nabla(u\cdot \div u),
 \end{aligned}\right.
\end{equation}
$x\in \RR^d$,
on a half-space $x_1>0$,
where $\rho$ denotes density, $u\in \RR^d$ velocity,
$e$ specific internal energy,
$E=e+\frac{|u|^2}{2}$ specific total energy,
$p=p(\rho, e)$ pressure, $T=T(\rho, e)$ temperature,
$\mu>0$ and $|\eta|\le \mu$ first and second coefficients of viscosity,
$\kappa>0$ the coefficient of heat conduction,
and $\eps>0$ (typically small) the reciprocal of the Reynolds number,
with no-slip {\it suction-type} boundary conditions on the velocity,
$$
 u_{j}(0, x_2, \dots, x_d)=0, \, j\ne 1
\quad \hbox{\rm and} \quad  u_1(0,x_2, \dots, x_d)= V(x)< 0,
$$
and prescribed temperature, $ T(0,x_2, \dots, x_d)= T_{wall}(x).  $
Under the standard assumptions $p_\rho$, $T_e>0$,
this can be seen to satisfy all of the hypotheses (A1)--(A3), (H0)--(H3);
indeed these are satisfied also under much weaker van der Waals gas
assumptions \cite{MaZ4,Z3,CHNZ,GMWZ5,GMWZ6}.
In particular, boundary-layer solutions are of noncharacteristic
type, scaling as
$(\rho, u, e)= (\bar \rho, \bar u, \bar e)(x_1/\eps)$, with layer thickness
$\sim \eps$ as compared to the $\sim \sqrt{\eps}$ thickness of the
characteristic type found for an impermeable boundary.
}

\textup{
This corresponds to the situation of an airfoil with microscopic holes
through which gas is pumped from the surrounding flow, the microscopic
suction imposing a fixed normal velocity while the macroscopic
surface imposes standard temperature conditions as in
flow past a (nonporous) plate.
This configuration was suggested by Prandtl and tested experimentally
by G.I. Taylor as a means to reduce drag by stabilizing laminar flow;
see \cite{S,Bra}.
It was implemented in the NASA F-16XL experimental aircraft
program in the 1990's with reported 25\% reduction in drag
at supersonic speeds \cite{Bra}.\footnote{
See also NASA site
http://www.dfrc.nasa.gov/Gallery/photo/F-16XL2/index.html}
Possible mechanisms for this reduction are smaller thickness
$\sim \eps<<\sqrt{\eps}$ of noncharacteristic boundary layers
as compared to characteristic type,
and greater stability, delaying the transition from laminar to
turbulent flow.
In particular, stability properties appear to be quite important for
the understanding of this phenomenon.
For further discussion,
including the related
issues of matched asymptotic expansion, multi-dimensional
effects, and more general boundary configurations, see \cite{GMWZ5}.
}
\end{example}

\begin{example}\label{shockpiece}
\textup{
For \eqref{NSeq}, or the general \eqref{hyper-parabolic},
a large class of boundary-layer solutions, sufficient for
the present purposes, may be generated as truncations
$ \bar u^{x_0}(x):= \bar u(x-x_0)$ of {\it standing shock solutions}
\begin{equation}\label{shocktype}
u=\bar u(x), \quad \lim_{x\to \pm \infty}\bar u(x)=u_\pm
\end{equation}
on the whole line $x\in \RR$, with boundary conditions
$\beta_h(t)\equiv \bar u(0)$ (inflow) or $\beta_h(t)\equiv \bar w^I(0)$
(outflow) chosen to match.
However, there are also many other boundary-layer solutions not connected with
any shock.
For more general catalogs of boundary-layer solutions of \eqref{NSeq},
see, e.g., \cite{MN,SZ,CHNZ,GMWZ5}.
}
\end{example}

\begin{lemma}[\cite{MaZ3,Z3,GMWZ5}]\label{lem-profile-decay}
Given (A1)-(A3) and (H0)-(H3), a standing wave solution \eqref{profile}
of \eqref{hyper-parabolic}, (B) satisfies
\begin{equation} \Big|(d/dx)^k(\bU - U_+)\Big|\le C
e^{-\theta x},\quad k=0,...,4, \end{equation} as $x\to +\infty$.
\end{lemma}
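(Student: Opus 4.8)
The plan is the classical ODE argument for noncharacteristic profiles; since the lemma presupposes the existence of a standing wave (and (H3) records uniqueness), only the decay at $+\infty$ is at issue. Setting $\tilde U_t\equiv 0$ in \eqref{hyper-parabolic} gives $F(\bU)_x=(B(\bU)\bU_x)_x$, and integrating from $x$ to $+\infty$ gives $B(\bU)\bU_x=F(\bU)-F(U_+)$: the fluxes $F(\bU(x))$ converge because $\bU(x)\to U_+$, hence $B(\bU(x))\bU_x(x)$ converges, and one checks its limit is $0$ using the block form $B=\begin{pmatrix}0&0\\b_1&b_2\end{pmatrix}$ (whose first row annihilates $\bU_x$) together with the convergence of the parabolic component. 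Writing $\bU=(\bar u,\bar v)$ and $U_+=(u_+,v_+)$ as in the splitting of $\tilde U$, and $F=(F_1,F_*)$ with $F_1\in\RR$, the first (hyperbolic) component reads $F_1(\bU)\equiv F_1(U_+)$; since $\partial_u F_1(U_+)=dF^{11}(U_+)\ne 0$ by (H2), the implicit function theorem produces $\bar u=\Psi(\bar v)$ near $U_+$ with $\Psi\in C^{4}$ by (H0). Substituting into the parabolic component $b_1\bar u_x+b_2\bar v_x=F_*(\bU)-F_*(U_+)$ and using $\sigma(b_2)\ge\theta>0$ to invert $b_1\Psi'(\bar v)+b_2$ on a neighborhood of $v_+$ yields a genuine first-order ODE
\begin{equation*}
\bar v_x=\Phi(\bar v),\qquad \Phi(v_+)=0,\qquad \Phi\in C^{3},
\end{equation*}
to which $\bar v$ is confined for all large $x$.

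The next step is to see that $v_+$ is a hyperbolic rest point of this reduced flow. Writing $Z:=\bar v-v_+$, we have $Z_x=MZ+N(Z)$ with $M:=d\Phi(v_+)$ and $N(Z)=\cO(|Z|^2)$. A Schur-complement computation — legitimate because the scalar $dF^{11}(U_+)$ is invertible — identifies $\sigma(M)$ with the set of finite roots $\mu$ of $\det\big(\mu B(U_+)-dF(U_+)\big)=0$, and the content of the structural hypotheses, namely noncharacteristicity (H1), hyperbolicity of $U_+$ (H2), and the genuine-coupling condition (A2), is precisely that none of these roots is purely imaginary; this is the point established in \cite{MaZ3,Z3,GMWZ5}. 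Granting it, since $\bar v(x)\to v_+$ the orbit cannot carry a component along the unstable subspace of $M$ (which would grow and escape the neighborhood), so $\bar v$ lies on the local stable manifold of $v_+$ for $x$ large; the stable spectrum is then nonempty, and choosing $\theta>0$ strictly below $\min\{-\R\lambda:\lambda\in\sigma(M),\ \R\lambda<0\}$, the standard Lyapunov/Gronwall estimate on that manifold gives $|Z(x)|\le C e^{-\theta x}$ there. Enlarging $C$ to absorb the bounded transient on the complementary compact $x$-interval extends this to all of $[0,\infty)$, and since $\bar u-u_+=\Psi(\bar v)-\Psi(v_+)=\cO(|Z|)$ we get $|\bU-U_+|\le C e^{-\theta x}$.

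The derivative bounds follow by bootstrapping on the reduced ODE. Differentiating $\bar v_x=\Phi(\bar v)$ repeatedly expresses $(d/dx)^k\bar v$, for $k=1,\dots,4$, as a universal polynomial in $\Phi(\bar v),d\Phi(\bar v),\dots,d^{k-1}\Phi(\bar v)$ and the lower $x$-derivatives of $\bar v$ — all well defined since $\Phi\in C^{3}$ by (H0) — and an easy induction, using $\Phi(v_+)=0$ together with $|\bar v-v_+|\le Ce^{-\theta x}$, gives $|(d/dx)^k\bar v|\le Ce^{-\theta x}$. Applying the chain rule to $\bar u=\Psi(\bar v)$ with $\Psi\in C^{4}$ yields the same bound for $(d/dx)^k\bar u$, $k\le 4$, and assembling the two blocks gives $|(d/dx)^k(\bU-U_+)|\le Ce^{-\theta x}$ as claimed. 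The only genuinely substantive ingredient is the spectral fact that the profile flow linearized at the noncharacteristic endstate has no neutral modes; the remaining steps are routine, the only mild care being the nonvanishing of $\det(b_1\Psi'+b_2)$ near $v_+$ and the vanishing of the integration constant, both of which are local facts at $U_+$.
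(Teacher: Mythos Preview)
Your proof is correct and follows exactly the approach the paper sketches: reduce the stationary equation to a first-order ODE for the parabolic component, observe that $U_+$ corresponds to a hyperbolic rest point of that flow (the spectral fact you rightly attribute to \cite{MaZ3,Z3,GMWZ5}), and read off exponential decay from the stable-manifold theorem, then bootstrap for derivatives. The paper's own proof is a single sentence deferring to those references; you have simply unpacked what that sentence means.
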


\begin{proof}
As in the shock case \cite{MaZ4,Z3}, this follows by the
observation that, under hypotheses (A1)-(A3) and (H0)-(H3),
$U_+$ is a hyperbolic rest point of the layer profile ODE.
See also \cite{GMWZ5}.
\end{proof}

\subsection{Main results.}
Linearizing the equations \eqref{hyper-parabolic}, (B) about the boundary
layer $\bU$, we obtain the linearized equation
\begin{equation}\label{linearized-eqs}
U_t=LU:=-(\bar AU)_x+(\bar BU_x)_x,
\end{equation}
where
$$
\bar B:= B(\bU), \quad \bar  AU:= dF(\bU)U - (dB(\bU)U)\bU_{x},
$$
with boundary conditions (now expressed in $U$-coordinates)
\begin{equation}\label{inBC-U}
(\partial \tilde W/\partial \tilde U)(\bar U_0)
U(0,t)=h(t):=\begin{pmatrix}
h_1\\h_2\end{pmatrix}(t)\end{equation} for the inflow case, and
\begin{equation}\label{outBC-U}
(\partial \tilde w^{II}/\partial \tilde U)(\bar U_0)
U(0,t)= h(t)\end{equation} for the outflow case,
where $(\partial \tilde W/\partial \tilde U)(\bar U_0)$
is constant and invertible,
\begin{equation}\label{linBCstructure}
(\partial \tilde w^{II}/\partial \tilde U)(\bar U_0)
= m \begin{pmatrix} \bar b_1&\bar b_2\end{pmatrix}(\bar U_0),
\end{equation}
(by (A1) and triangular structure \eqref{Wcoord})
is constant with $m\in \RR^{(n-1)\times (n-1)}$ invertible,
and $h:= \tilde h- \bar h$.

%
%
%

\begin{definition}\label{def-lin}
The boundary layer  $\bU$ is said to be
{\it linearly $X \to Y$ stable} if, for some $C>0$,
the problem \eqref{linearized-eqs} with initial data $U_0$ in $X$
and homogeneous boundary data $h\equiv 0$ 
has a unique global solution $U(\cdot,t)$
such that $|U(\cdot, t)|_Y\le C|U_0|_X$ for all $t$;
it is said to be {\it linearly asymptotically $X \to Y$
stable} if also $|U(\cdot, t)|_Y\to 0$ as $t\to \infty$.
\end{definition}

We define the following {\it stability criterion}, where
$D(\lambda)$ described below, denotes the Evans function associated
with the linearized operator $L$ about the layer, an analytic
function analogous to the characteristic polynomial of a
finite-dimensional operator, whose zeroes away from the essential
spectrum agree in location and multiplicity with the eigenvalues of
$L$:\medskip

(D) $\hbox{\rm There exist no zeroes of $D(\cdot)$ in the nonstable
half-plane $ Re \lambda \ge 0$}. $\medskip

As discussed, e.g., in \cite{R2,MZ1,GMWZ5,GMWZ6}, under assumptions (H0)-(H3),
this is equivalent to {\it strong spectral stability},
$\sigma(L)\subset \{Re \lambda < 0\}$, (ii) {\it transversality} of
$\bU$ as a solution of the connection problem in the associated
standing-wave ODE, and {\it hyperbolic stability} of an associated
boundary value problem obtained by formal matched asymptotics. See
\cite{GMWZ5,GMWZ6} for further discussions.

\begin{definition}\label{def-nonlin}
The boundary layer  $\bU$ is said to be
{\it nonlinearly $X \to Y$ stable}
if, for each $\eps>0$, the problem \eqref{hyper-parabolic}
with initial data $\tilde U_0$ sufficiently close to the profile $\bU$
in $|\cdot|_X$ has a unique global solution $\tilde U(\cdot,t)$ such that
$|\tilde U(\cdot, t)-\bU(\cdot)|_Y<\eps$
for all $t$; it is said to be {\it nonlinearly asymptotically $X \to Y$ stable}
if also $|\tilde U(\cdot, t)-\bU(\cdot)|_Y\to 0$ as $t\to \infty$.
We shall sometimes not explicitly define the norm $X$,
speaking instead of stability or asymptotic stability in $Y$
under perturbations satisfying specified smallness conditions.
\end{definition}
\medskip

Our first main result is as follows.

\begin{theorem}[Linearized stability]\label{theo-lin}
Assume (A1)-(A3), (H0)-(H3), and (B) with $|h(t)|\le E_0(1+t)^{-1}$.
Let $\bU$ be a boundary layer. Then linearized $L^1\cap L^p\to
L^1\cap L^p$ stability, $1\le p\le \infty$, is equivalent to (D). In
the case of stability, there holds also linearized asymptotic
$L_1\cap L^p\to L^p$ stability, $p>1$, with rate
\begin{equation} |U(\cdot,t)|_{L^p}\le C
(1+t)^{-\frac 12(1-1/p)} |U_0|_{L^1\cap L^p} + CE_0(1+t)^{-\frac
12(1-1/p)}.
\end{equation}
\end{theorem}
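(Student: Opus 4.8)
The plan is to prove Theorem \ref{theo-lin} by the pointwise Green-function / semigroup method of \cite{ZH,MaZ3,MaZ4,Z3,YZ}, the main new input being uniform resolvent control in the high-frequency regime where the hyperbolic block of \eqref{linearized-eqs} is not parabolically regularized. I would represent the solution of \eqref{linearized-eqs} with data $U_0$ and boundary data $h$ through the inverse Laplace transform,
\[
U(x,t)=\frac{1}{2\pi i}\int_{\eta-i\infty}^{\eta+i\infty} e^{\lambda t}\Big[\int_0^\infty G_\lambda(x,y)\,U_0(y)\,dy \;+\; (\text{contribution of }h)\Big]\,d\lambda ,
\]
where $G_\lambda=(\lambda-L)^{-1}$ is the resolvent kernel. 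One constructs $G_\lambda(x,y)$ from the subspace of solutions of the eigenvalue ODE $(\lambda-L)W=0$ decaying as $x\to+\infty$ — well defined because, by Lemma \ref{lem-profile-decay} and hypotheses (A1)--(A3), (H0)--(H3), $U_+$ is a hyperbolic rest point of the profile ODE with exponentially decaying coefficients, so this subspace carries definite spectral (exponential) rates — together with the solution manifold selected at $x=0$ by the boundary operator in \eqref{inBC-U}--\eqref{outBC-U} (with the dimension count adjusted between the inflow and outflow cases). The Evans function $D(\lambda)$ is the associated matching determinant, whence $G_\lambda$ is meromorphic in $\lambda$ away from essential spectrum with poles exactly at the zeros of $D$; by (D) there are none in $\{\operatorname{Re}\lambda\ge 0\}$, which permits shifting the contour onto and around the essential spectrum.

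Next I would split the contour integral into low-, intermediate-, and high-frequency pieces. On the intermediate band, compactness of $G_\lambda$ together with (D) gives a spectral gap, hence an exponentially decaying contribution. Near $\lambda=0$, where the essential spectrum is tangent to the imaginary axis, I would expand $G_\lambda$ via the scattering decomposition at $x=+\infty$; since the limiting hyperbolic speeds (eigenvalues of $dF^{11}(U_+)$) are real and nonzero by (H2), the far-field behavior is that of convected heat-kernel packets $t^{-1/2}e^{-(x-y-a_j t)^2/Mt}$ together with faster-decaying excited and boundary-layer terms, the precise scattering coefficients dictated by the behavior of $D$ near the origin; inverse transforming this piece yields the diffusive rate $(1+t)^{-\frac12(1-1/p)}$.

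The high-frequency regime $|\lambda|\ge R$ is where the essential new work lies, and I expect it to be the main obstacle. Unlike the strictly parabolic case of \cite{YZ}, the scalar $\tilde u$-equation is hyperbolic, so $(\lambda-L)^{-1}$ has no smoothing there and the naive sectorial estimate fails. I would instead exploit the partial symmetrizability (A1), the genuine-coupling (Kawashima-type) condition (A2), and the block structure (A3) to derive, by a Kreiss-symmetrizer / energy-type argument applied to the eigenvalue ODE, resolvent bounds that are $O(|\lambda|^{-1})$ on the hyperbolic component and of parabolic type on the rest; the corresponding Green-kernel piece is then a mildly regularized transport term — an exponentially localized near-$\delta$ riding the characteristic $x-y\approx (\text{speed})\,t$ — which is bounded on $L^p$ uniformly in $t$ and, after the $t$-integration, contributes nothing worse than the diffusive rate. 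Quantitatively this comes down to bounding $\int_{\operatorname{Re}\lambda=\eta,\;|\lambda|\ge R}e^{\lambda t}\|G_\lambda\|\,d(\operatorname{Im}\lambda)$ and converting that bound into pointwise control of $G$ near the characteristic, which is the delicate step.

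With pointwise bounds on $G(x,t;y)$ and on the boundary kernel $\bar G$ in hand, the representation reads
\[
U(x,t)=\int_0^\infty G(x,t;y)\,U_0(y)\,dy \;+\; \int_0^t \bar G(x,t-s)\,h(s)\,ds ,
\]
and the conclusions follow from standard convolution estimates: the Gaussian packets give $L^1\cap L^p\to L^p$ bounds at rate $(1+t)^{-\frac12(1-1/p)}$, the remaining terms decay faster, and — since $|h(s)|\le E_0(1+s)^{-1}$ — the boundary term contributes $CE_0(1+t)^{-\frac12(1-1/p)}$ after convolution with the integrable, diffusively decaying kernel $\bar G$; the uniform $L^1\cap L^p\to L^1\cap L^p$ bound, including $p=1$, comes from the corresponding $L^1\to L^1$ Green-function estimates. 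This yields (D) $\Rightarrow$ linearized (asymptotic) stability. For the converse, if (D) fails there is $\lambda_0$ with $\operatorname{Re}\lambda_0\ge 0$ and an eigenfunction $W_0$ of $L$; as $W_0$ lies in the stable manifold at the hyperbolic rest point $U_+$ it decays exponentially, hence $W_0\in L^1\cap L^p$, and $U(x,t)=e^{\lambda_0 t}W_0(x)$ solves \eqref{linearized-eqs} with $h\equiv 0$ but does not decay — and grows without bound when $\operatorname{Re}\lambda_0>0$ — contradicting linearized (asymptotic) stability. Hence linearized stability $\Rightarrow$ (D), completing the equivalence.
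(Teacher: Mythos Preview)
Your overall architecture---inverse Laplace transform, Evans-function construction of $G_\lambda$, low/intermediate/high frequency split, pointwise bounds on $G(x,t;y)$, then convolution estimates---is exactly the paper's program, and the sufficiency and necessity directions are organized the same way. Two points, however, are treated too loosely and hide the genuine work.

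\textbf{High-frequency resolvent.} You propose a ``Kreiss-symmetrizer / energy-type argument'' to get $O(|\lambda|^{-1})$ resolvent bounds on the hyperbolic block. The paper does not use energy estimates here; it carries out an explicit, iterated ODE diagonalization (rescale $\tilde x=|\lambda|x$, block-triangularize via $T_0+|\lambda|^{-1}T_1$, balance the parabolic block, then apply the tracking/reduction lemma) to obtain the \emph{pointwise} decomposition $G_\lambda=H_\lambda+P_\lambda+\Theta_\lambda^H+\Theta_\lambda^P$ with $H_\lambda$ the explicit scalar hyperbolic kernel. Norm bounds of symmetrizer type would not by themselves yield this structure. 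Moreover, the new content relative to the whole-line case \cite{MaZ3} is computing the boundary projections $\Pi_{\cZ}^{0,+}$ in the diagonalized coordinates and checking that, in the noncharacteristic inflow/outflow cases, the hyperbolic mode does \emph{not} reflect off the boundary (so the half-line $H_\lambda$ is just the restriction of the whole-line one). Your sketch does not address this, and without it the high-frequency piece is incomplete.

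\textbf{Boundary contribution.} Writing the boundary term as $\int_0^t \bar G(x,t-s)h(s)\,ds$ with $\bar G$ ``integrable, diffusively decaying'' is not correct as stated: the actual boundary kernel is $\tilde G_y(x,t-s;0)B+G(x,t-s;0)A$, and $\tilde G_y(\cdot,\tau;0)$ carries a $\tau^{-1}$ factor near $\tau=0$ which is not integrable. The paper resolves this (Lemma~\ref{boundaryconvolutionII}) by integrating the adjoint equation in $y$ to obtain the identity
\[
\tilde G_yB + GA = -\int_0^{+\infty}G\,A_y\,dy + \int_0^{+\infty}G_s\,dy,
\]
then integrating by parts in $s$ so that only $\int|G|\,dy\le C$ and $|h|+|h'|$ enter. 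Without this device (or an equivalent one) the boundary estimate does not close under the stated hypothesis $|h(t)|\le E_0(1+t)^{-1}$; in particular your sketch would seem to need a bound on $h'$ as well, which the paper's argument indeed uses at this step.

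A minor point on necessity: a zero of $D$ with $\Re\lambda_0=0$ gives $e^{\lambda_0 t}W_0$ bounded, which contradicts asymptotic stability but not boundedness. The paper's argument instead looks at the spectral projection of $G$ restricted to a bounded $x,y$ set as $t\to\infty$ and invokes \cite[Prop.~7.7, Cor.~7.8]{MaZ3}; you should either cite that or sharpen the eigenfunction argument.
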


To state the pointwise nonlinear stability result, we need some
notations. Denoting by
\begin{equation}
a_1^+<a_2^+ < \cdots < a_n^+
\end{equation}
the eigenvalues of of the limiting convection matrix $A_+:=
dF(U_+)$,
\medskip
define
\begin{equation}\label{theta}
\theta(x,t):= \sum_{a_j^+>0}(1+t)^{-1/2}e^{-|x-a_j^+t|^2/Mt},
\end{equation}
\begin{equation}\label{psi1}
\begin{aligned}
\psi_1(x,t)&:= \chi(x,t)\sum_{a_j^+>0}
(1+|x|+t)^{-1/2} (1+|x-a_j^+t|)^{-1/2},\\
\end{aligned}
\end{equation}
and
\begin{equation}\label{psi2}
\begin{aligned}
\psi_2(x,t)&:= (1-\chi(x,t))(1+|x-a_n^+t|+t^{1/2})^{-3/2},
\end{aligned}
\end{equation}
where $\chi(x,t)=1$ for $x\in [0,a_n^+t]$ and $\chi(x,t)=0$
otherwise and $M>0$ is a sufficiently large constant.

For simplicity, we measure the boundary data by function
\begin{equation}\label{Bdry-out} \CalB_{h}(t):= \sum_{r=0}^2|(d/dt)^rh|^2\end{equation} for the outflow
case, and \begin{equation}\label{Bdry-in} \CalB_h(t):=
\sum_{r=0}^4|(d/dt)^rh_1|^2 +
\sum_{r=0}^2|(d/dt)^rh_2|^2\end{equation} for the inflow case.

Then, our next result is as follows.

\begin{theorem}[Nonlinear stability]\label{theo-nonlin}
Assuming (A1)-(A3), (H0)-(H3), (B), and the linear stability
condition (D), the profile $\bU$ is nonlinearly asymptotically
stable in $L^p\cap H^4$, $p>1$,
with respect to perturbations $U_0\in H^4$, $h\in C^4$ in
initial and boundary data satisfying
$$
\|(1+|x|^2)^{3/4}U_0\|_{H^4} \le E_0
\quad \hbox{\rm and }\quad
|\cB_h(t)|\le E_0 (1+|t|)^{-1/2}
$$
for $E_0$ sufficiently small. More precisely,
\begin{equation}\label{pointwise}
\begin{aligned}
|\tilde U(x,t)-\bU(x)|&\le C E_0
(\theta+\psi_1+\psi_2)(x,t),\\
|\tilde U_x(x,t)-\bU_x(x)|&\le C E_0 (\theta+\psi_1+\psi_2)(x,t),
\end{aligned}
\end{equation}
where $\tilde U(x,t)$ denotes the solution of
\eqref{hyper-parabolic} with
initial and boundary data $\tilde U(x,0)=\bU(x)+U_0(x)$ and
$\tilde U(0,t)=\bar U_0+h(t)$, yielding the sharp rates
\begin{align} &\label{Lp} \|\tilde U(x,t)-\bU(x)\|_{L^p}\le C E_0
(1+t)^{-\frac{1}{2}(1-\frac{1}{p})}, \quad 1\le p\le \infty,\\
&\label{H4} \|\tilde U(x,t)-\bU(x)\|_{L^1\cap H^4}\le C E_0
(1+t)^{-\frac{1}{4}}.\\\notag
\end{align}
\end{theorem}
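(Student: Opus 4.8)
The plan is to follow the pointwise Green function / Duhamel iteration scheme of \cite{ZH,MaZ3,YZ}, adapted to the hyperbolic--parabolic boundary-value setting, supplemented by nonlinear energy estimates that exploit the partially symmetric structure (A1)--(A3) to handle the hyperbolic variable.

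\emph{Step 1 (Green function bounds).} First I would establish pointwise bounds on the Green function $G(x,t;y)$ of the linearized operator $L$ with the linearized boundary conditions \eqref{inBC-U}--\eqref{outBC-U}. Writing $G$ as the inverse Laplace transform of a resolvent kernel $G_\lambda$ built from decaying/growing solutions of the eigenvalue ODE $(\lambda-L)W=0$, whose solvability is governed by the Evans function $D(\lambda)$, assumption (D) guarantees the absence of unstable poles, so the contour may be deformed into $\{\Re\lambda<0\}$ off the essential spectrum. Splitting into low-, mid-, and high-frequency ($|\lambda|\to\infty$) contributions, the low-frequency part produces the scattering and excited terms moving at the hyperbolic speeds $a_j^+$, yielding the Gaussian and algebraically decaying pieces encoded in $\theta,\psi_1,\psi_2$ exactly as in the whole-line shock analysis and in \cite{YZ}. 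The genuinely new work is the high-frequency regime: since the system is only partially parabolic, $G_\lambda$ does not decay like $|\lambda|^{-1/2}$ uniformly but carries a more singular, hyperbolic component, which I would isolate by a Kreiss-symmetrizer / paradifferential decomposition of the ODE, separating the hyperbolic transport part (a finite-propagation-speed, short-time--localized singular kernel) from the parabolically smoothing part.

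\emph{Step 2 (nonlinear iteration).} Next, set $U:=\tilde U-\bU$, so that $U_t-LU=\partial_x\mathcal{Q}(U,U_x)+\mathcal{R}$ with $\mathcal{Q}=\mathcal{O}(|U|^2+|U|\,|U_x|)$ and $\mathcal{R}$ a commutator-type term of the same order, together with the inhomogeneous boundary data $h$; the boundary forcing is reduced to an interior source by a standard lifting, or carried directly through Duhamel's formula, giving
\begin{equation*}
U(x,t)=\int_0^\infty G(x,t;y)U_0(y)\,dy+\int_0^t\!\!\int_0^\infty \partial_y G(x,t-s;y)\,\mathcal{Q}(y,s)\,dy\,ds+(\text{bdry}).
\end{equation*}
Defining $\zeta(t):=\sup_{0\le s\le t,\,x\ge 0}\big(|U(x,s)|+|U_x(x,s)|\big)(\theta+\psi_1+\psi_2)^{-1}(x,s)$, the Step 1 bounds combined with the convolution lemmas of \cite{MaZ3,YZ} (showing $\theta,\psi_1,\psi_2$ are essentially invariant under convolution with $G$ and $\partial_y G$) yield $\zeta(t)\le C(E_0+\zeta(t)^2+\mathcal{E}(t))$, hence $\zeta(t)\le 2CE_0$ for $E_0$ small; this is \eqref{pointwise}, and the rates \eqref{Lp}--\eqref{H4} then follow by integrating the template functions.

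\emph{Step 3 (energy estimates and closure).} The Green-function bounds do not control the top derivatives in the hyperbolic variable, so in parallel I would run a Kawashima--Shizuta--type estimate in the partially symmetrized variables $\tilde W$: using symmetry of $\tilde A^0,\tilde A(\tilde W_+),\tilde A^{11}$ and the genuine-coupling hypothesis (A2), construct a compensating (skew-symmetric corrector) functional $\mathcal{E}(t)\sim\|U(t)\|_{H^4}^2$ satisfying $\tfrac{d}{dt}\mathcal{E}\le-\theta\|\partial_x U\|_{H^3}^2+C\|U\|_{L^2}^2+C\cB_h+(\text{nonlinear})$, and close it against the $L^2$ decay $\|U(t)\|_{L^2}^2\lesssim E_0^2(1+t)^{-1/2}$ from Step 2. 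Integration-by-parts boundary terms are controlled by $\cB_h$ thanks to the Dirichlet conditions (B) and the noncharacteristicity (H1), which renders the relevant boundary matrix invertible. A single continuous-induction (bootstrap) argument in $\zeta(t)$ and $\mathcal{E}(t)$, with $E_0$ small, then gives global existence and all stated bounds. The main obstacle is the high-frequency/short-time hyperbolic regime in Step 1 --- bounding the resolvent/Green kernel when the hyperbolic part carries no dissipation, so its short-time behavior is genuinely singular rather than heat-kernel-like --- and the corresponding difficulty in Step 3 of closing the energy estimate without parabolic smoothing on $\tilde u$; these are exactly the points where the argument departs from the strictly parabolic treatment of \cite{YZ}, and where (A2) and the careful symmetrizer decomposition are essential.
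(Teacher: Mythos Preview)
Your outline is correct and matches the paper's architecture: resolvent/Green-function bounds under (D) via inverse Laplace transform and contour deformation (Proposition~\ref{prop-Greenbounds}), a Duhamel representation and bootstrap in the quantity $\zeta(t)$ exactly as you define it, closed against a Kawashima-type $H^4$ energy estimate (Proposition~\ref{prop-energy-est}) that feeds the $L^2$ decay back in.

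Two points of departure are worth flagging. First, the high-frequency analysis in the paper is \emph{not} carried out by a Kreiss-symmetrizer or paradifferential argument; it is a direct ODE manipulation of the first-order eigenvalue system \eqref{firstorderW}: rescale $\tilde x=|\lambda|x$, change to coordinates $(A_*u,\,b_1u+b_2v,\,(b_1u+b_2v)_x)$, approximately block-diagonalize into a scalar hyperbolic block and a $2r\times 2r$ parabolic block, and then invoke the tracking/reduction lemma (Lemma~\ref{reduction}) to decouple exactly. This yields the decomposition $G_\lambda=H_\lambda+P_\lambda+\Theta^H_\lambda+\Theta^P_\lambda$ of Proposition~\ref{prop-high-est}, with $H_\lambda$ producing the delta-function term $H(x,t;y)$ in \eqref{Hterm}. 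Second, the reason the boundary does not generate new hyperbolic reflections---which is what makes the half-line Green function identical in form to the whole-line one of \cite{MaZ3}---is the structural fact that $A_*$ is \emph{scalar} of fixed sign (H1), so all hyperbolic signals either enter or leave the boundary; you should build this observation in explicitly, since without it the high-frequency boundary interaction would be genuinely more complicated. Finally, in the energy step the paper uses an $\alpha$-weighted $L^2$ norm with $\alpha_x/\alpha=-\mathrm{sign}(A^{11})c_*|\bar W_x|$ to absorb profile-interaction terms before applying the Kawashima compensator; this weight is what produces the good $-\langle\omega(x)\partial_x^k w^I,\partial_x^k w^I\rangle$ terms and is essential to relaxing (A1$'$)--(A2$'$) to the merely asymptotic (A1)--(A2).
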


\begin{remark} By the one dimensional Sobolev embedding, from the hypothesis on $U_0$, we automatically assume that $$\|U_0\|_{H^4} \le E_0, \quad |U_0(x)|+|U'_0(x)|\le E_0
(1+|x|)^{-3/2}.
$$
\end{remark}


A crucial step in establishing Theorems \ref{theo-lin} and
\ref{theo-nonlin} is to obtain pointwise bounds on the Green
function $G(x, t ; y)$ of the linearized evolution equations
\eqref{linearized-eqs} (more properly speaking, a distribution),
which we now describe. Let  $a_j^+, j=1,....,n$ denote the
eigenvalues of $A(+\infty)$, and $l_j^+,r_j^+$ associated left and
right eigenvectors, respectively, normalized so that $l_j^+r_k^+ =
\delta_{j}^k$. Eigenvalues $a_j(x)$, and eigenvectors
$l_j(x),r_j(x)$ correspond to large-time convection rates and modes
of propagation of the linearized model \eqref{linearized-eqs}.

Define time-asymptotic, scalar diffusion rates
\begin{equation}\label{beta} \beta_j^+:=(l_j B r_j)_+,\quad j = 1,...,n,\end{equation}
and local dissipation coefficient \begin{equation} \label{eta}
\eta_* :=- D_*(x)
\end{equation} where $$D_*(x):=A_{12}b_2^{-1}\Big[A_{21} - A_{22}b_2^{-1}b_1 + b_2^{-1}b_1 A_* + b_2 \partial_x (b_2^{-1}b_1)\Big](x)$$
is an effective dissipation analogous to the effective diffusion
predicted by formal, Chapman-Enskog expansion in the (dual)
relaxation case,
$$ A_*:=A_{11}-A_{12}b_2^{-1}b_1.
$$
Note that as a consequence of dissipativity, (A2), we obtain
\begin{equation}
\label{dissipative}\eta_*^+>0, \quad \beta_j^+ >0, \quad \mbox{for
all }j.
\end{equation}
We also define modes of propagation for the
reduced, hyperbolic part of system \eqref{linearized-eqs} as
\begin{equation} L_* = \begin{pmatrix} 1 \\0_{n-1} \end{pmatrix}
,\quad R_* = \begin{pmatrix} 1 \\-b_2^{-1}b_1
\end{pmatrix}\end{equation}

We define the Green function $G(x,t;y)$ of the linearized evolution
equations \eqref{linearized-eqs} with homogeneous boundary
conditions (more properly speaking, a distribution), by

(i) $(\partial_t -L_x)G=0$ in the distributional sense, for all
$x,y,t>0$;

(ii) $G(x, t;y)\rightarrow \delta(x-y)$ as $t\to 0$;

(iii) for all $y,t>0$, $\begin{pmatrix}\bar A_*
&0\\\bar b_1&\bar b_2\end{pmatrix}G(0,t;y)=\begin{pmatrix}*\\0
\end{pmatrix}$ where $*=0$ for the inflow case
$\bar A_*>0$ and $*$ is arbitrary for the outflow case $\bar A_*<0$, noting
that no boundary condition is needed to be prescribed on the
hyperbolic part.

By standard arguments as in \cite{MaZ3}, we have the spectral
resolution, or inverse Laplace transform formulae
\begin{equation} e^{Lt}f = \frac{1}{2\pi i}P.V. \int_{\eta -
i\infty} ^{\eta+i\infty} e^{\lambda t}(\lambda - L)^{-1}
fd\lambda\end{equation} and
\begin{equation} G(x,t;y)= \frac{1}{2\pi i} P.V. \int_{\eta -
i\infty} ^{\eta+i\infty} e^{\lambda t} G_\lambda(x,y) \, d\lambda
\end{equation}for any large positive $\eta$.

We prove the following pointwise bounds on the Green function $G(x,t;y)$.

\begin{proposition}\label{prop-Greenbounds} Under assumptions (A1)-(A3), (H0)-(H3), (B), and
(D), we obtain \begin{equation} G(x,t;y) = H(x,t;y) + \tilde
G(x,t;y),
\end{equation}
where
\begin{equation}
\begin{aligned}\label{Hterm}H(x,t;y) &= \frac{1}{2\pi} A_*(x)^{-1}
A_*(y)\delta_{x-\bar a_*t}(y)e^{-\int_y^x(\eta_*/A_*)(z)dz} R_*
L_*^{tr}\\&= \cO(e^{-\eta_0 t}) \delta_{x-\bar a_* t}(y)R_*
L_*^{tr},\end{aligned}
\end{equation}
and
\begin{equation}\label{Gbounds}
\begin{aligned}
|\partial_{x}^\gamma \partial_y^\alpha & \tilde G(x,t;y)|\le  Ce^{-\eta(|x-y|+t)}\\
& +\quad C(t^{-(|\alpha|+|\gamma|)/2}+ |\alpha| e^{-\eta|y|} +|\gamma|
e^{-\eta|x|}) \Big( \sum_{k=1}^n
t^{-1/2}e^{-(x-y-a_k^{+} t)^2/Mt} \\
&+\sum_{a_k^{+} < 0, \, a_j^{+} > 0} \chi_{\{ |a_k^{+} t|\ge |y| \}}
t^{-1/2} e^{-(x-a_j^{+}(t-|y/a_k^{+}|))^2/Mt}
 \Big), \\
\end{aligned}
\end{equation}
$0\le |\alpha|, |\gamma| \le 1$, for some $\eta$, $C$, $M>0$, where
indicator function $\chi_{\{ |a_k^{+}t|\ge |y| \}}$ is $1$ for
$|a_k^{+}t|\ge |y|$ and $0$ otherwise.

Here, the averaged convection rate $\bar a_* (x, t)$ in
\eqref{Hterm} denotes the time-averages over $[0, t]$ of $A_* (z)$
along backward characteristic paths $z_* = z_*(x,t)$ defined by
\begin{equation} \frac{dz_*}{dt} = A_*(z_*(x,t)), \quad z_*(t) =x.
\end{equation}
In all equations, $a_j^+$, $A_*$, $L_*,R_*$ are as defined just
above.
\end{proposition}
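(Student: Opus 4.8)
The plan is to obtain the stated bounds by the now-standard route of constructing the resolvent kernel $G_\lambda(x,y)$ --- the Green distribution of the elliptic boundary-value problem $(\lambda - L)G_\lambda = \delta_y$ with boundary conditions (iii) --- and then recovering $G(x,t;y)$ from the inverse Laplace transform formula already recorded above, shifting and deforming the contour $\Re\lambda = \eta$ to extract optimal decay. Writing the eigenvalue equation $(\lambda - L)U = 0$ as a first-order system in $x$ of dimension $2n-1$ (the hyperbolic variable $\tilde u$ contributing a first-order and the parabolic variable $\tilde v$ a second-order block), I would first invoke the conjugation lemma of \cite{MZ1} to reduce, for $x$ bounded away from the boundary, the analysis of solutions as $x \to +\infty$ to the constant-coefficient limiting system at $U_+$, whose decaying and growing modes enjoy a consistent splitting on $\Re\lambda > 0$ guaranteed by (H1)--(H2) together with the hyperbolic-parabolic dissipativity (A2); the gap lemma extends this to a full neighborhood of $\lambda = 0$. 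Patching the decaying-at-$+\infty$ solutions against those satisfying (iii) at $x=0$, with the prescribed jump at $x=y$, exhibits $G_\lambda$ as a ratio whose denominator is a nonvanishing multiple of the Evans function $D(\lambda)$; hypothesis (D), with the standard Kawashima-type resolvent bounds coming from the symmetrizer in (A1)--(A3), then shows $G_\lambda$ is analytic and appropriately bounded on a neighborhood of $\{\Re\lambda \ge 0\}$ minus the point $\lambda = 0$ where essential spectrum accumulates.

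I would then split the contour integral into three frequency regimes. On the \emph{bounded-frequency} set $r_0 \le |\lambda| \le R$, $\Re\lambda \ge -\eta_1$, compactness plus (D) gives a uniform resolvent bound, and deforming into $\Re\lambda < 0$ produces the contribution $Ce^{-\eta(|x-y|+t)}$ in \eqref{Gbounds}. On the \emph{low-frequency} set $|\lambda| \le r_0$, I would carry out the delicate expansion of $G_\lambda$ about $\lambda = 0$ as in \cite{MaZ3,YZ}: the $n$ slow modes, carrying convection rates $a_k^+$ and scalar diffusion rates $\beta_k^+$, yield after stationary-phase estimation the convected Gaussians $\sum_k t^{-1/2}e^{-(x-y-a_k^+ t)^2/Mt}$; the feature new to the half-line is that a mode emitted from the source with $a_k^+ < 0$ reaches $x=0$ at time $|y/a_k^+|$ and, since (iii) couples it to the \emph{outgoing} slow modes, reflects into the modes with $a_j^+ > 0$, producing precisely the reflection sum with cutoff $\chi_{\{|a_k^+ t| \ge |y|\}}$ and shifted center $x - a_j^+(t - |y/a_k^+|)$. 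The prefactors $t^{-(|\alpha|+|\gamma|)/2}$ and $|\alpha|e^{-\eta|y|}$, $|\gamma|e^{-\eta|x|}$ arise, respectively, from differentiating the Gaussian scaling variable and from derivatives falling on the exponentially localized layer correctors $(\bU - U_+)$ of Lemma \ref{lem-profile-decay}.

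The crux, and the genuinely new work beyond the strictly parabolic analysis of \cite{YZ}, is the \emph{high-frequency} regime $|\lambda| \ge R$: the scalar component $\tilde u$ enjoys no parabolic smoothing, so the relevant part of $G_\lambda$ is only polynomially small in $|\lambda|$ and its inverse transform does not converge absolutely but must be handled through its singular support. I would perform a Chapman-Enskog-type reduction in the frequency variable, projecting along $L_*, R_*$ and eliminating the slaved parabolic component; this isolates a scalar transport equation $\partial_t\phi + A_*(x)\partial_x\phi = -\eta_*(x)\phi$ whose solution along the backward characteristic $z_*(x,t)$ is exactly the measure-valued term $H(x,t;y)$ with amplitude $e^{-\int_y^x(\eta_*/A_*)(z)\,dz}$, while the remainder, after this extraction, gains enough decay --- quantified by a high-frequency energy estimate built from the partial symmetrizer of (A1)--(A3) --- that its inverse transform is absorbed into $Ce^{-\eta(|x-y|+t)}$. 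I expect the two main obstacles to be: (i) this high-frequency extraction and remainder estimate, including the careful matching of the high- and low-frequency contour pieces at $|\lambda|\sim R$; and (ii) the boundary bookkeeping in the hyperbolic mode, namely confirming that in the outflow case $\bar A_* < 0$ no condition is imposed on the hyperbolic part so that $H$ simply exits through $x=0$, whereas in the inflow case $\bar A_* > 0$ the single prescribed condition is consistent with the transport, so that in neither case does $H$ generate a spurious boundary contribution. Assembling the three pieces and invoking dissipativity \eqref{dissipative} (so $\eta_*^+ > 0$, $\beta_j^+ > 0$) then yields the decomposition $G = H + \tilde G$ with the asserted bounds.

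\begin{remark}
The same contour-deformation estimates, applied to $\partial_x^\gamma\partial_y^\alpha G_\lambda$ for $|\alpha|,|\gamma|\le 1$, give the stated bounds with derivatives; the $H$-term is differentiated only in the distributional sense, consistent with its exhibited form.
\end{remark}
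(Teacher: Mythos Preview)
Your proposal is correct and follows essentially the same approach as the paper: construction of the resolvent kernel $G_\lambda$ via the Evans-function framework, high-frequency diagonalization isolating the hyperbolic transport piece $H_\lambda$ (with the key observation that the noncharacteristic boundary conditions produce no reflected hyperbolic signal in either inflow or outflow case), low-frequency slow-mode expansion yielding the convected and reflected Gaussians, and contour deformation in the inverse Laplace formula to assemble the pieces. Two minor points of divergence worth noting as you fill in details: the paper organizes the contour analysis by first splitting into the regimes $|x-y|/t$ large versus bounded (the former handled entirely by high-frequency bounds, the latter requiring the full high/intermediate/low decomposition), and the high-frequency remainder estimate is obtained not by an energy/symmetrizer argument but by explicit successive diagonalizations and the tracking/reduction lemma applied to the rescaled first-order system in $\tilde x = |\lambda|x$.
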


\subsection{Discussion and open problems}\label{discussion}

The stability of noncharacteristic boundary layers in
gas dynamics has been treated using energy estimates in,
e.g., \cite{MN,KNZ,R3}, for both ``compressive'' boundary layers
including the truncated shock-solutions \eqref{shocktype},
and for ``expansive'' solutions analogous to rarefaction
waves.
However, in the case of compressive waves, these and most subsequent analyses
were restricted to the {\it small-amplitude case}
\begin{equation}\label{smallcond}
\|\bar u- u_+\|_{L^1(\RR^+)} \, \hbox{\rm sufficiently small}.
\end{equation}
Examining this condition even for the special class \eqref{shocktype}
of truncated shock solutions, we find that it is extremely restrictive.

For, consider the one-parameter family $\bar u^{x_0}(x)=\bar u(x-x_0)$
of boundary-layers associated with a standing shock $\bar u$ of
amplitude $\delta:=|u_+-u_-|<<1$.
By center manifold analysis \cite{Pe},
$ \bar u-u_+\sim \delta e^{-c\delta x}, $ hence
$$
\|\bar u- u_+\|_{L^1(\RR^+)} \sim e^{-c\delta x}
\sim \frac{|u_+-u(0)|}{|u_+-u_-|}
$$
in fact measures {\it relative amplitude} with respect to the
amplitude $|u_+-u_-|$ of the background shock solution $\bar u$.
Thus, smallness condition \eqref{smallcond} requires that the
boundary layer consist of a small, nearly-constant piece of
the original shock.

The present results, extending results of \cite{YZ} in the strictly
parabolic case, remove this restriction, allowing applications
in principle to shocks of any amplitude.  In particular, in combination
with the spectral stability results obtained in \cite{CHNZ} by
asymptotic Evans function analysis, they yield stability of noncharacteristic
isentropic gas-dynamical layers of sufficiently {\it large} amplitude.
Together with further, numerical, investigations of \cite{CHNZ}
give strong evidence that in fact {\it all} noncharacteristic isentropic
gas layers are spectrally stable, independent of amplitude, which would
together with our results yield nonlinear stability.

Spectral stability of full (nonisentropic) gas layers may be investigated
numerically as for shocks in \cite{HLyZ1, HLyZ2}, in both one-
and multi-dimensions.
However, analytical results of \cite{SZ} show that in this case
instability is possible, even for ideal gas equation of state.
The numerical classification of stability for full gas dynamics,
and the extension of our present nonlinear stability results to
multi-dimensions,
are two interesting direction for further investigation.


Finally, we comment briefly on the difference between our
analysis and the earlier analysis \cite{YZ} carried out
by similar techniques based on the Evans function and
stationary phase estimates on the inverse Laplace transform
formula.
Our analysis is in the same spirit as, and borrows heavily
from this earlier work.
The main new issues are technical ones connected with the more
singular high-frequency/short-time behavior of hyperbolic-parabolic
equations as compared to the strictly parabolic equations considered
in \cite{YZ}.  In particular, linearized behavior in the
$u$ coordinate, $U=(u,v)$,
is essentially hyperbolic, governed for short times
approximately by the principle part
\begin{equation}\label{pp}
v_t + A_*(x) v_x=0, \quad A_*:=(A_0^{11})^{-1}A^{11}
\end{equation}
Thus, we may expect as in the whole-line analysis of hyperbolic-parabolic
equations in \cite{MaZ3}
that the associated Green function contain
a delta-function component transported along the hyperbolic characteristic
$$
dx/dt=A_*(x),
$$
with the difference that now we must consider also a possibly-complicated
interaction with the boundary.

A key point is that in fact this potential complication {\it does not
occur}.  For, in the special case occurring in continuum-mechanical
systems \cite{Z3} that all hyperbolic signals either enter or leave the
boundary, there is no such boundary interaction and no reflected signal.
For example, in the simple scalar example \eqref{pp}, the Green function
on the half-line with either homogeneous inflow ($A^{11}>0$) boundary
condition $v(0)=0$ or outflow ($A^{11}<0$) condition $v(0)$ arbitrary,
is by inspection exactly the whole-line Green function
$$
g(x,t;y)=\delta_{x-\bar at}(y)/A_*(x)
$$
restricted to the half-line $x,y>0$, where $\bar a$ is
the average over $[0,t]$ of $A_*(z_*(t))$ along the
backward characteristic path
$$
\frac{dz_*}{dt} = A_*(z_*(x,t)), \quad z_*(t) =x.
$$
Indeed, comparing the description of the homogeneous boundary-value
Green function in Proposition \ref{prop-Greenbounds}
with that of the whole-line Green function in
\cite{MaZ3}, we see that they are identical.
However, to prove this simple observation costs us considerable
care in the high-frequency analysis.

A further issue at the nonlinear level
is to obtain nonlinear damping estimates using energy
estimates as in \cite{MaZ4},
which are somewhat complicated by the presence of a boundary.
This is necessary to prevent a loss of derivatives in the
nonlinear iteration.

As in \cite{YZ}, we get stability also with respect to perturbations
in boundary data, something that was not accounted for in earlier works
on long-time stability.
We mention, finally, the works \cite{GR,MZ1,GMWZ5,GMWZ6} in one-
and multi-dimensions of a similar spirit but somewhat different
technical flavor on the related small
viscosity problem-- for example,
$\eps\to 0$ in \eqref{NSeq}-- which establish that the Evans condition
(or its multi-dimensional analog) is also sufficient for existence
and stability of matched asymptotic solution as viscosity goes to zero.




\section{Pointwise bounds on resolvent
kernel $G_\lambda$} In this section, we shall establish estimates on
resolvent kernel $G_\lambda(x,y)$.

\subsection{Evans function framework}
Before starting the analysis, we review the basic Evans function
methods and gap/conjugation lemma.

\subsubsection{The gap/conjugation lemma}\label{conjugation}
Consider a family of first order ODE systems on the half-line:
\begin{equation}\label{gen}
\begin{aligned}
W'&=\mathbb{A}(x,\lambda)W,
\quad \lambda \in \Omega \quad {\rm and} \quad x>0,\\
\mathbb{B}(\lambda)W&=0, \quad \lambda \in \Omega \quad {\rm and}
\quad x=0.
\end{aligned}
\end{equation}
These systems of ODEs should be considered as a generalized
eigenvalue equation, with $\lambda$ representing frequency. We
assume that the boundary matrix $\mathbb{B}$ is analytic in
$\lambda$ and that the coefficient matrix $\mathbb{A}$ is analytic
in $\lambda$ as a function from $\Omega$ into $L^\infty(x)$, $C^K$
in $x$, and approaches exponentially to a limit
$\mathbb{A}_+(\lambda) $ as $x\rightarrow \infty$, with uniform
exponentially decay estimates

\begin{equation}\label{h0}
|(\partial/\partial x)^k(\BbbA- \BbbA_+)| \le C_1e^{-\theta|x|/C_2},
\, \quad \text{\rm for } x>0, \, 0\le k\le K,
\end{equation}
$C_j$, $\theta>0$, on compact subsets of $\Omega $. Now we can state
a refinement of the ``Gap Lemma'' of \cite{GZ,KS}, relating
solutions of the variable-coefficient ODE to the solutions of its
constant-coefficient limiting equations
\begin{equation}\label{limgen}
 Z'=\mathbb{A}_+(\lambda)Z
\end{equation}
as $x\rightarrow  +\infty $.
\begin{lem}[Conjugation Lemma \cite{MZ1}]
Under assumption \eqref{h0}, there exists locally to any given
$\lambda_0\in \Omega $ a linear transformation
$P_+(x,\lambda)=I+\Theta_+(x,\lambda)$  on $x\ge 0$, $\Phi_+$
analytic in $\lambda$ as functions from $\Omega$ to $L^\infty
[0,+\infty)$, such that:
\medskip

(i) $|P_+|$ and their inverses are uniformly bounded, with
\begin{equation} \label{Theta}
|(\partial/\partial \lambda)^j(\partial/\partial x)^k \Theta_+ |\le
C(j) C_1 C_2 e^{-\theta |x|/C_2} \quad \text{\rm for } x>0, \, 0\le
k\le K+1,
\end{equation}
$j\ge 0$, where $0<\theta<1$ is an arbitrary fixed parameter, and
$C>0$ and the size of the neighborhood of definition depend only on
$\theta$, $j$, the modulus of the entries of $\BbbA$ at $\lambda_0$,
and the modulus of continuity of $\BbbA$ on some neighborhood of
$\lambda_0 \in \Omega $.
\smallskip
\\
(ii)  The change of coordinates $W:=P_+ Z$ reduces \eqref{gen} on
$x\ge 0$ to the asymptotic constant-coefficient equations
\eqref{limgen}. Equivalently, solutions of \eqref{gen} may be
conveniently factorized as
\begin{equation}
W=(I+ \Theta_+)Z_+,
\end{equation}
where $Z_+$ are solutions of the constant-coefficient equations, and
$\Theta_+$ satisfy bounds.
\end{lem}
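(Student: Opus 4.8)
The plan is to realize the conjugator as the solution of a matrix (Sylvester-type) ODE, solved by a variation-of-constants integral equation of ``Gap Lemma'' type \cite{GZ,KS}. For $P_+$ invertible, the substitution $W=P_+Z$ carries the differential equation in \eqref{gen} into \eqref{limgen} precisely when $P_+'=\mathbb{A}P_+-P_+\mathbb{A}_+$ on $x\ge 0$. Writing $P_+=I+\Theta_+$ and $\delta\mathbb{A}:=\mathbb{A}-\mathbb{A}_+$ (so $|(\partial/\partial x)^k\delta\mathbb{A}|\le C_1e^{-\theta x/C_2}$ by \eqref{h0}), this is
\begin{equation*}
\Theta_+'=\mathrm{ad}_{\mathbb{A}_+}(\Theta_+)+\delta\mathbb{A}\,(I+\Theta_+),\qquad \mathrm{ad}_{\mathbb{A}_+}(\Theta):=\mathbb{A}_+\Theta-\Theta\mathbb{A}_+,
\end{equation*}
together with the requirement $\Theta_+(x,\lambda)\to0$ as $x\to+\infty$; invertibility of $P_+$ will be recovered afterwards.

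The operator $\mathrm{ad}_{\mathbb{A}_+(\lambda)}$ on $n\times n$ matrices has eigenvalues $\mu_i(\lambda)-\mu_j(\lambda)$, where the $\mu_i$ are the eigenvalues of $\mathbb{A}_+(\lambda)$; in particular $0$ is always among them, so there is generally no spectral gap at the imaginary axis and a naive stable/unstable splitting is unavailable. The key device is to use instead the definite decay rate $\theta/C_2$ of the forcing: fix $\bar\theta\in(0,\theta/C_2)$ such that no eigenvalue of $\mathrm{ad}_{\mathbb{A}_+(\lambda_0)}$ has real part $-\bar\theta$ (possible since the spectrum is finite), a property that persists for $\lambda$ near $\lambda_0$ and lets one define there the analytic spectral projections $\Pi_u(\lambda),\Pi_s(\lambda)$ of $\mathrm{ad}_{\mathbb{A}_+}$ onto the invariant subspaces with real part $>-\bar\theta$, respectively $<-\bar\theta$. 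On the space of continuous matrix functions on $[L,\infty)$ with norm $\|\Theta\|:=\sup_{x\ge L}e^{\bar\theta x}|\Theta(x)|$, I would solve $\Theta_+=\mathcal{T}[\Theta_+]$, where
\begin{equation*}
\mathcal{T}[\Theta](x):=\int_L^xe^{(x-s)\mathrm{ad}_{\mathbb{A}_+}}\Pi_s\big[\delta\mathbb{A}(s)(I+\Theta(s))\big]\,ds-\int_x^{\infty}e^{(x-s)\mathrm{ad}_{\mathbb{A}_+}}\Pi_u\big[\delta\mathbb{A}(s)(I+\Theta(s))\big]\,ds,
\end{equation*}
by the contraction mapping principle. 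The kernel bounds $|e^{\tau\mathrm{ad}_{\mathbb{A}_+}}\Pi_u|\le Ce^{-\bar\theta\tau}$ for $\tau\le0$ and $|e^{\tau\mathrm{ad}_{\mathbb{A}_+}}\Pi_s|\le Ce^{-\bar\theta\tau}$ for $\tau\ge0$ (after shrinking $\bar\theta$ slightly to absorb Jordan-block polynomial factors), combined with $|\delta\mathbb{A}(s)|\le C_1e^{-\theta s/C_2}$ and $\bar\theta<\theta/C_2$, yield $\|\mathcal{T}[\Theta]\|\le CC_1C_2(1+\|\Theta\|)$ and a contraction constant of size $O(C_1C_2e^{-\theta L/C_2})$; hence for $L$ large (depending only on $\theta$ and the modulus/continuity data of $\mathbb{A}$ near $\lambda_0$) there is a unique fixed point $\Theta_+$ on $[L,\infty)$ with $|\Theta_+(x)|\le CC_1C_2e^{-\bar\theta x}$. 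One then extends $P_+:=I+\Theta_+$ to $[0,L]$ by solving the linear ODE $P_+'=\mathbb{A}P_+-P_+\mathbb{A}_+$ backward from $x=L$; there is no blow-up on a compact interval, so the exponential bound survives on all of $[0,\infty)$ with constant absorbing the fixed $L$. Analyticity of $\Theta_+$ in $\lambda$ is inherited from $\mathcal{T}$ and the uniform convergence of the iterates.

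The remaining claims follow quickly. Invertibility of $P_+=I+\Theta_+$ holds for $x\ge L$ since $|\Theta_+|$ is small there; $\det P_+$ solves a scalar linear ODE in $x$, so it never vanishes on $[0,\infty)$, whence $|P_+|$ and $|P_+^{-1}|$ are uniformly bounded. The $\partial_x^k$-bounds in \eqref{Theta}, $0\le k\le K+1$, are read off the ODE by induction (using that $\mathbb{A}\in C^K$ in $x$), each differentiation producing a forcing term that still decays at least like $e^{-\bar\theta x}$; the $(\partial/\partial\lambda)^j$-bounds follow from Cauchy's estimate on a slightly smaller neighborhood of $\lambda_0$ applied to the analytic, $e^{\bar\theta x}$-weighted family $\Theta_+(\cdot,\lambda)$, with constants $C(j)$ as stated. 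Finally, $P_+$ conjugates the equation in \eqref{gen} to \eqref{limgen} by construction, and $W=(I+\Theta_+)Z_+$ is the asserted factorization.

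The main obstacle is exactly the absence of a spectral gap for $\mathrm{ad}_{\mathbb{A}_+}$: with the zero eigenvalue (and possibly further $\mu_i-\mu_j$ of small real part) one cannot cleanly separate modes at real part $0$, and integrating naively ``stable from $0$, unstable from $+\infty$'' would not produce a \emph{decaying} conjugator. The fix --- placing the dichotomy threshold at $-\bar\theta$ strictly inside the left half-plane, which is affordable precisely because $\delta\mathbb{A}$ decays at the positive rate $\theta/C_2$ --- has to be carried out uniformly in $\lambda$ and propagated through all the $x$- and $\lambda$-derivative estimates; that bookkeeping, rather than any single inequality, is where the work sits.
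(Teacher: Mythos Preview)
Your proposal is correct and follows essentially the same route as the paper: reduce to the ``lifted'' matrix ODE $P_+'=\mathbb{A}P_+-P_+\mathbb{A}_+$ (equivalently, your equation for $\Theta_+$), solve it via the Gap Lemma contraction with threshold shifted to $-\bar\theta$ inside the decay rate of $\delta\mathbb{A}$, then read off $x$-derivative bounds from the ODE and $\lambda$-derivative bounds from Cauchy/interior estimates. The paper's proof is a terse sketch of exactly this; you have supplied the details it omits.
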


\begin{proof}
As described in \cite{MaZ3}, for $j=k=0$ this is a straightforward
corollary of the gap lemma as stated in [Z.3], applied to the
``lifted'' matrix-valued ODE
$$
P'= \mathbb{A}_+P- P\mathbb{A}+ (\mathbb{A}-\mathbb{A}_+)P
$$
for the conjugating matrices $P_+$. The $x$-derivative bounds
$0<k\le K+1$ then follow from the ODE and its first $K$ derivatives.
Finally, the $\lambda$-derivative bounds follow from standard
interior estimates for analytic functions.
\end{proof}

\begin{defi}\label{consplit}
Following \cite{AGJ}, we define the {\it domain of consistent
splitting} for the ODE system $W'=\mathbb{A}(x,\lambda)W$ as the
(open) set of $\lambda $ such that the limiting matrix $\BbbA_+$ is
hyperbolic (has no center subspace) and the boundary matrix
$\mathbb{B}$ is full rank, with $\dim S_+=\Rank \mathbb{B}$.
\end{defi}

\begin{lem}\label{bases}
On any simply connected subset of the domain of consistent
splitting, there exist analytic bases $\{v_{1}, \dots, v_{k}\}^+$
and $\{v_{k+1}, \dots, v_{N}\}^+$ for the subspaces $S_+$ and $U_+$
defined in Definition \ref{consplit}.
\end{lem}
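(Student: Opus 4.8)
The plan is to exhibit $S_+$ and $U_+$ as the ranges of an analytic family of complementary spectral projections, and then to build global analytic frames by transporting a fixed pair of bases with a Kato-type transformation operator.

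First I would fix a base point $\lambda_0$ in the given simply connected subset $\Omega_0$ of the domain of consistent splitting. Since $\mathbb{A}_+(\lambda)$ is hyperbolic throughout $\Omega_0$, its spectrum splits into a group with $\Re\mu<0$ and a group with $\Re\mu>0$, and — because having no center subspace is an open condition and $\Omega_0$ is connected — these two groups stay strictly separated as $\lambda$ ranges over $\Omega_0$. Hence one can choose, locally in $\lambda$, a fixed simple closed contour $\Gamma$ in the common resolvent set enclosing exactly the eigenvalues with $\Re\mu<0$, and define the Riesz projection
\begin{equation}
\Pi(\lambda)=\frac{1}{2\pi i}\oint_{\Gamma}(\mu I-\mathbb{A}_+(\lambda))^{-1}\,d\mu .
\end{equation}
Being the spectral projection onto the generalized stable eigenspace, $\Pi(\lambda)$ does not depend on the admissible choice of $\Gamma$, so it is globally well defined and analytic on $\Omega_0$ (analyticity of $\mathbb{A}_+$ is part of the standing hypotheses, cf. \eqref{h0} and the Conjugation Lemma), with $\Range\Pi(\lambda)=S_+(\lambda)$ and $\Range(I-\Pi(\lambda))=U_+(\lambda)$ of constant ranks $k$ and $N-k$.

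Next I would introduce the transformation operator. Set $Q(\lambda):=\Pi'(\lambda)\Pi(\lambda)-\Pi(\lambda)\Pi'(\lambda)$, which is analytic on $\Omega_0$, and let $U(\lambda)$ solve $U'=QU$, $U(\lambda_0)=I$. Because $Q$ is analytic and $\Omega_0$ is simply connected, the solution continues to a single-valued analytic matrix function on all of $\Omega_0$, and it is invertible there since $(\det U)'=(\tr Q)\det U$ with $\tr Q\equiv0$. Differentiating $\Pi^2=\Pi$ yields $\Pi'\Pi+\Pi\Pi'=\Pi'$ and $\Pi\Pi'\Pi=0$, whence $[Q,\Pi]=\Pi'$ and therefore $\frac{d}{d\lambda}(U^{-1}\Pi U)=U^{-1}(\Pi'-[Q,\Pi])U=0$. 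Thus $\Pi(\lambda)=U(\lambda)\Pi(\lambda_0)U(\lambda)^{-1}$ and $I-\Pi(\lambda)=U(\lambda)(I-\Pi(\lambda_0))U(\lambda)^{-1}$ for all $\lambda\in\Omega_0$. Finally, choosing a basis $v_1^0,\dots,v_k^0$ of $S_+(\lambda_0)$ and a basis $v_{k+1}^0,\dots,v_N^0$ of $U_+(\lambda_0)$, the vectors $v_j(\lambda):=U(\lambda)v_j^0$ are analytic, linearly independent, and lie in $S_+(\lambda)$ respectively $U_+(\lambda)$ by the intertwining relations, hence form analytic bases by a dimension count.

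The routine parts are the standard spectral-calculus estimates and the elementary algebraic identities for $\Pi$. The step I expect to be the real crux is the construction hiding behind the phrase ``varies analytically and therefore admits analytic bases'': one must verify both that the separating contour $\Gamma$ can be chosen consistently — which is precisely what the definition of the domain of consistent splitting guarantees — and that the complex linear ODE $U'=QU$ has a globally defined, single-valued solution, which is exactly where simple connectedness of $\Omega_0$ enters, via the monodromy theorem for analytic continuation of solutions of linear ODE. An incidental convenience is that the same $U(\lambda)$ simultaneously conjugates $\Pi(\lambda_0)$ and its complement, so the frames for $S_+$ and $U_+$ are produced together.
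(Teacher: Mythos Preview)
Your proposal is correct and is essentially the same approach as the paper's, just carried out in full detail: the paper's proof notes that the spectral projections are analytic by spectral separation and then cites ``a standard result of Kato; see \cite{Kat}, pp.~99--102'' for the existence of analytic bases, and what you have written is precisely that Kato construction---the Riesz projection $\Pi(\lambda)$ together with the transformation function $U(\lambda)$ solving $U'=[\Pi',\Pi]\,U$. Your identification of simple connectedness as the ingredient guaranteeing a single-valued global solution of the linear ODE is exactly the point.
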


\begin{proof}
By spectral separation of $U_+$, $S_+$, the associated (group)
eigenprojections are analytic. The existence of analytic bases then
follows by a standard result of Kato; see \cite{Kat}, pp. 99--102.
\end{proof}

\begin{cor}\label{stablecor}
By the Conjugation Lemma  , on the domain of consistent splitting,
the stable manifold of solutions decaying as $x\to +\infty$ of
\eqref{gen} is
\begin{equation} \label{span}
\CalS^+:=\Span\{P_+v_1^+,\dots,  P_+ v_k^+\},
\end{equation}
where $W_+^j:=P_+v_j^+$ are analytic in $\lambda$ and $C^{K+1}$ in
$x$ for $\mathbb{A}\in C^K$.
\end{cor}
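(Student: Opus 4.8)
The plan is to assemble Corollary~\ref{stablecor} directly from the two preceding results, with essentially no new work beyond unwinding definitions. First I would recall the setup: on a simply connected subset $\mathcal{U}$ of the domain of consistent splitting, Lemma~\ref{bases} furnishes analytic bases $\{v_1^+,\dots,v_k^+\}$ for the stable subspace $S_+$ of the limiting matrix $\mathbb{A}_+(\lambda)$, where $k=\dim S_+=\Rank\mathbb{B}$. Because $\mathbb{A}_+$ is hyperbolic on this set, the constant-coefficient limiting equation \eqref{limgen} $Z'=\mathbb{A}_+(\lambda)Z$ has a $k$-dimensional space of solutions decaying as $x\to+\infty$, namely $Z_j(x):=e^{\mathbb{A}_+(\lambda)x}v_j^+$, and these span the stable manifold of \eqref{limgen}; they are manifestly analytic in $\lambda$ and (being exponentials) smooth in $x$.

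Next I would invoke the Conjugation Lemma (Lemma~\ref{conjugation}, the one labelled ``Conjugation Lemma''): locally near any $\lambda_0\in\mathcal{U}$ the change of variables $W=P_+(x,\lambda)Z=(I+\Theta_+(x,\lambda))Z$ carries \eqref{gen} to the constant-coefficient system \eqref{limgen}, with $P_+$ and $P_+^{-1}$ uniformly bounded and $\Theta_+$ decaying like $e^{-\theta x/C_2}$ together with its $\lambda$- and $x$-derivatives (estimate \eqref{Theta}, valid for $0\le k\le K+1$). Applying this to the decaying solutions $Z_j$ of the limiting equation, the functions $W_+^j:=P_+ Z_j=(I+\Theta_+)e^{\mathbb{A}_+ x}v_j^+$ solve \eqref{gen}; I would write $P_+ v_j^+$ as shorthand for this (regarding the action of $P_+$ on the constant vector $v_j^+$ as producing the conjugated decaying solution, consistent with the notation of the statement). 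Since $|P_+|,|P_+^{-1}|$ are bounded and $|e^{\mathbb{A}_+ x}v_j^+|$ decays exponentially, each $W_+^j$ decays as $x\to+\infty$; conversely any decaying solution of \eqref{gen} pulls back under $P_+^{-1}$ (also bounded) to a decaying solution of \eqref{limgen}, hence lies in the span of the $Z_j$, so $\CalS^+=\Span\{W_+^1,\dots,W_+^k\}$ is exactly the stable manifold. Regularity is inherited termwise: $v_j^+$ is analytic in $\lambda$ by Lemma~\ref{bases}, $e^{\mathbb{A}_+(\lambda)x}$ is analytic in $\lambda$ and entire in $x$, and $P_+=I+\Theta_+$ is analytic in $\lambda$ and $C^{K+1}$ in $x$ by \eqref{Theta} when $\mathbb{A}\in C^K$; products of these retain analyticity in $\lambda$ and $C^{K+1}$ regularity in $x$.

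There is one genuine point requiring a sentence of care: the Conjugation Lemma is stated only locally near each $\lambda_0$, whereas the corollary asserts the representation on all of $\mathcal{U}$. I would handle this exactly as in \cite{MaZ3}: the conjugator $P_+$ is not canonical, but the subspace $\CalS^+$ it produces — the stable manifold of \eqref{gen} — is intrinsic and independent of the choice of $P_+$, so the local representations patch to a globally defined analytic vector bundle over $\mathcal{U}$; since $\mathcal{U}$ is simply connected the analytic bases $v_j^+$ from Lemma~\ref{bases} are global, and the only non-uniqueness is in $\Theta_+$, which does not affect $\CalS^+$. This patching/uniqueness remark is the sole ``obstacle,'' and it is a standard one; the rest is bookkeeping. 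Thus the statement follows, and I would close with the explicit formula $W_+^j=P_+v_j^+$ together with the asserted $\lambda$-analyticity and $C^{K+1}$ $x$-regularity.
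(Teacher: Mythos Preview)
Your proposal is correct and matches the paper's approach: the paper states Corollary~\ref{stablecor} without a separate proof, treating it as an immediate consequence of the Conjugation Lemma together with Lemma~\ref{bases}, which is precisely the assembly you carry out. Your extra care with the local-to-global patching and the interpretation of the shorthand $P_+v_j^+$ is appropriate and does not deviate from the intended argument.
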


\subsubsection{Definition of the Evans Function} On any simply
connected subset of the domain of consistent splitting, let $W_1^+,
\dots, W_k^+=P_+v_1^+, \dots, P_+v_k^+$ be the
analytic basis described in Corollary \ref{stablecor} of the
subspace $\CalS^+$ of solutions $W$ of \eqref{gen} satisfying the
boundary condition $W\to 0$ at $+\infty$.
 Then, the {\it Evans function} for the ODE
systems $W'=\mathbb{A}(x,\lambda)W$ associated with this choice of
limiting bases is defined as the $k\times k$ Gramian determinant
\begin{equation}\label{deq1}
\begin{aligned}
D(\lambda)&:= \det \Big( \mathbb{B} W_{1}^+, \dots, \mathbb{B}W_k^+
\Big)_{|x=0, \lambda}\\
&=\det \Big( \mathbb{B}P_+ v_{1}^+, \dots, \mathbb{B}P_+ v_k^+
\Big)_{|x=0, \lambda}.\\
\end{aligned}
\end{equation}
\medskip

\begin{rem}
Note that $D$ is independent of the choice of $P_{+}$ as, by
uniqueness of stable manifolds, the exterior products (minors) $P_+
v_{1}^+\wedge \dots\wedge P_+ v_k^+$ are uniquely determined by
their behavior as $x\to + \infty$.
\end{rem}

\begin{prop}\label{2.4}
Both the Evans function and the subspace $\CalS^+$
are analytic on the entire simply connected subset of the domain of
consistent splitting on which they are defined. Moreover, for
$\lambda$ within this region, equation \eqref{gen} admits a
nontrivial solution $W\in L^2(x>0)$ if and only if $D(\lambda)=0$.
\end{prop}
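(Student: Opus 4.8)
The plan is to dispatch the two assertions separately, both following directly from the machinery already assembled in Section 2.1.

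\emph{Analyticity.} By Lemma \ref{bases}, on the given simply connected subset of the domain of consistent splitting there exist analytic bases $v_1^+,\dots,v_k^+$ of $S_+$; by the Conjugation Lemma the conjugator $P_+(x,\lambda)=I+\Theta_+(x,\lambda)$ is analytic in $\lambda$ with values in $L^\infty(x)$, with uniformly bounded inverse. Hence the functions $W_j^+=P_+v_j^+$ are analytic in $\lambda$ (both as elements of $L^\infty(x)$ and pointwise in $x$), and by Corollary \ref{stablecor} they constitute an analytic frame for the stable manifold $\CalS^+$; this exhibits $\CalS^+$ as an analytic subspace-valued map (equivalently, the associated eigenprojection, or the frame $\{W_1^+(0,\cdot),\dots,W_k^+(0,\cdot)\}$, is analytic). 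Since $\mathbb{B}(\lambda)$ is analytic by hypothesis, $\lambda\mapsto\big(\mathbb{B}W_1^+,\dots,\mathbb{B}W_k^+\big)\big|_{x=0}$ is analytic, so its Gramian determinant $D(\lambda)$ is analytic as well. The a priori local nature of $P_+$ is harmless since analyticity is local, and single-valuedness of $D$ on the simply connected domain (up to a nonvanishing analytic factor coming from the choice of basis) is exactly the content of the Remark following the definition together with Kato's construction.

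\emph{Zero characterization.} Fix $\lambda$ in the region. Being in the domain of consistent splitting, $\mathbb{A}_+(\lambda)$ is hyperbolic, so every nonzero solution $Z$ of the limiting equation \eqref{limgen} is either exponentially growing or exponentially decaying as $x\to+\infty$, and $Z\in L^2(x>0)$ if and only if $Z(0)\in S_+=\Span\{v_1^+,\dots,v_k^+\}$. By the Conjugation Lemma every solution $W$ of $W'=\mathbb{A}(x,\lambda)W$ equals $P_+Z$ for a unique such $Z$, and since $P_+,P_+^{-1}$ are uniformly bounded, $W\in L^2(x>0)$ iff $Z\in L^2(x>0)$. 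Thus the space of $L^2$ solutions of the ODE is precisely $\CalS^+=\Span\{W_1^+,\dots,W_k^+\}$, which is $k$-dimensional since $P_+(0,\lambda)$ is invertible. Therefore \eqref{gen} has a nontrivial $L^2$ solution iff there are constants $c_1,\dots,c_k$, not all zero, with $\mathbb{B}(\lambda)\big(\sum_j c_j W_j^+\big)\big|_{x=0}=0$, i.e. iff the $k$ vectors $\mathbb{B}W_j^+|_{x=0}$ (which lie in the $k$-dimensional range of the full-rank matrix $\mathbb{B}$, using $\Rank\mathbb{B}=\dim S_+=k$ from Definition \ref{consplit}) are linearly dependent, i.e. iff $D(\lambda)=0$.

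The only step needing genuine care is the identification of ``$L^2$ solution of the variable-coefficient ODE'' with ``element of $\CalS^+$'': this rests on hyperbolicity of $\mathbb{A}_+$ (which both rules out bounded, non-decaying, non-$L^2$ solutions of the limiting system and guarantees that decaying solutions decay exponentially, hence are $L^2$) combined with the uniform boundedness of $P_+^{\pm1}$ from the Conjugation Lemma, which transfers this dichotomy verbatim from \eqref{limgen} to \eqref{gen}. Everything else reduces to the standard facts that finite products and determinants of analytic functions are analytic.
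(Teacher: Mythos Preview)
Your proof is correct and follows essentially the same approach as the paper's (very terse) argument: analyticity from the analytic building blocks $v_j^+$, $P_+$, $\mathbb{B}$, and the zero characterization from identifying $L^2$ solutions with $\CalS^+$ and then checking when $\mathbb{B}$ has nontrivial kernel there. You supply considerably more detail than the paper does, in particular making explicit the step (via hyperbolicity of $\mathbb{A}_+$ and uniform boundedness of $P_+^{\pm1}$) that $L^2$ solutions of the variable-coefficient ODE coincide with $\CalS^+$, which the paper leaves implicit in the phrase ``stable manifold.''
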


\begin{proof}
Analyticity follows by uniqueness, and local analyticity of $P_{+}$,
$v_k^{+}$. Noting that the first $P_+v_j^+$ are a basis for the
stable manifold of \eqref{gen} at $x\to +\infty$, we find that the
determinant of $\mathbb{B}P_+v_j^+$ vanishes if and only if
$\mathbb{B}(\lambda)$ has nontrivial kernel on $\CalS_+(\lambda,0)$,
whence the second assertion follows.
\end{proof}

\begin{rem}
In the case (as here) that the ODE system  describes an eigenvalue equation
associated with an ordinary differential operator $L$, Proposition
\ref{2.4} implies that eigenvalues of $L$ agree in location with
zeroes of $D$. (Indeed, they agree also in multiplicity; see
\cite{GJ1,GJ2}; Lemma 6.1, \cite{ZH}; or Proposition 6.15 of
\cite{MaZ3}.)
\end{rem}

When $\ker \mathbb{B}$ has an analytic basis $v^0_{k+1}, \dots,
v^0_{N}$, for example, in the commonly occurring case, as here, that
$\mathbb{B}\equiv \const$, we have the following useful alternative
formulation. This is the version that we will use in our analysis of
the Green function and Resolvent kernel.

\begin{prop}\label{evansprop}
Let $v^0_{k+1}, \dots, v^0_{N}$ be an analytic basis of $\ker
\mathbb{B}$, normalized so that $\det \big( \mathbb{B}^*, v^0_{k+1},
\dots v^0_{N}\big) \equiv 1$. Then, the solutions $W^0_j$ of
\eqref{gen} determined by initial data $W^0_j(\lambda,0)=v^0_j$ are
analytic in $\lambda$ and $C^{K+1}$ in $x$, and
\begin{equation}\label{deq}
D(\lambda):= \det \Big( W_{1}^+, \dots,  W_k^+, W_{k+1}^0, \dots,
W_{N}^0 \Big)_{|x=0, \lambda}.
\end{equation}
\end{prop}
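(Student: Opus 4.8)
The plan is to show that the alternative determinant in \eqref{deq} differs from the original Evans function \eqref{deq1} only by a nonzero analytic scalar factor, which by standard convention may be normalized to $1$, so that the two definitions agree. First I would record that the solutions $W^0_j$, $j=k+1,\dots,N$, defined by the initial data $W^0_j(\lambda,0)=v^0_j$, are analytic in $\lambda$ and $C^{K+1}$ in $x$ on the relevant simply connected region: this is immediate from analytic dependence of solutions of the linear ODE $W'=\mathbb{A}(x,\lambda)W$ on the parameter $\lambda$ (using that $\mathbb{A}$ is analytic in $\lambda$ into $L^\infty(x)$ and $C^K$ in $x$), together with analyticity of the basis $v^0_j$ of $\ker\mathbb{B}$ assumed in the hypothesis, exactly as in Corollary \ref{stablecor} and Proposition \ref{evansprop}'s statement.

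Next, the key algebraic step. By construction $\mathbb{B}W^0_j = \mathbb{B}v^0_j = 0$ for $j=k+1,\dots,N$, so the vectors $W^0_j(\lambda,0)=v^0_j$ span $\ker\mathbb{B}$, an $(N-k)$-dimensional subspace, while the vectors $\mathbb{B}W^+_1,\dots,\mathbb{B}W^+_k$ live in the $k$-dimensional range of $\mathbb{B}$. I would evaluate the determinant \eqref{deq} at $x=0$ by a block/change-of-basis argument: writing the ambient space $\CC^N$ as $\ker\mathbb{B}\oplus (\ker\mathbb{B})^\perp$, the columns $W^0_j(\lambda,0)=v^0_j$ fill out the first summand, and the determinant \eqref{deq} reduces, up to the sign and scale fixed by the normalization $\det(\mathbb{B}^*, v^0_{k+1},\dots,v^0_N)\equiv 1$, to the determinant of the $k\times k$ block obtained by projecting $W^+_1(\lambda,0),\dots,W^+_k(\lambda,0)$ onto $(\ker\mathbb{B})^\perp$ and expressing the result in the basis dual to $\mathbb{B}$. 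Concretely: since $\mathbb{B}\colon (\ker\mathbb{B})^\perp\to \Range\mathbb{B}$ is an isomorphism, that projected block has the same determinant (relative to appropriate bases) as $(\mathbb{B}W^+_1,\dots,\mathbb{B}W^+_k)$, which is precisely \eqref{deq1}. The normalization $\det(\mathbb{B}^*,v^0_{k+1},\dots,v^0_N)\equiv 1$ is exactly what makes the proportionality constant equal to $1$ rather than an arbitrary nonvanishing analytic function, so \eqref{deq} and \eqref{deq1} coincide.

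Finally I would note that analyticity and the vanishing-iff-eigenvalue conclusion are inherited from Proposition \ref{2.4}: since \eqref{deq} equals \eqref{deq1}, it is analytic on the simply connected subset of the domain of consistent splitting, and it vanishes precisely when \eqref{gen} has a nontrivial $L^2(x>0)$ solution. The main obstacle, such as it is, is purely bookkeeping: one must track the orientation/normalization constants carefully through the change of basis so as to verify that the constant of proportionality is genuinely $1$ under the stated normalization and not merely some nonzero analytic multiple; the linear-algebra content (that $\det(\mathbb{B}W^+_j)$ and $\det(W^+_j, W^0_\ell)$ agree once $\ker\mathbb{B}$ is spanned by the $W^0_\ell$ and the dual normalization is imposed) is elementary. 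An alternative, equivalent route would be to observe directly that both \eqref{deq1} and \eqref{deq} are, by uniqueness of the stable manifold, alternating multilinear functionals of the same wedge $W^+_1\wedge\cdots\wedge W^+_k$ at $x=0$, composed with the fixed linear data $(\mathbb{B},v^0_{k+1},\dots,v^0_N)$; hence they must be scalar multiples of one another, and the normalization pins the scalar to $1$.
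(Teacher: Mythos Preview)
Your proposal is correct and follows essentially the same route as the paper: the paper's proof simply notes that analyticity and smoothness follow from analytic/smooth dependence on initial data and parameters, and then invokes ``the chosen normalization, and standard properties of Grammian determinants'' to conclude \eqref{deq}. Your block/change-of-basis argument is precisely the linear-algebra content behind that one-line invocation, written out in full; nothing substantively different is going on.
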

\begin{proof}
Analyticity/smoothness follow by analytic/smooth dependence on
initial data/parameters. By the chosen normalization, and standard
properties of Grammian determinants,
$$
D(\lambda)= \det \Big( W_{1}^+, \dots,
W_k^+, v_{k+1}^0, \dots, v_{N}^0 \Big)_{|x=0, \lambda},
$$
yielding \eqref{deq}.
\end{proof}

\subsubsection{The tracking/reduction lemma}\label{tracking}
Next, consider a family of systems
\begin{equation}\label{gen2}
\begin{aligned}
W'&=\mathbb{A}(x,p,\eps)W,
\quad p \in \cP, \, \eps\in \RR^+ \quad {\rm and} \quad x>0,\\
\mathbb{B}(p,\eps)W&=0, \quad \lambda \in \Omega \quad {\rm and}
\quad x=0
\end{aligned}
\end{equation}
parametrized by $p$, $\eps$, with $\eps\to 0$.
The main example we have in mind is \eqref{gen} with
$p=\lambda/|\lambda|$ and $\eps:=|\lambda|^{-1}$, in
the high-frequency regime $|\lambda|\to \infty$.
We assume further that by some coordinate change we can arrange that
\begin{equation}\label{blockdiag}
\begin{aligned}
\BbbA=\begin{pmatrix} M_+ & 0\\ 0 & M_-\end{pmatrix} + \Theta ,
\end{aligned}
\end{equation}
with
\begin{equation}\label{deltaeta}
|\Theta| \le \delta(\eps),
\quad \Re ( M_+-M_-)\ge 2\eta(\eps)+\alpha^\eps(x),
\end{equation}
$\|\alpha\|_{L^1(\RR^+)}$ uniformly bounded for all $\eps$ sufficiently
small, and
\begin{equation}\label{gaprel}
(\delta/\eta)(\eps)\to 0 \quad \hbox{\rm as}\quad \eps\to 0,
\end{equation}
where $\Re(Q):=(1/2)(Q+Q^*)$ denotes the symmetric part of a matrix $Q$.

Then, we have the following
analog of Lemma \ref{conjugation},
asserting that the approximately block-diagonalized equations
\eqref{gen2} may be converted by a smooth coordinate transformation
$$
\begin{pmatrix} I & \Theta^1\\
\Theta^2 & I\\
\end{pmatrix} \to I \quad \hbox{\rm as }\quad \eps \to 0
$$
to exactly diagonalized form with the same leading part $\MM$.

\begin{lem}[\cite{MaZ3}] \label{reduction}
Consider a system \eqref{blockdiag}, with $\tilde F\equiv 0$ and
$\delta/\eta\to 0$ as $\eps\to 0$. Then, (i) for all $0<\epsilon\le
\epsilon_0$, there exist (unique) linear transformations
$\Phi_1^\epsilon(z,p)$ and $\Phi_2^\epsilon(z,p)$, possessing the
same regularity with respect to the various parameters $z$, $p$,
$\epsilon$ as do coefficients $M_\pm$ and $\Theta$, for which the
graphs $\{(Z_1, \Phi^\epsilon_2 Z_1)\}$ and $\{(\Phi^\epsilon_1 Z_2
,Z_2)\}$ are invariant under the flow of \eqref{blockdiag}, and
satisfying
$$ |\Phi^\epsilon_1|, \, |\Phi^\epsilon_2| \le C
\delta(\epsilon)/\eta(\epsilon) \, \text{\rm for all } z.
$$
In particular, (ii) the subspace $E_-$ of data at $z=0$ for which
the solution decays as $z\to +\infty$, given by
$\Span\{(\Phi^\eps_1(0,p)v, v)\}$, converges as $ \eps \to 0$ to
$\tilde E_-:=\Span \{(0, v)\}$.
\end{lem}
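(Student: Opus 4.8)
The plan is to realize the two invariant graphs as fixed points of a contraction mapping built from the matrix Riccati equations attached to \eqref{blockdiag}, in the classical style of the tracking/reduction argument; the spectral gap $\Re(M_+-M_-)\ge 2\eta(\eps)+\alpha^\eps(x)$ together with $\|\alpha\|_{L^1(\RR^+)}$ uniformly bounded supplies an exponential dichotomy of rate comparable to $\eta$, and the smallness $(\delta/\eta)(\eps)\to0$ makes the relevant map a contraction on a ball of radius $\sim\delta/\eta$. Concretely, I would split $W=(W_1,W_2)$ and write \eqref{blockdiag}, with $\Theta=(\Theta_{ij})_{i,j=1,2}$ in block form, as $W_1'=(M_++\Theta_{11})W_1+\Theta_{12}W_2$ and $W_2'=\Theta_{21}W_1+(M_-+\Theta_{22})W_2$. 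Requiring the graph $\{W_1=\Phi_1W_2\}$ to be flow-invariant forces $\Phi_1$ to solve the Riccati equation
\[
\Phi_1'=(M_++\Theta_{11})\Phi_1-\Phi_1(M_-+\Theta_{22})+\Theta_{12}-\Phi_1\Theta_{21}\Phi_1,
\]
and, symmetrically, $\{W_2=\Phi_2W_1\}$ is invariant exactly when $\Phi_2$ solves the equation obtained by interchanging $(M_+,\Theta_{11})\leftrightarrow(M_-,\Theta_{22})$ and $\Theta_{12}\leftrightarrow\Theta_{21}$; I will carry out $\Phi_1$ in detail, $\Phi_2$ being the same argument with the roles of the two blocks---and the direction of integration below---reversed.

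Next I would set up the fixed-point formulation for $\Phi_1$. Let $\mathcal O_\pm(x,s)$ be the flows of $M_\pm$ and $\mathcal G(x,s)\colon\Phi\mapsto\mathcal O_+(x,s)\,\Phi\,\mathcal O_-(s,x)$ the propagator of the linear part $\Phi\mapsto M_+\Phi-\Phi M_-$ on matrices. Estimating the conjugated product $\mathcal O_+(x,s)\,\Phi\,\mathcal O_-(s,x)$ directly by a Gronwall/energy argument---one cannot bound the two flows separately, only the product---the gap hypothesis yields $|\mathcal G(x,s)|\le Ce^{-2\eta(s-x)-\int_x^s\alpha}\le C'e^{-2\eta(s-x)}$ for $s\ge x\ge0$, the last step since $\int_x^s\alpha\le\|\alpha\|_{L^1}$ uniformly in $\eps$; hence $\int_x^\infty|\mathcal G(x,s)|\,ds\le C/\eta$, and bounded solutions of the Riccati equation on $[0,\infty)$ coincide with solutions of
\[
\Phi_1(x)=-\int_x^\infty\mathcal G(x,s)\big[\Theta_{12}+\Theta_{11}\Phi_1-\Phi_1\Theta_{22}-\Phi_1\Theta_{21}\Phi_1\big](s)\,ds=:\mathcal T[\Phi_1](x).
\]
On the ball $\mathcal K:=\{\Phi\in C^0_b([0,\infty)):\|\Phi\|_\infty\le K\delta/\eta\}$ the bracket is bounded by $\delta(1+O(\delta/\eta))$, so $\|\mathcal T[\Phi]\|_\infty\le(C/\eta)\,\delta(1+o(1))\le K\delta/\eta$ for $K$ fixed large and $\eps$ small, and $\|\mathcal T[\Phi]-\mathcal T[\Phi']\|_\infty\le(C\delta/\eta)\|\Phi-\Phi'\|_\infty$, a contraction since $(\delta/\eta)(\eps)\to0$. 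The unique fixed point $\Phi_1=\Phi_1^\eps(\cdot,p)$ is then the unique small bounded invariant graph, satisfies $|\Phi_1^\eps|\le C\delta/\eta$, and inherits the regularity of $M_\pm,\Theta$---in $x$ from the Riccati ODE, in $(p,\eps)$ from the uniform-contraction theorem applied to the parametrized map $\mathcal T$; with $\Phi_2^\eps$ obtained symmetrically, this proves (i).

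For (ii), the change of variables $W=T(x)\tilde W$ with $T=\begin{pmatrix}I&\Phi_1\\\Phi_2&I\end{pmatrix}$---invertible for $\eps$ small since its off-diagonal blocks are $O(\delta/\eta)$---conjugates \eqref{blockdiag} to an exactly block-diagonal system with diagonal blocks $M_\pm+O(\delta)$, still satisfying the same gap. In the regime of consistent splitting, where $M_-$ carries the decaying modes, the data at $z=0$ whose solution decays as $z\to+\infty$ is $\{(0,v)\}$ for the diagonal system, hence $E_-=T(0)\{(0,v)\}=\Span\{(\Phi_1^\eps(0,p)v,\,v)\}$; since $|\Phi_1^\eps(0,p)|\le C\delta/\eta\to0$, we get $E_-\to\tilde E_-=\Span\{(0,v)\}$.

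The main obstacle is the dichotomy estimate for $\mathcal G$: the individual flows $\mathcal O_\pm$ carry no sign information and may be arbitrarily large, so the decay rate $2\eta$ must be extracted from $\Re(M_+-M_-)$ directly on the conjugated product, while absorbing the merely $L^1$-controlled, $x$-dependent correction $\alpha^\eps$ into a constant that---together with the neighborhood size $\eps_0$---is uniform in $p\in\cP$. Everything else is a routine contraction-mapping and smooth-dependence argument.
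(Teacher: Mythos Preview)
Your proposal is correct and is exactly the approach the paper intends: the paper's proof is the single sentence ``Standard contraction mapping argument carried out on the `lifted' equations governing the flow of the conjugating matrices $\Phi^\eps_j$; see Appendix C, \cite{MaZ3},'' and your Riccati/variation-of-constants fixed-point argument on $\mathcal T$ is precisely that lifted contraction, with the dichotomy estimate on $\mathcal G$ supplying the $C/\eta$ factor against the $O(\delta)$ inhomogeneity. Your identification of the one genuine technical point---that the decay of $\mathcal G(x,s)$ must be read off the conjugated product via the gap $\Re(M_+-M_-)\ge 2\eta+\alpha^\eps$ rather than from separate bounds on $\mathcal O_\pm$, with the $L^1$-bounded $\alpha^\eps$ absorbed into the constant---is also the right emphasis.
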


\medskip
{\bf Proof.} Standard contraction mapping argument carried out on
the ``lifted'' equations governing the flow of the conjugating
matrices $\Phi^\eps_j$; see Appendix C, \cite{MaZ3}.

\begin{remark}\label{alphaeps}
\textup{
In practice, we usually have $\alpha^\eps\equiv 0$,
as can be obtained in general by a change of coordinates
multiplying the first coordinate
by exponential weight $e^{\int \alpha^\eps dx}$.
}
\end{remark}

\subsection{Construction of the resolvent kernel}
In this section we construct the explicit form of the resolvent
kernel, which is nothing more than the Green function
$G_\lambda(x,y)$ associated with the elliptic operator $(L-\lambda
I)$, where
\begin{equation}\label{Gdef}
(L-\lambda I)G_\lambda(\cdot,y)=\delta_yI, \quad
\begin{pmatrix}\bar A_*&0\\\bar b_1&\bar b_2\end{pmatrix}
G_\lambda(0,y)\equiv
\begin{pmatrix}*\\0\end{pmatrix}
\end{equation}
where $*=0$ for the inflow case
and is arbitrary for the outflow case.

Let $\Lambda$ be the region of consistent splitting for $L$. It is
a standard fact (see, e.g., [He]) that the resolvent $(L-\lambda
I)^{-1}$ and the Green function $G_\lambda(x,y)$ are meromorphic in
$\lambda$ on $\Lambda$, with isolated poles of finite order.

Writing the associated eigenvalue equation $LU-\lambda U =0$ in the
form of a first-order system \eqref{gen} as follows: $W:=(u,v,z)\in
\CC^{2n-1}$ with $z:=b_1 u' + b_2 v'$, and
\begin{align}\label{firstorderW}\notag u' &= A^{-1}_* (-A_{12}b_2^{-1}z - (A'_{11}+\lambda)u -
A'_{12}v),\\ v'&= b_2^{-1}z - b_2^{-1}b_1 u',\\ z'&= (A_{21} -
A_{22}b_2^{-1}b_1)u' + A_{22}b_2^{-1}z + A_{21}'u + (A_{22}'
+\lambda)v. \notag\end{align}

\subsubsection{Domain of consistent splitting} Define
\begin{equation}\label{Lambda}\Lambda := \cap \Lambda^+_j, \quad j = 1,2,...,n\end{equation} where $\Lambda_j^+$ denote the open sets bounded on the left by the
algebraic curves $\lambda_j^+(\xi)$ determined by the eigenvalues of
the symbols $-\xi^2 B_+ - i\xi A_+$ of the limiting
constant-coefficient operators \begin{equation}L_+ w: = B_+ w'' -
A_+ w'\end{equation} as $\xi$ is varied along the real axis. The
curves $\lambda_j^+$ comprise the essential spectrum of operators
$L_+$.
\begin{lemma}[\cite{MaZ3}] The set $\Lambda$ is equal to the component containing real $ +\infty$ of the domain
of consistent splitting for \eqref{firstorderW}. Moreover, if
assumption (H3) holds, then
\begin{equation} \Lambda \subset \{ \lambda~:~\R\lambda > - \eta
|\I\lambda|/(1+|\I\lambda|), \quad \eta>0.\end{equation}
\end{lemma}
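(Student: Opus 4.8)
The plan is to identify $\Lambda$ as a connected component of the domain of consistent splitting for the first-order system \eqref{firstorderW}, and then to locate that component precisely in the complex plane using the structure of the limiting symbols. First I would recall that, by definition, the domain of consistent splitting consists of those $\lambda$ for which the limiting coefficient matrix $\BbbA_+(\lambda)$ is hyperbolic, i.e.\ has no center subspace, together with the appropriate dimension count matching the rank of the boundary matrix. The key observation is that $\BbbA_+(\lambda)$ has a purely imaginary eigenvalue $i\xi$ (for real $\xi$) precisely when $\lambda$ is an eigenvalue of the symbol $-\xi^2 B_+ - i\xi A_+$; this is the standard computation converting the second-order constant-coefficient eigenvalue equation $L_+ w = \lambda w$ into the first-order form \eqref{firstorderW} and reading off the dispersion relation. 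Hence the boundary of the region where $\BbbA_+$ is hyperbolic is exactly the union of the curves $\lambda_j^+(\xi)$, which are the essential spectrum of $L_+$; the component containing real $+\infty$ is bounded on the left by these curves, which is the definition of $\Lambda = \cap_j \Lambda_j^+$. The dimension count (that $\dim S_+ = \Rank \mathbb B$) holds on this component by the noncharacteristic assumption (H1), (H2) — an explicit count of stable/unstable dimensions of $\BbbA_+$ for large real $\lambda$, as carried out in \cite{MaZ3}.

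Second, for the spectral-gap inclusion I would analyze the curves $\lambda_j^+(\xi)$ for $\xi$ near $0$ and for $|\xi|$ large, since these govern the leftmost extent of $\Lambda$. For small $\xi$, a perturbation expansion of the eigenvalues of $-\xi^2 B_+ - i\xi A_+$ about $\xi = 0$ gives $\lambda_j^+(\xi) = -i a_j^+ \xi - \beta_j^+ \xi^2 + O(\xi^3)$, where $a_j^+$ are the (real, by (H2)) eigenvalues of $A_+$ and $\beta_j^+ = (l_j B r_j)_+ > 0$ by dissipativity \eqref{dissipative}; thus near the imaginary axis the curves peel off to the left like $-\beta_j^+ \xi^2$, i.e.\ $\R\lambda \sim -\beta_j^+ (\I\lambda/a_j^+)^2$, which is comfortably to the left of the line $\R\lambda = -\eta|\I\lambda|/(1+|\I\lambda|)$ for small $\I\lambda$ and appropriately small $\eta$. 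For $|\xi|$ large, the parabolic terms $-\xi^2 B_+$ dominate on the parabolic block while the hyperbolic (scalar $u$) component contributes an eigenvalue whose real part is controlled by the effective dissipation; here one uses assumption (H3) (uniqueness of the profile, equivalently a genuine-coupling/Kawashima-type condition ruling out neutral modes) to conclude that $\R\lambda_j^+(\xi) \to -\infty$ or stays uniformly bounded away from the region in question, so that no curve re-enters the wedge $\{\R\lambda > -\eta|\I\lambda|/(1+|\I\lambda|)\}$. Combining the small- and large-$\xi$ estimates with compactness on intermediate $\xi$ yields the claimed wedge-shaped containment for a suitable $\eta > 0$.

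The main obstacle, I expect, is the intermediate- and large-frequency control of the essential spectrum curves: near $\xi = 0$ the perturbative computation is routine, but showing that the curves $\lambda_j^+(\xi)$ do not bend back toward the imaginary axis for large $|\xi|$ requires the genuine-coupling structure encoded in (H3) and the partially-symmetric hyperbolic–parabolic form (A1)–(A3). Concretely, one must show that the symbol $-\xi^2 B_+ - i\xi A_+$ — which for the hyperbolic–parabolic system is only partially dissipative — nonetheless has all eigenvalues with real part bounded above by $-\theta|\xi|^2/(1+|\xi|^2)$ for some $\theta > 0$; this is precisely the Kawashima-type dissipativity estimate, and under (A1)–(A3), (H3) it follows from the standard compensating-matrix / Shizuta–Kawashima argument. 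I would invoke the version established in \cite{MaZ3, Z3} for this class of systems rather than reprove it. Once that symbol estimate is in hand, the wedge inclusion for $\Lambda$ is immediate, since $\Lambda$ lies to the right of all the curves $\lambda_j^+$ and to the right of real $+\infty$'s component, and the wedge $\{\R\lambda > -\eta|\I\lambda|/(1+|\I\lambda|)\}$ is exactly the region excluded by such a parabolic spectral bound.
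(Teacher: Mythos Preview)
The paper does not actually prove this lemma; it is stated with citation to \cite{MaZ3} and no proof is given. Your outline is essentially the standard argument found there: identify hyperbolicity of $\BbbA_+(\lambda)$ with $\lambda$ avoiding the dispersion curves $\lambda_j^+(\xi)=\sigma(-\xi^2 B_+-i\xi A_+)$, check the stable/unstable dimension count for large real $\lambda$, and then obtain the wedge inclusion from a uniform symbol bound $\R\lambda_j^+(\xi)\le -\theta\xi^2/(1+\xi^2)$ via the Shizuta--Kawashima compensating-matrix argument. So the approach is correct and matches what the cited reference does.

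There is one concrete slip worth fixing. You write that (H3), ``uniqueness of the profile,'' is ``equivalently a genuine-coupling/Kawashima-type condition.'' In this paper that is not so: uniqueness of the boundary-layer profile is a transversality statement about the connection ODE, not a statement about the symbol of $L_+$. The genuine-coupling hypothesis you actually need for the dissipativity estimate on $-\xi^2 B_+-i\xi A_+$ is (A2) (no eigenvector of $\tilde A(\tilde A^0)^{-1}$ in $\ker \tilde B(\tilde A^0)^{-1}$), which is precisely the Shizuta--Kawashima condition; see Lemma~\ref{K} and \eqref{dissipative}. The appearance of ``(H3)'' in the lemma statement is an artifact of the hypothesis numbering in \cite{MaZ3}, where the analogous label refers to the Kawashima condition rather than profile uniqueness. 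Your argument goes through once you invoke (A2) in place of (H3) at the large-$|\xi|$ step; as written, the justification ``uniqueness of the profile rules out neutral modes of the symbol'' is not valid.
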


\subsubsection{Basic construction}
We first recall the following duality relation derived for the
degenerate viscosity case in \cite{MaZ3}.

\begin{lem}[\cite{ZH,MaZ3}]\label{duality}
The function $W = (U,Z)$ is a solution of \eqref{firstorderW} if and
only if $\tilde W^* \tilde \CalS W \equiv$ constant for any solution
$\tilde W = (\tilde U,\tilde Z)$ of the adjoint eigenvalue equation,
where
\begin{equation}\tilde \CalS = \begin{pmatrix} -A_{11} & -A_{12} & 0 \\
-A_{21} & -A_{22} & I_r \\-b_{2}^{-1}b_1 & -I_r &
0\end{pmatrix}\end{equation} and
\begin{equation}Z= (b_1,b_2)U', \quad \tilde Z= (0,b_2^* )\tilde
U'.\end{equation}
\end{lem}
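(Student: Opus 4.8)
\textbf{Proof plan for Lemma \ref{duality}.} The strategy is the standard one for establishing a bilinear conservation law attached to a first-order ODE system and its formal adjoint: identify the correct symplectic-type pairing matrix $\tilde\CalS$ by requiring that $\frac{d}{dx}\big(\tilde W^* \tilde\CalS W\big) = 0$ whenever $W$ solves \eqref{firstorderW} and $\tilde W$ solves the adjoint equation, then verify this by direct differentiation. First I would write \eqref{firstorderW} in the compact form $W' = \mathbb{A}(x,\lambda) W$ with the explicit $(2n-1)\times(2n-1)$ coefficient matrix read off from \eqref{firstorderW}, using the variables $W = (u,v,z)$ with $z = (b_1,b_2)U' = b_1 u' + b_2 v'$; note that the substitution $z = (b_1,b_2)U'$ is precisely what reduces the second-order degenerate-parabolic eigenvalue equation $LU - \lambda U = 0$ to first order, and that the block of $\mathbb{A}$ coming from the parabolic part is invertible because $b_2 \ge \theta > 0$.

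Next I would compute the formal adjoint of $LU - \lambda U = 0$ in the same reduced coordinates, introducing $\tilde Z = (0, b_2^*)\tilde U'$ as the dual auxiliary variable (the asymmetry here — only the parabolic component of $\tilde U'$ enters — reflects the degeneracy of $B$, exactly as in the whole-line degenerate-viscosity analysis of \cite{MaZ3}). The adjoint system then has coefficient matrix related to $-\mathbb{A}^*$ after conjugation by $\tilde\CalS$; concretely, the claim $\tilde W^* \tilde\CalS W \equiv \const$ is equivalent to the matrix identity $\tilde\CalS \mathbb{A} + \mathbb{A}_{\mathrm{adj}}^* \tilde\CalS + \tilde\CalS' = 0$, where $\mathbb{A}_{\mathrm{adj}}$ is the coefficient matrix of the adjoint system and $\tilde\CalS'$ accounts for the $x$-dependence of $\tilde\CalS$ through $A_{ij}(x)$, $b_i(x)$. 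Since $\tilde\CalS$ as written has constant blocks $0$, $I_r$, $-I_r$ and only the $A_{ij}$, $b_2^{-1}b_1$ entries vary, I would organize the verification block by block, using the symmetry hypotheses on $\tilde A$ from (A1) only where needed — in fact the lemma as stated does not require symmetry, so the computation should go through for the general $A_{ij}$.

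Then I would translate the matrix identity back into the statement as phrased: given a solution $W = (U,Z)$ of \eqref{firstorderW} and a solution $\tilde W = (\tilde U, \tilde Z)$ of the adjoint eigenvalue equation with $Z = (b_1,b_2)U'$ and $\tilde Z = (0,b_2^*)\tilde U'$, the scalar $\tilde W^* \tilde\CalS W$ has zero $x$-derivative, hence is constant; conversely, if $\tilde W^*\tilde\CalS W$ is constant for \emph{every} adjoint solution $\tilde W$, then differentiating and using that adjoint solutions span enough directions forces $W' = \mathbb{A}W$, which is the reverse implication. The bookkeeping for the converse is where I would be most careful: one needs that the pairing $\tilde\CalS$ restricted to the relevant variables is nondegenerate enough that "$\langle \tilde W, W' - \mathbb{A}W\rangle = 0$ for all adjoint $\tilde W$" implies $W' - \mathbb{A}W = 0$, which uses invertibility of $b_2$ and the structure of $\tilde\CalS$.

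The main obstacle is purely computational: matching the $A_{12}b_2^{-1}$, $A_{22}b_2^{-1}b_1$, and $b_2^{-1}b_1$ cross-terms in $\tilde\CalS\mathbb{A}$ against those in $\mathbb{A}_{\mathrm{adj}}^*\tilde\CalS + \tilde\CalS'$, since the reduction to first order introduced the variable $z$ in an asymmetric way and the dual variable $\tilde Z$ even more asymmetrically. I expect no conceptual difficulty — the result is quoted from \cite{ZH,MaZ3} and the analogous whole-line statement is established there — but the half-line formulation requires re-deriving the pairing in the present coordinates \eqref{firstorderW}, so the "proof" is really a verification that the $\tilde\CalS$ displayed in the lemma is the correct one, which I would do by the block-by-block differentiation just described, citing \cite{MaZ3} for the whole-line prototype.
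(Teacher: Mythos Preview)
Your approach is correct and is the standard one: differentiate $\tilde W^*\tilde\CalS W$, reduce to the matrix identity $\tilde\CalS\mathbb{A} + \mathbb{A}_{\mathrm{adj}}^*\tilde\CalS + \tilde\CalS' = 0$, and verify block by block, with the converse following from nondegeneracy of $\tilde\CalS$. The paper itself does not give a proof of this lemma at all---it is simply quoted from \cite{ZH,MaZ3} with the matrix $\tilde\CalS$ and its inverse recorded for later use---so your plan is in fact more detailed than what appears in the paper, and matches the verification carried out in those references.
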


For future reference, we note the representation \begin{equation}
{\tilde \CalS}^{-1} = \begin{pmatrix}-A_*^{-1} & 0& A_*^{-1}A_{12}
\\b_2^{-1}b_1A_*^{-1} & 0& -b_2^{-1}b_1 A_*^{-1}A_{12} -I_r\\-\tilde
AA_*^{-1} & I_r & -A_{22}+\tilde A A_*^{-1}A_{12}
\end{pmatrix}\end{equation}
where $\tilde A:=A_{21} - A_{22}b_{-1}b_1$,
$A_*:=A_{11}-A_{12}b_2^{-1}b_1$,
obtained by direct computation in \cite{MaZ3}.

Denote by
\begin{equation}\label{phi0}
\Phi^0= ( \phi^0_{k+1}(x;\lambda), \cdots , \phi^0_{n+r}(x;\lambda)
),\end{equation}\begin{equation} \label{phi+} \Phi^+=
(\phi^+_1(x;\lambda), \cdots ,\phi^+_{k}(x;\lambda)=
(P_+v_1^+,\cdots, P_+v_k^+),
\end{equation} and \begin{equation} \label{phi} \Phi=
(\Phi^+,\Phi^0),
\end{equation} the matrices whose columns span the subspaces of
solutions of \eqref{gen} that, respectively, decay at $x=+\infty$,
and satisfy the prescribed boundary conditions at $x=0$,
denoting (analytically chosen) complementary subspaces by
\begin{equation}\label{psi0}
\Psi^0= ( \psi^0_1(x;\lambda) , \cdots , \psi^0_{k}(x;\lambda)),
\end{equation}
\begin{equation}\label{psi+}
\Psi^+= ( \psi^+_{k+1}(x;\lambda) , \cdots ,
\psi^+_{n+r}(x;\lambda))
\end{equation}
and \begin{equation}\label{psi} \Psi= ( \Psi^0, \Psi^+).
\end{equation}

As described in the previous subsection, eigenfunctions decaying at
$+\infty$ and satisfying the prescribed boundary conditions
at $0$
 occur precisely when the subspaces $\Span \Phi^0$
and $\Span \Phi^+$ intersect, i.e., at zeros of the Evans function
defined in \eqref{deq}:
\begin{equation}
D_L(\lambda):=\det(\Phi^0,\Phi^+)_{|x=0}.
\end{equation}

Define the solution operator from $y$ to $x$ of $(L-\lambda)U =0$,
denoted by $\CalF^{y\rightarrow x}$, as $$\CalF^{y\rightarrow x} =
\Phi (x,\lambda)\Phi^{-1}(y,\lambda)$$ and the projections
$\Pi_y^0,\Pi_y^+$ on the stable manifolds at $0,+\infty$ as
$$\Pi_y^+ =\begin{pmatrix}\Phi^+(y) & 0
\end{pmatrix}\Phi^{-1}(y), \quad \Pi_y^0 =\begin{pmatrix} 0& \Phi^0(y)
\end{pmatrix}\Phi^{-1}(y).$$

With these preparations, the construction of the Resolvent kernel
goes exactly as in the construction performed in \cite{ZH,MaZ3} on
the whole line.

\begin{lem}\label{G-rep}
We have the the representation
\begin{equation}
G_\lambda (x,y)=
\begin{cases}
   (I_n, 0)\CalF^{y\rightarrow x}\Pi_y^+ \tilde
   S^{-1}(y)(I_n,0)^{tr},\quad & for \quad x>y,\\
-(I_n, 0)\CalF^{y\rightarrow x}\Pi_y^0 \tilde
   S^{-1}(y)(I_n,0)^{tr},\quad &for \quad x<y.
\end{cases}
\end{equation}
Moreover, on any compact subset $K$ of $\rho(L)\cap \Lambda$,
\begin{equation}\label{Inter-est}|G_\lambda (x,y)|\le C e^{\eta |x-y|},\end{equation} where $C>0$ and $\eta>0$
depend only on $K,L$.
\end{lem}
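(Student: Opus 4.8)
The plan is to follow the classical variation-of-parameters construction of the resolvent kernel on a half-line, exactly as carried out on the whole line in \cite{ZH,MaZ3}, adapting the bookkeeping to the boundary condition at $x=0$. Recall that $\CalF^{y\to x}=\Phi(x,\lambda)\Phi^{-1}(y,\lambda)$ is the solution operator of the homogeneous first-order system \eqref{firstorderW}, and that the full fundamental matrix $\Phi=(\Phi^+,\Phi^0)$ is chosen so that the first $k$ columns span the subspace $\CalS^+$ of solutions decaying at $+\infty$ (via Corollary \ref{stablecor}) and the remaining $n+r-k$ columns span the subspace of solutions satisfying the boundary condition $\mathbb B W=0$ at $x=0$ (via Proposition \ref{evansprop}). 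The projections $\Pi_y^+$ and $\Pi_y^0$ along these complementary subspaces are well defined precisely away from zeros of $D_L(\lambda)=\det(\Phi^0,\Phi^+)_{|x=0}$, i.e.\ on $\rho(L)\cap\Lambda$.

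The key steps are as follows. First, I would write the ansatz $G_\lambda(\cdot,y)=(I_n,0)\CalF^{\cdot\to y}M(y)$ for $x$ on either side of $y$, with an appropriate jump in $M$ across $x=y$ dictated by the delta source; the factors $(I_n,0)$ and $(I_n,0)^{tr}$ pick out the $u,v$-block, and $\tilde S^{-1}(y)$ converts the jump in $W$ into the correct jump in the physical variables using Lemma \ref{duality} (the relation $Z=(b_1,b_2)U'$ and the conserved bilinear form $\tilde W^*\tilde\CalS W$). Second, I would impose the two selection conditions: for $x>y$ the kernel must decay as $x\to+\infty$, forcing the solution to lie in $\Span\Phi^+$, which is implemented by inserting $\Pi_y^+$; for $x<y$ the kernel must satisfy the boundary condition at $x=0$, forcing it to lie in $\Span\Phi^0$, implemented by $\Pi_y^0$. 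Third, I would check that the prescribed jump $[G_\lambda]_{x=y^-}^{x=y^+}$ produced by $\Pi_y^+ + \Pi_y^0 = I$ together with the structure of $\tilde S^{-1}$ reproduces exactly $\delta_y I$ in \eqref{Gdef}, and that the boundary condition $\begin{pmatrix}\bar A_*&0\\\bar b_1&\bar b_2\end{pmatrix}G_\lambda(0,y)=\begin{pmatrix}*\\0\end{pmatrix}$ holds because $\Phi^0$ was built from data in $\ker\mathbb B$ (here $\mathbb B$ is exactly the boundary matrix \eqref{linBCstructure}, with $*=0$ in the inflow case where the hyperbolic part is also constrained and $*$ free in the outflow case where it is not). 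This is where I would invoke "the construction goes exactly as in \cite{ZH,MaZ3}" rather than redo the linear algebra.

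For the bound \eqref{Inter-est}, I would argue as follows. On a compact subset $K$ of $\rho(L)\cap\Lambda$, the Evans function $D_L$ is bounded away from zero, so the projections $\Pi_y^+,\Pi_y^0$ are uniformly bounded in $y$ and $\lambda$: indeed, by the Conjugation Lemma the decaying basis $\Phi^+=P_+(v_1^+,\dots,v_k^+)$ has $P_+=I+\Theta_+$ with $\Theta_+$ exponentially small, so $\Phi^+$ and its complement remain uniformly transverse and well-conditioned for all $x\ge 0$ uniformly on $K$; likewise $\tilde S^{-1}(y)$ and $\tilde S(y)$ are uniformly bounded by (H0) and the exponential convergence $\bU\to U_+$. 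The only growth in $\CalF^{y\to x}\Pi_y^+$ for $x>y$ comes from the unstable directions of the constant-coefficient limiting equation \eqref{limgen}, but these have been projected out, leaving at worst the slowest decaying/growing stable rate, which on a compact $\lambda$-set is bounded by $Ce^{\eta|x-y|}$ for some $\eta>0$; the symmetric statement with $\Pi_y^0$ handles $x<y$. The main obstacle is the uniformity of these estimates: one must be careful that the constant $\eta$ and $C$ genuinely depend only on $K$ and $L$ and not on $x,y$, which requires combining the uniform spectral gap on $K$ (Definition \ref{consplit}, the domain of consistent splitting), the uniform Conjugation Lemma bounds \eqref{Theta}, and the uniform lower bound on $|D_L|$ over the compact set $K$ away from the poles of the resolvent — precisely the ingredients assembled in \cite{MaZ3}, so the proof reduces to citing that construction and checking that the half-line boundary term introduces no new difficulty in this low-frequency/compact regime.
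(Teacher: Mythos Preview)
Your proposal is correct and follows essentially the same approach as the paper: the paper does not give a proof of this lemma at all, but simply states it after the remark ``With these preparations, the construction of the Resolvent kernel goes exactly as in the construction performed in \cite{ZH,MaZ3} on the whole line,'' and your outline is precisely that construction adapted to the half-line boundary condition. Your added detail on the jump condition via $\Pi_y^+ + \Pi_y^0 = I$ and the duality matrix $\tilde\CalS$, and on the uniform bound via the Conjugation Lemma and nonvanishing of $D_L$ on compact $K\subset\rho(L)\cap\Lambda$, is exactly what is needed to flesh out the citation.
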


We define also the dual subspaces of solutions of $(L^* - \lambda
^*)\tilde W =0$. We denote growing solutions
\begin{equation}
\tilde{\Phi}^0=
\begin{pmatrix}
\tilde{\phi}^0_1(x;\lambda) & \cdots &
          \tilde{\phi}^0_{k}(x;\lambda)
\end{pmatrix},
\end{equation}
\begin{equation}\label{tildephi+}
\tilde{\Phi}^+=
\begin{pmatrix}
\tilde{\phi}^+_{k+1}(x;\lambda) & \cdots &
          \tilde{\phi}^+_{n+r}(x;\lambda)
\end{pmatrix},
\end{equation} $\tilde{\Phi}:=(\tilde{\Phi}^0,\tilde{\Phi}^+)$ and decaying solutions
\begin{equation}
\tilde{\Psi}^0=
\begin{pmatrix}
\tilde{\psi}^0_1(x;\lambda) & \cdots &
          \tilde{\psi}^+_{k}(x;\lambda)
\end{pmatrix},
\end{equation}
\begin{equation}\label{tildepsi+}
\tilde{\Psi}^+=
\begin{pmatrix}
\tilde{\psi}^+_{k+1}(x;\lambda) & \cdots &
          \tilde{\psi}^+_{n+r}(x;\lambda),
\end{pmatrix}
\end{equation} and $\tilde{\Psi}:=(\tilde{\Psi}^0,\tilde{\Psi}^+)$,
satisfying the relations $$(\tilde \Psi \tilde \Phi)_{0,+}^* \tilde
S (\Psi \Phi)_{0,+} \equiv I.$$

Then, we have
\begin{prop}\label{G-rep1} The resolvent kernel may alternatively be expressed as
 \begin{equation}
  G_\lambda(x,y)=
  \begin{cases}
   (I_n,0)\Phi^+(x;\lambda)M^+(\lambda)\tilde\Psi^{0*}(y;\lambda)(I_n,0)^{tr}\ &x>y,\\
   -(I_n,0)\Phi^0(x;\lambda)M^0(\lambda)\tilde\Psi^{+*}(y;\lambda)(I_n,0 )^{tr}\ &x<y,
                 \end{cases}
 \end{equation}
where
\begin{equation}\label{Mexp}
  M(\lambda):=\text{\rm diag}(M^+(\lambda),M^0(\lambda))=
  \Phi^{-1}(z;\lambda)\bar \CalS^{-1}(z)\tilde\Psi^{-1*}(z;\lambda).
\end{equation}
\end{prop}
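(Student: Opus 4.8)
The plan is to deduce the representation from that of Lemma~\ref{G-rep} by the same duality manipulation used on the whole line in \cite{ZH,MaZ3}, the only genuinely new point being the treatment of the boundary condition at $x=0$. First I would expand the projected solution operators of Lemma~\ref{G-rep}. Since $\Phi=(\Phi^+,\Phi^0)$, one has $\Phi^{-1}(y)\,(\Phi^+(y),\,0)=\diag(I_k,0)$ and $\Phi^{-1}(y)\,(0,\,\Phi^0(y))=\diag(0,I_{n+r-k})$, whence $\CalF^{y\to x}\Pi_y^+=\Phi^+(x)\,R(y)$ and $\CalF^{y\to x}\Pi_y^0=\Phi^0(x)\,R^{c}(y)$, where $R(y)$ and $R^{c}(y)$ denote, respectively, the first $k$ rows and the last $n+r-k$ rows of $\Phi^{-1}(y)$. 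Since the outer factors $(I_n,0)(\,\cdot\,)(I_n,0)^{tr}$ are common to Lemma~\ref{G-rep} and to the claimed formula, it then suffices to verify the matrix identities $R(y)\,\bar\CalS^{-1}(y)=M^+(\lambda)\,\tilde\Psi^{0*}(y)$ and $R^{c}(y)\,\bar\CalS^{-1}(y)=M^0(\lambda)\,\tilde\Psi^{+*}(y)$, with $\bar\CalS$ the symmetrizer appearing in \eqref{Mexp} (the matrix $\tilde\CalS$ of Lemma~\ref{duality}) and $M^+,M^0$ as in \eqref{Mexp}.

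Second, I would show that the matrix $M(\lambda)$ of \eqref{Mexp} is independent of $z$, hence well defined. Writing the first-order system \eqref{firstorderW} as $W'=\mathbb A\,W$ and the adjoint eigenvalue equation as $\tilde W'=\mathbb A^{\dagger}\tilde W$, constancy of $\tilde W^{*}\bar\CalS\,W$ along paired solutions (Lemma~\ref{duality}) forces the Lyapunov-type relation $\bar\CalS'+\bar\CalS\,\mathbb A+(\mathbb A^{\dagger})^{*}\bar\CalS=0$; differentiating $\Phi^{-1}\bar\CalS^{-1}\tilde\Psi^{-1*}$ and inserting this together with $(\Phi^{-1})'=-\Phi^{-1}\mathbb A$ and $(\tilde\Psi^{-1*})'=-(\mathbb A^{\dagger})^{*}\tilde\Psi^{-1*}$, all terms cancel. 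Equivalently $M(\lambda)=(\tilde\Psi^{*}\bar\CalS\,\Phi)^{-1}$, and $\tilde\Psi^{*}\bar\CalS\,\Phi$ is constant in $x$ for the same reason. From $z$-independence one gets $\Phi^{-1}(y)\bar\CalS^{-1}(y)=M(\lambda)\,\tilde\Psi^{*}(y)$; once $M(\lambda)$ is known to be block diagonal, $\diag(M^+,M^0)$ with $M^+$ of size $k$, reading off the top $k$ and bottom $n+r-k$ rows and recalling $\tilde\Psi=(\tilde\Psi^0,\tilde\Psi^+)$ gives precisely the two identities of the previous paragraph.

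Third --- the step requiring the most care --- I would establish block-diagonality of $\tilde\Psi^{*}\bar\CalS\,\Phi$, i.e.\ the vanishing of the off-diagonal blocks pairing the two families labelled ``$+$'' and the two families labelled ``$0$''. Each such block is constant in $x$ by Lemma~\ref{duality}, so it may be evaluated where convenient. The block pairing the solutions that decay at $+\infty$ is evaluated as $x\to+\infty$: the relevant columns of $\Phi$ and of the adjoint family both tend to $0$ while $\bar\CalS\to\bar\CalS_{+}$ remains bounded, so this block is identically $0$. The block pairing the two families singled out by the boundary condition is evaluated at $x=0$: the columns of $\Phi^0$ satisfy the boundary condition of \eqref{Gdef} and those of the corresponding adjoint family satisfy the adjoint boundary condition, and these are complementary with respect to the form $(w,\tilde w)\mapsto\tilde w^{*}\bar\CalS\,w$ --- which is exactly what the normalization $(\tilde\Psi\tilde\Phi)_{0,+}^{*}\bar\CalS\,(\Psi\Phi)_{0,+}\equiv I$ encodes --- so this block vanishes at $x=0$, hence identically. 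The delicate point is verifying this last complementarity separately in the inflow case ($\bar A_{*}>0$, Dirichlet data on the full vector $(u,v)$) and the outflow case ($\bar A_{*}<0$, Dirichlet data on $v$ only), checking via the explicit form of $\bar\CalS$ in Lemma~\ref{duality} that the unconstrained hyperbolic component contributes nothing to the boundary pairing --- the manifestation here of the fact that no boundary condition need be prescribed on the hyperbolic part.

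Finally, combining block-diagonality with the row identities of the first two paragraphs and substituting back into Lemma~\ref{G-rep} yields the asserted representation, with $M(\lambda)=\diag(M^+(\lambda),M^0(\lambda))$ as in \eqref{Mexp}. The principal obstacle is the boundary computation of the third paragraph; the remainder is the linear-algebra bookkeeping familiar from the whole-line construction of \cite{ZH,MaZ3}.
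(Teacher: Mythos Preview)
Your outline is correct and is essentially the standard argument from \cite{ZH,MaZ3}; the paper itself does not give a proof of this proposition, simply stating it as part of the resolvent-kernel construction which ``goes exactly as in the construction performed in \cite{ZH,MaZ3} on the whole line.'' So you are faithfully reconstructing the omitted proof.

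One simplification worth noting: the step you flag as ``the principal obstacle'' --- showing the off-diagonal block $\tilde\Psi^{0*}\tilde\CalS\,\Phi^0=0$ via an explicit boundary computation --- is in fact absorbed into the setup rather than the proof. The normalization $(\tilde\Psi\,\tilde\Phi)_{0,+}^{*}\tilde\CalS\,(\Psi\,\Phi)_{0,+}\equiv I$ stated just before the proposition is the \emph{definition} of the dual families $\tilde\Psi^0,\tilde\Phi^0$ (given the already-chosen $\Psi^0,\Phi^0$), and reading off its upper-right block gives $\tilde\Psi^{0*}\tilde\CalS\,\Phi^0=0$ immediately. What one must actually verify is that such a dual basis exists, i.e.\ that $(\Psi^0,\Phi^0)$ is a genuine basis of the solution space; but this is guaranteed once $\Phi^0$ is chosen complementary to $\Phi^+$ (possible on $\Lambda\cap\rho(L)$ by the Evans condition). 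So the inflow/outflow case analysis you sketch, while correct in spirit, is not needed at this point --- it is precisely what underlies the well-definedness of the normalization, already assumed. The remaining steps (constancy of $M$ via Lemma~\ref{duality}, and the row-extraction algebra) are exactly as you describe.
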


From Proposition \ref{G-rep1}, we obtain the following scattering
decomposition, generalizing the Fourier transform representation in
the constant-coefficient case

\begin{cor}\label{G-rep2} On $\Lambda \cap \rho (L)$,
\begin{equation}\label{G-rep3} G_\lambda(x,y)=
\sum_{j,k}d_{jk}^+\phi_j^+(x;\lambda)\tilde \psi_k^+(y;\lambda)^* +
\sum_k \phi^+_k(x;\lambda)\tilde
\phi_k^+(y;\lambda)^*\end{equation}for $0\le y\le x$, and
\begin{equation}\label{G-rep4} G_\lambda(x,y)=
\sum_{j,k}d_{jk}^0(\lambda)\phi_j^+(x;\lambda)\tilde
\psi_k^+(y;\lambda)^* + \sum_k \psi_k^+(x;\lambda)\tilde
\psi_k^+(y;\lambda)^*\end{equation}for $0\le x\le y$, where
$d_{jk}^{0,+}(\lambda)=\CalO(\lambda^{-K})$ are scalar meromorphic
functions with pole of order $K$ less than or equal to the
order to which the Evans function $D(\lambda)$ vanishes
at $\lambda=0$ (note that $K=0$ under assumption (D)).
\end{cor}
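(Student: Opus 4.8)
The plan is to read off the scattering decomposition \eqref{G-rep3}--\eqref{G-rep4} from Proposition \ref{G-rep1} by expanding the matrix products into rank-one pieces, and then to locate the $\lambda$-singularities of the resulting scalar coefficients by a Cramer's-rule argument. First, for $0\le y\le x$, I would start from $G_\lambda(x,y)=(I_n,0)\Phi^+(x;\lambda)M^+(\lambda)\tilde\Psi^{0*}(y;\lambda)(I_n,0)^{tr}$ and write the product as a sum of rank-one terms, each of the form $[M^+(\lambda)]_{jk}\,\phi_j^+(x;\lambda)$ times a dual solution evaluated at $y$; since $\Phi^+=(\phi_1^+,\dots,\phi_k^+)$ already spans the solutions decaying at $+\infty$, the $x$-dependence appears only through the $\phi_j^+(x;\lambda)$, as asserted. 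I would then split $M^+(\lambda)$ into the contribution of the $\tilde\CalS$-biorthonormalization block $(\tilde\Psi\tilde\Phi)^*\tilde\CalS(\Psi\Phi)\equiv I$ — which carries coefficient exactly $1$ and produces the ``free''/transmitted term $\sum_k\phi_k^+(x)\tilde\phi_k^+(y)^*$ — and the remaining part, whose entries are the reflection coefficients $d_{jk}^+(\lambda)$ multiplying $\phi_j^+(x)\tilde\psi_k^+(y)^*$. The same bookkeeping applied to the $x<y$ formula $G_\lambda(x,y)=-(I_n,0)\Phi^0(x;\lambda)M^0(\lambda)\tilde\Psi^{+*}(y;\lambda)(I_n,0)^{tr}$, after re-expressing the boundary-condition basis $\Phi^0(x)$ along the full set $\{\phi_j^+\}\cup\{\psi_k^+\}$, yields \eqref{G-rep4} with diagonal term $\sum_k\psi_k^+(x)\tilde\psi_k^+(y)^*$ and reflection coefficients $d_{jk}^0(\lambda)$.

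Next I would bound the poles of the $d_{jk}^{0,+}$. Evaluating \eqref{Mexp} at $z=0$ and applying Cramer's rule, every entry of $M(\lambda)=\Phi^{-1}(0;\lambda)\bar\CalS^{-1}(0)\tilde\Psi^{-1*}(0;\lambda)$ is a rational expression in the entries of $\Phi(0;\lambda)$, $\bar\CalS(0)$, and $\tilde\Psi(0;\lambda)$ whose only possible denominators are $\det\Phi(0;\lambda)$ and $\det\tilde\Psi(0;\lambda)$; by \eqref{deq}, $\det\Phi(0;\lambda)=\pm D_L(\lambda)$, and by the duality relation of Lemma \ref{duality} the dual boundary and decaying bases at $x=0$ are linearly dependent precisely when the primal ones are, so $\det\tilde\Psi(0;\lambda)$ equals a nonvanishing analytic function times $D_L(\lambda)$ as well. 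Carrying out the cancellations forced by the $\tilde\CalS$-normalization exactly as in \cite{MaZ3}, each surviving $d_{jk}^{0,+}(\lambda)$ is an analytic function divided by $D_L(\lambda)$, while the diagonal coefficients reduce to the constant $1$; hence each $d_{jk}^{0,+}$ is meromorphic on $\Lambda$ with poles only at zeros of $D_L$, of order there at most the vanishing order of $D_L$.

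Finally I would localize at $\lambda=0$: although $\lambda=0$ lies on the boundary of the domain of consistent splitting, the primal and dual bases $\phi_j^+,\psi_j^+,\tilde\phi_j^+,\tilde\psi_j^+$ extend analytically to a full neighborhood of $\lambda=0$ by the Gap/Conjugation Lemma together with Kato's analytic-basis lemma (Lemma \ref{bases}), just as in the construction of \eqref{span}; so the numerators above are analytic at $\lambda=0$ and $d_{jk}^{0,+}(\lambda)=\cO(\lambda^{-K})$ with $K$ at most the order of vanishing of $D_L$ there. Under (D) there is no zero of $D_L$ in $\Re\lambda\ge 0$, in particular $D_L(0)\ne0$, so $K=0$. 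The hard part will be the verification in the second step that, after all cancellations, the single surviving denominator is exactly $D_L(\lambda)$ — i.e., that the auxiliary complementary bases $\Psi^0,\Psi^+,\tilde\Psi^0,\tilde\Psi^+$ introduce no spurious singularities — which requires the careful Wronskian and duality bookkeeping of \cite{MaZ3}; relative to the whole-line case treated there, the half-line setting changes nothing essential beyond letting the boundary data $\Phi^0,\Psi^0$ play the role formerly played by the data at $-\infty$.
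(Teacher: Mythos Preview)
Your approach is correct and matches the paper's: the proof there reads simply ``Matrix manipulation of expression \eqref{Mexp}, Kramer's rule, and the definition of the Evans function; see \cite{MaZ3},'' which is precisely the route you have sketched in detail. Your identification of the cancellation step (showing the surviving denominator is exactly $D_L(\lambda)$) as the delicate point, deferred to \cite{MaZ3}, is also in line with the paper.
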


\begin{proof}
Matrix manipulation of expression \eqref{Mexp}, Kramer's rule, and the
definition of the Evans function; see \cite{MaZ3}.
\end{proof}

\begin{remark}\label{G-rep-constv} In the constant-coefficient case,
with a choice of common bases $\Psi^{0,+} = \Phi^{+,0}$ at
$0,+\infty$, the above representation \eqref{G-rep2} reduces to the
simple formula \begin{equation}G_\lambda(x,y) = \begin{cases}
   \sum_{j=k+1}^N\phi_j^+(x;\lambda)\tilde \phi_j^{+*}(y;\lambda) &x>y,\\
   -\sum_{j=1}^k\psi_j^+(x;\lambda)\tilde \psi_j^{+*}(y;\lambda) &x<y.
                 \end{cases}
\end{equation}
\end{remark}

\subsection{High frequency estimates}

We now turn to the crucial estimation
of the resolvent kernel in the high-frequency regime $|\lambda|\to
+\infty$,
following the general approach of \cite{MaZ3}.
Define sectors \begin{equation}\label{sectorP}
\Omega_P:=\{\lambda~:~\R\lambda\ge -\theta_1 |\I\lambda| +
\theta_2\}, \quad\theta_j >0.\end{equation}
and
\begin{equation} \Omega:=\{\lambda ~:~-\eta_1 \le \R\lambda\}
\end{equation} with $\eta_1$ sufficiently small such that $\Omega
\setminus B(0,r)$ is compactly contained in the set of consistent
splitting $\Lambda$, for some small $r$ to be chosen later.
Then, we have the following crucial result analogous to
the estimates on the whole line performed in \cite{MaZ3}.

\begin{proposition} \label{prop-high-est} Assume that (H0)-(H3) hold. Then for any $r>0$
and $\eta_1 = \eta_1(r)>0$ chosen sufficiently small such that
$\Omega \setminus B(0,r) \subset \Lambda \cap \rho (L)$. Moreover
for $R>0$ sufficiently large, the following decomposition holds on
$\Omega \setminus B(0,R)$:
\begin{equation}G_\lambda(x,y) = H_\lambda(x,y) + P_\lambda(x,y) +
\Theta_\lambda^H(x,y) + \Theta_\lambda^P(x,y),\end{equation} where
\begin{equation}H_\lambda(x,y) = \begin{cases}
   \chi_{\{A_*>0\}}A_*(x)^{-1}e^{\int_y^x(-\lambda/A_* - \eta_*/A_*)(z)dz}R_*L_*^{tr}&x>y,\\
   \chi_{\{A_*<0\}}A_*(x)^{-1}e^{\int_y^x(-\lambda/A_* - \eta_*/A_*)(z)dz}R_*L_*^{tr}&x<y,
                 \end{cases}
\end{equation} and \begin{equation}\begin{aligned}
\Theta_\lambda^H(x,y) &= \lambda^{-1}B_\lambda(x,y;\lambda) +
\lambda^{-1}(x-y)C_\lambda(x,y;\lambda),\\\Theta_\lambda^P(x,y)&=
\lambda^{-2}D_\lambda(x,y;\lambda)\end{aligned}\end{equation} where
\begin{equation}\label{BC-est}B_\lambda(x,y) =C_\lambda(x,y)= \begin{cases}
   \chi_{\{A_*>0\}}e^{-\int_y^x\lambda/A_*(z)dz}b_*(x,y)&x>y,\\
   \chi_{\{A_*<0\}}e^{-\int_y^x\lambda/A_*(z)dz}b_*(x,y)&x<y,
                 \end{cases}
\end{equation}with \begin{equation} b_*:=e^{\int_y^x(-\eta_*/A_*)(z)dz} =
\cO(e^{-\theta|x-y|}),\end{equation} due to \eqref{dissipative}, and
\begin{equation}\label{D-est}D_\lambda(x,y;\lambda) = \cO(e^{-\theta(1+Re \lambda)|x-y|}+
e^{-\theta |\lambda|^{1/2}|x-y|}),\end{equation} for some uniform
$\theta>0$ independent of $x, y, z,$ each described term separately
analytic in $\lambda$, and $P_\lambda$ is analytic in $\lambda$ on a
(larger) sector $\Omega_P$ as in \eqref{sectorP}, with $\theta_1$
sufficiently small, and $\theta_2$ sufficiently large, satisfying
uniform bounds
\begin{equation}\label{Plamda-est} (\partial /\partial
x)^\alpha(\partial/\partial y)^{\beta} P_\lambda(x,y) =
\cO(|\lambda|^{(|\alpha|+|\beta|-1)/2})
e^{-\theta|\lambda|^{1/2}|x-y|}, \quad \theta>0,\end{equation} for
$|\alpha|+|\beta| \le 2$ and $0\le |\alpha|,|\beta|\le 1.$

Likewise, the following derivative bounds also hold:
\begin{align*}
(\partial/\partial x)\Theta_\lambda(x,y) =&\Big(B_x^0(x,y;\lambda) + (x-y)C_x^0(x,y;\lambda)\Big) + \lambda^{-1}\Big(B_x^1(x,y;\lambda)\\&+(x-y)C_x^1(x,y;\lambda) +(x-y)^2D_x^1(x,y;\lambda)\Big)\\&+\lambda^{-3/2}E_x(x,y;\lambda)
\end{align*}
and
\begin{align*}
(\partial/\partial y)\Theta_\lambda(x,y) =&\Big(B_y^0(x,y;\lambda) + (x-y)C_y^0(x,y;\lambda)\Big) + \lambda^{-1}\Big(B_y^1(x,y;\lambda)\\&+(x-y)C_y^1(x,y;\lambda) +(x-y)^2D_y^1(x,y;\lambda)\Big)\\&+\lambda^{-3/2}E_y(x,y;\lambda)
\end{align*}
where $B_\beta^\alpha$, $C_\beta^\alpha$, and $D_\beta^1$ satisfy bounds of the form \eqref{BC-est}, and $E_\beta$ satisfies a bound of the form \eqref{D-est}.\end{proposition}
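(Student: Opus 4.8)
The plan is to follow the high-frequency ODE analysis of \cite{MaZ3}, adapted to the half-line with boundary. First I would rescale the eigenvalue equation \eqref{firstorderW} in the regime $|\lambda|\to\infty$ by setting $\e:=|\lambda|^{-1/2}$ and $p:=\lambda/|\lambda|$, and perform an asymptotic diagonalization (WKB-type) of the coefficient matrix $\mathbb A(x,\lambda)$. The $2n-1$ modes split into: one genuinely hyperbolic mode whose growth/decay rate is $\sim -\lambda/A_*(x)$, governed to leading order by the reduced transport equation \eqref{pp}; and $2(n-1)$ parabolic modes whose rates are $\sim \pm |\lambda|^{1/2}\mu_j(x)$ for the square-root branches $\mu_j$ of the diffusion matrix $\bar b_2$. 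The key separation estimate is that the real parts of these rates differ by $\gtrsim |\lambda|^{1/2}$, so the gap condition \eqref{gaprel} of the tracking/reduction Lemma \ref{reduction} holds with $\delta(\e)/\eta(\e)\to 0$; applying that lemma block-diagonalizes the flow exactly up to errors controlled by $\delta/\eta$, and identifies the stable subspace at $+\infty$ and the boundary subspace at $x=0$ as convergent perturbations of their leading-order counterparts. The hyperbolic decay piece is what produces $H_\lambda$ once one integrates the scalar transport rate and extracts the dissipation coefficient $\eta_*$ of \eqref{eta} as the first-order correction term in the WKB expansion (this is exactly the computation giving $D_*(x)$ and $A_*$ from \eqref{firstorderW}).

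Next I would insert these diagonalized bases into the resolvent representation of Lemma \ref{G-rep} / Proposition \ref{G-rep1}. The dominant contribution for the hyperbolic-hyperbolic interaction gives $H_\lambda$ with the exponential factor $e^{\int_y^x(-\lambda/A_*-\eta_*/A_*)}$, which is the leading scalar transport kernel, restricted to $x>y$ in the inflow case ($A_*>0$) and $x<y$ in the outflow case ($A_*<0$) exactly because of the boundary condition \eqref{Gdef} and the fact that no boundary condition is imposed on the hyperbolic variable — this is the ``no reflected signal'' point emphasized in Section \ref{discussion}. The purely parabolic-parabolic interaction gives $P_\lambda$, for which the bounds \eqref{Plamda-est} follow by differentiating the $e^{-\theta|\lambda|^{1/2}|x-y|}$-type kernels and counting powers of $|\lambda|^{1/2}$ per derivative, precisely as in the strictly parabolic analysis. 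The cross terms and the subleading corrections from the tracking lemma's off-diagonal blocks $\Phi_1^\e,\Phi_2^\e=\cO(\delta/\eta)$ produce $\Theta_\lambda^H$ and $\Theta_\lambda^P$; the factors $\lambda^{-1}$ and $\lambda^{-2}$ and the polynomial $(x-y)$ weights come from integrating the correction terms against the transport kernel (each integration of $\alpha^\e(x)$-type error over a characteristic path costs a power of $\lambda^{-1}$ and can generate a linear factor $(x-y)$ from resonant terms), while the exponential localization in \eqref{BC-est}, \eqref{D-est} comes from the dissipativity bounds \eqref{dissipative} together with the $|\lambda|^{1/2}$-spectral gap of the parabolic modes.

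The derivative bounds at the end are obtained the same way: differentiating $\CalF^{y\to x}$ in $x$ or $y$ brings down the coefficient matrix $\mathbb A$, which in the $u$-variable is $\cO(1)$ (this is the origin of the $B_\beta^0+(x-y)C_\beta^0$ piece with \emph{no} gain in $\lambda$ — a manifestation of the loss of one derivative relative to the parabolic case), while in the $v$-variable it is $\cO(|\lambda|^{1/2})$, matching the $\lambda^{-3/2}E_\beta$ residual once combined with the $\Theta_\lambda^P$-type terms. The analyticity of each separate term is inherited from analyticity of the WKB symbols and of the conjugating transformations $P_+$ (Conjugation Lemma) and $\Phi_j^\e$ (Reduction Lemma) in $\lambda$ on the relevant sectors, and one checks that $P_\lambda$ extends analytically to the wider sector $\Omega_P$ because the parabolic modes remain spectrally separated there provided $\theta_1$ is small and $\theta_2$ large.

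The hard part will be the genuinely hyperbolic mode: unlike in \cite{YZ}, the rate $-\lambda/A_*(x)$ is \emph{linear} in $\lambda$ rather than $\pm\sqrt{\lambda}$, so the WKB expansion must be carried to sufficient order to extract $\eta_*$ cleanly and to verify that the correction terms are genuinely $\cO(\lambda^{-1})$ and $\cO(\lambda^{-2})$ with at most the displayed $(x-y)$ and $(x-y)^2$ polynomial weights — in particular one must rule out secular growth beyond quadratic order and show the boundary contributions do not reinstate a reflected hyperbolic signal. Controlling the interaction between this singular mode and the parabolic modes in the block-diagonalization, i.e. verifying \eqref{deltaeta}--\eqref{gaprel} uniformly down to $x=0$ including the boundary matrix $\mathbb B$, is where the ``considerable care in the high-frequency analysis'' advertised in Section \ref{discussion} is spent; I would handle it by a careful choice of coordinates (as in Remark \ref{alphaeps}, absorbing the $\eta_*/A_*$ factor into an exponential weight so that $\alpha^\e\equiv 0$) followed by the contraction-mapping argument of Appendix C of \cite{MaZ3}.
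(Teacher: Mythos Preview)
Your proposal is correct and follows essentially the same route as the paper: rescale, block-diagonalize via the iterative procedure and the reduction lemma of \cite{MaZ3} into a scalar hyperbolic mode and a $2r\times 2r$ parabolic block, then read off $H_\lambda$ and $P_\lambda$ from the representation of Lemma \ref{G-rep} after explicit computation of the coordinate-change matrices. One redirection: the genuinely new boundary work is \emph{not} in verifying the gap condition \eqref{deltaeta}--\eqref{gaprel} down to $x=0$ (the diagonalization carries over from \cite{MaZ3} essentially unchanged), but in computing the boundary-satisfying basis $\Phi^0$ in the diagonalized $\cZ$-coordinates --- expanding $\Phi^0$ in the modes $\phi^{h+},\Phi^{p+},\Psi^{p+}$ with coefficients $e(\lambda),E(\lambda)$ fixed by the boundary data and showing these are uniformly bounded --- which is what concretely rules out a reflected hyperbolic signal and yields the projections $\Pi_\cZ^{0,+}$ in usable form; the $\lambda^{-1}$ and $\lambda^{-1/2}$ factors in the error terms then fall out of the matrix algebra of $(I_n,0)\cQ^{-1}\cT$ and $\cT^{-1}\cQ\tilde\CalS^{-1}(I_n,0)^{tr}$ rather than from integrating corrections along characteristics.
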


\begin{proof}
We shall follow closely the argument in \cite{MaZ3},
with the new feature of boundary treatments,
or estimates of $\Phi^0,\Psi^0$.
Writing the associated eigenvalue equation $LU-\lambda U =0$ in the
form of a first-order system as follows: $W:=(u,v,z)\in \CC^{2n-1}$
with $z:=b_1 u' + b_2 v'$, and
\begin{align}\label{firstorderWe}\notag u' &= A^{-1}_* (-A_{12}b_2^{-1}z - (A'_{11}+\lambda)u -
A'_{12}v),\\ v'&= b_2^{-1}z - b_2^{-1}b_1 u',\\ z'&= (A_{21} -
A_{22}b_2^{-1}b_1)u' + A_{22}b_2^{-1}z + A_{21}'u + (A_{22}'
+\lambda)v \notag\end{align} or \begin{equation} W' = A
W.\end{equation}

Recall from Lemma \ref{G-rep} that we have the the representation
\begin{equation}\label{repW}
G_\lambda (x,y)=
\begin{cases}
   (I_n, 0)\CalF_W^{y\rightarrow x}\Pi_W^+ (y)\tilde
   S^{-1}(y)(I_n,0)^{tr},\quad & for \quad x>y,\\
-(I_n, 0)\CalF_W^{y\rightarrow x}\Pi_W^0(y) \tilde
   S^{-1}(y)(I_n,0)^{tr},\quad &for \quad x<y.
\end{cases}
\end{equation}

We shall find it more convenient to use the ``local'' coordinates
$\tilde u: = A_* u, \tilde v := b_1 u + b_2 v$. yielding from
\eqref{firstorderW}:
\begin{align}\label{firstorderY1}\notag \tilde u_x &= -\lambda A^{-1}_* \tilde u - (A_{12}b_2^{-1}\tilde v)_x \\ (\tilde v_x)_x & = \Big[
((A_{21} - A_{22}b_2^{-1}b_1 + b_2
\partial_x(b_2^{-1}b_1))A_*^{-1}\tilde u)_x\\&\quad + ((A_{22}+ \partial_x(b_2)b_2^{-1})\tilde v)_x + \lambda b_2^{-1}b_1 A_*^{-1}\tilde u + \lambda b_2^{-1}\tilde v\Big]\notag.\end{align}

Following standard procedure (e.g., \cite{AGJ,GZ,ZH,MaZ3}),
performing the rescaling
\begin{equation}\tilde x: = |\lambda| x, \quad \tilde \lambda: = \lambda / |\lambda|, \end{equation}
and changing coordinates $W \mapsto Y = \cQ W$, where
\begin{equation}Y = (\tilde u, \tilde v, \tilde v_x)^{tr} = (A_* u, b_1 u
+ b_2 v, (b_1 u + b_2 v)_x)^{tr},\end{equation} \begin{equation} \cQ
=\begin{pmatrix} A_* & 0 & 0\\b_1 & b_2 &
0\\|\lambda|^{-1}\partial_x b_1 & |\lambda|^{-1}\partial_x b_2 &
|\lambda|^{-1}I_r,
\end{pmatrix}
\end{equation}
and \begin{equation} \cQ^{-1} =\begin{pmatrix} A_*^{-1} & 0 &
0\\-b_2^{-1}b_1 A_*^{-1}& b_2^{-1} &
0\\-|\lambda|b_2\partial_x(b_2^{-1} b_1) A_*^{-1} &
-|\lambda|\partial_x (b_2)b_2^{-1} & |\lambda|I_r,
\end{pmatrix}
\end{equation} we obtain the first
order equations
\begin{equation}\label{firstorderY} Y' = A(\tilde x, |\lambda|^{-1})
Y, \quad Y:=(\tilde u, \tilde v, \tilde v')^{tr}, \quad ' :=
\partial_{\tilde x}\end{equation}
where \begin{equation}A(\tilde x, |\lambda|^{-1}) = A_0(\tilde x) +
|\lambda|^{-1}A_1(\tilde x) + \cO(|\lambda|^{-2}),\end{equation}
with \begin{equation} \begin{aligned} A_0(\tilde x) &=
\begin{pmatrix} -\tilde \lambda A_*^{-1} & 0 & - A_{12}b_2^{-1}
\\0&0&I_r
\\0&0&0\end{pmatrix} \\ A_1(\tilde x)&= \begin{pmatrix} 0 & -\partial_x(A_{12}b_2^{-1}) & 0
\\0&0&0
\\-\tilde\lambda d_*A_*^{-2}&\tilde \lambda
b_2^{-1}&e_*b_2^{-1}\end{pmatrix}\end{aligned}\end{equation}
\begin{equation}\begin{aligned}d_*&:= A_{21} - A_{22} b_2^{-1}b_1 - b_2^{-1}b_1 A_* + b_2 \partial_x(b_2^{-1}b_1),\\e_*&:=A_{22} + d_* A_*^{-1}A_{12} + \partial_x(b_2)
.\end{aligned}
\end{equation}

We will carry out the details of the lower-order estimates in
Proposition \ref{prop-high-est}, leaving high-order estimates and
derivative bounds as brief remarks at the end. First, observe that
the representation \eqref{repW} becomes
\begin{equation}\label{repY}
G_\lambda (x,y)=
\begin{cases}
   (I_n, 0)\cQ^{-1}\CalF_Y^{y\rightarrow x}\Pi_Y^+ (y)\cQ\tilde
   S^{-1}(y)(I_n,0)^{tr},\quad & for \quad x>y,\\
-(I_n, 0)\cQ^{-1}\CalF^{y\rightarrow x}_Y\Pi_Y^0 (y)\cQ \tilde
   S^{-1}(y)(I_n,0)^{tr},\quad &for \quad x<y
\end{cases}
\end{equation}
where $\Pi_Y^{0,+}$ and $\cF_Y^{y\to x}$ denote projections and
flows in $Y-$coordinates.

\subsubsection{Initial diagonalization.}
Applying the formal iterative diagonalization
procedure described in \cite[Proposition 3.12]{MaZ3}, one obtains the
approximately block-diagonalized system
\begin{align} Z' &= D(\tilde x, |\lambda|^{-1})Z, \quad TZ:=Y, \quad
D:=T^{-1}AT, \\T(\tilde x,|\lambda|^{-1})&= T_0(\tilde x) +
|\lambda|^{-1}T_1(\tilde x) + \cdots +|\lambda|^{-3}T_3(\tilde x) \\
D(\tilde x, |\lambda|^{-1})&= D_0(\tilde x) +
|\lambda|^{-1}D_1(\tilde x) + \cdots + D_3(\tilde
x)|\lambda|^{-3}+\cO(|\lambda|^{-4}),
\end{align} where without loss of generality (since $T_0$ is uniquely determined up to a constant linear coordinate change)
\begin{equation}
T_0:=\begin{pmatrix} 1 & 0 &-\tilde \lambda ^{-1}A_* A_{12}b_2^{-1}
\\0&I_r&0 \\0&0&I_r\end{pmatrix} , \quad T_0^{-1}=\begin{pmatrix} 1 & 0 &\tilde \lambda^{-1} A_* A_{12}b_2^{-1}
\\0&I_r&0 \\0&0&I_r\end{pmatrix}\end{equation}
and \begin{equation} D_0:=\begin{pmatrix} -\tilde \lambda A_*^{-1} &
0 &0\\0&0&I_r
\\0&0&0\end{pmatrix} , \quad D_1:=\begin{pmatrix} -\eta_* A_*^{-1} & 0
&0
\\0&0&0 \\0&\tilde\lambda b_2^{-1} & *\end{pmatrix}\end{equation}
with $\eta_*$ as defined in \eqref{eta};
see Proposition 3.12 \cite{MaZ3}.
(Here, the
simple block upper-triangular form of $A_0$ has been used to deduce the above
simple form of $D_0$, $D_1$.)

\subsubsection{The parabolic block.} At this point, we have approximately
diagonalized our system
into a $1\times 1$ hyperbolic block with eigenvalue $\tilde \mu = -\tilde \lambda / A_*$ of $A_0$,
and a $2r\times 2r$ parabolic block
\begin{equation} \label{Zpeqs}Z_p' = N Z_p\end{equation} with
\begin{equation}N:= \begin{pmatrix} 0&I_r \\0&0\end{pmatrix}+
|\lambda|^{-1} \begin{pmatrix} 0&0 \\\tilde\lambda
b_2^{-1}&*\end{pmatrix} + \cO(|\lambda|^{-2}).\end{equation}

Balancing this matrix $N$ by transformations $\cB:
=diag\{I_r,|\lambda|^{-1/2}I_r\}$  we get
\begin{equation}\tilde
M:=\cB^{-1}N\cB= |\lambda|^{-1/2} \tilde M_1 +
\cO(|\lambda|^{-1}),\quad\tilde M_1:=\begin{pmatrix} 0&I_r
\\\tilde \lambda b_2^{-1}&0\end{pmatrix}\end{equation}

Observe that $\sigma(\tilde M_1) = \pm \sqrt{\sigma(\tilde \lambda
b_2^{-1})}$ has a uniform spectral gap of order one. Thus, there is
a well-conditioned transformation $S = S(\tilde M_1)$ depending
continuously on $\tilde M_1$ such that
\begin{equation} \hat M_1 := S^{-1}\tilde M_1 S =\diag\{\hat M^-,\hat M^+\}\end{equation}
with $\hat M_1^{\pm}$ uniformly positive/negative definite,
respectively. Applying this coordinate change, and noting that the
``dynamic error'' $S^{-1}\partial_{\tilde x}S$ is of order
$\partial_{\tilde x} \tilde M_1 = \cO(|\lambda|^{-1})$, we obtain
the formal expansion
\begin{equation} \hat M(\tilde x, |\lambda|^{-1})  =
|\lambda|^{-1/2} \diag \{\hat M_1^-, \hat M_1^+\} +
\cO(|\lambda|^{-1}).\end{equation}

Finally, on sector $\Omega_P$, blocks
$|\lambda|^{-1/2}\hat M_1^\pm$ are exponentially
separated to order $|\lambda|^{-1/2}$. Thus, by the {\it reduction lemma},
Lemma \ref{reduction},
there is a further transformation $\hat S:=I_{2r} + \cO(|\lambda|^{-1/2})$
converting $\hat M$ to the fully
diagonalized form
\begin{align*}
M (\tilde x, |\lambda|^{-1})&:=
|\lambda|^{-1/2}\hat S^{-1}\Big(\hat M_1 +\cO(|\lambda|^{-1/2})\Big)\hat S \\
&=
\cO(|\lambda|^{-1/2})\diag\{ M^-_1,M^+_1\}\end{align*}
where
$M_1^\pm = \hat M_1^\pm + \cO(|\lambda|^{-1/2})$ are still uniformly
positive/negative definite.


 In summary, changing coordinates \begin{equation} \cB
S \hat S \hat Z_p = Z_p,\end{equation} \eqref{Zpeqs} yields
\begin{equation}\label{Zhateqs} \hat Z_p ' =
\cO(|\lambda|^{-1/2})\begin{pmatrix} M^-_1&0
\\0&M^+_1\end{pmatrix} \hat Z_p + \cO(|\lambda|^{-3/2})
\end{equation}

Therefore the transformation
\begin{equation}\cT: = (T_0 + |\lambda|^{-1}T_1)\begin{pmatrix} 1&0
\\0&\cB S \hat S \end{pmatrix} \end{equation}
converts equations \eqref{firstorderY} to the following:
\begin{equation}\label{firstordercZ} \begin{aligned}\zeta' &= -(\tilde \lambda A_*^{-1} + |\lambda|^{-1}\eta_*A_*^{-1})\zeta + \cO(|\lambda|^{-2})
\\\rho_\pm' &= |\lambda|^{-1/2}M_1^\pm \rho_\pm + \cO(|\lambda|^{-3/2})\end{aligned}\end{equation}
by relation
\begin{equation}\cT \cZ = Y , \quad \cZ = (\zeta,\rho_-,\rho_+)^{tr}.\end{equation}

Then, we have the the representation
\begin{equation}\label{repcZ}
G_\lambda (x,y)=
\begin{cases}
   (I_n, 0)\cQ^{-1}\cT\CalF_{\cZ}^{y\rightarrow x}\Pi_{\cZ}^+ (y)\cT^{-1}\cQ\tilde
   S^{-1}(y)(I_n,0)^{tr}, & for \quad x>y,\\
-(I_n, 0)\cQ^{-1}\cT\CalF^{y\rightarrow x}_{\cZ}\Pi_{\cZ}^0
(y)\cT^{-1}\cQ \tilde
   S^{-1}(y)(I_n,0)^{tr},&for \quad x<y,
\end{cases}
\end{equation} thanks to the fact that \begin{equation} \CalF_{Y}^{y\rightarrow x} = \cT \CalF_{\cZ}^{y\rightarrow x} \cT^{-1}, \quad \Pi_{Y}^+= \cT\Pi_{\cZ}^+\cT^{-1}.\end{equation}

Computing, we have
\begin{align*}\cT=\begin{pmatrix} 1& |\lambda|^{-1/2}&|\lambda|^{-1/2}\\0&\cO(1)&\cO(1)\\0&|\lambda|^{-1/2}&|\lambda|^{-1/2}\end{pmatrix}
\quad\cT^{-1}= \begin{pmatrix} 1& 0&\tilde\lambda^{-1}A_*
A_{12}b_2^{-1}\\0&\cO(1)&|\lambda|^{1/2}\\0&\cO(1)&|\lambda|^{1/2}\end{pmatrix}\end{align*}
and
\begin{equation}(I_n,0)\cQ^{-1}
= \begin{pmatrix} A_*^{-1}&0&0
\\-b_2^{-1}b_1 A_*^{-1}& b_2^{-1} &
0\end{pmatrix}\end{equation}\begin{equation}(I_n,0)\cQ^{-1}\cT =
\begin{pmatrix} A_*^{-1}&\cO(|\lambda|^{-1/2})&\cO(|\lambda|^{-1/2})
\\-b_2^{-1}b_1A_*^{-1}&\cO(1)&\cO(1)\end{pmatrix}\end{equation}
 and \begin{equation}\cQ
\tilde S^{-1}(I_n,0)^{tr} = \begin{pmatrix} -1&0
\\0&0\\|\lambda|^{-1}&|\lambda|^{-1}I_r\end{pmatrix}\end{equation}
\begin{equation}\cT^{-1}\cQ \tilde S^{-1}(I_n,0)^{tr} =
\begin{pmatrix} -1+|\lambda|^{-1}&\cO(|\lambda|^{-1})
\\\cO(|\lambda|^{-1/2})&\cO(|\lambda|^{-1/2})\\\cO(|\lambda|^{-1/2})&\cO(|\lambda|^{-1/2})\end{pmatrix}\end{equation}
Therefore now we are ready to estimate $\CalF_{\cZ}^{y\rightarrow
x}\Pi_{\cZ}^+ $ and $\CalF_{\cZ}^{y\rightarrow x}\Pi_{\cZ}^+$.

\subsubsection{Estimates on projections and solution operators} We shall give estimates on the
projections: \begin{equation} \Pi_{\cZ}^+ = (\Phi^+,
0)(\Phi^+,\Phi^0)^{-1}, \quad  \Pi_{\cZ}^0 =
(0,\Phi^0)(\Phi^+,\Phi^0)^{-1}\end{equation} and the solution
operators:
\begin{equation}
\CalF_{\cZ}^{y\rightarrow x} =
(\Phi^+(x),\Phi^0(x))(\Phi^+(y),\Phi^0(y))^{-1}.
\end{equation}

First, let $\Phi^{p+} / \Psi^{p+}$ be the decaying/growing basis
solutions of
\begin{equation} \label{para-eqsZ}\rho_-' =
|\lambda|^{-1/2}M_1^-\rho_-\quad \mbox{and}\quad \rho_+' =
|\lambda|^{-1/2}M_1^+\rho_+ \end{equation} and $\phi^{h+} /
\psi^{h+}$ be the decaying/growing basis solutions of
\begin{equation}\label{hyp-eqZ} \zeta' = -(\tilde
\lambda A_*^{-1} + |\lambda|^{-1}\eta_*A_*^{-1})\zeta.
\end{equation}

\begin{lemma}{\em [Inflow case]} For the inflow case $A_*>0$, we obtain
\begin{equation} \Pi_{\cZ}^+ =
\begin{pmatrix} 1& 0 &
-|\lambda|^{-1/2}\phi^{h+}e(\lambda){\Psi^{p+}}^{-1}
\\0& I_r &-\Phi^{p+}E(\lambda){\Psi^{p+}}^{-1}\\0&0&0\end{pmatrix}\end{equation}
\begin{equation}
\Pi_{\cZ}^0 =
\begin{pmatrix} 0& 0 &
|\lambda|^{-1/2}\phi^{h+}e(\lambda){\Psi^{p+}}^{-1}
\\0& 0 &\Phi^{p+}E(\lambda){\Psi^{p+}}^{-1}\\0&0&I_r\end{pmatrix}\end{equation}
with bounded functions $e(\lambda),E(\lambda)$, and
\begin{equation}
\CalF_{\cZ}^{y\rightarrow x} =
\begin{pmatrix}\phi^{h+}(x)\phi^{h+}(y)^{-1} & 0 & 0\\0&\Phi^{p+}(x)\Phi^{p+}(y)^{-1}&0\\0&0&\Psi^{p+}(x)\Psi^{p+}(y)^{-1}\end{pmatrix}
\end{equation}
\end{lemma}

\begin{proof}
We have the decaying basis solution in $\cZ$-coordinates of
the first order equations \eqref{firstordercZ}
\begin{equation} \Phi^+ =
\begin{pmatrix} \phi^{h+} & 0 \\0& \Phi^{p+}
\\0&0\end{pmatrix} + \cO(|\lambda|^{-1}).\end{equation}

Since $\Phi^+$ and $\Psi^+$ (exactly $\Psi^{p+}$) form a basis
solution, we can write
\begin{equation}\label{Phi-0} \Phi^0(x) = e (\lambda)\begin{pmatrix} \phi^{h+}
\\0
\\0\end{pmatrix} + \begin{pmatrix} 0\\\Phi^{p+}(x)E(\lambda)
\\0\end{pmatrix} +
\begin{pmatrix} 0\\0
\\\Psi^{p+}(x)F(\lambda)\end{pmatrix}\end{equation}

Now since $\{\psi_j^{p+}\}_j$ forms a basis, we can take
$\{\psi_j^{p+}(0)\}$ to be the analytic basis for $Y$ at $x=0$. Also
as we recall that $\cZ = \cT^{-1} Y$, we compute
\begin{equation}\phi^0_{j|_{x=0}}  = \cT^{-1}\begin{pmatrix}
0\\0\\\psi_j^{p+}(0)\end{pmatrix} = \begin{pmatrix}
\cO(1)\\|\lambda|^{1/2}\psi_j^{p+}(0)\\|\lambda|^{1/2}\psi_j^{p+}(0)\end{pmatrix}\end{equation}
This and \eqref{Phi-0} yield
\begin{equation} \Phi^0(x) = \begin{pmatrix} e(\lambda)\phi^{h+}(x)
\\|\lambda|^{1/2}\Phi^{p+}(x)E(\lambda)
\\|\lambda|^{1/2}\Psi^{p+}(x)\end{pmatrix} +\cO(|\lambda|^{-1/2})\end{equation} where \begin{equation}E(\lambda) =
(E_1(\lambda),\dots, E_r(\lambda))^{tr},\quad E_j (\lambda)=
\psi_j^{p+}(0,\lambda) \Phi^{p+}(0,\lambda)^{-1}\end{equation} and
$e(\lambda),E_j(\lambda) \in \RR^r$ are bounded functions in
$\lambda$. Therefore computing, we get
\begin{equation}(\Phi^+,\Phi^0)^{-1}
=\begin{pmatrix} {\phi^{h+}}^{-1} & 0 &
-|\lambda|^{-1/2}e(\lambda){\Psi^{p+}}^{-1}
\\0& {\Phi^{p+}}^{-1} &-E(\lambda){\Psi^{p+}}^{-1}\\0&0&|\lambda|^{-1/2}{\Psi^{p+}}^{-1}\end{pmatrix}
\end{equation}
and hence straightforward computations give the lemma.\end{proof}


\begin{lemma}\label{lem-proj-out}{\em [Outflow case]}
For the outflow case $A_*<0$, we obtain
\begin{equation}
\Pi_{\cZ}^+ = \begin{pmatrix} 0& 0 & 0
\\0& I_r &-\Phi^{p+}E{\Psi^{p+}}^{-1}\\0&0&0\end{pmatrix},
\quad\Pi_{\cZ}^0 =\begin{pmatrix}1 & 0&0
\\0& 0
&\Phi^{p+}E{\Psi^{p+}}^{-1}
\\0& 0 &I_r\end{pmatrix},
\end{equation}
where $E(\lambda)$ is a bounded function in $\lambda$ determined
below. Moreover,
\begin{equation}
\CalF_{\cZ}^{y\rightarrow x} =
\begin{pmatrix}\psi^{h+}(x)\psi^{h+}(y)^{-1} & 0 & 0\\0&\Phi^{p+}(x)\Phi^{p+}(y)^{-1}&0\\0&0&\Psi^{p+}(x)\Psi^{p+}(y)^{-1}\end{pmatrix}
\end{equation}
\end{lemma}

\begin{proof} Similarly, we have $\Phi^+ = \Phi^{p+}$ and $\Phi^0 =
(\phi^{h0},\Phi^{p0})$ where we can write \begin{equation} \Phi^0(x)
=
\begin{pmatrix} 0\\\Phi^{p+}(x) E(\lambda)
\\0\end{pmatrix} + e
(\lambda)\begin{pmatrix} \psi^{h+} \\0
\\0\end{pmatrix}+
\begin{pmatrix} 0\\0
\\\Psi^{p+}(x)F(\lambda)\end{pmatrix}.\end{equation}
%
%

As before, using the form of the linearized boundary conditions
\eqref{outBC-U},
we can take
\begin{equation}\phi^{p0}_{j|_{x=0}}  = \cT^{-1}\begin{pmatrix}
0\\0\\\psi_j^{p+}(0)\end{pmatrix} =
\begin{pmatrix}
\cO(1)\\|\lambda|^{1/2}\psi_j^{p+}(0)\\|\lambda|^{1/2}\psi_j^{p+}(0)\end{pmatrix}\end{equation}
and thus\begin{equation} \Phi^{p0}(x) = \begin{pmatrix}
e(\lambda)\psi^{h+}(x)
\\|\lambda|^{1/2}\Phi^{p+}(x)E(\lambda)
\\|\lambda|^{1/2}\Psi^{p+}(x)\end{pmatrix}\end{equation} with bounded functions $e(\lambda)$ and $ E_j (\lambda)=
\psi_j^{p+}(0,\lambda) \Phi^{p+}(0,\lambda)^{-1}$.

Similarly, we take \begin{align*}\phi^{h0}_{|_{x=0}}
&=\cT^{-1}\begin{pmatrix}1\\0\\0\end{pmatrix}=
\begin{pmatrix}
1\\0\\0\end{pmatrix}\end{align*} and thus
\begin{equation} \phi^{h0}(x) = \begin{pmatrix} \psi^{h+}(x)
\\0
\\0\end{pmatrix} \end{equation}

Putting together and computing, we
obtain\begin{equation}(\Phi^+,\Phi^0) =
\begin{pmatrix} 0 &\psi^{h+}& e(\lambda)\psi^{h+}
\\\Phi^{p+}& 0&|\lambda|^{1/2}\Phi^{p+}E(\lambda)\\0&0&|\lambda|^{1/2}\Psi^{p+}\end{pmatrix}
\end{equation}and
\begin{equation}(\Phi^+,\Phi^0)^{-1} = \begin{pmatrix}
0 &{\Phi^{p+}}^{-1} & -E(\lambda){\Psi^{p+}}^{-1}\\{\psi^{h+}}^{-1}&
0
&-|\lambda|^{-1/2}e(\lambda){\Psi^{p+}}^{-1}\\
0&0&|\lambda|^{-1/2}{\Psi^{p+}}^{-1}\end{pmatrix}
\end{equation}
Direct computations yield the lemma. \end{proof}




\subsubsection{Estimates on $G_\lambda$: Inflow case $A_*>0$.} Now we are ready to combine all above estimates to give the bounds on resolvent kernel
$G_\lambda$. We shall work in detail for the case $x>y$. Similar
estimates can be easily obtained for $x<y$. First decompose the
projection as $\Pi_{\cZ}^+ =\Pi_{\cZ}^{h+}+\Pi_{\cZ}^{p+}$ where
\begin{equation} \begin{aligned}&\Pi_{\cZ}^{h+}=\begin{pmatrix} 1& 0 &
-|\lambda|^{-1/2}\phi^{h+}e(\lambda){\Psi^{p+}}^{-1}
\\0& 0 &0\\0&0&0\end{pmatrix}\\&\Pi_{\cZ}^{p+}=\begin{pmatrix} 0& 0 &
0\\0& I_r
&-\Phi^{p+}E(\lambda){\Psi^{p+}}^{-1}\\0&0&I_r\end{pmatrix}
\end{aligned}
\end{equation}

Hence
\begin{align*}H_\lambda(x,y) &=(I_n, 0)\cQ^{-1}\cT\CalF_{\cZ}^{y\rightarrow x}\Pi_{\cZ}^{h+} (y)\cT^{-1}\cQ\tilde
   S^{-1}(y)(I_n,0)^{tr}\\&=\phi^{h+}(x)\phi^{h+}(y)^{-1}\begin{pmatrix}(-1+\cO(|\lambda|^{-1}))A_*^{-1} & \cO(|\lambda|^{-1})A_*^{-1}
   \\(1+\cO(|\lambda|^{-1}))b_2^{-1}b_1A_*^{-1}&\cO(|\lambda|^{-1})b_2^{-1}b_1A_*^{-1}\end{pmatrix}
   \\&=\phi^{h+}(x)\phi^{h+}(y)^{-1}\begin{pmatrix}-A_*^{-1}(x) & 0
   \\b_2^{-1}b_1A_*^{-1}(x)&0\end{pmatrix}+\cO(|\lambda|^{-1})\phi^{h+}(x)\phi^{h+}(y)^{-1},
   \\&=\phi^{h+}(x)\phi^{h+}(y)^{-1}R_*L_*^{tr}+\cO(|\lambda|^{-1})\phi^{h+}(x)\phi^{h+}(y)^{-1},\end{align*}
recalling that $\phi^{h+}(x)\phi^{h+}(y)^{-1}$ is the solution
operator of hyperbolic equation in \eqref{hyp-eqZ} and thus
satisfies
\begin{equation} \phi^{h+}(x)\phi^{h+}(y)^{-1} =e^{\int_{\tilde y}^{\tilde x}(-1/A_* - |\lambda|^{-1}\eta_*/A_*)(z)dz}=
e^{\int_y^x(-\lambda/A_* - \eta_*/A_*)(z)dz}.\end{equation}

At the same time, computing $P_\lambda(x,y)$, we obtain
\begin{align*}P_\lambda(x,y) &=(I_n, 0)\cQ^{-1}\cT\CalF_{\cZ}^{y\rightarrow x}\Pi_{\cZ}^{p+} (y)\cT^{-1}\cQ\tilde
   S^{-1}(y)(I_n,0)^{tr}\\&=\cO(|\lambda|^{-1/2})\Phi^{p+}(x)\Phi^{p+}(y)^{-1}\end{align*}
recalling that $\Phi^{p+}(x)\Phi^{p+}(y)^{-1}$ is the (stable)
solution operator of parabolic equation \eqref{para-eqsZ}, with
$M_1^{-}$ uniformly negative definite, and thus we have an obvious
estimate
\begin{equation} |\Phi^{p+}(x)\Phi^{p+}(y)^{-1}| \le
Ce^{-\theta|\lambda|^{-1/2}(\tilde x-\tilde y)}\le
Ce^{-\theta|\lambda|^{1/2}(x- y)}.\end{equation}

We therefore obtain \begin{equation} P_\lambda(x,y) =
\cO(|\lambda|^{-1/2})e^{-\theta|\lambda|^{1/2}(x-y)}.\end{equation}

\subsubsection{Estimates on $G_\lambda$: Outflow
case $A_*<0$.} Again as above, we shall work in detail for the case
$x>y$. Similar estimates can be easily obtained for $x<y$.
Estimates in Lemma \ref{lem-proj-out} yield
\begin{equation}
\CalF_{\cZ}^{y\rightarrow x} \Pi_{\cZ}^+(y) =
\begin{pmatrix}0& 0 & 0\\ 0&\Phi^{p+}(x)\Phi^{p+}(y)^{-1}&-\Phi^{p+}(x)E(\lambda)
\Psi^{p+}(y)^{-1}\\0&0&0\end{pmatrix}
\end{equation}where $\Phi^{p+}(x)E(\lambda) \Psi^{p+}(y)^{-1} \le C \Phi^{p+}(x)
\Phi^{p+}(y)^{-1}$. Observe that $\Pi_{\cZ}^{h+} \equiv 0$.
Therefore, $H_\lambda(x,y)=0$ and
\begin{align*}P_\lambda(x,y) &=(I_n,
0)\cQ^{-1}\cT\CalF_{\cZ}^{y\rightarrow x}\Pi_{\cZ}^{p+}
(y)\cT^{-1}\cQ\tilde
   S^{-1}(y)(I_n,0)^{tr}\\&=\Phi^{p+}(x)\Phi^{p+}(y)^{-1}\begin{pmatrix}\cO(|\lambda|^{-1})&\cO(|\lambda|^{-1})\\\cO(|\lambda|^{-1/2})&\cO(|\lambda|^{-1/2})\end{pmatrix}
\\&\le C|\lambda|^{-1/2}e^{-\theta |\lambda|^{1/2}(x-y)}\end{align*}
We thus complete the proof of estimates $H_\lambda$ and of
$P_\lambda$ appearing in Proposition \ref{prop-high-est}.


\subsubsection{Derivative estimates.} Derivative estimates now follow in a straightforward fashion, by differentiation of
\eqref{repcZ}, noting from the approximately decoupled equations
that differentiation of the flow brings down a factor (to absorbable
error) of $\lambda$ in hyperbolic modes, $\lambda^{1/2}$ in
parabolic modes. This completes the proof of Proposition
\ref{prop-high-est}. \end{proof}

\subsection{Low frequency estimates} Our goal in this section is the estimation of the resolvent kernel
 in the critical regime $|\lambda|\to 0$, i.e., the large time
 behavior of the Green function G, or global behavior in space and
 time. We are basically following the same treatment as that
carried out for viscous shock waves of strictly parabolic
conservation laws in \cite{ZH,MaZ3}; we refer to those references
for details.
 In the low frequency case the behavior is essentially governed by
 the limiting far-field equation
\begin{equation}\label{lim-eqs}
U_t=L_+U :=-A_+U_x+B_+U_{xx}
\end{equation}

\begin{lem}[\cite{MaZ3}]\label{lem-low-solnbasis}
Assuming (H0)-(H3), for $|\lambda|$ sufficiently small, the
eigenvalue equation $(L_{+}-\lambda)W = 0$ associated with the
limiting, constant-coefficient operator $L_+$, considered as a
first-order system $W'=\BbbA_+ W$, $W=(u,v,v')$, has a basis of
$2n-1$ solutions
$\bar{W}^{+}_j=e^{\BbbA_+(\lambda)x}V_{j}(\lambda),$
consisting of $n-1$ ``fast'' modes (not necessarily eigenmodes)
\begin{equation} \label{fastbound}
|e^{\BbbA_+(\lambda)x}V_j|\le Ce^{-\theta|x|}, \quad \theta>0,
\end{equation}
 and $n$ analytic ``slow'' (eigen-)modes
\begin{equation} \label{mubound}
\begin{aligned}
e^{\BbbA_+(\lambda)x}V_j&=e^{\mu_j(\lambda)x}V_j,\\
\mu_{n-1+j}^{+}(\lambda)&:=
-\lambda/a^{+}_{j}+\lambda^2\beta^{+}_{j}/a^{+^{3}}_{j}+\CalO(\lambda^3),\\
V_{n-1+j}^{+}(\lambda) &:= r^{+}_{j} +\CalO(\lambda),\\
\end{aligned}
\end{equation}
where $a_j^{+}$, $l_j^+$, $r_j^{+},\beta_j^+$ are defined as in
Proposition \ref{prop-Greenbounds}.
The same is true for the adjoint eigenvalue equation
$$
(L_+-\lambda)^*Z=0,
$$
i.e, it has a basis of solutions $ \bar{ \tilde W}_{j}^+ =
e^{-\BbbA^*_+(\lambda)x}\tilde{V}_j(\lambda) $
with $n-1$ analytic ``fast'' modes
\begin{equation} \label{dualfastbound}
|e^{-\BbbA^*_+(\lambda)x}\tilde V_j|\le Ce^{-\theta|x|}, \quad
\theta>0,
\end{equation}
and $n$ analytic ``slow'' (eigen-)modes
\begin{equation}
\tilde{V}_{n-1+j}^+(\lambda)= l_j^+ + \CalO(\lambda).
\end{equation}
\end{lem}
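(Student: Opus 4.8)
The plan is to follow the constant‑coefficient analysis of \cite{MaZ3}, treating $\lambda$ as a small parameter and separating the ``slow'' (hydrodynamic) modes from the ``fast'' (genuinely parabolic) ones by a rescaling that desingularizes the degenerate characteristic root present at $\lambda=0$. First, write $(L_+-\lambda)W=0$ as the first-order system $W'=\BbbA_+(\lambda)W$, $W=(u,v,v')\in\CC^{2n-1}$, with coefficients frozen at $U_+$ as in \eqref{firstorderW}; a mode $W=e^{\mu x}V$ exists exactly when $\det(\mu^2 B_+-\mu A_+-\lambda I)=0$. Since $b_2$ is invertible, $B_+$ has rank $n-1$, so this polynomial in $\mu$ has degree exactly $2n-1$, matching the size of the system. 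At $\lambda=0$ it factors as $\mu^{\,n}\det(\mu B_+-A_+)$, and $\det(\mu B_+-A_+)$ has degree $n-1$ (leading coefficient $\propto A_*\det b_2\neq0$) with nonzero constant term $(-1)^n\det A_+=(-1)^n\prod_j a_j^+\neq0$, the $a_j^+$ being the nonzero characteristic speeds at the noncharacteristic endpoint $U_+$. Hence $\mu=0$ is a root of exact multiplicity $n$ --- the seed of the $n$ slow modes --- while the other $n-1$ roots $\nu_1,\dots,\nu_{n-1}$ are nonzero and, by continuity, remain in a fixed compact set away from $0$ for $|\lambda|$ small. By dissipativity (A2) --- equivalently, from the structure of the domain $\Lambda$ of consistent splitting established in \cite{MaZ3}, on which $\dim S_+$ is constant and the unstable directions are carried entirely by the slow modes with $a_j^+<0$ --- each $\nu_i$ has strictly negative real part; the corresponding spectral subspace of $\BbbA_+(\lambda)$ depends analytically on $\lambda$ by the standard group-projection argument (cf.\ Lemma \ref{bases}), and a spectral-gap estimate gives $|e^{\BbbA_+(\lambda)x}V_j|\le Ce^{-\theta x}$, $\theta>0$, uniformly --- these are the $n-1$ fast modes, which need not be individual eigenmodes since the $\nu_i$ may collide.

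For the $n$ roots emanating from $\mu=0$, set $\mu=\lambda\tilde\mu$; using $\mu^2B_+-\mu A_+-\lambda I=\lambda(\lambda\tilde\mu^2B_+-\tilde\mu A_+-I)$, they solve $\det(\tilde\mu A_++I-\lambda\tilde\mu^2B_+)=0$, which at $\lambda=0$ reads $\det(\tilde\mu A_++I)=0$, i.e.\ $\tilde\mu=-1/a_j^+$. When the $a_j^+$ are distinct these are simple roots, so the analytic implicit function theorem yields analytic $\tilde\mu_j(\lambda)$, hence analytic $\mu_j(\lambda)=\lambda\tilde\mu_j(\lambda)=-\lambda/a_j^++\cO(\lambda^2)$; for repeated speeds one isolates each cluster by the Kato spectral projection exactly as in Lemma \ref{bases}. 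Substituting the ansatz $\mu_j=-\lambda/a_j^++c_j\lambda^2+\cdots$ into the scalar reduced dispersion relation $\lambda=-a_j^+\mu_j-\beta_j^+\mu_j^2+\cO(\mu_j^3)$, obtained by projecting $\mu^2B_+-\mu A_+-\lambda I$ onto the eigendirection $r_j^+$ of $A_+$ so that the quadratic coefficient is $\beta_j^+=(l_j^+B_+r_j^+)$ of \eqref{beta}, and matching powers of $\lambda$ forces $c_j=\beta_j^+/(a_j^+)^3$, which is the claimed expansion of $\mu_{n-1+j}^+$. The eigenvector comes from the leading balance $(A_+-a_j^+I)V^{(0)}=0$, giving $V_{n-1+j}^+(\lambda)=r_j^++\cO(\lambda)$, analytic by the same projection, with the $v'$-block of $V$ equal to $\mu_j v=\cO(\lambda)$; the positivity $\beta_j^+>0$ recorded in \eqref{dissipative} confirms these modes are genuinely diffusive.

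The adjoint statement follows by applying the identical analysis to the first-order form of $(L_+-\lambda)^*Z=0$ --- equivalently, to the system conjugate/transpose to the above via the duality relation of Lemma \ref{duality} --- whose characteristic polynomial is that of $\BbbA_+$ under $\mu\mapsto-\bar\mu$: one obtains $n-1$ fast modes with $|e^{-\BbbA_+^*(\lambda)x}\tilde V_j|\le Ce^{-\theta|x|}$ and $n$ analytic slow eigenmodes whose leading parts are the left eigenvectors, $\tilde V_{n-1+j}^+(\lambda)=l_j^++\cO(\lambda)$.

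I expect the single delicate point to be the behavior at $\lambda=0$: the $n$ slow modes issue from the $n$-fold root $\mu=0$, so their analyticity is not automatic, and it is the rescaling $\mu=\lambda\tilde\mu$ that resolves the branch point and reduces everything to simple (or Kato-reducible) perturbation theory. After that the only real work is the classical Chapman--Enskog identification of the $\cO(\lambda^2)$ coefficient with $\beta_j^+/(a_j^+)^3$ and the bookkeeping needed to make all estimates uniform over a fixed neighborhood of $\lambda=0$.
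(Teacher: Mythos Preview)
Your approach is exactly the standard matrix perturbation theory the paper invokes from \cite{MaZ3}, and the slow-mode analysis---factoring the characteristic polynomial at $\lambda=0$, desingularizing the $n$-fold root via $\mu=\lambda\tilde\mu$, and extracting the $\beta_j^+/(a_j^+)^3$ coefficient by projecting onto the $r_j^+$ eigendirection---is correct and is the substantive content of the lemma.

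There is one gap. Your claim that every fast root $\nu_i$ has strictly negative real part does not follow from the consistent-splitting count you give. From the high-frequency structure one has $\dim S_+=(n-1)+\chi_{\{A_*>0\}}$, while for small $\lambda>0$ the slow modes contribute exactly $p:=\#\{j:a_j^+>0\}$ to $S_+$; hence the number of \emph{stable} fast modes is $\dim S_+-p$, which equals $n-1$ only when $p=1$ (inflow) or $p=0$ (outflow), neither of which is assumed. A concrete counterexample is supersonic inflow for isentropic Navier--Stokes in $(\rho,u)$ coordinates: $n=2$, $A_*=u_+>0$, $p=2$, and the single fast root is $\nu=\rho_+(u_+^2-c_+^2)/(u_+\mu_{\rm visc})>0$. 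The bound \eqref{fastbound}, inherited verbatim from the whole-line statement in \cite{MaZ3}, should be read as asserting only that the fast spectral group is uniformly bounded away from zero, so that each fast mode is uniformly exponential in the direction in which it decays. This weaker statement is what your argument actually establishes, and it is all that is used downstream: in Proposition~\ref{prop-Gbounds-lambda-low} only modes $\phi_j^+,\tilde\psi_k^+$ lying in the stable subspace enter the resolvent representation \eqref{G-rep3}--\eqref{G-rep4}, and those genuinely decay at $+\infty$. So your proof stands once you replace ``each $\nu_i$ has strictly negative real part'' by ``each $\nu_i$ is uniformly bounded away from zero.''
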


\begin{proof}
Standard matrix perturbation theory; see \cite{MaZ3}, Appendix B.
\end{proof}

Also we recall from the representation of $G_\lambda$ in Corollary \ref{G-rep2}:
\begin{prop}\label{prop-greenlow}
Assuming (H0)-(H3), let $K$ be the order of the pole of $G_\lambda$
at $\lambda=0$ and r be sufficiently small that there are no other
poles in $B(0,r)$. Then for $\lambda \in \Omega_\theta$ such that
$|\lambda|\leq r$ and we have
\begin{equation} \label{greenlow}
G_\lambda(x,y)=\sum_{j,k}d_{jk}^+(\lambda)\phi_j^+(x)\tilde{\psi}_k^+(y)+
\sum_{k}\phi_k^+(x)\tilde{\phi}_k^+(y),
\end{equation}
for $x>y>0$, and
\begin{equation} \label{greenlow1}G_\lambda(x,y)=\sum_{j,k}d_{jk}^0(\lambda)\phi_j^+(x)\tilde{\psi}_k^+(y)+
\sum_{k}\psi_k^+(x)\tilde{\psi}_k^+(y),
\end{equation}
for $0<x<y$, where $d_{jk}^{0,+}(\lambda)=\CalO(\lambda^{-K})$ are
scalar meromorphic functions, moreover $K \leq$ order of vanishing
of the Evans function $D(\lambda)$ at $\lambda=0$.
\end{prop}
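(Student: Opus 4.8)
The plan is to obtain Proposition \ref{prop-greenlow} as a transcription of the scattering decomposition of Corollary \ref{G-rep2} into the low-frequency window $|\lambda|\le r$, once one checks that a suitable punctured neighborhood of $\lambda=0$ within $\Omega_\theta$ lies in the region of consistent splitting and, after shrinking $r$, in the resolvent set.

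First I would recall the output of Section 2.3: by Proposition \ref{G-rep1} and Corollary \ref{G-rep2}, on all of $\Lambda\cap\rho(L)$ the resolvent kernel has the form $G_\lambda(x,y)=\sum_{j,k}d_{jk}^+(\lambda)\phi_j^+(x)\tilde\psi_k^+(y)^*+\sum_k\phi_k^+(x)\tilde\phi_k^+(y)^*$ for $x>y>0$ and the analogous expression for $0<x<y$, where $\phi_j^+,\psi_k^+$ (resp. $\tilde\phi_j^+,\tilde\psi_k^+$) are the decaying/growing basis solutions of the first-order system \eqref{firstorderW} and its adjoint furnished by the Conjugation Lemma, and the scalar coefficients $d_{jk}^{0,+}$ arise from $M(\lambda)$ by Kramer's rule, hence are meromorphic with a pole at $\lambda=0$ of order at most the order of vanishing there of the Evans function $D(\lambda)=\det(\Phi^0,\Phi^+)_{|x=0}$. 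This already supplies the two displayed formulas, the property $d_{jk}^{0,+}(\lambda)=\cO(\lambda^{-K})$, and the bound on $K$; the slow/fast structure and the analyticity of the individual factors $\phi_j^+,\tilde\psi_k^+,\dots$ are precisely what Lemma \ref{lem-low-solnbasis} records. So the only thing to verify is that the representation is valid on $\{\lambda\in\Omega_\theta:0<|\lambda|\le r\}$.

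For that, I would argue membership in $\Lambda$ first: by the description of $\Lambda$ in Section 2.2.1 its left boundary consists of the essential-spectrum curves $\lambda_j^+(\xi)$ of the limiting operators $L_+$, and the expansion $\mu_{n-1+j}^+(\lambda)=-\lambda/a_j^++\lambda^2\beta_j^+/{a_j^+}^3+\cO(\lambda^3)$ of Lemma \ref{lem-low-solnbasis} (equivalently $\lambda_j^+(\xi)=-ia_j^+\xi-\beta_j^+\xi^2+\cO(\xi^3)$ with $\beta_j^+>0$ by \eqref{dissipative}) shows these curves leave the origin tangent to the imaginary axis and bend strictly into $\Re\lambda<0$. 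Hence a sufficiently narrow sector $\Omega_\theta$ intersected with a small ball lies, minus the origin, strictly to the right of all of them, i.e., in the component of the domain of consistent splitting containing real $+\infty$, which is $\Lambda$. Membership in $\rho(L)$ then follows from the standard fact (cited from [He]) that $G_\lambda$ is meromorphic on $\Lambda$ with isolated poles of finite order: shrinking $r$ removes every pole in $B(0,r)$ except possibly $\lambda=0$, which is exactly the hypothesis placed on $r$. On the resulting punctured sector the scattering formula of Corollary \ref{G-rep2} applies verbatim, giving \eqref{greenlow}--\eqref{greenlow1}.

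I do not expect a genuine obstacle, since this is essentially a bookkeeping statement extracted from Corollary \ref{G-rep2}; the one step requiring a little care is the geometric claim that $\Omega_\theta\cap B(0,r)\setminus\{0\}$ sits inside the region of consistent splitting, which rests on the sign $\beta_j^+>0$ of the diffusive second-order term controlling the shape of the essential spectrum near the origin.
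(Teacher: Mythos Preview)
Your proposal is correct and essentially matches the paper's approach: the paper's proof is simply a citation to \cite[Proposition 7.1]{ZH} and Theorem 6.3 there, and the text immediately preceding the statement already says ``we recall from the representation of $G_\lambda$ in Corollary \ref{G-rep2}'', which is exactly how you frame it. Your only addition beyond the paper is the explicit verification that $\Omega_\theta\cap B(0,r)\setminus\{0\}$ lies in $\Lambda$ via the quadratic bending of the essential-spectrum curves, which is a reasonable point to spell out but is implicit in the cited references.
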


\begin{proof}
See \cite[Proposition 7.1]{ZH} for the first statement and theorem
6.3 for the second statement linking order $K$ of the pole to
multiplicity of the zero of the Evans Function.
\end{proof}

Our main result of this section is then:



\begin{prop}\label{prop-Gbounds-lambda-low}
Assume (H0)-(H3) and (D). Then, for $r>0$ sufficiently small, the
resolvent kernel $G_\lambda$ associated with the  linearized
evolution equation \eqref{lim-eqs} satisfies,
for $0\le  y \le x$:
\begin{equation}\label{ptres}
\begin{aligned}
|\partial_x^\gamma \partial_y^\alpha &G_\lambda(x,t;y)|\\&\le C
(|\lambda|^{\gamma} +  e^{-\theta |x|}) (|\lambda|^{\alpha} +
e^{-\theta |y|}) \Big(\sum_{a_k^{+}>0} \big|e^{ (-\lambda/a^{+}_k +
\lambda^2 \beta^{+}_k/{a^{+}_k}^3 )(x-y)}\big|\\
&\quad +\sum_{a_k^{+} < 0, \,  a_j^{+} > 0}
\big|e^{(-\lambda/a^{+}_j + \lambda^2 \beta^{+}_j/{a^{+}_j}^3 )x
+(\lambda/a^{+}_k - \lambda^2 \beta^{+}_k/{a^{+}_k}^3 )y}\big|
\Big),
\end{aligned}
\end{equation}
$0\le |\alpha|, |\gamma| \le 1$, $\theta>0$, with similar bounds for
$0\le x\le y$. Moreover, each term in the summation on the righthand
side of \eqref{ptres} bounds a separately analytic function.

\end{prop}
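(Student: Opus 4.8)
The plan is to derive \eqref{ptres} by inserting the explicit slow/fast mode structure of the far-field equation into the scattering decomposition of $G_\lambda$, following the scheme of \cite{ZH,MaZ3} adapted to the half-line. First I would invoke Proposition \ref{prop-greenlow} (equivalently Corollary \ref{G-rep2}): for $|\lambda|\le r$ with $r$ small,
$$ G_\lambda(x,y)=\sum_{j,k}d_{jk}^{+}(\lambda)\,\phi_j^{+}(x)\,\tilde\psi_k^{+}(y)+\sum_{k}\phi_k^{+}(x)\,\tilde\phi_k^{+}(y),\qquad x>y>0, $$
and the analogous formula on $x<y$, where $\{\phi_j^{+}\}$ spans the solutions of the eigenvalue equation decaying as $x\to+\infty$, $\{\tilde\psi_k^{+}\}$ the dual solutions decaying at $+\infty$, and $\{\tilde\phi_k^{+}\}$ the complementary (growing) dual modes; all of these are analytic in $\lambda$. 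Under assumption (D) the pole order is $K=0$, so the scattering coefficients $d_{jk}^{0,+}(\lambda)$ are analytic, hence $\cO(1)$, on $B(0,r)$; this is the only place (D) is used.

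Next I would substitute the description of these solutions. By the Conjugation Lemma together with Lemma \ref{lem-profile-decay}, each decaying solution factors as $\phi_j^{+}(x)=P_+(x,\lambda)\,e^{\mathbb{A}_+(\lambda)x}V_j(\lambda)$ with $P_+=I+\Theta_+$ and $|(\partial/\partial x)^\ell\Theta_+|\le Ce^{-\theta x}$, and by Lemma \ref{lem-low-solnbasis} the modes $e^{\mathbb{A}_+ x}V_j$ split into $n-1$ \emph{fast} modes bounded, with all derivatives, by $Ce^{-\theta|x|}$, and \emph{slow} eigenmodes $e^{\mu_j^{+}(\lambda)x}(r_j^{+}+\cO(\lambda))$ with $\mu_j^{+}(\lambda)=-\lambda/a_j^{+}+\lambda^2\beta_j^{+}/(a_j^{+})^3+\cO(\lambda^3)$; dually $\tilde\psi_k^{+}(y)=e^{-\mu_k^{+}(\lambda)y}(l_k^{+}+\cO(\lambda)+\cO(e^{-\theta y}))$ on slow modes. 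A slow primal mode lies in the span of $\{\phi_j^{+}\}$ precisely when $\Re\mu_j^{+}<0$; since on the domain of consistent splitting the stable/unstable splitting of $\mathbb{A}_+(\lambda)$ is locally constant and agrees with the evident one for $\lambda$ real positive, this holds precisely when $a_j^{+}>0$, while the dual slow mode $e^{-\mu_k^{+}y}l_k^{+}$ decays at $+\infty$ precisely when $a_k^{+}<0$. This bookkeeping produces exactly the index sets $\{a_k^{+}>0\}$ and $\{a_k^{+}<0,\ a_j^{+}>0\}$ appearing in \eqref{ptres}.

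Then I would read off the bounds. In $\sum_k\phi_k^{+}(x)\tilde\phi_k^{+}(y)$, the slow--slow contributions with $a_j^{+}>0$ are $\cO(|e^{\mu_j^{+}(x-y)}|)$, giving the first sum of \eqref{ptres}; in $\sum_{j,k}d_{jk}^{+}\phi_j^{+}(x)\tilde\psi_k^{+}(y)$, the slow--slow contributions with $a_j^{+}>0$, $a_k^{+}<0$ are $\cO(|e^{\mu_j^{+}x-\mu_k^{+}y}|)$, giving the second sum. Every remaining contribution contains at least one fast factor, is therefore $\cO(e^{-\theta|x|})$ or $\cO(e^{-\theta|y|})$, and is absorbed into the prefactors once one notes $e^{-\theta|x|}\le C|e^{\mu_j^{+}x}|$ and $e^{-\theta|y|}\le C|e^{-\mu_k^{+}y}|$ for $|\lambda|$ small. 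For the derivative bounds one differentiates the factorization: $\partial_x$ on a slow factor brings down $\mu_j^{+}=\cO(\lambda)$ and on the conjugator an extra $\cO(e^{-\theta x})$, giving the prefactor $|\lambda|^{\gamma}+e^{-\theta|x|}$, while on a fast factor it preserves the $e^{-\theta|x|}$ bound (via $W'=\mathbb{A}W$ with $|\mathbb{A}|$ bounded for $|\lambda|$ small); $\partial_y$ acts identically on the $y$-factors. Analyticity of each summand in \eqref{ptres} is inherited from that of $\mu_j^{+}(\lambda)$, $V_j(\lambda)$, $P_+(\cdot,\lambda)$ and $d_{jk}^{0,+}(\lambda)$, as each is a product of these. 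The case $0\le x\le y$ follows in the same manner from the $x<y$ branch \eqref{greenlow1}, with the roles of the $a_j^{+}>0$ and $a_j^{+}<0$ modes interchanged.

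I do not expect any single estimate to be the obstacle: the content lies in the combinatorial bookkeeping of the second step --- matching the decaying primal and dual slow modes to the correct sign conditions on the $a_j^{+}$ via the domain of consistent splitting --- and in checking in the third step that every fast-mode and $\cO(\lambda)$ error term genuinely folds into the stated majorant with the correct power of $\lambda$ on each derivative.
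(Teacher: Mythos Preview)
Your proposal is correct and follows essentially the same approach as the paper: invoke the scattering decomposition of Proposition~\ref{prop-greenlow}, use (D) to bound the coefficients $d_{jk}^{0,+}$, factor the primal and dual modes via the Conjugation Lemma and Lemma~\ref{lem-low-solnbasis} into slow and fast parts, and then group terms. The paper's proof is in fact much more terse than yours---it simply says ``substituting this and corresponding dual estimates \dots\ and grouping terms, we obtain the result''---so your additional detail on the sign bookkeeping, the absorption of fast modes into the prefactors, and the derivative bounds is welcome elaboration rather than a departure.
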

\begin{proof} By condition (D), $D(\lambda)$ does not vanish on $Re(\lambda)\geq 0$, hence, by continuity,
on $|\lambda|\le r$. Thus, according to Proposition
\ref{prop-greenlow}, all $|d_{jk}(\lambda)|$ are uniformly bounded
on $|\lambda|\le r$, and thus it is enough to find estimates for
fast and slow modes $\phi_j^+$, $\tilde{\phi}^+_j$, $\psi_j^+$ and
$\tilde{\psi}^+_j$.
By applying Lemma \ref{lem-low-solnbasis} and using \eqref{phi+} we
find:
\begin{equation}  \label{lphi+}
  \begin{pmatrix} \phi^+_j \\ \partial_x \phi^+_j
 \end{pmatrix}=e^{\BbbA_+(\lambda)x}P^+\begin{pmatrix}v_j \\ \mu_j
 v_j\end{pmatrix}=e^{\BbbA_+(\lambda)x}(I+\Theta)\begin{pmatrix}v_j \\ \mu_j
 v_j\end{pmatrix}
\end{equation}
and similarly for $\tilde{\phi}^+_j$, $\psi_j^+$ and
$\tilde{\psi}^+_j$. Now using \eqref{Theta} and the fact, by Lemma
\ref{lem-low-solnbasis}, that $e^{\mu_j(\lambda)x}$ is of order $e^{
-(\lambda/a^{+}_{j}+\lambda^2\beta^{+}_{j}/a^{+^{3}}_{j}+\CalO(\lambda^3))x}$
for slow modes and order $e^{-\theta |x |}$ for fast modes,
so by substituting this and corresponding dual estimates in
\eqref{lphi+} and grouping terms, we obtain the result.

\end{proof}

\section{Pointwise bounds on Green function $G(x,t;y)$}
In this section, we prove the pointwise bounds on the Green
function $G$ following the general approach of \cite{MaZ3}
in the whole-line, shock, case.
Our starting point is the representation
\begin{equation}\label{inv-Laplace} G(x,t;y) = \frac{1}{2\pi i}P.V.\int_{\eta - i\infty}^{\eta + i\infty}e^{\lambda t}
G_\lambda(x,y) \, d\lambda\end{equation} where $\eta$ is any
sufficiently large positive real number.
\\
\\
{\bf Case I. $|x-y|/t$ large.} We first treat the simple case that
$|x-y|/t\ge S$, $S$ sufficiently large. Fixing $x,y,t$, set $\lambda
= \eta + i\xi$, for $\eta>0$ sufficiently large. Applying
Proposition \ref{prop-high-est}, we obtain the decomposition
\begin{align*} G(x,t;y) &= \frac{1}{2\pi i}P.V \int_{\eta-i\infty}^{\eta +
i\infty}e^{\lambda t}\Big[H_\lambda + \Theta_\lambda^H+P_\lambda +
\Theta_\lambda^P\Big](x,y)d\lambda \\&=:I + II + III +
IV.\end{align*}

For definiteness considering the inflow case $A_*>0$ and taking $x > y$, we estimate each term in turn.\\
\\
{\it Term I.} Computing,
\begin{align*} I &=\frac{1}{2\pi i}P.V \int_{\eta-i\infty}^{\eta +
i\infty}e^{\lambda t}H_\lambda (x,y)d\lambda\\&= \frac{1}{2\pi}
A_*(x)^{-1} e^{\eta (t -
\int_y^x1/A_*(z)dz)}e^{-\int_y^x(\eta_*/A_*)(z)dz}P.V
\int_{-\infty}^{+\infty}e^{i\xi (t - \int_y^x1/A_*(z)dz)}d\xi
\\&= \frac{1}{2\pi}
A_*(x)^{-1} \delta(t -
\int_y^x1/A_*(z)dz)e^{-\int_y^x(\eta_*/A_*)(z)dz}
\\&= \frac{1}{2\pi}
A_*(x)^{-1} A_*(y)\delta_{x-\bar
a_*t}(y)e^{-\int_y^x(\eta_*/A_*)(z)dz}\end{align*} where $\bar a_*$
is defined as in Proposition \ref{prop-Greenbounds}. Noting that
$\bar a_* \ge \inf _xA_*(x)>0$ and $\eta_*^+>0$, we get
$e^{-\int_y^x(\eta_*/A_*)(z)dz} = \cO(e^{-\theta(x-y)})$ and thus
\begin{equation}\label{Hest} I= \cO(e^{-\theta t})\delta_{x-\bar
a_*t}(y),\end{equation} vanishing for $|x-y|/t$ large.
\\
\\{\it Term II.} Similar calculations show that the ``hyperbolic error term'' $II$ also
vanishes. For example, the term $e^{\lambda
t}\lambda^{-1}B(x,y;\lambda)$ contributes
\begin{align*} \frac{1}{2\pi}e^{\eta (t -
\int_y^x1/A_*(z)dz)}e^{-\int_y^x(\eta_*/A_*)(z)dz}P.V
\int_{-\infty}^{+\infty} (\eta + i\xi)^{-1}e^{i\xi (t -
\int_y^x1/A_*(z)dz)}d\xi.\end{align*} The integral though not
absolutely convergent, is integrable and uniformly bounded as a
principal value integral, for all real $\eta$ bounded away from
zero, by explicit computation. On the other hand
$$e^{\eta (t - \int_y^x1/A_*(z)dz)} \le  e^{\eta (t -
|x-y|/\min_zA_*(z))} \le e^{\eta t(1- S/\min_zA_*(z))}\to 0,$$ as
$\eta \to +\infty$, for $S$ sufficiently large. Thus, we find that
the above integral term goes to zero. Likewise, the result applies
for the term of $e^{\lambda t}C(x,y;\lambda)$, since
$(x-y)e^{-\int_y^x(\eta_*/A_*)(z)dz} \le C(x-y)e^{-\theta (x-y)}$ is
also bounded. Thus, each term of $II$ vanishes as $\eta \to
+\infty$.\\
\\
{\it Term $III$.} The parabolic term $III$ may be treated exactly as
in the strictly parabolic case \cite{ZH}. Precisely, we may first
deform the contour in the principle value integral to
\begin{equation}
\int_{\Gamma_1\cup \Gamma_2} e^{\lambda t} P_\lambda(x,y) \,
d\lambda,
\end{equation}
where $\Gamma_1:= \partial B(0,R)\cap \bar \Omega_P$ and $\Gamma_2:=
\partial \Omega_P \setminus B(0,R)$, recalling the parabolic sector $\Omega_P$ defined in
\eqref{sectorP}. Choose
\begin{equation}\label{largedef}
\bar{\alpha} :={\frac{|x-y|}{2 \theta t}}, \quad R:=
\theta\bar{\alpha}^2,
\end{equation}
where $\theta$ is as in \eqref{Plamda-est}. Note that the
intersection of $\Gamma$ with the real axis is
$\lambda_{min}=R=\theta \bar{\alpha}^2$. By the large $|\lambda|$
estimates of Proposition \ref{prop-high-est}, we have for all
$\lambda \in \Gamma_1\cup \Gamma_2$ that
$$
|P_\lambda (x,y)|\le C |\lambda|^{-1/2}
e^{-\theta|\lambda|^{1/2}|x-y|}.
$$
Further, we have
\begin{equation}
\begin{aligned}
Re \lambda &\le  R(1- \eta \omega^2), \quad \lambda\in \Gamma_1,
\\Re \lambda &\le Re \lambda_0 - \eta (|Im \lambda| - |Im
\lambda_0|), \quad \lambda \in \Gamma_2
\end{aligned}
\end{equation}
for $R$ sufficiently large, where $\omega$ is the argument of
$\lambda$ and $\lambda_0$ and $\lambda_0^*$ are the two points of
intersection of $\Gamma_1$ and $\Gamma_2$, for some $\eta>0$
independent of $\bar{\alpha} $. Combining these estimates, we obtain
\begin{equation}
\begin{aligned}
\Big|\int_{\Gamma_{1}} e^{\lambda t} P_\lambda  d\lambda\Big| &\le C
\int_{\Gamma_{1}} |\lambda|^{-1/2}\, e^{Re \lambda t
-\theta|\lambda|^{1/2}|x-y| } \, d\lambda \\ &\le C e^{-\theta
\bar{\alpha} ^{2}t} \int_{-arg(\lambda_0)}^{+arg(\lambda_0)}
R^{-1/2}e^{-\theta R \eta \omega^2 t} \, R d\omega \\&\le Ct^{-1/2}
e^{-\theta \bar{\alpha} ^{2}t}.
\end{aligned}
\end{equation}
Likewise,
\begin{equation}
\begin{aligned}
|\int_{\Gamma_{2}} e^{\lambda t} P_\lambda  d\lambda| &\le
\int_{\Gamma_{2}}C |\lambda|^{-1/2}\, C e^{Re \lambda t -\theta
|\lambda|^{1/2}|x-y|} d\lambda \\ &\le C
e^{Re(\lambda_{0})t-\theta|\lambda_0|^{1/2} |x-y|} \int_{\Gamma_{2}}
|\lambda|^{-1/2}e^{(Re \lambda-Re\lambda_{0})t}\
|d\lambda|\\
&\le C e^{-\theta \bar{\alpha} ^{2}t} \int_{\Gamma_2}  |Im \,
\lambda|^{-1/2} e^{-\eta|Im \, \lambda-Im \, \lambda_{0}|t}\ |d\, Im
\lambda|\\ & \le  Ct^{-1/2} e^{-\theta \bar{\alpha}^{2}t}.
\end{aligned}
\end{equation}
Combining these last two estimates, we have
\begin{equation} \label{g1}
III\le Ct^{-1/2} e^{{-\theta \bar{\alpha} ^{2}t}/{2}}
e^{{-(x-y)^{2}}/{8\theta t}} \le Ct^{-1/2}e^{-\eta t}
e^{-(x-y)^{2}/{8\theta t}},
\end{equation}
for $\eta>0$ independent of $\bar{\alpha}$. Observing that ${|x-at|/
2t} \le {{|x-y|}/{t}} \le{{2|x-at|}/{t}}$ for any bounded $a$, for
${|x-y|}/{t}$ sufficiently large, we find that $III$ may be absorbed
in any summand $ t^{-1/2}e^{{-(x-y-a_k^+ t)^2}/{Mt}}. $\\
\\{\it Term $IV$.} Similarly, as in the treatment of the term $III$,
the principle value integral for the ``parabolic error term $IV$ may
be shifted to $\eta = R = \theta \bar\alpha^2$, $\bar \alpha$ as
above. This yields an estimate $$|IV|\le Ce^{-\theta \bar \alpha^2
t}\int_{-\infty}^{+\infty} |\eta_0+i\xi|^{-2} d\xi \le Ce^{-\theta
\bar \alpha^2 t},$$ absorbed in $\cO(e^{-\eta t}e^{-|x-y|^2/Mt})$
for all $t$.
\\
\\
{\bf Case II. $|x-y|/t$ bounded.} We now turn to the critical case
where $|x-y|/t \le S$, for some fixed $S$.\\
\\
{\it Decomposition of the contour:} We begin by converting the
contour integral \eqref{inv-Laplace} into a more convenient form
decomposing high, intermediate, and low frequency contributions.

We first observe that $L$ has no spectrum on the portion of $\Omega$
lying outside the rectangle \begin{equation} \R: =
\{\lambda~:~-\eta_1 \le \Re \lambda \le \eta, -R \le \Im \lambda\le
R\}\end{equation} for $\eta>0$, $R>0$ sufficiently large, hence
$G_\lambda$ is analytic on this region. Since, also, $H_\lambda$ is
analytic on the whole complex plane, contours involving either one
of these contributions may be arbitrarily deformed within $\Omega
\setminus \R$ without affecting the result, by Cauchy's theorem.
Likewise, $P_\lambda$ is analytic on $\Omega_P\setminus \R$, and so
contours involving this contribution may be arbitrarily deformed
within this region. Thus, we obtain

\begin{observation}[\cite{MaZ3}] Assume (H0)-(H3) and (D) hold. The principle
value integral \eqref{inv-Laplace} may be replaced by
 \begin{equation}\label{G-decomp}G(x,t;y) = I_a + I_b + I_c + II_a + II_b +
 III\end{equation} where
\begin{align*}I_a&:= P.V. \int_{\eta-i\infty}^{\eta +i\infty} e^{\lambda
t}H_\lambda(x,y) d\lambda\\I_b&:= P.V \Big(\int_{-\eta_1 -
i\infty}^{-\eta_1 - iR} + \int_{-\eta_1 + iR}^{-\eta_1 +
i\infty}\Big) e^{\lambda t}(G_\lambda - H_\lambda -
P_\lambda)(x,y)d\lambda\\I_c:&=\int_{\Gamma_2} e^{\lambda
t}P_\lambda(x,y)d\lambda\\II_a&:= \Big(\int_{-\eta_1 - iR}^{-\eta_1
- ir/2} + \int_{-\eta_1 + ir/2}^{-\eta_1 + iR}\Big) e^{\lambda t}
G_\lambda(x,y) d\lambda\\II_b&:=-\int_{-\eta_1 - iR}^{-\eta_1 + iR}
e^{\lambda t}H_\lambda(x,y)d\lambda \\III&:= \int_{
\Gamma_1}e^{\lambda t}G_\lambda(x,y)d\lambda\end{align*} with
\begin{align*}\Gamma_1:=&[-\eta_1 -
ir/2,\eta - ir/2] \cup [\eta - ir/2,\eta +
ir/2]\cup[\eta+ir/2,-\eta_1 + ir/2]\\\Gamma_2:=&
\partial\Omega_P \setminus \Omega,
\end{align*}
for any $\eta,r>0$, $R$ sufficiently large, and $\eta_1$
sufficiently small with respect to $r$.
\end{observation}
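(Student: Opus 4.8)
The plan is to deduce \eqref{G-decomp} purely by contour deformation from the Bromwich representation \eqref{inv-Laplace}, exactly as in \cite{MaZ3}; the only analytic inputs are the high-frequency decomposition of Proposition~\ref{prop-high-est}, the spectral description recalled above, and condition (D). First I would split the integrand on $\Re\lambda=\eta$ as $G_\lambda=H_\lambda+(G_\lambda-H_\lambda)$. Since $H_\lambda$ is entire, its contribution is simply left on the original line as the principal-value term $I_a$; it cannot be deformed away or absorbed into the other pieces precisely because $H_\lambda$ does not decay as $|\Im\lambda|\to\infty$, being responsible for the delta-type component \eqref{Hterm}. The remaining integral of $e^{\lambda t}(G_\lambda-H_\lambda)$ over $\Re\lambda=\eta$ is then moved to the left.

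For this I would use that $G_\lambda-H_\lambda$ is analytic on $\Omega\setminus\R$ — this is where (D) enters, excluding poles of $G_\lambda$ in $\Re\lambda\ge0$ (hence, by continuity, at $\lambda=0$), together with the absence of spectrum of $L$ on $\Omega\setminus\R$ — and, sharpening this, that it is analytic on $\{-\eta_1\le\Re\lambda\le\eta,\ |\Im\lambda|\ge r/2\}$ once $\eta_1$ is small relative to $r$, since the essential-spectrum curves $\lambda_j^+(\xi)$ through $\lambda=0$ obey $\Re\lambda_j^+\sim-\beta_j^+\xi^2$ and so lie to the left of $\Re\lambda=-\eta_1$ for $|\Im\lambda|\ge r/2$, while by (D) any point spectrum lies in $\Re\lambda\le-\theta<-\eta_1$. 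Thus, up to connecting arcs, $\Re\lambda=\eta$ may be pushed to the broken contour made of the vertical line $\Re\lambda=-\eta_1$ with a rightward rectangular detour $\Gamma_1$ near the origin. On the outer rays $\{\Re\lambda=-\eta_1,\ |\Im\lambda|\ge R\}$ one then makes two further adjustments: there $G_\lambda-H_\lambda-P_\lambda=\Theta_\lambda^H+\Theta_\lambda^P=\cO(|\lambda|^{-1})$, which is kept (as a principal value) as $I_b$, while the peeled-off piece $P_\lambda$ — analytic on $\Omega_P\setminus B(0,R)$ by Proposition~\ref{prop-high-est}, with $R$ chosen large enough that these rays lie in $\Omega_P$ — is deformed out to $\Gamma_2=\partial\Omega_P\setminus\Omega$, where $e^{\lambda t}P_\lambda$ is exponentially small, yielding $I_c$. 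On the middle rays $\{r/2\le|\Im\lambda|\le R\}$ and on $\Gamma_1$ one writes the integrand back as the unsubtracted $G_\lambda$, giving $II_a$ and $III$; the resulting discrepancy is $-\int e^{\lambda t}H_\lambda$ over those pieces, which, $H_\lambda$ being entire, equals $-\int_{-\eta_1-iR}^{-\eta_1+iR}e^{\lambda t}H_\lambda\,d\lambda=II_b$. Collecting terms gives $G=I_a+I_b+I_c+II_a+II_b+III$.

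The only real work lies in the limiting arguments for the deformations, i.e. showing that the connecting arcs $\{\Im\lambda=\pm R'\}$, $R'\to\infty$, contribute nothing: on them $G_\lambda-H_\lambda=P_\lambda+\Theta_\lambda^H+\Theta_\lambda^P=\cO(|\lambda|^{-1/2})$ by Proposition~\ref{prop-high-est}, and for fixed $t>0$ the factor $e^{\Re\lambda\,t}$ stays bounded on $\Re\lambda\in[-\eta_1,\eta]$ (and is exponentially small on the portion of the arc inside $\Omega_P$), so the arc integrals over a bounded horizontal span tend to zero — this is exactly why $H_\lambda$ and $P_\lambda$ had to be isolated first, since $G_\lambda$ itself does not decay. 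I expect the genuine obstacle to be not analytic but combinatorial: keeping the orientation and the precise form of the integrand ($G_\lambda$, $G_\lambda-H_\lambda$, $G_\lambda-H_\lambda-P_\lambda$, or $H_\lambda$ alone) straight on each of the six pieces so that they reassemble to the original Bromwich integral, and verifying that the choices ``$\eta_1$ small relative to $r$'' and ``$R$ large'' put $\Gamma_1\cup\Gamma_2\cup\{\Re\lambda=-\eta_1\}$ on the correct side both of the essential-spectrum curves near $\lambda=0$ and of the sector $\Omega_P$.
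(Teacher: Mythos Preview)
Your proposal is correct and follows essentially the same approach as the paper. The paper's own justification is the single paragraph immediately preceding the Observation, which records only the analyticity facts (that $G_\lambda$ is analytic on $\Omega\setminus\R$, $H_\lambda$ is entire, and $P_\lambda$ is analytic on $\Omega_P\setminus\R$) and invokes Cauchy's theorem; you have supplied the bookkeeping and the arc-vanishing argument that the paper leaves implicit, and your reassembly of the six pieces---in particular the identification of the $-H_\lambda$ discrepancy over the middle rays and $\Gamma_1$ with $II_b$ via the entirety of $H_\lambda$---is exactly right.
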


Using the above decomposition \eqref{G-decomp}, we shall estimate in
turn the high-frequency contributions $I_a, I_b,$ and $I_c$, the
intermediate-frequency contributions $II_a$ and $II_b$, and the
low-frequency contributions $III$.
\\
\\
{\it High-frequency contribution.} We first carry out the
straightforward estimation of the high-frequency terms $I_a, I_b,$
and $I_c$. The principal term $I_a$ has already been computed in
\eqref{Hest} to be $H(x, t; y)$. Likewise, calculations similar to
those of {\it Term II} show that the term $I_b$ is
time-exponentially small. For example, the term $e^{\lambda
t}\lambda^{-1}B(x,y;\lambda)$ contributes
\begin{align}\label{Ib1}P.V.&\Big(\int_{-\infty}^{-R}+\int_R^{+\infty}\Big) (-\eta_1 +
i\xi)^{-1} e^{i\xi (t-\int_y^x 1/A_*(z)dz)}d\xi\notag\\&\times e^{-\eta_1
(t - \int_y^x1/A_*(z)dz)}e^{-\int_y^x(\eta_*/A_*)(z)dz}
\end{align}
where \begin{equation}\label{Ib2}P.V.\Big(\int_{-\infty}^{-R}+\int_R^{+\infty}\Big) (-\eta_1
+ i\xi)^{-1} e^{i\xi (t-\int_y^x 1/A_*(z)dz)}d\xi <\infty\end{equation} and
\begin{equation}\label{Ib3}e^{\eta_1 \int_y^x1/A_*(z)dz}e^{-\int_y^x(\eta_*/A_*)(z)dz} \le
C e^{\frac{\eta_1|x-y|}{\min_zA_*(z)}}e^{-\theta|x-y|} \le
Ce^{-\theta|x-y|/2},\end{equation} for $\eta_1$ sufficiently small.
This contributes in the term $\cO(e^{-\eta_1(t+|x-y|)})$ of $R$.
Likewise, the contributions of terms $e^{\lambda
t}\lambda^{-1}(x-y)C(x,y;\lambda)$ and $e^{\lambda
t}\lambda^{-2}D(x,y;\lambda)$ split into the product of a
convergent, uniformly bounded integral in $\xi$, a bounded factor
analogous to \eqref{Ib3}, and the factor $e^{-\eta_1 t}$,
giving the result.

The term $I_c$ may be estimated exactly as was term $III$ in the large $|x-y|/t$
case, to obtain contribution $\cO(t^{-1/2}e^{-\eta_1 t})$ absorbable again in the residual term
$\cO(e^{-\eta t}e^{-|x-y|^2/Mt})$ for $t\ge \epsilon$, any $\epsilon>0$, and by any summand $\cO(t^{-1/2}(1+t)^{-1/2}e^{-(x-y-a_k^+)^2/Mt}) e^-\eta (x+y)$ for $t$ small.
\\
\\
{\it Intermediate-frequency contribution.} Error term $II_b$ is time-exponentially small for
$\eta_1$ sufficiently small, by the same calculation as in \eqref{Ib1}-\eqref{Ib3}, hence negligible.
Likewise, term $II_a$ by the basic estimate \eqref{Inter-est} is seen to be time-exponentially
small of order $\cO(-\eta_1 t)$ for any $\eta_1 > 0$ sufficiently small that the associated contour
lies in the resolvent set of $L$.
\\
\\
{\it Low-frequency contribution}. It remains to estimate the
low-frequency term $III$, which is of essentially the same form as the low-frequency contribution analyzed
in \cite{ZH,YZ} in the strictly parabolic case, in that the contour is the same and
the resolvent kernel $G_\lambda$ satisfies same bounds (with no $E_\lambda$ term) in this regime. Thus, we may
conclude from these previous analyses that $III$ gives contribution 
as claimed, exactly as in the strictly parabolic case. For
completeness, we indicate the main features of the argument here.
\\
\\
{\bf Bounded time}.  For $t$ bounded, we can use the
medium-$\lambda$ bounds $|G_\lambda|$, $|G_{\lambda_x}|$,
$|G_{\lambda_y}| \le C$ to obtain $|\int_{\Gamma_1} e^{\lambda t}
G_\lambda d \lambda| \le C_2 |\Gamma_1|$. This contribution is order
$Ce^{-\eta t}$ for bounded time, hence
can be absorbed.\\
\\
{\bf Large time}. For $t$ large, we must instead estimate
$\int_{\Gamma_1} e^{\lambda t} G_\lambda d \lambda$ using the
small-$|\lambda|$ expansions. First, observe that, all coefficient
functions $d_{jk}(\lambda)$ are uniformly bounded (since $|\lambda|$
is bounded in this case).

~\\
{\bf Case II(i). $(0<y<x)$}.  By our low-frequency estimates in
Proposition \ref{prop-greenlow}, we have
\begin{equation} \label{gamma}
\begin{aligned}
\int_{\Gamma_1} e^{\lambda t}G_\lambda(x,y) \, d\lambda &=
\int_{\Gamma_1} \sum_{j,k} e^{\lambda t} d_{jk} \phi^+_j (x)  \tilde
\psi^+_k (y) d\lambda\\
&\quad +\int_{\Gamma_1} \sum_{k} e^{\lambda t} \phi^+_k (x) \tilde
\phi^+_k (y) d\lambda,\\
\end{aligned}
\end{equation}
where each $d_{jk}$ is analytic, hence bounded. We estimate
separately each of the terms
$$
\int_{\Gamma_1} e^{\lambda t} d_{jk} \phi^+_j (x) \tilde \psi^+_k
(y) d\lambda
$$
on the righthand side of \eqref{gamma}. Estimates for terms
$$
\int_{\Gamma_1} e^{\lambda t} \phi^+_k (x)  \tilde \phi^+_k (y)
d\lambda
$$
go similarly.
\medskip

 {\bf Case II(ia).} First, consider the critical case $a^+_j
>0$, $a^+_k
<0$ . For this case,
$$|d_{jk} \phi^+_j{(x)} \tilde \psi^+_k (y)| \le C e^{Re(\rho^+_j x - \nu^+_k y)},$$
where
$$
\begin{cases} \nu^+_k (\lambda) = -\lambda/a^+_k + \lambda^2 \beta^+_k
/ (a^+_k)^3 + \CalO (\lambda^3) \cr \rho^+_j (\lambda) = -\lambda /
a^+_j + \lambda^2 \beta^+_j / (a^+_j)^3 + \CalO(\lambda^3).
\end{cases}
$$

Set
$$
\bar{\alpha}= \frac{a^+_k x/a^+_j -y - a^+_k t}{2t}, \quad
p:=\frac{\beta^+_j a^+_k x / (a^+_j)^3 - \beta^+_k y / (a^+_k)^2
}{t} <0.
$$
Define $\Gamma_{1a}'$ to  be the portion contained in
$\Omega_\theta$ of the hyperbola
\begin{equation}
\begin{aligned} \label{rho-nu}
&Re(\rho_j^+x - \nu_k^+y) + \CalO(\lambda ^3)(|x|+|y|)\cr &\quad=
(1/a_k^+)Re[\lambda(-a^+_k x/ a^+_j + y) + \lambda^2 (x \beta^+_j
a^+_k / (a^+_j)^3 - y \beta^+_k / (a^+_k)^2 )] \cr &\quad \equiv
\const \cr {} &\quad= (1/a_k^+) [(\lambda_{min}(-a^+_k x/ a^+_j + y)
+ \lambda^2_{min} (x \beta^+_j a^+_k / (a^+_j)^3 - y \beta^+_k /
(a^+_k)^2)], \cr
\end{aligned}
\end{equation}
where
\begin{equation}
\lambda_{min} :=
\begin{cases}
\frac{\bar{\alpha}}{p} &  if  \quad |\frac{\bar{\alpha}}{p}|\le
\epsilon \cr \pm \epsilon & if \quad \frac{\bar{\alpha}}{p} \gtrless
\epsilon
\end{cases}
\end{equation}

Denoting by $\lambda_1$, $\lambda^*_1$, the intersections of this
hyperbola with $\partial \Omega_\theta$, define $\Gamma_{1_b}'$ to
be the union of $\lambda_1 \lambda_0$ and $\lambda^*_0 \lambda^*_1$,
and define $\Gamma_1' = \Gamma_{1_a}' \cup \Gamma_{1_b}'$. Note that
$\lambda = \bar{\alpha}/p$ minimizes the left hand side of
\eqref{rho-nu} for $\lambda$ real. Note also that that $p$ is
bounded for $\bar{\alpha}$ sufficiently small, since
$\bar{\alpha}\le \epsilon$ implies that
$$
(|a_k^+x/ a_j^+| + |y|)/t \le 2|a_k^+| + 2\epsilon
$$
i.e. $(|x|+|y|)/t$ is controlled by $\bar{\alpha}$.

With these definitions, we readily obtain that
\begin{equation}
\begin{aligned}
Re(\lambda t + \rho^+_j x - \nu^+_k y) &\le -(t/a^-_k)
(\bar{\alpha}^2/4p) - \eta Im (\lambda)^2 t \cr {} &\le -
\bar{\alpha}^2 t/M - \eta Im (\lambda)^2 t,
\end{aligned}
\end{equation}
for $\lambda \in \Gamma_{1_a}'$ (note: here, we have used the
crucial fact that $\bar{\alpha}$ controls $(|x|+|y|)/t$, in bounding
the error term $\CalO(\lambda^3)(|x|+|y|)/t$ arising from expansion
Likewise, we obtain for any $q$ that
\begin{equation} \label{gamma'}
\int_{\Gamma_{1_a}'} |\lambda|^q e^{Re(\lambda t + \rho^+_j x -
\nu^-_k y)} d\lambda \le C t^{-\frac{1}{2} -\frac{q}{2}}
e^{-\bar{\alpha}^2 t / M},
\end{equation}
for suitably large $C,\, M>0$ (depending on $q$). Observing that
$$
\bar{\alpha}= (a_k^+/a_j^+)(x- a_j^+ (t - |y/a_k^+|))/2t,
$$
we find that the contribution of \eqref{gamma'} can be absorbed in
the described bounds for $t\ge |y/a_k^-|$. At the same time, we find
that $\bar{\alpha}\ge x > 0$ for $t\le |y/a_k^+|$, whence
$$
\bar{\alpha}\ge (x- y - a_j^+t)/Mt +|x|/M,
$$
for some $\epsilon>0$ sufficiently small and  $M>0$ sufficiently
large.

This gives
$$
e^{-\bar{\alpha}^2/|p|}\le e^{- (x-y- a_k^+t)^2/Mt} e^{-\eta |x|}
$$
provided $|x|/t >a_j^+$, a contribution which can again be absorbed.
On the other hand, if $t\le |x/a_j^+|$, we can use the dual estimate
\begin{equation}
\begin{aligned}
\bar{\alpha}&= (-y- a_k^+(t - |x/a_j^+|))/2t \cr &\ge  (x-y-
a_k^+t)/Mt  + |y|/M,
\end{aligned}
\end{equation}
together with $|y|\ge |a_k^- t|$, to obtain
$$
e^{-\bar{\alpha}^2/|p|}\le e^{- (x-y- a_j^+t)^2/Mt} e^{-\eta |y|},
$$
a contribution that can likewise be absorbed.


{\bf Case II(ib).} In case $a^+_j<0$ or $a^{+}_k >0$, terms
$|\varphi_j^+| \le C
e^{-\eta |x|}$ and $|\tilde{\psi}_j^+| \le C e^{-\eta |y|}$ are
strictly
smaller than those already treated in  Case II(ia), so may be
absorbed in previous terms.
\bigskip
\\
{\bf Case II(ii) $(0<x<y)$.} The case $0<x<y$ can be treated very
similarly to the previous one; see \cite{ZH} for details. This
completes the proof of Case II, and the theorem.

\section{Energy estimates}
\subsection{Energy estimate I}
We shall require the following energy estimate adapted from
\cite{MaZ4,Z2}. Define the nonlinear perturbation variables $U = (u,
v)$ by
\begin{equation}\label{per-var}
U(x,t):=\tilde U(x,t)-\bU(x).
\end{equation}

\begin{proposition}\label{prop-energy-est} Under the hypotheses of Theorem \ref{theo-nonlin}, let $U_0 \in H^4$ and $U=(v,u)^T$ be a solution of \eqref{hyper-parabolic} and \eqref{per-var}. Suppose that, for $0\le t\le T$, the $W^{2,\infty}_x$ norm of
the solution $U$ remains bounded by a sufficiently small constant
$\zeta>0$. Then
\begin{align}\label{energy-ineq}\|U(t)\|_{H^4}^2 \le Ce^{-\theta t}\|U_0\|_{H^4}^2 +
C \int_0^t
e^{-\theta(t-\tau)}\Big(\|U(\tau)\|_{L^2}^2+\CalB_h(\tau)\Big)d\tau\end{align}
for all $0\le t\le T$, where the boundary operator $\CalB_h$ is
defined in Theorem \ref{theo-nonlin}.
\end{proposition}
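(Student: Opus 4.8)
The plan is to prove \eqref{energy-ineq} by a Kawashima-type energy estimate in symmetrized variables, adapting the whole-line damping estimate of \cite{MaZ4,Z2} and tracking carefully the boundary contributions at $x=0$ produced by each integration by parts. First I would pass to the partially symmetric coordinates, setting $W:=\tilde W(\tilde U)-\tilde W(\bU)$, and write the perturbation equation as $\tilde A^0 W_t + \tilde A W_x = (\tilde B W_x)_x + \mathcal{Q}$, where by (A1)--(A3) the matrix $\tilde A^0$ is symmetric positive definite and block diagonal, $\tilde A^{11}$ is symmetric, $\tilde B=\blockdiag(0,\tilde b)$ with $\tilde b\ge\theta>0$, all coefficients are $C^3$ and, by Lemma \ref{lem-profile-decay}, converge exponentially to constants as $x\to+\infty$; the remainder $\mathcal{Q}$ collects the quadratic nonlinearities (which live in the $v$-block and are $\mathcal{O}(|W_x|^2)$) together with linear terms carrying a factor $\bU_x=\mathcal{O}(e^{-\theta x})$. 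Under the standing hypothesis $\|U\|_{W^{2,\infty}_x}\le\zeta$ on $[0,T]$, these remainder terms and (after one further integration by parts at top order) their $x$-derivatives up to fourth order are controlled by $\zeta\|W\|_{H^4}^2$ plus exponentially localized lower-order quantities, hence absorbable; I would keep this in mind and not dwell on it.

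Second, the basic $L^2$ estimate: pairing the equation with $W$ in $L^2(x>0)$ and integrating by parts, the symmetric structure gives $\frac{d}{dt}\tfrac12\langle W,\tilde A^0 W\rangle \le -\theta\|v_x\|_{L^2}^2 + C\|W\|_{L^2}^2 + \mathrm{BT}_0 + (\text{absorbable terms})$, where the only available dissipation sits in the parabolic component $v$ and $\mathrm{BT}_0$ is a boundary term at $x=0$, bilinear in $W(0,t)$ and $(W,W_x)(0,t)$. Here the noncharacteristic hypothesis (H1) together with the Dirichlet conditions (B) is decisive: in the inflow case all components of $W(0,t)$ equal the prescribed data $h(t)$, while in the outflow case $w^{II}(0,t)$ is prescribed and the residual purely hyperbolic contribution $\tfrac12\tilde A^{11}(0)\,|u(0,t)|^2$ carries the dissipative sign $\tilde A^{11}(0)<0$; so in both cases $\mathrm{BT}_0\le C\CalB_h(t)$ modulo quantities absorbable into the interior dissipation.

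Third, I would differentiate the equation $k=1,\dots,4$ times in $x$ and repeat the estimate on $\partial_x^k W$, gaining control of $\|\partial_x^k v_x\|_{L^2}^2$ in terms of $\|W\|_{H^k}^2$, $\CalB_h$, and $\zeta$-small nonlinear terms. The new feature is that the boundary terms now involve normal derivatives $\partial_x^j W(0,t)$ not directly prescribed; I would eliminate these using the PDE and its differentiated forms to express each normal derivative at $x=0$ through time derivatives of $W(0,t)$ and of $h(t)$ --- this is exactly why $\CalB_h$ in \eqref{Bdry-out}--\eqref{Bdry-in} carries up to two, respectively four, time derivatives of the boundary data. Since these estimates still see no dissipation in the hyperbolic component, I would then add a Kawashima skew-symmetric corrector, forming (schematically)
\[
\cE(t):=\sum_{k=0}^4\|\partial_x^k W(t)\|_{L^2}^2+\eps\sum_{k=0}^{3}\langle K\,\partial_x^k u,\ \partial_x^{k+1}v\rangle,
\]
with $K$ constructed from the structural conditions (A1)--(A2) as in \cite{MaZ4} so that the time-derivative of the corrector contributes $+\theta\sum_k\|\partial_x^k u_x\|_{L^2}^2$ modulo terms dominated by the parabolic dissipation already in hand and by $\|W\|_{L^2}^2$; taking $\eps>0$ small makes $\cE\sim\|W\|_{H^4}^2\sim\|U\|_{H^4}^2$, the last equivalence holding because $\tilde W(\cdot)$ is a $C^4$ diffeomorphism near the profile with bounded derivatives. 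The corrector also generates boundary traces, handled just as above. Combining everything yields $\frac{d}{dt}\cE\le-\theta\cE+C\|W\|_{L^2}^2+C\CalB_h(t)$, and Gr\"onwall's inequality gives \eqref{energy-ineq}.

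I expect the main obstacle to be the systematic control of the boundary contributions in the higher-order estimates --- those from differentiating the equation, those from the top-order integration by parts on the quadratic terms, and those introduced by the Kawashima corrector. One must verify that every normal $x$-derivative occurring in a trace at $x=0$ can be converted, using the equation, into tangential $t$-derivatives of $W(0,t)$ and of $h$, so that no uncontrolled top-order trace survives, and that the outflow hyperbolic boundary term always enters with the sign forced by (H1). Relative to the whole-line analysis of \cite{MaZ4}, this is the genuinely new content; the interior Kawashima structure and the absorption of the nonlinear terms go through essentially as there.
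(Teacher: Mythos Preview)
Your overall strategy --- pass to symmetric variables, run Friedrichs estimates at each order, add a Kawashima corrector, convert boundary traces of normal derivatives into time derivatives of $h$ via the PDE, then apply Gronwall --- matches the paper's approach, and your identification of the boundary-trace bookkeeping as the new ingredient relative to \cite{MaZ4} is correct.

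There is, however, a genuine gap in the \emph{interior} estimate. When you integrate by parts the principal term $\langle\partial_x^k W,\tilde A\,\partial_x^{k+1}W\rangle$, you pick up $\tfrac12\langle\tilde A_x\,\partial_x^kW,\partial_x^kW\rangle$, with $\tilde A_x=\cO(\zeta+|\bW_x|)$. The $\zeta$-part is small as you say, but the $|\bW_x|$-part contributes $C\langle|\bW_x|\,\partial_x^k w^I,\partial_x^k w^I\rangle$ to the hyperbolic block; this is exponentially localized in $x$ but \emph{not small}, since the paper treats boundary layers of arbitrary amplitude. The Kawashima corrector supplies only $-\theta_2\|\partial_x^k w^I\|_{L^2}^2$ with $\theta_2>0$ fixed by the end state $U_+$ via (A2), so it cannot absorb a localized term whose coefficient grows with the layer amplitude. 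Your assertion that these ``exponentially localized lower-order quantities'' are ``absorbable'' is therefore unjustified in the large-amplitude regime (which is the whole point of Theorem \ref{theo-nonlin}).

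The paper closes this gap by carrying out all the estimates in an $\alpha$-weighted inner product $\langle f,g\rangle_\alpha:=\langle\alpha f,g\rangle_{L^2}$, where $\alpha$ solves $\alpha_x=-\mathrm{sgn}(\tilde A^{11})\,c_*|\bW_x|\,\alpha$, $\alpha(0)=1$, with $c_*$ large. This is the second place where the noncharacteristic hypothesis (H1) enters, now in the interior: since $\tilde A^{11}$ is sign-definite, the weight generates an extra favorable term $-\tfrac12 c_*\theta_1\langle|\bW_x|\,\partial_x^kw^I,\partial_x^kw^I\rangle_\alpha$ directly from the Friedrichs step, which for $c_*$ large absorbs all the $|\bW_x|$-localized hyperbolic contributions. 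The Kawashima corrector is then needed only to control $\partial_x^k w^I$ in the far field where the coefficients are essentially frozen at $U_+$. Without this weighted norm (or an equivalent device), your argument would go through only under a small-amplitude assumption on $\bU$, precisely the restriction the paper is written to remove.
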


\begin{proof}Observe that a straightforward calculation shows that
$|U|_{H^r}\sim|W|_{H^r},$
\begin{equation}\label{per-varW}W = \tilde W - \bar W := W(\tilde U ) -W(\bU),\end{equation}
for $0\le r\le 4$, provided $|U|_{W^{2,\infty}}$ remains bounded, hence it is sufficient to prove a
corresponding bound in the special variable $W$. We first carry out a complete proof
in the more straightforward case with conditions (A1)-(A3) replaced by the following global
versions, indicating afterward by a few brief remarks the changes needed to carry
out the proof in the general case.
\medskip

(A1') $\tilde A(\tilde W),\tilde A^0,\tilde A^{11}$ are symmetric, $\tilde
A^0\ge \theta_0>0$,
\medskip

(A2') no eigenvector of $\tilde A(\tilde A^0)^{-1}(\tilde W)$ lies in the
kernel of $\tilde B(\tilde W)$,
\medskip

(A3') $\tilde W = \begin{pmatrix}\tilde w^I\\\tilde w^{II}\end{pmatrix}, \tilde B=\begin{pmatrix}0 & 0 \\
0 & \tilde b\end{pmatrix}, \tilde b\ge \theta>0,$ and $\tilde
G\equiv 0$.

Substituting \eqref{per-varW} into \eqref{symmetric-form}, we obtain the quasilinear perturbation equation
\begin{align}\label{perturb-eqs} A^0 W_t + AW_x =
(BW_{x})_x + M_1\bW_x + (M_2\bW_x)_x + g(\tilde
W_x)-g(\bW_x)\end{align} where $A^0:=A^0(W+\bW)$ is positive
definite symmetric, $A:=A(W+\bW) $ is symmetric,
\begin{align*}M_1 &= A(W+\bW) - A(\bW) = \Big(\int_0^1 dA(\bW +
\theta
W)d\theta\Big)W,\\
M_2 &= B(W+\bW) - B(\bW) = \begin{pmatrix}0 & 0 \\
0 & (\int_0^1 dA(\bW + \theta W)d\theta)W\end{pmatrix}.\end{align*}

As shown in \cite{MaZ4}, we have bounds
\begin{align}\label{bound-A1}|A^0|\le C,\quad |A^0_t| &\le C|W_t|\le C(|W_x|+|w^{II}_{xx}|)\le
C\zeta,\\|\partial_xA^0|+|\partial_x^2A^0|&\le
C(\sum_{k=1}^2|\partial_x^kW|+|\bW_x|)\le
C(\zeta+|\bW_x|).\label{bound-A2}\end{align} We have the same bounds
for $A,B,K$, and also due to the form of $M_1,M_2$,
\begin{align}\label{bound-M}|M_1|,|M_2|\le C(\zeta+|\bW_x|)|W|. \end{align}

Note that thanks to Lemma \ref{lem-profile-decay} we have the bound
on the profile: $|\bW_x|\le Ce^{-\theta |x|}$, as $x \to +\infty$.

The following results assert that hyperbolic effects can compensate
for degenerate viscosity $B$, as revealed by the existence of a
compensating matrix $K$.

\begin{lemma}[\cite{KSh}]\label{K} Assuming (A1'), condition (A2') is equivalent to the following:

(K1) There exists a smooth skew-symmetric matrix $K(W)$ such that
\begin{equation}\label{K1}\R(K(A^0)^{-1}A + B)(W)\ge
\theta_2>0.\end{equation}
\end{lemma}

Define $\alpha$ by the ODE \begin{equation}\label{alphaeq} \alpha_x
= -\mbox{sign}(A^{11})c_*|\bW_x|\alpha, \quad
\alpha(0)=1\end{equation} where $c_*>0$ is a large constant to be
chosen later. Note that we have $\alpha_x/\alpha = \beta_x/\beta +
\gamma_x/\gamma$ and thus \begin{align} \label{alpha-est}
(\alpha_x/\alpha)A^{11} \le
-c_*\theta_1|\bW_x|=:-\omega(x)\end{align}
 and  \begin{align} \label{alpha-est1}
|\alpha_x/\alpha |\le c_*|\bW_x|=\theta_1^{-1}\omega(x).\end{align}

In what follows, we shall use $\wprod{,}$ as the $\alpha$-weighted
$L^2$ inner product defined as $$\wprod{f,g} = \iprod{\alpha
f,g}_{L^2}$$ and $\|f\|_s =
\sum_{i=0}^s\wprod{\frac{d^{(i)}}{dx^i}f,\frac{d^{(i)}}{dx^i}f}^{1/2}$
as the norm in weighted $H^s$ space. Note that for any symmetric
operator $S$,
\begin{align*}\wprod{S f_x,f} &= -\frac 12\wprod{(S_x+(\alpha_x/\alpha)S)f,f}-\frac 12 S_0f_0.f_0.\end{align*}

Note that in what follows, we shall pay attention to keeping track of $c_*$. For constants independent of $c_*$, we simply write them as $C$.

\subsubsection{Zeroth order ``Friedrichs-type'' estimate} First
employing integration by parts yields, and using estimates
\eqref{bound-A1}, \eqref{bound-A2}, and then \eqref{alpha-est}, we
obtain
\begin{align*}-\langle A&W_x,W\rangle \\&= \frac 12
\wprod{(A_x+(\alpha_x/\alpha)A)W,W}+ \frac 12 A_0W(0)\cdot W(0)\\&\le
\frac 12 \wprod{(\alpha_x/\alpha)A^{11}w^{I},w^{I}} +
C\wprod{(\zeta+|\bW_x|)|W| + \omega(x)|w^{II}|,|W|} + I^0_b
\\&\le
-\frac 12\wprod{\omega (x)w^{I},w^{I}}+ C(\zeta\|w^{I}\|_0^2
+\wprod{|\bW_x|w^{I},w^{I}})+ C(c_*)\|w^{II}\|^2_0+ I^0_b,
\end{align*} where $I^0_b$ denotes the boundary term $\frac 12 A_0W(0)\cdot W(0)$, and the term $\wprod{|\bW_x|w^{I},w^{I}}$ may be easily absorbed into the first right-hand term, as for $c_*$ sufficiently large,
\begin{equation}\label{est-v}\wprod{|\bW_x|w^{I},w^{I}} \le (c_*\theta_1)^{-1}\wprod{\omega(x)w^{I},w^{I}}\le \frac 1{4C}\wprod{\omega (x)w^{I},w^{I}}.\end{equation}
Whereas, integration by parts also yields
\begin{align*}\wprod{(BW_{x})_x,W}&=-\wprod{BW_{x},W_x}-\wprod{(\alpha_x/\alpha)BW_{x},W} - B_0W_x(0)\cdot W(0)\\&\le
-\theta
\|w^{II}_x\|_0^2+C\wprod{\omega(x)w^{II}_x,u}-b_0w^{II}_x(0)w^{II}(0)\\&\le
-\theta \|w^{II}_x\|_0^2+C(c_*)\|w^{II}\|_0^2 -
b_0w^{II}_x(0)w^{II}(0).
\end{align*}
where we used the fact that $B_0W_x(0)\cdot W(0) =
b_0w^{II}_x(0)w^{II}(0)$, and similarly, together with the
block-diagonal form of $B$ and thus of $M_2 = B(W+\bW) - B(\bW)$,
\begin{align*}\langle (M_2&\bW_x)_x,W\rangle\\&=-\wprod{M_2\bW_x,W_x}-\wprod{(\alpha_x/\alpha)M_2\bW_x,W
}- M_2(0)\bW_x(0)\cdot W(0)\\&\le C\wprod{|\bW_x||W|,|w^{II}_x|} +
C\wprod{\omega(x)|W|,w^{II}}- M_2(0)\bW_x(0)\cdot W(0)
\\&\le \xi\|w^{II}_x\|_0^2 + C\Big(\epsilon\wprod{\omega(x)w^{I},w^{I}}+ C(c_*)\|w^{II}\|_0^2\Big)- M_2(0)\bW_x(0)\cdot W(0)
\end{align*}for any small $\xi$. Note that $C$ is independent of $c_*$. Therefore, for $\xi=\theta/2$ and $c_*$ sufficiently large, we can compute
\begin{align}
\frac 12 &\dt\wprod{A^0W,W}\\
 &= \wprod{A^0W_t,W} +
\frac 12 \wprod{A^0_tW,W}\notag\\
&=\wprod{-AW_x +
(BW_{x})_x+M_1\bW_x + (M_2\bW_x)_x,W}+\frac 12
\wprod{A^0_tW,W}\notag\\
&\le -\frac 14[\wprod{\omega(x)w^{I},w^{I}}
+\theta \|w^{II}_x\|_0^2]+ C\zeta\|w^{I}\|_0^2 +
C(c_*)\|w^{II}\|^2_0 + I_b^0\label{Fzeroth-est0}
\end{align} where the boundary term
\begin{equation} I_b^0:=\frac 12 A_0W(0)\cdot W(0)- b_0w^{II}_x(0)w^{II}(0) -
M_2(0)\bW_x(0)\cdot W(0)\end{equation}which, thanks to the negative
definite of $A_0^1 = A^{11}(0)$ and the special form of $M_2$, is
estimated as
\begin{equation} I_b^0\le -\frac {\theta_1}{2}|w^{I}(0)|^2 + C(|w^{II}(0)|^2 +
|w^{II}_x(0)||w^{II}(0)|)\end{equation} for the outflow case, and
similarly
\begin{equation} I_b^0\le C(|W(0)|^2
+ |w^{II}_x(0)||w^{II}(0)|)\end{equation}for the inflow case.

Therefore together with this, \eqref{Fzeroth-est0} yields
\begin{align} \frac 12 \dt&\wprod{A^0W,W}\notag\\&\le -\frac 14[\wprod{\omega(x)w^{I},w^{I}} +\theta
\|w^{II}_x\|_0^2]+ C\zeta\|w^{I}\|_0^2 +
C(c_*)\|w^{II}\|^2_0+I_b^0.\label{Fzeroth-est}
\end{align}

\subsubsection{First order ``Friedrichs-type'' estimate} Similarly as
above, we shall need the following key estimate. We compute by the
use of integration by parts, \eqref{est-v}, plus $c_*$ being
sufficiently large,
\begin{align}-\wprod{W_x,AW_{xx} } &=\frac12
\wprod{W_x,(A_x+(\alpha_x/\alpha)A)W_x} +\frac 12
A_0W_x(0)\cdot W_x(0)\notag\\&\le -\frac
14\wprod{\omega(x)w^{I}_x,w^{I}_x}+C\zeta\|w^{I}_x\|_0^2
+Cc_*^2\|w^{II}_x\|_0^2\\
&\quad +\frac 12 A_0W_x(0)\cdot W_x(0).\label{ineq-key1}
\end{align}
We shall deal with the boundary term later. Now let us compute
\begin{align}\label{F1} \frac 12 \dt &\wprod{A^0W_x,W_x}
=\wprod{W_x,(A^0W_t)_x}-\wprod{W_x,A^0_xW_t} +\frac12
\wprod{A^0_tW_x,W_x}.
\end{align}
We shall control each term in turn. First we have
\begin{align*}\wprod{A^0_tW_x,W_x} \le C\zeta\|W_x\|_0^2
\end{align*} and by multiplying $(A^0)^{-1}$ into
\eqref{perturb-eqs}, \begin{align*} |\wprod{W_x,A^0_xW_t}| \le&
C\wprod{(\zeta+|\bW_x|)|W_x|, (|W_x|+|w^{II}_{xx}|+|W|)}\\\le&
\xi\|w^{II}_{xx}\|_0^2 + C\wprod{(\zeta+|\bW_x|)w^{I}_x,w^{I}_x}\\&+
C\wprod{(\zeta+|\bW_x|)w^{I},w^{I}} + C\|w^{II}\|_1^2,
\end{align*}where, once again, the term $\wprod{|\bW_x|w^{I}_x,w^{I}_x}$ may be treated the same way as \eqref{est-v}. Using \eqref{perturb-eqs}, we write the first right-hand term in \eqref{F1} as
\begin{align*}\wprod{W_x,(A^0W_t)_x}=&\wprod{W_x,[-AW_x +
(BW_x)_x+M_1\bW_x+(M_2\bW_x)_x]_x}\\=&-\wprod{W_x,AW_{xx}}
+\wprod{W_x,-A_xW_x+(M_1\bW_x)_x}\\&-\wprod{W_{xx}+(\alpha_x/\alpha)W_x,[(BW_x)_x+(M_2\bW_x)_x]}\\&-W_x(0).[(BW_x)_x+(M_2\bW_x)_x](0)
\\\le&-\frac 14\Big[\wprod{\omega(x)w^{I}_x,w^{I}_x}+\theta\|w^{II}_{xx}\|_0^2\Big]\\&+C\Big[\zeta\|w^{I}\|_1^2
+C(c_*)\|w^{II}_x\|_0^2+\wprod{|\bW_x|w^{I},w^{I}}\Big]+I_b^1\end{align*}
where $I_b^1$ denotes the boundary terms
\begin{equation}\label{Ib}I_b^1:=\frac 12 A_0W_x(0)\cdot W_x(0)
-W_x(0)\cdot[(BW_x)_x+(M_2\bW_x)_x](0),\end{equation} and we have
used estimate \eqref{ineq-key1},\eqref{est-v} for $w^{I},w^{I}_x$,
and applied Young's inequality to obtain:
\begin{align*}
\wprod{W_x,-A_xW_x+(M_1\bW_x)_x}&\le C\wprod{(\zeta+|\bW_x|)|W_x|,|W_x|+|W|}.
 \\
-\wprod{W_{xx}+(\alpha_x/\alpha)W_x,(BW_x)_x}&\le
 -\theta\|w^{II}_{xx}\|_0^2 \\&\quad+ C\wprod{|w^{II}_{xx}|+\omega(x)|w^{II}_x|,(\zeta+|\bW_x|)|w^{II}_x|}
\\
-\wprod{W_{xx}+(\alpha_x/\alpha)W_x,(M_2\bW_x)_x}&\le\\
C&\wprod{|w^{II}_{xx}|+\omega(x)|w^{II}_x|,(\zeta+|\bW_x|)(|W_x|+|W|)}.
\end{align*}

Putting these estimates together into \eqref{F1}, we have obtained
\begin{align} \frac 12 \dt \wprod{A^0W_x,W_x}  &+\frac 14\theta\|w^{II}_{xx}\|^2_{0}+\frac 14
\wprod{\omega(x)w^{II}_x,w^{II}_x}\notag\\&\le
C\Big[\zeta\|w^{II}\|_1^2
+\wprod{|\bW_x|w^{I},w^{I}}+C(c_*)\|w^{II}\|_1^2\Big]+I_b^1.\label{ineq-key2}
\end{align}
Let us now treat the boundary term. First observe that using the
parabolic equations with noting that $A^0$ is the diagonal-block
matrix diag$(A^0_1,A^0_2)$, we can write
\begin{align*}\Big|W_x(0)\cdot[(BW_x)_x&+(M_2\bW_x)_x](0) \Big|\\&=\Big|w^{II}_x(0)\cdot[(bw^{II}_x)_x+(M_2^{22}\bW_x)_x](0)
\Big|\\&= \Big|w^{II}_x(0)\cdot[A^0_2w^{II}_t +
A_{21}w^{I}_x+A_{22}w^{II}_x-M_1\bW_x](0)\Big|\\&\le\epsilon|w^{II}_x(0)|^2+C(|W(0)|^2+|w^{II}_x(0)|^2+|w^{II}_t(0)|^2).\end{align*}

Whereas, for the first term in $I_b$, we shall consider two cases
separately. When $A^{11}\le -\theta_1<0$, we get
$$A_0W_x(0)\cdot W_x(0)\le -\frac{
\theta_1}2|w^{I}_x(0)|^2 + C|w^{II}_x(0)|^2.$$ Therefore
\begin{align}\label{Ibd1}I_b^1 \le -\frac{\theta_1}2|w^{I}_x(0)|^2+ C(|W(0)|^2+|w^{II}_x(0)|^2+|w^{II}_t(0)|^2).
\end{align}

Meanwhile, for the case $A^{11}\ge \theta_1>0$, we have
$$|A_0W_x(0)\cdot W_x(0)|\le C|W_x(0)|^2.$$ Now the invertibility of $A^{11}$ allows us to use
the hyperbolic equation to derive
\begin{align*}|w^{I}_x(0)|&\le C(|w^{I}_t(0)|
+ |w^{II}_x(0)|).\end{align*}Therefore in the case of $A^{11}\ge
\theta_1>0$, we get \begin{align} I_b^1 \le
C(|W(0)|^2+|W_t(0)|^2+|w^{II}_x(0)|^2).\end{align}

Now applying the standard Sobolev inequality (applies for
$\alpha-$weighted norms as long as $|\alpha_x/\alpha|$ is uniformly
bounded): \begin{equation}|w(0)|^2 \le
C\|w\|_{L^2}(\|w_{x}\|_{L^2}+\|w\|_{L^2})\end{equation} to control
the term $|w^{II}_x(0)|^2$ in $I_b^1$ in both cases. We get
\begin{equation}\label{Sob-ineq}|w^{II}_x(0)|^2\le \epsilon'\|w^{II}_{xx}\|_0^2 +
C\|w^{II}_x\|^2_0.\end{equation}

Using this with $\epsilon'=\theta/8$, \eqref{Ib}, and \eqref{Ibd1},
the estimate \eqref{ineq-key2} reads
\begin{align} \frac 12 \dt \wprod{A^0W_x,W_x} &+\frac\theta8\|w^{II}_{xx}\|^2_{0}+\frac 14
\wprod{\omega(x)w^{I}_x,w^{I}_x }\notag\\&\le
C\Big(\zeta\|w^{I}\|_1^2 +\wprod{|\bW_x|w^{I},w^{I}}+
C(c_*)\|w^{II}\|_{1}^2\Big)+ I_b^1\label{ineq-key3}
\end{align}where the boundary term $I_b^1$ is estimated as
\begin{equation} I_b^1\le -\frac {\theta_1}{2}|w^{I}_x(0)|^2 + C(|W(0)|^2+|w^{II}_t(0)|^2)\end{equation} for the outflow case, and similarly
\begin{equation} I_b^1\le C(|W(0)|^2+|W_t(0)|^2)\end{equation}for the inflow case.

\subsubsection{Higher order ``Friedrichs-type'' estimate} Similarly
as above, we shall now derive an estimate for
$\wprod{A^0\partial^k_xW,\partial^k_xW}$, $k=2,3,4$. We need the
following key estimate. Integration by parts and \eqref{alpha-est}
give\begin{equation}
\begin{aligned}-\langle\partial^k_xW,A\partial^{k+1}_xW\rangle  =&\frac12
\wprod{\partial^k_xW,(A_x+(\alpha_x/\alpha)A)\partial^k_xW} +\frac
12 A_0\partial^k_xW(0)\cdot\partial^k_xW(0)\notag\\\le& -\frac
14\wprod{\omega(x)\partial^k_xw^{I},\partial^k_xw^{I}}+C\zeta\|\partial^k_xw^{I}\|_0^2
\\&+Cc_*^2\|\partial^k_xw^{II}\|_0^2+\frac 12
A_0\partial^k_xW(0)\cdot\partial^k_xW(0).\label{ineq-key11}
\end{aligned}\end{equation}

We compute
\begin{align} \frac 12 \dt \wprod{A^0\partial^k_xW,\partial^k_xW}=&\frac12 \wprod{A^0_t\partial^k_xW,\partial^k_xW}+\wprod{A^0\partial^k_xW,\partial^k_xW_t}
\notag\\=&\frac12 \wprod{A^0_t\partial^k_xW,\partial^k_xW}+\langle
A^0\partial^k_xW,\partial^k_x[(A^0)^{-1}\notag\\&(-AW_x +(BW_x)_x) +
M_1 \bW_x + (M_2 \bW_x)_x]\rangle.\label{F-higher}
\end{align}
We shall estimate each term in turn. First,
$|\wprod{A^0_t\partial^k_xW,\partial^k_xW}|\le C\zeta \|\partial_x^k
W\|_0^2$, and
\begin{align*}&\wprod{A^0\partial^k_xW,\partial^k_x[-(A^0)^{-1}AW_x
]}
\\&=\wprod{A^0\partial^k_xW,\sum_{i=0}^k\partial^i_x[-(A^0)^{-1}A]\partial^{k-i+1}_xW
}\\&= -\wprod{\partial^k_xW,A\partial^{k+1}_xW } +
\sum_{i=1}^k\wprod{A^0\partial^k_xW,\partial^i_x[-(A^0)^{-1}A]\partial^{k-i+1}_xW
}
\end{align*}
where we have
\begin{equation}
\Big|\partial_x^i[-(A^0)^{-1}A]\Big| \le C
\sum_{\sum{\alpha_j =i}} \prod_{1\le j \le
i}|\partial_x^{\alpha_j}W|.
\end{equation} 
Using the hypothesis on the boundedness of solutions in
$W^{2,\infty}$ and weak Moser inequality \cite[Lemma 1.5]{Z4}, we
get
$$
\begin{aligned}
&|\wprod{A^0\partial _x^kW,\partial _x^i[-(A^0)^{-1}A]\partial_x^{k-i+1}W}|
\le \\
&
\qquad \qquad
C\Big(\|w^{II}\|_k^2 + \zeta\|w^{I}\|_k^2 +
\sum_{i=1}^k\wprod{|\bW_x|\partial_x^iw^{I},\partial_x^iw^{I}}\Big).
\end{aligned}
$$

This, \eqref{ineq-key11}, similar treatment \eqref{est-v} for
$\wprod{|\bW_x|\partial_x^kw^{I},\partial_x^kw^{I}}$ with $c_*$
being sufficiently large give
\begin{align}\wprod{A^0\partial^k_xW,\partial^k_x[-(A^0)^{-1}AW_x
]}&\le -\frac 14\wprod{\omega\partial_x^kw^{I},\partial_x^kw^{I}} +
\frac
12A_0\partial_x^kW(0)\cdot\partial_x^kW(0)\notag\\&+C\Big(\|w^{II}\|_k^2
+ \zeta\|w^{I}\|_k^2 +
\sum_{i=1}^{k-1}\wprod{|\bW_x|\partial_x^iw^{I},\partial_x^iw^{I}}\Big)\end{align}

Next, similarly, we obtain $$|\wprod{A^0\partial_x^k W,
\partial_x^k[(A^0)^{-1}M_1\bW_x]}| \le C\Big(\|w^{II}\|_k^2 +
\zeta\|w^{I}\|_k^2 +
\sum_{i=1}^k\wprod{|\bW_x|\partial_x^iw^{I},\partial_x^iw^{I}}\Big).$$

Finally, we compute and \begin{align*} \langle
A^0\partial^k_xW,&\partial^k_x[(A^0)^{-1}(BW_x+M_2\bW_x)_x]\rangle
\\=& \sum_{i=0}^k\wprod{A^0\partial^k_xW,\partial^{i}_x[(A^0)^{-1}]\partial^{k-i+1}_x(BW_x + M_2\bW_x)}
\\=&\wprod{\partial^k_xW,\partial^{k+1}_x(BW_x + M_2\bW_x)}\\&+
\sum_{i=1}^k\wprod{A^0\partial^k_xW,\partial^{i}_x[(A^0)^{-1}]\partial^{k-i+1}_x(BW_x
+ M_2\bW_x)}
\\\le&-\wprod{\partial^{k+1}_xW + (\alpha_x/\alpha)\partial^k_xW,\partial^k_x(BW_x + M_2\bW_x)} \\&- \partial_x^k[b\partial _xw^{II} + M_2^{22}\bW_x](0)\partial_x^kw^{II}(0)
\\& +\xi \|\partial_x^{k+1}w^{II}\|_0^2 +
C\Big(c_*^2\|w^{II}\|_k^2 + \zeta\|w^{I}\|_k^2 +
\sum_{i=1}^k\wprod{|\bW_x|\partial_x^iw^{I},\partial_x^iw^{I}}\Big)\\\le&-\frac\theta2\|\partial_x^{k+1}w^{II}\|_0^2-\partial_x^k[b\partial
_xw^{II} + M_2^{22}\bW_x](0)\partial_x^kw^{II}(0)\\&\quad +
C\Big(c_*^2\|w^{II}\|_k^2 + \zeta\|w^{I}\|_k^2 +
\sum_{i=1}^k\wprod{|\bW_x|\partial_x^iw^{I},\partial_x^iw^{I}}\Big)
\end{align*}
where in the last inequality we used the special form of $B$ and
$M_2$ to get \begin{align*}\langle&\partial^{k+1}_xW +
(\alpha_x/\alpha)\partial^k_xW,\partial^k_x(BW_x +
M_2\bW_x)\rangle\\&\le\langle|\partial^{k+1}_xw^{II}| +
\omega(x)|\partial^k_xw^{II}|,|\partial^k_x(bw^{II}_x +
\Pi_2M_2\bW_x)|\rangle\\&\le -\theta\|\partial_x^{k+1}w^{II}\|_0^2+
C\Big(C(c_*)\|w^{II}\|_k^2 + \zeta\|w^{I}\|_k^2 +
\sum_{i=1}^k\wprod{|\bW_x|\partial_x^iw^{I},\partial_x^iw^{I}}\Big).\end{align*}
Note that in the last inequality, there is no term of
$\wprod{\omega(x)\partial_x^iw^{I},\partial_x^iw^{I}}$ because of
the presence of $|\bW_x|$ in term of $\Pi_2 M_2$.

Put all these estimates into \eqref{F-higher} together, we have obtained
\begin{align}
\frac 12\dt &\wprod{A^0\partial_x^kW,\partial_x^kW}
+ \frac 14\theta\|\partial^{k+1}_xw^{II}\|_0^2+\frac 14
\wprod{\omega(x)\partial^k_xw^{I},\partial^{k}_xw^{I} }\notag\\&\le
C\Big(C(c_*)\|w^{II}\|_k^2 + \zeta\|w^{I}\|_k^2 +
\sum_{i=1}^{k-1}\wprod{|\bW_x|\partial_x^iw^{I},\partial_x^iw^{I}}\Big)+I_b
\label{F-higher1}\end{align} where the boundary term
\begin{equation}\label{bdry-terms}I_b:=\frac
12A_0\partial_x^kW(0)\cdot\partial_x^kW(0)- \partial_x^k[b\partial
_xw^{II} + M_2^{22}\bW_x](0)\partial_x^kw^{II}(0).\end{equation}

For this boundary term, we shall treat the same as we did before. First using the parabolic equations with noting that $A^0$ is the diagonal-block matrix diag$(A^0_1,A^0_2)$, we can write
\begin{align}\label{bdry-est}
\partial_x^k &[b\partial _xw^{II} + M_2^{22}\bW_x](0)\notag \\
&= \partial^{k-1}_x[A^0_2(0)w^{II}_t(0,t) + A_{21}w^{I}_x(0)+A_{22}w^{II}_x(0) - \Pi_2 M_1(0)\bW_x(0)]
.\end{align}

Therefore we get
\begin{align}\label{bdry-treat}|\partial_x^k[b\partial _xw^{II} &+ M_2^{22}\bW_x](0)\partial_x^kw^{II}(0)|
\notag\\&\le
C|\partial^{k}_xw^{II}(0)|\Big[|\partial^{k-1}_xw^{II}_t(0)|+\sum_{i=0}^k(|\partial_x^iw^{II}(0)|
+ |\partial_x^iw^{I}(0)|)\Big]\notag\\&\le
\epsilon\sum_{i=0}^k|\partial_x^iw^{I}(0)|^2 +
C\sum_{i=1}^k|\partial_x^iw^{II}(0)|^2\\
&\quad +C|\partial^{k}_xw^{II}(0)||\partial^{k-1}_xw^{II}_t(0)|\end{align}
for any $\epsilon$ small. To deal with the term of $w^{II}_t$, for
simplicity, assume $k=3$. By solving the parabolic-part equations
and using the invertibility of $b$, we obtain
\begin{equation}\begin{aligned} |\partial_x^2 w^{II}_t| &= |\partial_t
w^{II}_{xx}| \le C(|w^{II}_{tt}| + |W_t| + |W_x|+|W_{xt}|)\\|W_{xt}|
&\le C(|W|+|W_x|+|W_{xx}| + |w^{II}_{xxx}|).
\end{aligned}\end{equation}

Since for case $k=3$ we have a good term
$\|\partial^{4}_xw^{II}\|_0$ (see \eqref{F-higher1}), the term
$|w^{II}_{xxx}(0)|$ resulting from the boundary treatment is easily
treated via Sobolev embedding inequality. Hence all terms in a form
$\partial_x^r w^{II}(0)$ are easily estimated. Meanwhile, using the
hyperbolic-part equations, we have
\begin{equation} |w^{I}_t|\le C(|W|+|W_x|).\end{equation}

Employing Young's inequality to the last term in \eqref{bdry-treat},
we obtain
\begin{align}\label{bdry-treat1}|\partial_x^k[b\partial _xw^{II} &+ M_2^{22}\bW_x](0)\partial_x^kw^{II}(0)|
\notag\\&\le \epsilon\sum_{i=0}^k|\partial_x^iw^{I}(0)|^2 +
C(\sum_{i=0}^{k}|\partial_x^iw^{II}(0)|^2+|w^{II}_t(0)|^2+|w^{II}_{tt}(0)|^2)\end{align}
 To deal with the term of $w^{I}$, we need to consider two cases
separately. When $A^{11}\le -\theta_1<0$, we get
$$A_0\partial_x^kW(0)\cdot\partial_x^kW(0)\le
-\frac{\theta_1}2|\partial_x^kw^{I}(0)|^2 +
C|\partial_x^kw^{II}(0)|^2.$$ Therefore
\begin{align}\label{Ibk}I^k_b \le& -\frac{\theta_1}2|\partial_x^kw^{I}(0)|^2+ C(\sum_{i=0}^{k-1}|\partial_x^iw^{I}(0)|^2 \notag\\&+
 \sum_{i=0}^{k}|\partial_x^iw^{II}(0)|^2+|w^{II}_t(0)|^2+|w^{II}_{tt}(0)|^2).
\end{align}

Meanwhile, for the case $A^{11}\ge \theta_1>0$, we have
$$|A_0\partial_x^kW(0)\cdot\partial_x^kW(0)|\le C(|\partial_x^kw^{I}(0)|^2 +
|\partial_x^kw^{II}(0)|^2).$$ The invertibility of $A^{11}$ allows
us to use the hyperbolic equation to derive
\begin{align*}|\partial_x^kw^{I}(0)|&\le C(\sum_{i=0}^{k}(|\partial_x^iw^{II}(0)|^2+|\partial_t^iw^{I}(0)|^2)+|w^{II}_t(0)|^2+|w^{II}_{tt}(0)|^2)
.\end{align*}Therefore in the case of $A^{11}\ge \theta_1>0$, we get
\begin{align} I^k_b \le C(\sum_{i=0}^{k}(|\partial_x^iw^{II}(0)|^2+|\partial_t^iw^{I}(0)|^2)+|w^{II}_t(0)|^2+|w^{II}_{tt}(0)|^2).\end{align}

Employing the boundary estimates into \eqref{F-higher1}, we have
obtained
\begin{align} \dt& \wprod{A^0\partial^k_xW,\partial^k_xW} +\theta\|\partial^{k+1}_xw^{II}\|^2_{0}+c_*\theta_1
\wprod{|\bW_x|\partial^k_xw^{I},\partial^{k}_xw^{I} }\notag\\&\le
C\Big(\zeta\|w^{I}\|_k^2 +
c_*^2\|w^{II}\|_{k}^2+\sum_{j=0}^{k-1}\wprod{|\bW_x|\partial^j_xw^{I},\partial^j_xw^{I}}
\Big)+I_b^k\label{ineq-key30}
\end{align}where, after absorbing the terms of $|\partial^r _xw^{II}(0)|$ via Sobolev embedding, the boundary term $I_b^k$ satisfies
\begin{align}\label{Ibk-out}I^k_b \le -\frac{\theta_1}2|\partial_x^kw^{I}(0)|^2+ C(\sum_{i=0}^{k-1}|\partial_x^iw^{I}(0)|^2 +|w^{II}_t(0)|^2+|w^{II}_{tt}(0)|^2)
\end{align}for outflow case, and
\begin{align}\label{Ibk-in} I^k_b \le
C(\sum_{i=0}^{k}|\partial_t^iw^{I}(0)|^2+|w^{II}_t(0)|^2+|w^{II}_{tt}(0)|^2)\end{align}for
the inflow case.

We shall establish an Kawashima-type estimate to bound the term
$\|w^{I}\|_k^2$ appearing on the left hand side of the above.

\subsubsection{``Kawashima-type'' estimate} Let $K$ be the
skew-symmetry in \eqref{K1}. Integration by parts and skew-symmetry
property of $K$ yield \begin{align*}\wprod{KW_{xt},W} &=
-\wprod{KW_t,W_x}-\wprod{(K_x+(\alpha_x/\alpha)K)W_t,W}
-K_0W_0\cdot(W_0)_t
\\&=\wprod{KW_x,W_t}+\wprod{(K_x+(\alpha_x/\alpha)K)W,W_t} -K_0W_0\cdot(W_0)_t
.\end{align*} Using this, we compute
\begin{align*}
\dt&\wprod{KW_x,W}=\\
&\wprod{K_tW_x+KW_{xt},W} + \wprod{KW_x,W_{t}}
\\=&\wprod{K_tW_x,W}+\wprod{2KW_x+(K_x+(\alpha_x/\alpha)K)W,W_t}\\&-K_0W_0\cdot(W_0)_t
\\=&\wprod{K_tW_x,W}+\wprod{2KW_x+(K_x+(\alpha_x/\alpha)K)W,-(A^0)^{-1}AW_x}
\\&+\langle
2KW_x+(K_x+(\alpha_x/\alpha)K)W,(A^0)^{-1}(BW_x)_x\\&+M_1\bW_x+(M_2\bW_x)_x\rangle
-K_0W_0\cdot(W_0)_t
\\
\le&-2\wprod{K(A^0)^{-1}AW_x,W_x} + \xi\|w^{I}_x\|_0^2
-K_0W_0\cdot(W_0)_t\\&+
C\Big(C(c_*)\|w^{II}\|_{2}^2+\zeta\|w^{I}\|_0^2
+\wprod{\omega(x)w^{I},w^{I}}+\wprod{\omega(x)w^{I}_x,w^{I}_x}\Big).\end{align*}
Using \eqref{K1}, we get
$$\wprod{K(A^0)^{-1}AW_x,W_x} \ge \theta_2 \|w^{I}_x\|_0^2 -
C(c_0)\|w^{II}_{x}\|_0^2,$$ and thus obtain from the above estimate
with $\xi = \theta_2/2$
\begin{align}\label{K-1st-est}\dt\wprod{KW_x,W} \le & -\frac{\theta_2}{2}\|w^{I}_x\|_0^2
+C\Big(C(c_*)\|w^{II}\|_{2}^2+\zeta\|w^{I}\|_0^2 \notag
\\&+\wprod{\omega(x)w^{I},w^{I}}+\wprod{\omega(x)w^{I}_x,w^{I}_x}\Big)-K_0W_0\cdot(W_0)_t.\end{align}

\subsubsection{Higher order ``Kawashima-type'' estimate} With similar
calculations, we shall obtain an estimate for
$\dt\wprod{K\partial_x^{k}W,\partial_x^{k-1}W}, k\ge 1$. We compute
\begin{align*}\langle
K\partial_x^{k}W_{t}&,\partial_x^{k-1}W\rangle=\wprod{K\partial_x^{k}W,\partial_x^{k-1}W_t}\\
&+\wprod{(K_x+(\alpha_x/\alpha)K)\partial_x^{k-1}W,\partial_x^{k-1}W_t}
- K\partial_x^{k-1}W_t\cdot\partial_x^{k-1}W (0) .\end{align*} and
hence
\begin{align*}\dt&\wprod{K\partial_x^{k}W,\partial_x^{k-1}W}=\wprod{K_t\partial_x^{k}W,\partial_x^{k-1}W}+\wprod{2K\partial_x^{k}W,\partial_x^{k-1}W_t}
\\&\quad+\wprod{(K_x+(\alpha_x/\alpha)K)\partial_x^{k-1}W,\partial_x^{k-1}W_t}- K\partial_x^{k-1}W_t\cdot\partial_x^{k-1}W (0)
\\&=\wprod{2K\partial_x^{k}W,\partial_x^{k-1}[(-A^0)^{-1}(AW_x+(BW_x)_x+M_1\bW_x+(M_2\bW_x)_x)]}
\\&\quad+\langle (K_x+(\alpha_x/\alpha)K)\partial_x^{k-1}W,\\&\quad\quad\quad\partial_x^{k-1}[(-A^0)^{-1}(AW_x+(BW_x)_x+M_1\bW_x+(M_2\bW_x)_x)]\rangle\\&\quad- K\partial_x^{k-1}W_t\cdot\partial_x^{k-1}W (0)\\&
\le-2\wprod{K(A^0)^{-1}A\partial_x^{k}W,\partial_x^{k}W} +
\epsilon\|w^{I}\|_k^2 +
Cc_*^2\|w^{II}\|_{k+1}^2\\&\quad+C\zeta\|w^{I}\|_0^2
+C\sum_{l=1}^k\wprod{\omega(x)\partial_x^{l}w^{I},\partial_x^{l}w^{I}}-
K\partial_x^{k-1}W_t\cdot\partial_x^{k-1}W (0)\end{align*} for
$\epsilon$ small.

Using \eqref{K1}, we obtain from the above
\begin{align}\label{K2}
\dt\wprod{K\partial_x^{k}W,\partial_x^{k-1}W}
\le&-\frac{\theta_2}{3}\|\partial_x^k w^{I}\|_0^2+
Cc_*^2\|w^{II}\|_{k+1}^2+ \epsilon \|w^{I}\|_{k-1}\\
&\quad + C \zeta\|w^{I}\|_0^2\notag
+C\sum_{l=1}^k\wprod{\omega(x)\partial_x^{l}w^{I},\partial_x^{l}w^{I}}\\
&\quad - K\partial_x^{k-1}W_t\cdot\partial_x^{k-1}W (0).
\end{align}

\subsubsection{Final estimates} We are ready to conclude our result.
First combining the estimate \eqref{ineq-key3} with
\eqref{Fzeroth-est}, we easily obtain
\begin{align*} \frac 12 \dt& \Big(\wprod{A^0W_x,W_x} + M\wprod{A^0 W,W}\Big)
 \\\le& -\Big(\frac\theta8\|w^{II}_{xx}\|^2_{0}+\frac 14\wprod{\omega(x)w^{I}_x,w^{I}_x }\Big)\\&+
C\Big(\zeta\|w^{I}\|_1^2  +\wprod{|\bW_x|w^{I},w^{I}}+
C(c_*)\|w^{II}\|_{1}^2\Big) + I_b^1
\\&-\frac M4\Big(\wprod{\omega(x) w^{I},w^{I}} +\theta
\|w^{II}_x\|_0^2\Big)+ CM\zeta\|w^{I}\|_0^2 \\
&\quad + MC(c_*)\|w^{II}\|^2_0 + MI_b^0
\end{align*}

By choosing $M$ sufficiently large such that $M\theta \gg C(c_*)$,
and noting that $c_*\theta_1 |\bW_x|\le \omega(x)$, we get
\begin{equation}\label{F-combine01}\begin{aligned} \frac 12 \dt&
\Big(\wprod{A^0W_x,W_x} + M\wprod{A^0 W,W}\Big)\\\le&
-\Big(\theta\|w^{II}\|^2_{2}+\wprod{\omega(x) w^{I},w^{I}}+
\wprod{\omega(x)w^{I}_x,w^{I}_x }\Big)\\&+ C\Big(\zeta\|w^{I}\|_1^2+
C(c_*)\|w^{II}\|^2_0\Big) + I_b^1 + MI_b^0.
\end{aligned}\end{equation}
We shall treat the boundary terms later. Now we employ the estimate
\eqref{K-1st-est} to absorb the term $\|w^{I}\|_1$ into the left
hand side. Indeed, fixing $c_*$ large as above, adding
\eqref{F-combine01} with \eqref{K-1st-est} times $\epsilon$, and
choosing $\epsilon ,\zeta$ sufficiently small such that $\epsilon
C(c_*)\ll \theta, \epsilon \ll 1$ and $\zeta \ll \epsilon \theta_2$,
we obtain

\begin{align*} \frac 12 \dt& \Big(\wprod{A^0W_x,W_x} +
M\wprod{A^0 W,W} + \epsilon \wprod{KW_x,W} \Big) \notag\\\le&
-\Big(\theta\|w^{II}\|^2_{2}+\wprod{\omega(x) w^{I},w^{I}}+
\wprod{\omega(x)w^{I}_x,w^{I}_x }\Big)\\&+ C\Big(\zeta\|w^{I}\|_1^2+
C(c_*)\|w^{II}\|^2_0\Big)
-\frac{\theta_2\epsilon}{2}\|w^{I}_x\|_0^2\\&+
C\epsilon\Big(C(c_*)\|w^{II}\|_{2}^2+\zeta\|w^{I}\|_0^2+\wprod{\omega(x)
w^{I},w^{I}}+ \wprod{\omega(x)w^{I}_x,w^{I}_x }\Big)\\&+ I_b^1 +
MI_b^0-\epsilon K_0W_0\cdot(W_0)_t
\\\le &-\frac 12\Big(\theta\|w^{II}\|^2_{2} + \theta_2\epsilon\|w^{I}_x\|_0^2\Big)+
C(c_*)\Big(\zeta\|w^{I}\|_0^2+ \|w^{II}\|^2_0\Big)+ I_b
\end{align*} where $I_b:=I_b^1 +
MI_b^0-\epsilon K_0W_0\cdot(W_0)_t$.

By a view of boundary terms $I_b^0,I_b^1$, we treat the term $I_b$
in each inflow/outflow case separately. Recalling the inequality
\eqref{Sob-ineq}, $|w^{II}_x(0)|\le C\|w^{II}\|_2$. Thus, using
this, for the inflow case we have
\begin{align*} I_b&\le M|W(0)|^2 + C|W_t(0)|^2 + M|w^{II}_x(0)||w^{II}(0)|
\\&\le \frac{\theta}{2}\|w^{II}\|_2^2 + M^2|W(0)|^2 + C|W_t(0)|^2.\end{align*}
Meanwhile, for the outflow case, with $M\theta_1 \gg 1$ and
$K_0W_0\cdot(W_0)_t \sim w^{II}_0w^{I}_{0t} + w^{I}_0w^{II}_{0t}$,
we have $I_b$ is bounded by
\begin{align*}
-\frac{\theta_1}{2}&(|w^{I}_x(0)|^2 + |w^{I}(0)|^2)\\
&\quad + C(|w^{II}_t(0)|^2 +|w^{II}(0)|^2 )+
\epsilon (|w^{II}_x(0)|^2+ |w^{I}_{t}(0)|^2)\end{align*}
which,
together with $\epsilon$ being sufficiently small and the facts that
$$
|w^{I}_{t}(0)|\le C(|w^{I}_x(0)| + |w^{II}_x(0)| + |W(0)|)
$$
obtained from solving the hyperbolic equation and the embedding
inequality
$$
|w^{II}_x(0)|\le C\|w^{II}\|_2,
$$
yields
\begin{align*} I_b \le -\frac{\theta_1}{2}(|w^{I}_x(0)|^2 + |w^{I}(0)|^2)+
\frac{\theta}{2}\|w^{II}\|_2^2 + C(|w^{II}(0)|^2 +
|w^{II}_t(0)|^2)\end{align*}for the outflow case. Now by
Cauchy-Schwarz's inequality and by positivity definite of $A^0$, it
is easy to see that
\begin{equation}\cE:=\wprod{A^0W_x,W_x} + M\wprod{A^0 W,W} +
\epsilon \wprod{KW_x,W} \sim \|W\|_{H^1_\alpha}^2 \sim
\|W\|_{H^1}^2.\end{equation}  The last equivalence is due to the
fact that $\beta$ is bounded above and below away from zero. Thus
the above derives

\begin{align*}\dt \cE(W)(t)\le  - \theta_3 \cE(W)(t) + C(c_*)\Big(\|W(t)\|_{L^2}^2 +
\CalB_1(t)\Big),\end{align*} for some positive constant $\theta_3$,
which by the Gronwall inequality yields
\begin{equation}\label{energy-estimate}\|W(t)\|_{H^1}^2 \le Ce^{-\theta t}\|W_0\|_{H^1}^2 + C(c_*)\int_0^t e^{-\theta(t-\tau)}
\Big(\|W(\tau)\|_{L^2}^2 + \CalB_1(\tau)\Big)d\tau,\end{equation}
where $W(x,0)=W_0(x)$ and
\begin{equation} \CalB_1(\tau):= \cO(|W(0,\tau)|^2 +
|W_t(0,\tau)|^2) = \cO(|(h_1,h_2)|^2 +
|(h_1,h_2)_t|^2)\end{equation} for the inflow case, and
\begin{equation} \CalB_1(\tau):= \cO(|w^{II}(0,\tau)|^2 +
|w^{II}_t(0,\tau)|^2)= \cO(|h|^2 + |h_t|^2)\end{equation} for the
outflow case.

Similarly, by induction, we shall derive the same estimates for $W$
in $H^s$. To do that, let us define
\begin{align*} \cE_1(W)&:= \wprod{A^0W_x,W_x} + M\wprod{A^0 W,W} + \epsilon \wprod{KW_x,W}
\\\cE_k(W)&:= \wprod{A^0\partial_x^k W,\partial_x^kW} + M\cE_{k-1}(W) + \epsilon \wprod{K\partial_x^kW,\partial_x^{k-1}W}
.\end{align*}

Then by Cauchy-Schwarz inequality, it is easy to see that $\cE_k(W)
\sim \|W\|_{H^k}^2$, and by induction, we obtain
\begin{align*}\dt \cE_k(W)(t)\le  - \theta_3 \cE_k(W)(t) + C(c_*)(\|W(t)\|_{L^2}^2+\CalB_k(t)),\end{align*} for some positive constant $\theta_3$,
which by the Gronwall inequality yields
\begin{equation}\label{energy-estimate-higher}\|W(t)\|_{H^k}^2 \le Ce^{-\theta t}\|W_0\|_{H^k}^2 + C(c_*)\int_0^t
e^{-\theta(t-\tau)}(\|W(\tau)\|_{L^2}^2+\CalB_k(\tau))d\tau,\end{equation}
where $W(x,0)=W_0(x), k=2,3,4$, and $\CalB_k$ are defined as in
\eqref{Bdry-out} and \eqref{Bdry-in}.

\subsubsection{The general case} Following \cite{MaZ4}, the general
case that hypotheses (A1)-(A3) hold can easily be covered via
following simple observations. First, we may express matrix $A$ in
\eqref{perturb-eqs} as
\begin{align}\label{formA}A(W+\bW) = \hat A +
(\zeta + |\bW_x|)\begin{pmatrix}0 & \cO(1) \\
\cO(1) & \cO(1)\end{pmatrix} \end{align}
where $\hat A$ is a symmetric matrix obeying the same derivative
bounds as described for $A$, identical to $A$ in the $11$ block and
obtained in other blocks $jk$ by
\begin{align}\label{formAjk}A^{jk}(W+\bW) &=
A^{jk}(\bW)+A^{jk}(W+\bW) - A^{jk}(\bW)\notag\\&= A^{jk}(W_+) +
\cO(|W_x|+|\bW_x|)= A^{jk}(W_+) + \cO(\zeta+|\bW_x|).
\end{align}

Replacing $A$ by $\hat A$ in the $k^{th}$ order
Friedrichs-type bounds above, we find that the resulting error terms
may be expressed as
$$\wprod{\partial_x^k\cO(\zeta +
|\bW_x|)|W|,|\partial_x^{k+1}w^{II}|},$$ plus lower order terms,
easily absorbed using Young's inequality, and boundary terms
$$\cO(\sum_{i=0}^k|\partial_x^iw^{II}(0)||\partial^{k}_xw^{I}(0)|)$$ resulting from the use of integration by
parts as we deal with the $12-$block. However these boundary terms
were already treated somewhere as before (see \eqref{bdry-treat}).
Hence we can recover the same Friedrichs-type estimates obtained
above. Thus we may relax $(A1')$ to $(A1)$.

The second observation is that, because of the favorable terms
$$c_*\theta_1 \wprod{|\bW_x|\partial^k_xw^{I},\partial^{k}_xw^{I} }$$
occurring in the lefthand sides of the Friedrichs-type estimates
\eqref{ineq-key30}, we need the Kawashima-type bound only to control
the contribution to $|\partial_x^kw^{I}|^2$ coming from $x$ near
$+\infty$; more precisely, we require from this estimate only a
favorable term
$$-\theta_2 \wprod{(1-\cO(\zeta+|\bW_x|))\partial^k_xw^{I},\partial^{k}_xw^{I} }$$
rather than $\theta_2\|\partial_x^kw^{I}\|_0^2$ as in \eqref{K2}.
But, this may easily be obtained by substituting for $K$ a
skew-symmetric matrix-valued function $\hat K: = K(W_+)$, and using
the fact that
$$\R(K(A^0)^{-1}A + B)(W_+)\ge
\theta_2>0,$$ and same as \eqref{formAjk}, $K = \hat K +
\cO(\zeta+|\bW_x|)$, we have $$\R(K(A^0)^{-1}A + B)(W)\ge
\theta_2(1-\cO(\zeta+|\bW_x|))>0.$$ Thus we may relax $(A2')$ to
$(A2)$.

Finally, notice that the term $g(\tilde W_x)-g(\bW_x)$ in the
perturbation equation may be Taylor expanded as
$$\begin{pmatrix}0 \\
g_1(\tilde W_x,\bW_x)+g_1(\bW_x,\tilde W_x)\end{pmatrix}+\begin{pmatrix}0 \\
\cO(|W_x|^2)\end{pmatrix}$$ The first, since linear term on the
righthand side decays at plus spatial infinity and vanishes in the
1-1 block, it may be treated as follows by Young's inequality
$$\begin{pmatrix}0 \\
g_1(\tilde W_x,\bW_x)+g_1(\bW_x,\tilde W_x)\end{pmatrix}\begin{pmatrix}w^{I}_x \\
w^{II}_x\end{pmatrix}\le
C\Big(\wprod{(\zeta+|\bW_x|)w^{I}_x,w^{I}_x}
+\|w^{II}_x\|_{0}^2\Big)$$which can be treated in the
Friedrichs-type estimates. The $(0,O(|W_x|^2)$ nonlinear term may be
treated as other source terms in the energy estimates. Specifically,
the
worst-case term $$\wprod{\partial_x^k W,\partial_x^k\begin{pmatrix}0 \\
\cO(|W_x|^2)\end{pmatrix}} = -\wprod{\partial_x^{k+1}
w^{II},\partial_x^{k-1}\cO(|W_x|^2)}-\partial_x^k
w^{II}(0)\partial_x^{k-1}\cO(|W_x|^2)(0)$$ may be bounded by
$$\|\partial_x^{k+1}
w^{II}\|_{L^2}\|W\|_{W^{2,\infty}}\|W\|_{H^k} -\partial_x^k
w^{II}(0)\partial_x^{k-1}\cO(|W_x|^2)(0).$$ The boundary term will
be contributed into the form \eqref{bdry-terms} of $I_b$, and hence
using the parabolic equations to get rid of this term as treating in
\eqref{bdry-est}. Thus, we may relax $(A3')$ to $(A3)$, completing
the proof of the general case $(A1)-(A3)$ and the
proposition.\end{proof}

\subsection{Energy estimate II}
We require also the following estimate:
\begin{lemma}[\cite{HR}]\label{lem-w-EE} Under the hypotheses of Theorem \ref{theo-nonlin}, let $E_0 :=\|(1+|x|^2)^{3/4}U_0\|_{H^4}$, and suppose that, for $0\le t\le T$, the $W^{2,\infty}$ norm of the solution $U$ of \eqref{perteqs} remains bounded by some constant $C>0$. Then, for all $0\le t\le T$, \begin{equation} \|(1+|x|^2)^{3/4}U(x,t)\|_{H^4}^2 \le M E_0e^{Mt}. \end{equation}
\end{lemma}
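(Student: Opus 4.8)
The plan is to run the same Friedrichs--Kawashima energy scheme as in the proof of Proposition~\ref{prop-energy-est}, but now in the polynomially weighted norm with weight $\phi(x):=(1+|x|^2)^{3/4}$, exploiting the fact that here we only ask for exponential-in-time growth rather than sharp decay. First I would record the elementary weight bounds $|\partial_x^j\phi(x)|\le C_j(1+x^2)^{(3-2j)/4}\le C_j\phi(x)$ for all $j\ge 0$, so that $\phi$ is a weight of moderate growth: every derivative of $\phi$ (and of $\phi^2$) is dominated by $\phi$ (resp. $\phi^2$), and $|\phi_x/\phi|$ is uniformly bounded, which is exactly what makes the weight essentially inert in all integrations by parts and keeps the weighted Sobolev trace inequality of Proposition~\ref{prop-energy-est} valid. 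As in that proof it suffices to work in the symmetrized variable $W$, since one checks $|\phi U|_{H^r}\sim|\phi W|_{H^r}$ for $0\le r\le 4$ under the standing $W^{2,\infty}$ bound; so I would substitute \eqref{per-varW} into \eqref{symmetric-form} to obtain the perturbation equations \eqref{perturb-eqs} and set $\cE(t)$ to be the natural $\phi^2$-weighted analogue of the functional $\cE_4(W)$ built in Proposition~\ref{prop-energy-est}, i.e. a sum of $\wprod{\phi^2A^0\partial_x^kW,\partial_x^kW}$, $k=0,\dots,4$, together with small Kawashima correctors $\epsilon\wprod{\phi^2K\partial_x^kW,\partial_x^{k-1}W}$ for $k\ge 1$; one has $\cE(t)\sim\|\phi W(t)\|_{H^4}^2\sim\|\phi U(t)\|_{H^4}^2$. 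The goal is the differential inequality $\dt\cE(t)\le M\cE(t)+M\CalB_h(t)$.

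For each $k=0,\dots,4$ I would differentiate $\wprod{\phi^2A^0\partial_x^kW,\partial_x^kW}$ in $t$, use \eqref{perturb-eqs} to eliminate $\partial_x^kW_t$, and integrate by parts in the $\phi^2$-weighted inner product exactly as in Proposition~\ref{prop-energy-est}. The parabolic block produces good negative terms $-\theta\|\phi\,\partial_x^{k+1}w^{II}\|_{L^2}^2$, and every remaining contribution is an error of one of three types, each bounded by $C\cE(t)$ plus $\epsilon$ times the good term (absorbed by Young): (i) commutators of $\partial_x^k$ with the variable coefficients $A,B,A^0,M_1,M_2$ and with the quadratic source $g(\tilde W_x)-g(\bW_x)$, controlled by the weak Moser inequality and the $W^{2,\infty}$ bound on $W$, the weight distributing harmlessly since $|\partial_x^j\phi|\le C\phi$; (ii) terms in which a derivative falls on $\phi^2$, of the form $\wprod{\phi\phi'(\cdot),(\cdot)}$, harmless because $\phi'=\phi^2(\phi'/\phi)\cdot\phi^{-1}$ with $\phi'/\phi$ bounded; and (iii) the profile-dependent terms carrying a factor $\bW_x$, which stay bounded after multiplication by $\phi^2$ since $|\phi^2\bW_x|\le Ce^{-\theta x/2}$ by Lemma~\ref{lem-profile-decay}. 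The convection term $AW_x$ contributes, after one integration by parts, $\tfrac12\wprod{\partial_x(\phi^2A)\partial_x^kW,\partial_x^kW}$ plus lower order, again $\le C\cE$, and the only borderline term $\wprod{\phi^2\partial_x^kW,A\,\partial_x^{k+1}W}$ loses its extra derivative under integration by parts exactly as in \eqref{ineq-key11}, so no loss of derivatives occurs. Note that since we permit the full $\|W\|_{H^4}^2$ on the right-hand side, the Kawashima-type estimate is not actually required to close at top order; if one wishes to keep that structure, one simply adds the weighted correctors above and repeats the Kawashima argument verbatim.

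The boundary contributions at $x=0$ are treated exactly as in Proposition~\ref{prop-energy-est}: since $\phi(0)=1$ and $\phi'(0)=0$, the weight is invisible at the boundary and the boundary terms $I_b^k$ coincide with those of that proof. One disposes of them via the Dirichlet conditions (B), the (weighted) Sobolev trace inequality $|w(0)|^2\le C\|w\|_{L^2}(\|w_x\|_{L^2}+\|w\|_{L^2})$ to absorb the $w^{II}$-traces into the good parabolic terms, and, in the inflow case, the invertibility of $A^{11}$ to trade boundary $x$-derivatives of $w^{I}$ for $t$-derivatives; what survives is $\le C\CalB_h(t)$. Collecting everything yields $\dt\cE(t)\le M\cE(t)+M\CalB_h(t)$. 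Since $\cE(0)\le C\|\phi U_0\|_{H^4}^2=CE_0^2\le CE_0$ (using that $E_0$ is small) and $\CalB_h(t)\le E_0(1+t)^{-1/2}\le E_0$, Gronwall's inequality gives $\cE(t)\le(\cE(0)+ME_0t)e^{Mt}\le ME_0e^{Mt}$ after adjusting $M$, i.e. $\|(1+|x|^2)^{3/4}U(x,t)\|_{H^4}^2\le ME_0e^{Mt}$.

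The main obstacle I anticipate is not any single inequality but the bookkeeping: carrying the growing weight $\phi$ through all the integrations by parts at orders $k=0,\dots,4$ while verifying at each step that (a) the weight derivatives never spoil the sign of the good parabolic terms --- guaranteed by $|\partial_x^j\phi|\le C_j\phi$ and the boundedness of $\phi'/\phi$ --- and (b) the profile source terms remain integrable after multiplication by $\phi^2$ --- guaranteed because the exponential decay of $\bW_x$ beats the polynomial growth of $\phi$. A secondary point worth stressing is that, in contrast with Proposition~\ref{prop-energy-est}, here we deliberately allow $\|U\|_{H^4}^2$ (not merely $\|U\|_{L^2}^2$) on the right-hand side, which renders the estimate routine: no sharp cancellation and no delicate tracking of the constant $c_*$ is needed, only that every error term carries at worst the derivatives needed to be dominated by $\cE$.
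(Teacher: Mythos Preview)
Your proposal is correct and matches the paper's approach: the paper's proof is a single sentence---``This follows by standard Friedrichs symmetrizer estimates carried out in the weighted $H^4$ norm''---and you have filled in exactly that computation, with the additional (correct) observation that the Kawashima correctors are not strictly needed since only exponential-in-time growth is required. Your treatment of the weight, the boundary terms, and the final Gronwall step (including the reduction $E_0^2\le CE_0$ via smallness) is sound.
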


\begin{proof} This follows by standard Friedrichs symmetrizer estimates carried out
in the weighted $H^4$ norm. \end{proof}

\begin{remark}\label{rem-wpoint-bounds} An immediate consequence of Lemma \ref{lem-w-EE}, by Sobolev embedding: $W^{3,\infty}\subset H^4$
and equation \eqref{perteqs}, is that, if $E_0$ and $ \|U\|_{H^4}$
are uniformly bounded on $[0,T]$, then
 \begin{equation} (1+|x|)^{3/2}\Big[|U|+|U_t|+|U_x|+|U_{xt}|\Big](x,t)
 \end{equation}
is uniformly bounded on $[0,T]$ as well.
\end{remark}

\section{Stability analysis}
In this section, we shall prove Theorems \ref{theo-lin} and
\ref{theo-nonlin}. Following \cite{HZ,MaZ3}, define the nonlinear
perturbation $U = (u, v)$ by
\begin{equation}
U(x,t):=\tilde U(x,t)-\bU(x),
\end{equation}
we obtain
\begin{equation}
\label{perteqs} U_t-LU=Q(U,U_x)_x,
\end{equation}
where linearized operator
\begin{equation}\label{linearized-operator}L U := - (A U)_x + ( B U_x)_x\end{equation} where
$$ A U:= d F(\bU) U - (dB(\bU)U) \bU_x, \quad  B =  B(\bU)$$
and the second-order Taylor remainder:
\begin{align*}
Q(U,U_x) &=F(\bU + U) - F(\bU) + A(\bU)U + (B(\bU + U) - B(\bU))U_x
\end{align*}
satisfying
\begin{equation}\label{newqbounds}
\begin{aligned}
|Q(U,U_x)|&\le C(|U||U_x|+|U|^2)\\
|\Pi_1Q(U,U_x)_x| &\le C(|U||U_x|+|U|^2)\\
|Q(U,U_x)_x|&\le C(|U||U_{xx}|+|U_x|^2+|U||U_x|)\\
|Q(U,U_x)_{xx}|&\le C(|U||U_{xx}|+|U||U_{xxx}|+|U_x||U_{xx}|+|U_x|^2)
\end{aligned}
\end{equation}
so long as $|U|$ remains bounded.

For boundary conditions written in $U-$coordinates, (B) gives
\begin{equation}\label{BCs-in}
\begin{aligned}
h(t)=\tilde h(t) -\bar h&=
(\tilde W(U+\bar U)-\tilde W(\bar U))(0,t)\\
&=(\partial \tilde W/\partial \tilde U)(\bar U_0) U (0,t)+
\cO(|U(0,t)|^2).
\end{aligned}
\end{equation}
in inflow case and
\begin{equation}\label{BCs-out}
\begin{aligned}
h(t)=\tilde h(t) -\bar h&=
(\tilde w^{II}(U+\bar U)-\tilde w^{II}(\bar U))(0,t)\\
&= (\partial \tilde w^{II}/\partial \tilde U)(\bar U_0)
U(0,t) + \cO(|U(0,t)|^2)\\
&= m\begin{pmatrix} \bar b_1 & \bar b_2 \end{pmatrix}(\bar U_0)
U(0,t) + \cO(|U(0,t)|^2)
\\&= m B(\bar U_0)U(0,t) + \cO(|U(0,t)|^2).
\end{aligned}
\end{equation}



%

\subsection{Integral formulation}
We obtain the following:
\begin{lemma}[Integral formulation] We have
\begin{equation}\label{u}
\begin{aligned}
  U(x,t)=&\int_0^\infty G(x,t;y)U_0(y)\,dy
  \\&+\int_0^t \Big(\tilde G_y(x,t-s;0)BU(0,s)+G(x,t-s;0) AU(0,s)\Big)\,ds\\
  &+ \int^t_0 \int^\infty_{0} H(x,t-s;y)
  \Pi_1Q(U,U_y)_y(y,s)\,dy\,ds\\
  &-\int^t_0 \int^\infty_{0} \tilde G_y(x,t-s;y)
  \Pi_2Q(U,U_y)(y,s)\,dy\,ds
\end{aligned}
\end{equation} where $U(y,0) = U_0(y)$. 
\end{lemma}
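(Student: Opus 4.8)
The stated identity is the Duhamel representation for the perturbation equation \eqref{perteqs}, $U_t-LU=Q(U,U_x)_x$, carrying the inhomogeneous Dirichlet data recorded in \eqref{BCs-in}--\eqref{BCs-out}, expressed against the Green distribution $G$ of the \emph{homogeneous}-boundary linearized problem. The plan is to differentiate the pairing $\Phi(s):=\int_0^\infty G(x,t-s;y)\,U(y,s)\,dy$ in $s$ and integrate over $s\in(0,t)$. The endpoints supply the first term and the left side: as $s\to0$ one obtains $\int_0^\infty G(x,t;y)U_0(y)\,dy$, and since $G(x,\tau;y)\to\delta(x-y)$ as $\tau\to0$, letting $s\to t$ gives $U(x,t)$. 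In $\Phi'(s)$ one substitutes $U_s=LU+Q(U,U_y)_y$ together with the fact that $G(x,t-s;y)$, viewed as a function of $(s,y)$, solves the backward equation adjoint to $L=-(\bar A\,\cdot)_x+(\bar B\,\cdot_x)_x$; by Lagrange's identity the interior $L$-contributions cancel up to boundary terms at $y=0$ and $y=+\infty$. The $y=+\infty$ terms vanish by the exponential localization of $\bU-U_+$ (Lemma \ref{lem-profile-decay}) and the bounds of Proposition \ref{prop-Greenbounds}; the $y=0$ terms are evaluated using the boundary structure (iii) of $G$ and the boundary relations \eqref{BCs-in}/\eqref{BCs-out} and \eqref{linBCstructure} for $U$. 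Integrating by parts the first-order piece $-(\bar AU)_x$ produces $G(x,t-s;0)\bar AU(0,s)$, while integrating by parts $(\bar BU_x)_x$ produces, after using that the complementary row of $\begin{pmatrix}\bar A_*&0\\\bar b_1&\bar b_2\end{pmatrix}G(0,t-s;\cdot)$ annihilates while the $\bar A_*$-row is unconstrained, the term $\tilde G_y(x,t-s;0)\bar BU(0,s)$ — with $\tilde G$, not $G$, because $\partial_y H$ at $y=0$ is inadmissible and, in the relevant regime, the hyperbolic delta $H$ carries no boundary mass.

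For the nonlinear source $\int_0^t\int_0^\infty G(x,t-s;y)\,Q(U,U_y)_y(y,s)\,dy\,ds$ I would split $Q=\Pi_1Q+\Pi_2Q$ and $G=H+\tilde G$. Since $H\propto R_*L_*^{tr}$ and $L_*^{tr}\Pi_2=0$, one has $H\,\Pi_2(\cdot)\equiv0$, so for the parabolic part only $\tilde G$ survives; integrating by parts in $y$ yields $-\int_0^t\int_0^\infty\tilde G_y(x,t-s;y)\,\Pi_2Q(U,U_y)(y,s)\,dy\,ds$ plus a $y=0$ boundary term, which is absorbed into the boundary contributions above (the $\Pi_2$-component of the Taylor remainder in \eqref{BCs-out}). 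For the hyperbolic part, $\Pi_1Q$ depends on $U$ alone (the $\bar B$-block is killed by $\Pi_1$), so $\Pi_1Q_y=\mathcal{O}(|U||U_y|)$ is already regular and no integration by parts against the delta $H$ is performed; the $H$-contribution is kept as $\int_0^t\int_0^\infty H(x,t-s;y)\,\Pi_1Q(U,U_y)_y(y,s)\,dy\,ds$, and any residual $\tilde G$-contribution of the hyperbolic source is shown to combine with the linear first-order boundary term — using that $\bar AU+Q$ reconstructs the full nonlinear flux difference — to produce $G(x,t-s;0)\bar AU(0,s)$. Collecting the four groups gives \eqref{u}. (Alternatively one could verify that the right side of \eqref{u} solves \eqref{perteqs} with the correct initial and boundary data and invoke uniqueness; the same $y=0$ bookkeeping is then required.)

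\textbf{Main obstacle.} The delicate point throughout is the accounting at $y=0$: one must check that every boundary contribution — those from integrating by parts the linear operator \emph{and} those from the nonlinear source — assembles \emph{exactly} into $\tilde G_y(x,t-s;0)\bar BU(0,s)+G(x,t-s;0)\bar AU(0,s)$ with nothing left over. This needs the precise, inflow/outflow-dependent boundary structure (iii) of $G$, the relation \eqref{linBCstructure} between $(\partial\tilde w^{II}/\partial\tilde U)(\bar U_0)$ and $(\bar b_1,\bar b_2)(\bar U_0)$, and the fact that in the noncharacteristic setting the hyperbolic characteristic carrying $H$ is genuinely incoming or outgoing, so that $H$ produces no spurious boundary mass. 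A secondary technical point — routine but requiring the pointwise estimates of Proposition \ref{prop-Greenbounds} — is to justify the distributional manipulations (the delta in $H$; the $\tau\to0$ limit of $G(x,\tau;y)$) and the integrations by parts in $y$ on the half-line.
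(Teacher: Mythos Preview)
Your plan is essentially the paper's proof: pair $U$ against $G(x,t-s;y)$, use the adjoint identity $-G_s-(GA)_y+GA_y=(G_yB)_y$ together with the dual boundary conditions (iii$'$) --- namely $G(x,t;0)\equiv0$ in the outflow case and $G(x,t;0)B=0$ in the inflow case --- to cancel the interior $L$-terms and extract the $y=0$ contributions, then split $G=H+\tilde G$ in the nonlinear source. Two of your justifications are off, though neither derails the argument.

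First, the replacement $G_yB\to\tilde G_yB$ has nothing to do with ``boundary mass'' of $H$. It holds \emph{identically} in $(x,t,y)$ because $H\propto R_*L_*^{tr}$ and
\[
L_*^{tr}B=\begin{pmatrix}1&0_{n-1}\end{pmatrix}\begin{pmatrix}0&0\\b_1&b_2\end{pmatrix}=0,
\]
so $HB\equiv0$ and hence $H_yB\equiv0$, giving $G_yB=\tilde G_yB$. This is the paper's one-line justification, and it is what you should use.

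Second, the residual interior term $\int_0^t\!\int_0^\infty\tilde G\,\Pi_1Q_y\,dy\,ds$ is a double integral; it cannot ``combine with the linear first-order boundary term,'' which is a single integral in $s$ only. Your appeal to ``$\bar AU+Q$ reconstructs the full nonlinear flux'' does not produce any such cancellation. The paper does nothing of the sort: having obtained $\int\!\!\int G\,Q_y=\int\!\!\int H\,Q_y+\int\!\!\int\tilde G\,Q_y$, it reduces the first summand to $\int\!\!\int H\,\Pi_1Q_y$ via $H\Pi_2=0$ and simply integrates the second by parts in $y$. Drop the flux-reconstruction idea and do that.
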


\begin{proof}
From the duality (see \cite[Lemma 4.3]{ZH}), we find that
$G(x,t-s;y)$ considered as a function of $y,s$ satisfies the adjoint
equation
\begin{equation}\label{timeadj}
(\partial_s- L_y)^*G^*(x, t- s; y)= 0,
\end{equation} or \begin{equation}\label{explicit}
-G_s -(GA)_y + GA_y= (G_yB)_y.
\end{equation}
 in the distributional sense, for all $x,y,t>s>0$,
where the adjoint operator of $L_y$ is defined by \begin{equation}
L_y^* V: = V^*_y A + (V^*_yB)_y,
\end{equation} with $V^* = V^{tr}$.

Likewise, for boundary conditions, we have,
by duality
\\
\\
(iii') for all $x,t>0$, $G(x,t;0)\equiv 0$ in the
outflow case $\bar A_*<0$; and
$G(x,t;0)B=0$
in the inflow case $\bar A_*>0$, noting that no
boundary condition
need be applied on the
hyperbolic part for the adjoint equations in
the inflow case.

Thus, integrating $G$ against \eqref{perteqs}, we obtain for any
classical solution that
\begin{equation}
\begin{aligned}
\int_0^t\int_0^\infty & G(x,t-s;y)Q(U,U_y)_y(y,s)\, dy\, ds=\\
& \int_0^t\int_0^\infty G(x,t-s;y)(\partial_s-L_y)U(y,s) \, dy\, ds
\\=&: I_1 + I_2.
\end{aligned}
\end{equation}

Integrating by parts and using the boundary conditions (iii')
on the boundary $y=0$, we get
\begin{align*} I_1 =& \int_0^t\int_0^\infty G(x,t-s;y)\partial_s U(y,s) \, dy\,
ds \\=&\int_0^t\int_0^\infty \partial_s G(x,t-s;y)U(y,s) \, dy\, ds
\\&+ \int_0^\infty G(x,0;y)U(y,t)\,dy - \int_0^\infty G(x,t;y)U(y,0)\,dy
\end{align*} where note that $$U(x,t) = \int_0^\infty
G(x,0;y)U(y,t)\,dy$$ and also
\begin{align*} I_2 =& \int_0^t\int_0^\infty G(x,t-s;y) (-L_y)U(y,s) \, dy\,
ds \\=&\int_0^t\int_0^\infty G(x,t-s;y)((AU)_y - (BU_y)_y)(y,s)\,
dy\, ds \\=&\int_0^t\int_0^\infty
(-G_yA - (G_yB)_y)
U(y,s)\, dy\, ds \\& - \int_0^t G_y(x,t-s;0)BU(0,s)ds - \int_0^t
G(x,t-s;0) AU(0,s) ds
\end{align*}

Combining these estimates, and noting that $G_yB=\tilde G_yB$ since
$HB\equiv 0$,  we obtain \eqref{u} by rearranging and
integrating by parts the last term of
\begin{equation}\begin{aligned}\int_0^t\int_0^\infty
&G(x,t-s;y)Q(U,U_y)_y(y,s) \, dy\, ds \\&= \int_0^t\int_0^\infty
(H+\tilde G)(x,t-s;y)Q(U,U_y)_y(y,s) \, dy\,
ds\end{aligned}\end{equation} \end{proof}

As an expression for $U_x$, we obtain the following.
\begin{lemma}[Integral formulation for $U_x$] We have
\begin{equation}\label{ux}
\begin{aligned}
  U_x(x,t)=&\int_0^\infty G_x(x,t;y)U_0(y)\,dy
  -\int_0^tH(x,t-s;0)\Pi_1Q(U,U_y)_y(0,s)\,ds\\
   &+\int_0^t \Big[\tilde G_{xy}(x,t-s;0)BU(0,s)+G_x(x,t-s;0)AU(0,s)\Big]\,ds\\
  &+ \int^t_0 \int^\infty_{0} (H_x-H_y)(x,t-s;y) \Pi_1Q(U,U_y)_y(y,s)\,dy\,ds\\
  &-\int^t_0 \int^\infty_{0} H(x,t-s;y) \Pi_1Q(U,U_y)_{yy}(y,s)\,dy\,ds\\
  &-\int^{t-1}_0 \int^\infty_{0} \tilde G_{xy}(x,t-s;y) \Pi_2Q(U,U_y)(y,s)\,dy\,ds\\
  &+\int^t_{t-1} \int^\infty_{0} \tilde G_{x}(x,t-s;y)\Pi_2Q(U,U_y)_y(y,s)\,dy\,ds
\end{aligned}
\end{equation}
where $U(y,0) = U_0(y)$.
\end{lemma}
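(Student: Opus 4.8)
The plan is to obtain \eqref{ux} directly by differentiating the integral representation \eqref{u} in $x$; the content of the lemma is not a new identity but a rearrangement that moves the $x$-derivative off the two most singular kernels ($H$, which is distributional, and $\tilde G_y$, whose $x$-derivative fails to be integrable near the diagonal) before it can cause trouble. First I would differentiate the three harmless pieces of \eqref{u}. The term $\int_0^\infty G(x,t;y)U_0(y)\,dy$ gives $\int_0^\infty G_x(x,t;y)U_0(y)\,dy$, and the boundary integral $\int_0^t(\tilde G_y(x,t-s;0)BU(0,s)+G(x,t-s;0)AU(0,s))\,ds$ gives $\int_0^t(\tilde G_{xy}(x,t-s;0)BU(0,s)+G_x(x,t-s;0)AU(0,s))\,ds$. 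Differentiation under the integral here is justified by the pointwise Green function bounds of Proposition \ref{prop-Greenbounds} (for fixed $0<s<t$ the kernels and their $x$-derivatives are smooth and decay exponentially in $|x-y|$, losing only a factor $(t-s)^{-1/2}$), together with the spatial decay of $U_0$ and of $U,U_x$ supplied by Remark \ref{rem-wpoint-bounds}.

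Next, for the $H$-term $\int_0^t\int_0^\infty H(x,t-s;y)\Pi_1Q(U,U_y)_y\,dy\,ds$, the $x$-derivative of the transported delta-distribution $H(x,\cdot;y)$ is itself distributional in $y$, so I would not let $H_x$ act on $\Pi_1Q(U,U_y)_y$ but instead write $H_x=(H_x-H_y)+H_y$ and integrate the $H_y$-piece by parts in $y$. The boundary contribution at $y=+\infty$ vanishes by exponential decay; the boundary contribution at $y=0$ produces $-\int_0^t H(x,t-s;0)\Pi_1Q(U,U_y)_y(0,s)\,ds$ (nonzero since, as a distribution in $s$ through the $t-s$ argument, $H(x,t-s;0)$ concentrates at the single time $s$ for which the backward characteristic through $(x,t)$ meets the boundary); and the remaining volume term becomes $-\int_0^t\int_0^\infty H(x,t-s;y)\Pi_1Q(U,U_y)_{yy}\,dy\,ds$. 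The retained piece $\int_0^t\int_0^\infty(H_x-H_y)(x,t-s;y)\Pi_1Q(U,U_y)_y\,dy\,ds$ is, as a distribution in $y$, of exactly the same order as $H_y$ and is controlled in the subsequent pointwise estimates; the reason for keeping it in the convective combination $H_x-H_y$ is that this is precisely the form that will be bounded using the transport structure of $H$.

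Finally, for the $\tilde G$-term $-\int_0^t\int_0^\infty\tilde G_y(x,t-s;y)\Pi_2Q(U,U_y)\,dy\,ds$, naive differentiation gives $-\int_0^t\int_0^\infty\tilde G_{xy}\Pi_2Q\,dy\,ds$, but by the high-frequency bounds of Proposition \ref{prop-high-est} the kernel $\tilde G_{xy}(x,\tau;y)$ is, after the $y$-integration, not integrable as $\tau=t-s\to0$; this forces the splitting of the $s$-integral at $s=t-1$. On $[0,t-1]$ the integrand is smooth and I keep $-\int_0^{t-1}\int_0^\infty\tilde G_{xy}\Pi_2Q\,dy\,ds$. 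On $[t-1,t]$ I integrate by parts in $y$, transferring a derivative onto $\Pi_2Q$ to produce the less singular kernel $\tilde G_x$; the endpoint term at $y=+\infty$ vanishes by decay, and the endpoint term at $y=0$ vanishes because, by the duality boundary conditions (iii$'$) together with $HB\equiv0$, one has $\tilde G(x,t-s;0)B=0$, so $\tilde G(x,t-s;0)$ and $\tilde G_x(x,t-s;0)$ annihilate the range of $B$, which contains $\Pi_2Q(0,s)$ (and in the outflow case $G(x,t-s;0)\equiv0$ makes the term vanish outright). This gives $+\int_{t-1}^t\int_0^\infty\tilde G_x(x,t-s;y)\Pi_2Q(U,U_y)_y\,dy\,ds$, and assembling the four groups of terms yields \eqref{ux}. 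The main obstacle is precisely this near-diagonal ($s\uparrow t$, $|x-y|$ small) analysis: one must check that after the split-and-integrate-by-parts the kernels $\tilde G_x$, $H$, and $H_x-H_y$ are genuinely integrable in $(y,s)$ up to $s=t$, and that every $y=0$ boundary contribution either cancels or reduces to the stated term — which is exactly where the noncharacteristic structure (all hyperbolic signals enter or leave the boundary, $HB\equiv0$, and $G(x,t;0)B=0$ respectively $G(x,t;0)\equiv0$) is essential.
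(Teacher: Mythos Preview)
Your proposal is correct and follows essentially the same route as the paper: differentiate \eqref{u} in $x$, rewrite $H_x=(H_x-H_y)+H_y$ and integrate the $H_y$ piece by parts in $y$, and split the $\tilde G$-integral at $s=t-1$ with an integration by parts on $[t-1,t]$. In fact you supply a detail the paper leaves implicit: the boundary term $\int_{t-1}^t \tilde G_x(x,t-s;0)\Pi_2Q(0,s)\,ds$ arising from that last integration by parts vanishes, which you correctly justify via $G(x,t;0)B=0$ (inflow) respectively $G(x,t;0)\equiv 0$ (outflow) together with $HB\equiv 0$ and $\Pi_2Q\in\Range B$.
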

\begin{proof} Differentiating the formulation \eqref{u} for $U(x,t)$ with respect to $x$ and noting that \begin{align*} \int^t_0 \int^\infty_{0} &H_x \phi\,dy\,ds =
\int^t_0 \int^\infty_{0} (H_x -
  H_y) \phi\,dy\,ds \\&- \int^t_0 \int^\infty_{0} H(x,t-s;y)
 \phi_y(y,s)\,dy\,ds - \int_0^t H(x,t-s;0)\phi(0,s)ds\end{align*}
and\begin{align*} \int^t_0 \int^\infty_{0} &\tilde
G_{xy}\psi\,dy\,ds = \int^{t-1}_0 \int^\infty_{0} \tilde G_{xy}
\psi\,dy\,ds \\&- \int_{t-1}^t \int^\infty_{0} \tilde G_{x}
\psi_y\,dy\,ds  - \int_{t-1}^t \tilde
G_x(x,t-s;0)\psi(0,s)ds\end{align*} are valid for any smooth
functions $\phi,\psi$, we obtain the lemma. \end{proof}

\subsection{Convolution estimates}
To establish stability, we use the following lemmas proved in
\cite{HZ,HR,RZ}.

\begin{lem}[Linear estimates I]\label{iniconvolutions}
Under the assumptions of Theorem \ref{theo-nonlin},
\begin{equation}\label{iniconeq}
\begin{aligned}
\int_{0}^{+\infty}|\tilde G(x,t;y)|(1+|y|)^{-3/2}\, dy &\le
C(\theta+\psi_1+\psi_2)(x,t), \\\int_{0}^{+\infty}|\tilde
G_x(x,t;y)|(1+|y|)^{-3/2}\, dy &\le C(\theta+\psi_1+\psi_2)(x,t),
\end{aligned}
\end{equation}
and so the latter is dominated by $\psi_1  + \psi_2$, for $0\le t\le
+\infty$, some $C>0$.
\end{lem}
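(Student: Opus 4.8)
The plan is to substitute the pointwise Green function bounds of Proposition~\ref{prop-Greenbounds} into the integral and thereby reduce the claim to a finite collection of scalar estimates for the convolution of a moving Gaussian against the algebraic weight $(1+|y|)^{-3/2}$ on the half-line $y\ge 0$; these are precisely the kernel estimates established in \cite{HZ,HR,RZ}, so the bulk of the work is to recall which of their computations are needed. By \eqref{Gbounds} with $|\alpha|=|\gamma|=0$ (resp.\ $|\gamma|=1$, $|\alpha|=0$), $\tilde G$ is dominated by $Ce^{-\eta(|x-y|+t)}$ plus a sum of scattering kernels $t^{-1/2}e^{-(x-y-a_k^+t)^2/Mt}$, $k=1,\dots,n$, and reflected kernels $\chi_{\{|a_k^+t|\ge|y|\}}\,t^{-1/2}e^{-(x-a_j^+(t-|y/a_k^+|))^2/Mt}$, $a_k^+<0<a_j^+$, while $\tilde G_x$ carries in addition the benign prefactor $t^{-1/2}+e^{-\eta|x|}$. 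Since the weight decays algebraically, it suffices to treat each kernel separately and sum, and the exponentially small part is immediate: $\int_0^\infty e^{-\eta|x-y|}(1+|y|)^{-3/2}\,dy\le C(1+|x|)^{-3/2}$, so it contributes at most $Ce^{-\eta t}(1+|x|)^{-3/2}$, which for $t\ge 1$ is dominated by any polynomially decaying term, hence by $\psi_1+\psi_2$, and for $t\le 1$ is bounded while the right-hand side is bounded below.

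For a scattering kernel write the Gaussian peak as $y_\ast=x-a_k^+t$. If $a_k^+\le 0$ then $y_\ast=x+|a_k^+|t$ is large and positive, the weight near the peak is at most $(1+x+t)^{-3/2}$, and one checks directly that $(1+x+t)^{-3/2}\le C(\psi_1+\psi_2)$. If $a_k^+>0$, split the $y$-integral into the band near $y_\ast$ and its complement: when $y_\ast\ge t^{1/2}$ the weight on the band is comparable to $(1+y_\ast)^{-3/2}\sim(1+|x-a_k^+t|+t^{1/2})^{-3/2}$, which is $\lesssim\psi_2$ for $x>a_n^+t$ (since then $|x-a_k^+t|\ge|x-a_n^+t|$) and $\lesssim\psi_1$ for $x\in[0,a_n^+t]$ (since $(1+r+t^{1/2})^{-3/2}\le C(1+t)^{-1/2}(1+r)^{-1/2}$ once $r\ge t^{1/2}$); when $0\le y_\ast\le t^{1/2}$, i.e.\ $(x,t)$ lies within $O(t^{1/2})$ of the characteristic $x=a_k^+t$, the whole integral is $\le Ct^{-1/2}$, dominated by the $k$-th summand of $\theta$; and when $y_\ast<0$ the half-line truncation removes the Gaussian mass, leaving a bound $\le C(1+t)^{-1/2}e^{-(x-a_k^+t)^2/Mt}$, again the $k$-th summand of $\theta$; the far tails $|y-y_\ast|\gtrsim y_\ast$ or $|y-y_\ast|\gtrsim y$ produce strictly smaller contributions of the same shape. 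Summing over $k$ gives $\tilde G\ast(1+|\cdot|)^{-3/2}\le C(\theta+\psi_1+\psi_2)$. The reflected kernels have an indicator forcing $|y|\le|a_k^+|t$; the substitution $\tilde y=(a_j^+/|a_k^+|)y$ converts them into convolutions of the same form against a weight equivalent to $(1+|\tilde y|)^{-3/2}$, and since the reflected characteristic $x=a_j^+(t-|y/a_k^+|)$ sweeps out $[0,a_j^+t]\subset[0,a_n^+t]$, the same case analysis places the result in $C(\theta+\psi_1)$.

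For $\tilde G_x$, the extra factor $t^{-1/2}$ multiplying each Gaussian improves every estimate above by one factor of $t^{-1/2}$; in particular it converts $(1+t)^{-1/2}e^{-(x-a_k^+t)^2/Mt}$ into a quantity bounded by $C\psi_1$ inside the cone and $C\psi_2$ outside (checking the sub-cases $|x-a_k^+t|\lessgtr t^{1/2}$ exactly as before), so the $\theta$ contribution disappears and $\int_0^\infty|\tilde G_x|(1+|y|)^{-3/2}\,dy\le C(\psi_1+\psi_2)$, as stated in the remark; the factor $e^{-\eta|x|}$ only adds a rapidly decaying term. I expect the main obstacle to be entirely the bookkeeping — tracking all cases in the Gaussian-against-algebraic-weight convolution and verifying that the half-line truncation and the reflected-kernel contributions fit the template $\theta+\psi_1+\psi_2$, especially the matching across the transition $x=a_n^+t$ between the $\psi_1$ and $\psi_2$ regimes — but this is routine and carried out in detail in \cite{HZ,HR,RZ}, to which I would ultimately reduce.
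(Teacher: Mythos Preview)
Your proposal is correct and follows exactly the approach the paper takes: the paper does not give an independent proof of this lemma but simply cites \cite{HZ,HR,RZ}, and your sketch---substituting the pointwise bounds \eqref{Gbounds} of Proposition~\ref{prop-Greenbounds} and reducing to the Gaussian-versus-algebraic-weight convolution estimates established in those references---is precisely the computation carried out there. Your case analysis (scattering versus reflected kernels, inside versus outside the cone $[0,a_n^+t]$, near versus far from the characteristic) is the standard one, and your observation that the extra $t^{-1/2}$ prefactor in $\tilde G_x$ eliminates the $\theta$ contribution is what underlies the final clause of the statement.
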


\begin{lem}[Linear estimates II]\label{iniconvolutionsh}
Under the assumptions of Theorem \ref{theo-nonlin}, if
$|U_0(x)|+|\partial_xU_0(x)|\le E_0 (1+|x|)^{-3/2}, E_0>0$, then,
for some $\theta>0$,
\begin{equation}\label{iniconeqh}
\begin{aligned}
\int_{0}^{+\infty}H(x,t;y)U_0(y)\, dy & \le CE_0 e^{-\theta
t}(1+|x|)^{-3/2},\\\int_{0}^{+\infty}H_x(x,t;y)U_0(y)\, dy& \le CE_0
e^{-\theta t}(1+|x|)^{-3/2},
\end{aligned}
\end{equation}
and so both are dominated by $CE_0(\psi_1  + \psi_2)$, for $0\le
t\le +\infty$, some $C>0$.
\end{lem}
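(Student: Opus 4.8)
The plan is to substitute the closed form of the principal term $H$ from Proposition~\ref{prop-Greenbounds} and reduce both inequalities to the elementary decay hypothesis on $U_0$ together with one geometric comparison. In the outflow case $\bar A_*<0$ the term $H$ is either absent or supported on the backward characteristic lying to the \emph{right} of $x$, so its foot $y_*(x,t)$ satisfies $y_*\ge x\ge 0$, whence $(1+|y_*|)^{-3/2}\le(1+|x|)^{-3/2}$ and the estimates are immediate; I focus on the inflow case $\bar A_*>0$.

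Using $H(x,t;y)=\frac{1}{2\pi}A_*(x)^{-1}A_*(y)\,\delta_{x-\bar a_* t}(y)\,e^{-\int_y^x(\eta_*/A_*)(z)\,dz}\,R_*L_*^{tr}=\mathcal O(e^{-\eta_0 t})\,\delta_{x-\bar a_* t}(y)R_*L_*^{tr}$ together with the identity $A_*(y)\delta_{x-\bar a_* t}(y)=\delta\!\big(t-\int_y^x dz/A_*(z)\big)$, integration against $U_0$ selects the foot $y_*=y_*(x,t)$ of the backward characteristic through $(x,t)$ — the unique zero of the strictly increasing function $y\mapsto t-\int_y^x dz/A_*(z)$, which when nonnegative (equivalently $x\ge x_*(t)$, the characteristic position reached from the origin in time $t$) satisfies $x-y_*=\bar a_*t$ with $\inf A_*\le\bar a_*\le\sup A_*$ — and gives $0$ otherwise:
\[
\int_0^\infty H(x,t;y)U_0(y)\,dy=\chi_{\{y_*(x,t)\ge0\}}\;\frac{1}{2\pi}A_*(x)^{-1}A_*(y_*)\,U_0(y_*)\,e^{-\int_{y_*}^x(\eta_*/A_*)(z)dz}R_*L_*^{tr}.
\]
By \eqref{dissipative} and Lemma~\ref{lem-profile-decay} the exponential is $\mathcal O(e^{-\eta_0 t})$ and $A_*(x)^{-1}A_*(y_*)$ is bounded, while the hypothesis gives $|U_0(y_*)|\le E_0(1+|y_*|)^{-3/2}$; hence the left-hand side is $\le CE_0e^{-\eta_0 t}(1+|y_*|)^{-3/2}$. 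Since $y_*\ge0$ and $x-y_*=\bar a_*t$, one has $1+|x|\le(1+|y_*|)(1+\bar a_*t)\le C(1+|y_*|)(1+t)$, so $(1+|y_*|)^{-3/2}\le C(1+t)^{3/2}(1+|x|)^{-3/2}$, and absorbing the factor $(1+t)^{3/2}$ into $e^{-\eta_0 t}$ gives $\le CE_0e^{-\theta t}(1+|x|)^{-3/2}$ for any fixed $0<\theta<\eta_0$. The second inequality of \eqref{iniconeqh} follows by differentiating the displayed expression in $x$: the $x$-dependence enters only through $A_*(x)^{-1}$, the exponent, and $y_*(x,t)$, with $\partial_x y_*=A_*(y_*)/A_*(x)$ uniformly bounded (from $\int_{y_*}^x dz/A_*(z)=t$), so using in addition $|\partial_x U_0|\le E_0(1+|x|)^{-3/2}$ the same chain of estimates yields $\le CE_0e^{-\eta_0 t}(1+|y_*|)^{-3/2}$ for the classical part; the derivative of the characteristic cutoff contributes only a boundary term, supported on the single ray $\{x=x_*(t)\}$ where the characteristic meets $\{x=0\}$, of the same exponentially small order $\mathcal O(E_0e^{-\eta_0 t})$ and of the type already carried by the $H(x,t-s;0)$ contributions in \eqref{ux}. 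That both right-hand sides are in turn dominated by $CE_0(\psi_1+\psi_2)$ is an elementary comparison, since $e^{-\theta t}$ decays faster than any power and controls the algebraic weights in \eqref{psi1}, \eqref{psi2}.

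The only step requiring any care is the geometric trade-off $(1+|y_*|)^{-3/2}\le C(1+t)^{3/2}(1+|x|)^{-3/2}$: converting the sharp decay measured at the foot of the characteristic into the spatial weight $(1+|x|)^{-3/2}$ costs a polynomial factor in $t$, which is harmless precisely because $H$ carries genuine time-exponential decay $e^{-\eta_0 t}$ — a feature special to the noncharacteristic, hyperbolically transported delta term and absent from the parabolic part $\tilde G$ treated in Lemma~\ref{iniconvolutions}. Everything else is direct bookkeeping on the explicit delta.
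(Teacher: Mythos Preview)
The paper does not actually prove this lemma --- it is listed, together with Lemmas~\ref{iniconvolutions}, \ref{convolutionsI}, and \ref{convolutionsII}, under the sentence ``we use the following lemmas proved in \cite{HZ,HR,RZ}'' and given no argument --- so there is no in-paper proof to compare against. Your direct computation from the explicit form of $H$ in Proposition~\ref{prop-Greenbounds} is exactly the natural route, and your treatment of the first inequality (evaluate the delta at the characteristic foot $y_*$, use the exponential factor $e^{-\eta_0 t}$, and trade $(1+|y_*|)^{-3/2}$ for $(1+|x|)^{-3/2}$ at the cost of a polynomial in $t$ absorbed by the exponential) is correct and is what those references do.

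The one imprecision is in the $H_x$ estimate. You correctly isolate the smooth part of the derivative and observe that differentiating the cutoff $\chi_{\{y_*(x,t)\ge 0\}}$ produces a contribution supported on the single ray $\{x=x_*(t)\}$, but your claim that this is ``of the type already carried by the $H(x,t-s;0)$ contributions in \eqref{ux}'' points to the wrong mechanism: those terms in \eqref{ux} arise from the nonlinear source $Q$, not from the initial data, and do not absorb this contribution. The honest statement is simpler. Either note that the bound \eqref{iniconeqh} holds pointwise for $x\neq x_*(t)$, a set of measure zero, which is all that is ever used in the $\zeta$-argument; or observe that the full linear evolution $\int_0^\infty G(x,t;y)U_0(y)\,dy=e^{Lt}U_0$ lies in $H^4$ and is therefore $C^1$ in $x$, so any jump of $\int_0^\infty H\,U_0\,dy$ across $x=x_*(t)$ is exactly cancelled by a matching jump in $\int_0^\infty\tilde G\,U_0\,dy$. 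Either remark closes the gap; just drop the appeal to the $H(x,t-s;0)$ boundary terms, which is unrelated.
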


\begin{lem}[Nonlinear estimates I]\label{convolutionsI}
Under the assumptions of Theorem \ref{theo-nonlin},
\begin{equation}\label{coneq}
\begin{aligned}
\int_0^t\int_{0}^{+\infty}|\tilde G_y(x,t-s;y)|\Psi(y,s)\, dy ds
&\le C(\theta+\psi_1+\psi_2)(x,t),\\
\int_0^{t-1}\int_{0}^{+\infty}|\tilde G_{xy}(x,t-s;y)|\Psi(y,s)\, dy
ds
&\le C(\theta+\psi_1+\psi_2)(x,t),\\
\end{aligned}
\end{equation}
for $0\le t\le +\infty$, some $C>0$, where
\begin{equation}\label{source}
\begin{aligned}
\Psi(y,s)&:=(\theta + \psi_1+\psi_2)^2(y,s).\\
\end{aligned}
\end{equation}
\end{lem}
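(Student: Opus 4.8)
The proof reduces to inserting the pointwise bounds \eqref{Gbounds} of Proposition \ref{prop-Greenbounds} for $\tilde G_y$ and $\tilde G_{xy}$ (the cases $|\alpha|=1$, $|\gamma|\in\{0,1\}$) and then carrying out a family of scalar convolution estimates identical in structure to those of \cite{HZ} and \cite{YZ}. \emph{Step 1 (reduction to scalar kernels).} Write $|\tilde G_y(x,t;y)|$ as the sum of the exponentially localized piece $Ce^{-\eta(|x-y|+t)}$, a piece carrying the weight $e^{-\eta|y|}$, the convecting Gaussian pieces $\sum_{k}(t^{-1/2}+\cdots)e^{-(x-y-a_k^+t)^2/Mt}$, and the boundary-reflected Gaussians $\sum_{a_k^+<0,\,a_j^+>0}\chi_{\{|a_k^+t|\ge|y|\}}(t^{-1/2}+\cdots)e^{-(x-a_j^+(t-|y/a_k^+|))^2/Mt}$; likewise for $\tilde G_{xy}$, with an extra factor $(t^{-1/2}+e^{-\eta|x|})$. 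Simultaneously expand $\Psi=(\theta+\psi_1+\psi_2)^2$, as in \eqref{source}, into a sum of monomials $\theta^2,\theta\psi_1,\psi_1^2,\ldots,\psi_2^2$; each carries a definite space--time decay rate, one full power faster than the corresponding term of $\theta+\psi_1+\psi_2$, and in particular $\psi_2^2\lesssim(1+|x-a_n^+t|+t^{1/2})^{-3}$ is integrable in $x$. It then suffices to bound the convolution of each kernel piece against each monomial.

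\emph{Step 2 (localized pieces).} For the terms $Ce^{-\eta(|x-y|+t)}$ and those carrying $e^{-\eta|y|}$, the $y$-integral of each monomial of $\Psi$ is controlled by its value near $y=0$, which is $\cO((1+s)^{-1})$; the remaining $s$-integral $\int_0^t(\text{kernel prefactor in }t-s)\,(1+s)^{-1}\,ds$ --- with prefactor $e^{-\eta(t-s)}$, or $(t-s)^{-1/2}e^{-(x-y-a_k^+(t-s))^2/M(t-s)}$ localized in $x$ --- is estimated exactly as in \cite{HZ} and absorbed into $\theta+\psi_1+\psi_2$.

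\emph{Step 3 (scattering pieces).} For the convecting Gaussian kernels we use the Gaussian--algebraic and Gaussian--Gaussian convolution bounds of \cite{HZ}: convolution of a Gaussian traveling at speed $a_k^+$ against $\psi_1^2$ reproduces $\psi_1$, against $\theta^2$ reproduces $\theta$, against $\psi_2^2$ reproduces $\psi_2$, and mixed monomials are dominated by these; the gain of one power of decay in the source supplies exactly the margin needed to close each estimate. The boundary-reflected Gaussians are handled by regarding $\chi_{\{|a_k^+(t-s)|\ge|y|\}}(t-s)^{-1/2}e^{-(x-a_j^+(t-s-|y/a_k^+|))^2/M(t-s)}$ as an effective convection emitted from $y$ after a delay $|y/a_k^+|$: split the $s$-integral at the reflection time $s=t-|y/a_k^+|$, estimate the ``incoming'' leg $s<t-|y/a_k^+|$ and the ``outgoing'' leg separately, and in each leg reduce to a standard Gaussian convolution, cf.\ \cite{YZ}.

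\emph{Step 4 (derivative kernel and the main obstacle).} The $x$-derivative in $\tilde G_{xy}$ produces the extra factor $(t^{-1/2}+e^{-\eta|x|})$, hence a short-time singularity $(t-s)^{-1}$; this is precisely why the second estimate is truncated at $s=t-1$, making $(t-s)^{-1}$ integrable, while the missing slab $s\in[t-1,t]$ is absorbed in the companion estimate of \eqref{ux} carrying $\tilde G_x$ against $\Pi_2 Q(U,U_y)_y$ rather than $\tilde G_{xy}$ against $\Pi_2 Q(U,U_y)$. The main obstacle, beyond organizing the many cases, is the bookkeeping of the boundary-reflected Gaussians in Step 3: one must verify that after reflection the effective source weight, transported to $y=0$ along the incoming characteristic and forward along the outgoing one, still decays fast enough that the time convolution converges and reproduces only $\theta+\psi_1+\psi_2$. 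This is where the fact that $\Psi$ is a \emph{square} is used essentially, and it is the analogue of the computation in \cite{YZ} that we adapt here to the hyperbolic--parabolic kernel.
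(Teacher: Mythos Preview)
Your proposal is correct and follows essentially the same approach as the paper, which in fact does not prove this lemma but simply cites it as ``proved in \cite{HZ,HR,RZ}''; the convolution estimates you sketch---inserting the bounds of Proposition~\ref{prop-Greenbounds}, expanding $\Psi$ into monomials, and treating the exponentially localized, convecting Gaussian, and boundary-reflected Gaussian pieces separately---are precisely the computations carried out in those references and adapted to the half-line setting in \cite{YZ}. Your identification of the reason for the $t-1$ truncation in the second estimate (the nonintegrable $(t-s)^{-1}$ singularity from the extra $x$-derivative) is also correct and matches how the paper organizes the representation \eqref{ux}.
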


\begin{lem}[Nonlinear estimates II]\label{convolutionsII}
Under the assumptions of Theorem \ref{theo-nonlin},
\begin{equation}\label{coneqh}
\begin{aligned}
\int_0^t\int_{0}^{+\infty}H(x,t-s;y)\Upsilon(y,s)\, dy ds &\le C
(\psi_1+\psi_2)(x,t)\\
\int_0^t\int_{0}^{+\infty}(H_x-H_y)(x,t-s;y)\Upsilon(y,s)\, dy ds
&\le C (\psi_1+\psi_2)(x,t)\\
\int_{t-1}^t\int_{0}^{+\infty}|\tilde G_x(x,t-s;y)|\Upsilon(y,s)\,
dy ds &\le C (\psi_1+\psi_2)(x,t)
\end{aligned}
\end{equation}
for all $0<t< +\infty$, some $C>0$, where
\begin{equation}\label{sourceE}
\begin{aligned}
\Upsilon(y,s)&:= s^{-1/4}(\theta +
\psi_1+\psi_2)(y,s)
\end{aligned}
\end{equation}
\end{lem}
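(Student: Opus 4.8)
All three bounds are convolution estimates of the type established in \cite{HZ,HR,RZ}, and we follow those references; the only ingredient particular to the present hyperbolic--parabolic setting is the treatment of the transported delta-kernel $H$, which we now indicate.

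Consider first the bound involving $H$. By the representation \eqref{Hterm}, $H(x,\tau;y)=\mathcal{O}(e^{-\eta_0\tau})\delta_{x-\bar a_*\tau}(y)R_*L_*^{tr}$, so that the $y$-integration collapses and
\[
\int_0^t\!\!\int_0^\infty H(x,t-s;y)\Upsilon(y,s)\,dy\,ds
=\mathcal{O}(1)\int_0^t e^{-\eta_0(t-s)}\,\Upsilon\big(y_*(x,t-s),s\big)\,ds,
\]
where $y_*(x,\cdot)$ is the backward characteristic through $(x,\cdot)$, i.e. the solution of $dz_*/d\tau=\bar a_*(z_*)$ with $z_*(0)=x$. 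By \eqref{dissipative}, $\bar a_*$ is bounded above and below away from zero, so $0\le y_*(x,\tau)$ and $|x-y_*(x,\tau)|\le C\tau$ for all $\tau\ge 0$; in particular the characteristic never reaches the boundary with the wrong sign, which is precisely why no reflected contribution appears. The exponential weight $e^{-\eta_0(t-s)}$ localizes the $s$-integral to $s$ within an $\mathcal{O}(1)$ distance of $t$ (the contribution of $s\le t/2$ being superexponentially small in $t$, hence harmless, and the case of $t$ bounded being handled directly from the collapsed form), on which range $y_*(x,t-s)$ lies within an $\mathcal{O}(1)$ distance of $x$; since for $t$ bounded away from $0$ the templates $\theta,\psi_1,\psi_2$ vary by at most a bounded factor under $\mathcal{O}(1)$ shifts in $(x,t)$, we obtain the pointwise bound
\[
\int_0^t\!\!\int_0^\infty H(x,t-s;y)\Upsilon(y,s)\,dy\,ds\le C\,t^{-1/4}(\theta+\psi_1+\psi_2)(x,t).
\]

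It remains to absorb the right-hand side into $C(\psi_1+\psi_2)(x,t)$. For this one checks the elementary inequality $t^{-1/4}\psi_j\le C\psi_j$ and, crucially, $t^{-1/4}\theta(x,t)\le C(\psi_1+\psi_2)(x,t)$, verified region by region: near each ray $x=a_j^+t$ the gain $t^{-1/4}$ compensates exactly the $\sqrt t$ width mismatch between the Gaussian $\theta$ and $\psi_1$ (so that $t^{-3/4}\lesssim\psi_1$ in the transition zone $|x-a_j^+t|\sim\sqrt t$ and on the ray itself), far from all rays $\theta$ has Gaussian transverse decay and is trivially controlled, and on the outer region $x\ge a_n^+t$ one uses $\theta(x,t)\le C(1+t)^{-1/2}e^{-(x-a_n^+t)^2/Mt}$ together with $t^{-3/4}\lesssim(1+|x-a_n^+t|+t^{1/2})^{-3/2}=\psi_2(x,t)$. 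This gives the first estimate. For the second, note that in $H_x$ and in $H_y$ the genuinely singular parts are $\mathcal{O}(1)\,\partial_x\delta_{x-\bar a_*(t-s)}(y)$ and $\mathcal{O}(1)\,\partial_y\delta_{x-\bar a_*(t-s)}(y)$ respectively, with coefficients bounded together with their derivatives; these cancel in $H_x-H_y$ up to a term $\mathcal{O}(e^{-\eta_0(t-s)})\delta_{x-\bar a_*(t-s)}(y)$ with bounded coefficient, so $H_x-H_y$ obeys the same bound as $H$ and the argument above applies verbatim.

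Finally, for the term involving $\tilde G_x$ over $s\in[t-1,t]$, set $\tau=t-s\in[0,1]$ and invoke \eqref{Gbounds}: $|\tilde G_x(x,\tau;y)|\le Ce^{-\eta(|x-y|+\tau)}+C(\tau^{-1/2}+e^{-\eta|x|})\sum_k\tau^{-1/2}e^{-(x-y-a_k^+\tau)^2/M\tau}+\cdots$, where the singularity $\tau^{-1/2}$ is integrable on $[0,1]$ and the Gaussian factors act as a heat kernel of width $\sqrt\tau\le 1$ in the variable $y$. Convolving against $(\theta+\psi_1+\psi_2)(y,s)$, using $s\ge t-1$ and the bounded-factor stability of the templates under the resulting $\mathcal{O}(1)$ shifts, one obtains $\le C\,s^{-1/4}(\theta+\psi_1+\psi_2)(x,t)\le C\,t^{-1/4}(\theta+\psi_1+\psi_2)(x,t)$ for $t$ large, absorbed into $C(\psi_1+\psi_2)(x,t)$ exactly as above, the case of $t$ bounded being treated directly. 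I expect the principal technical obstacle to be the bookkeeping in this template-comparison step: showing $(\theta+\psi_1+\psi_2)(y_*(x,\tau),s)\le C(\theta+\psi_1+\psi_2)(x,t)$ uniformly across the various regimes (near vs.\ far from each ray $x=a_j^+t$, inside vs.\ outside the outer cone $x=a_n^+t$, $t$ small vs.\ large), tracking the half-line constraints carried by the indicator functions, and confirming that the factor $s^{-1/4}$ is exactly — and just barely — enough to convert the more slowly decaying $\theta$ into $\psi_1+\psi_2$ at the edge $x\approx a_n^+t$.
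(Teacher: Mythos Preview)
The paper does not give a proof of this lemma; it is simply cited from \cite{HZ,HR,RZ}. Your overall strategy for the first and third estimates follows those references and is essentially correct: the transported delta in $H$ collapses the $y$-integral, the factor $e^{-\eta_0(t-s)}$ localizes $s$ near $t$, and the template comparison $t^{-1/4}\theta\le C(\psi_1+\psi_2)$ that you check region by region is the right closing step. Likewise, for the $\tilde G_x$ term the heat-kernel convolution over $[t-1,t]$ is handled as you describe.

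Your argument for the second estimate, however, contains a genuine error. You assert that the $\delta'$ contributions in $H_x$ and $H_y$ ``cancel in $H_x-H_y$ up to a term $\mathcal{O}(e^{-\eta_0(t-s)})\delta$.'' This is false. Already in the constant-coefficient case $H=e^{-\eta_0\tau}\delta(y-x+a\tau)$ one has $\partial_x\delta(y-x+a\tau)=-\delta'$ and $\partial_y\delta(y-x+a\tau)=+\delta'$, so that
\[
H_x-H_y = -2e^{-\eta_0\tau}\,\delta'(y-x+a\tau)+\hbox{(lower order)},
\]
with no cancellation whatsoever; it is the combination $H_x+H_y$ in which the $\delta'$ terms cancel. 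In the variable-coefficient case, writing $H=g(x,y)\delta(y-z_0(x,t))$, one finds $H_x-H_y=(g_x-g_y)\delta-g\,(1+\partial_x z_0)\,\delta'$, and again the $\delta'$ coefficient does not vanish.

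Consequently the pairing $\int_0^\infty(H_x-H_y)\Upsilon\,dy$ is not simply $\mathcal{O}(e^{-\eta_0\tau})\Upsilon(z_0,s)$ as you claim; it necessarily involves $\Upsilon_y$ evaluated on the characteristic. The correct argument (as in \cite{HR,RZ}) integrates the $\delta'$ by parts and then uses the explicit structure of $\Upsilon=s^{-1/4}(\theta+\psi_1+\psi_2)$: the dangerous term $s^{-1/4}\partial_y\theta$ carries an extra $s^{-1/2}$ relative to $\theta$, so after localization one must verify the sharper inequality $t^{-3/4}\theta(x,t)\le C(\psi_1+\psi_2)(x,t)$, not merely $t^{-1/4}\theta\le C(\psi_1+\psi_2)$. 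This stronger bound does hold by the same region-by-region analysis, but your proof does not establish it.
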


We require also the following estimate accounting boundary effects.

\begin{lem}[Boundary estimates I]\label{boundaryconvolutionI}
Under the assumptions of Theorem \ref{theo-nonlin}, if
$|h(t)|+|h'(t)|\le E_0(1+t)^{-1}$,
\begin{equation}\label{bconeqh}
\begin{aligned}
\int_{0}^{t}H(x,t-s;0)h(s)\, ds&\le
CE_0(\psi_1+\psi_2)(x,t)\\\int_{0}^{t}H_x(x,t-s;0)h(s)\, ds &\le
CE_0(\psi_1+\psi_2)(x,t),
\end{aligned}
\end{equation}
 for $0\le t\le +\infty$,
some $C>0$.
\end{lem}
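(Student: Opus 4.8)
The plan is to reduce both estimates to the explicit form of the kernel $H$ given in Proposition \ref{prop-Greenbounds}, which on the half-line traces a single hyperbolic characteristic. Recall that $H(x,t;y)=\frac{1}{2\pi}A_*(x)^{-1}A_*(y)\,\delta_{x-\bar a_*t}(y)\,e^{-\int_y^x(\eta_*/A_*)(z)\,dz}R_*L_*^{tr}=\cO(e^{-\eta_0 t})\,\delta_{x-\bar a_* t}(y)R_*L_*^{tr}$, so setting $y=0$ collapses the $y$-integral and pins $x$ to the characteristic curve emanating from the boundary at time $s$. Concretely, $H(x,t-s;0)$ is supported where $x = \bar a_*(x,t-s)\,(t-s)$, i.e.\ along the backward characteristic through the origin; equivalently there is a smooth increasing function $\tau\mapsto X(\tau)$ (the forward characteristic from $0$) with $H(x,t-s;0)$ nonzero only at $x=X(t-s)$, and with weight $\cO(e^{-\eta_0(t-s)})$ coming from the $e^{-\int_0^x(\eta_*/A_*)}$ factor together with the exponential decay built into the representation.

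First I would substitute this into $\int_0^t H(x,t-s;0)h(s)\,ds$. Because of the delta-type support, the integral reduces to evaluation at the unique $s=s(x,t)$ for which $x=X(t-s)$, picking up a Jacobian factor $1/|X'(t-s)| = 1/\bar a_* \le 1/\inf_x A_*(x) <\infty$ (using $\bar a_*\ge \inf A_* >0$ from Lemma \ref{lem-profile-decay} and the definition of $\bar a_*$); and it vanishes unless $0\le s(x,t)\le t$, i.e.\ unless $x$ lies on the portion $[0,\,\bar a_* t]$ of the characteristic, which is exactly the region where $\chi(x,t)=1$ and the $\psi_1$ bound is active. On that region we get the pointwise bound
\begin{equation}
\Big|\int_0^t H(x,t-s;0)h(s)\,ds\Big| \le C\, e^{-\eta_0(t-s(x,t))}\,|h(s(x,t))| \le C E_0\, e^{-\eta_0(t-s)}\,(1+s)^{-1},
\end{equation}
using the hypothesis $|h(s)|\le E_0(1+s)^{-1}$. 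It then remains to check that $e^{-\eta_0(t-s)}(1+s)^{-1} \le C(\psi_1+\psi_2)(x,t)$ on the support. Along the characteristic $x$ is comparable to $s$ (since $x\sim \bar a_* s$ with $\bar a_*$ bounded above and below), and $t-s$ is comparable to the distance $|x-a_n^+ t|/a_n^+$-type quantity up to the spread among the $a_j^+$; so the time-exponential factor dominates any polynomial, and $e^{-\eta_0(t-s)}(1+s)^{-1}$ is bounded by $C(1+|x|+t)^{-1/2}(1+|x-a_j^+ t|)^{-1/2}$ for an appropriate $j$ with $a_j^+>0$, hence absorbed into $\psi_1$. For the $x$-derivative estimate, differentiating $H(x,t-s;0)$ in $x$ either hits the weight $e^{-\int_0^x(\eta_*/A_*)}$ (producing a bounded factor $\eta_*/A_*$) or differentiates the delta support, which after integrating the delta against the smooth $s\mapsto h(s(x,t))$ costs one $\partial_s$, i.e.\ an $h'$; using $|h'(s)|\le E_0(1+s)^{-1}$ we obtain exactly the same bound, giving the second inequality in \eqref{bconeqh}.

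The main obstacle I anticipate is bookkeeping the characteristic-geometry reduction carefully: making precise the change of variables $s\leftrightarrow x$ induced by the delta function, verifying that the Jacobian and the exponential weight stay uniformly controlled along the whole characteristic (this uses $\eta_*^+>0$ from \eqref{dissipative} and the uniform bounds on $A_*$), and checking the support condition $s\in[0,t]\Leftrightarrow x\in[0,\bar a_* t]$ so that the output genuinely lands in the $\chi\equiv 1$ region where $\psi_1$ provides a matching decay rate. Once that geometric reduction is in place, the remaining inequalities $e^{-\eta_0(t-s)}(1+s)^{-1}\le C(\psi_1+\psi_2)$ are elementary comparisons of exponential versus algebraic decay, entirely analogous to the corresponding step in \cite{YZ}; I would simply cite that the estimates go through verbatim for the delta-function kernel restricted to the half-line, since the half-line $H$ coincides with the whole-line hyperbolic Green function (as noted in the discussion following Proposition \ref{prop-Greenbounds}).
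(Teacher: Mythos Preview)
Your approach is essentially the same as the paper's: use the delta-function structure of $H$ from Proposition~\ref{prop-Greenbounds} to collapse the $s$-integral to a point evaluation, then compare the resulting expression $e^{-\eta_0(t-s)}(1+s)^{-1}$ to $\psi_1+\psi_2$. The paper carries this out in two lines, writing directly $|\int_0^t H(x,t-s;0)h(s)\,ds| = e^{-\eta_0 x/\bar a_*}|h(t-x/\bar a_*)| \le e^{-\eta_0|x|}(1+|x-\bar a_* t|)^{-1}$, and similarly for $H_x$ with $|h|+|h'|$ in place of $|h|$.

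Two small corrections. First, you omit the outflow case: for $A_*<0$ the characteristic through $y=0$ runs into the boundary rather than into the domain, so $H(x,t;0)\equiv 0$ identically and there is nothing to prove; the paper notes this in one line. Second, you write ``along the characteristic $x$ is comparable to $s$ (since $x\sim \bar a_* s$)'', but earlier you correctly had $x=X(t-s)$, so in fact $x\sim \bar a_*(t-s)$ and $s=t-x/\bar a_*$. With this correction your exponential factor becomes $e^{-\eta_0(t-s)}=e^{-\eta_0 x/\bar a_*}$ (decay in $x$) and your algebraic factor becomes $(1+s)^{-1}\sim(1+|x-\bar a_* t|)^{-1}$, which is exactly the paper's bound and is then easily absorbed into $\psi_1+\psi_2$. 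The swapped roles of $s$ and $t-s$ in your write-up would make the final comparison paragraph incorrect as stated, though the underlying mechanism is right.
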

\begin{proof} 
Note that
$H(x,t;0)\equiv 0$ for the outflow case $A_*<0$. Consider the inflow
case $A_*>0$ (and thus $\bar a_*>0$). We have
\begin{align*}
\Big|\int_{0}^{t}H(x,t-s;0)&h(s)\, ds\Big| \\&= e^{-\eta_0x/\bar
a_*}|h(-\frac 1{\bar a_*}(x-\bar a_*t))|\\&\le
e^{-\eta_0|x|}(1+|x-\bar a_*t|)^{-1}\le CE_0(\psi_1+\psi_2)(x,t),
\\\Big|\int_{0}^{t}H_x(x,t-s;0)&h(s)\, ds\Big| \\&\le
e^{-\eta_0x/\bar a_*}\Big[|h|+|h'|\Big](-\frac 1{\bar a_*}(x-\bar
a_*t))|\\&\le e^{-\eta_0|x|}(1+|x-\bar a_*t|)^{-1}\le
CE_0(\psi_1+\psi_2)(x,t),\end{align*} which completes the proof of the
lemma.
\end{proof}

\begin{lem}[Boundary estimates II]\label{boundaryconvolutionII}
Under the assumptions of Theorem \ref{theo-nonlin}, if
$|h(t)|+|h_t(t)|\le E_0(1+t)^{-1}$,
\begin{equation}\label{bconeq}
\begin{aligned}
\Big|\int_0^t \Big(\tilde G_y(x,t-s;0)Bh(s)+&G(x,t-s;0)
Ah(s)\Big)\,ds \Big|\\&\le CE_0(\theta+\psi_1+\psi_2)(x,t)
\\
\Big|\int_0^t \Big(\tilde G_{xy}(x,t-s;0)Bh(s)+&G_x(x,t-s;0)
Ah(s)\Big)\,ds \Big|\\&\le CE_0(\theta+\psi_1+\psi_2)(x,t)
\end{aligned}
\end{equation}
for $0\le t\le +\infty$, some $C>0$.
\end{lem}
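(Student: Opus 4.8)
The plan is to reduce both integrals to scalar convolution estimates of the kind already established in \cite{HZ,HR,RZ}, after first exploiting the structure of the Green function at the boundary. I would decompose $G=H+\tilde G$ throughout and treat the inflow ($\bar A_*>0$) and outflow ($\bar A_*<0$) cases separately. The structural facts that make the transported part harmless are: $HB\equiv 0$, immediate from $L_*^{tr}B=(1,0)B=0$ and the rank-one form \eqref{Hterm}; $G(x,\cdot;0)B\equiv 0$ in the inflow case and $G(x,\cdot;0)\equiv 0$ in the outflow case, from the boundary conditions (iii)--(iii'); and, in the outflow case, $H(x,\cdot;0)\equiv 0$ as well, since the point mass in \eqref{Hterm} is supported on $\{x=\bar a_* t\}$, which for $\bar a_*<0$ never meets $x>0$, so that $\tilde G(x,\cdot;0)\equiv 0$ there and only the $\tilde G_y B$ and $\tilde G_{xy}B$ terms survive. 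In the inflow case $\tilde G(x,\cdot;0)B\equiv 0$, hence also $\tilde G_x(x,\cdot;0)B\equiv 0$, and $\tilde G_y B=G_y B$, so $B$ again never multiplies the point mass; meanwhile in $G(x,t-s;0)Ah(s)=H(x,t-s;0)Ah(s)+\tilde G(x,t-s;0)Ah(s)$ the $H$-term is a time-exponentially small point mass transported along the incoming characteristic $\{x=\bar a_* t\}$, which, $A$ being bounded and $|h|+|h_t|\le E_0(1+t)^{-1}$, is bounded exactly as in the proof of Lemma \ref{boundaryconvolutionI} — and likewise for its $x$-derivative, using the bound on $h_t$ — giving a contribution $\le CE_0(\psi_1+\psi_2)(x,t)$.

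For the remaining $\tilde G$-contributions I would insert the bounds of Proposition \ref{prop-Greenbounds} specialized to $y=0$: there $\chi_{\{|a_k^+t|\ge|y|\}}=1$ and $t-|y/a_k^+|$ becomes $t$, so the two sums in \eqref{Gbounds} coalesce and the $a_k^+<0$ summands are time-exponentially small, leaving, with $\tau:=t-s$,
\[
|\partial_x^\gamma\partial_y^\alpha\tilde G(x,\tau;0)|\le Ce^{-\eta(|x|+\tau)}+C\big(\tau^{-(|\alpha|+|\gamma|)/2}+|\alpha|+|\gamma|e^{-\eta|x|}\big)\sum_{a_k^+>0}\tau^{-1/2}e^{-(x-a_k^+\tau)^2/M\tau},
\]
for $0\le|\alpha|,|\gamma|\le1$. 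This turns both inequalities into scalar bounds of the form $\int_0^t\big(e^{-\eta(|x|+(t-s))}+(t-s)^{-j/2}\sum_{a_k^+>0}(t-s)^{-1/2}e^{-(x-a_k^+(t-s))^2/M(t-s)}\big)E_0(1+s)^{-1}\,ds$, $j\in\{0,1,2\}$, to be compared with $CE_0(\theta+\psi_1+\psi_2)(x,t)$. The $e^{-\eta(|x|+(t-s))}$ piece convolves to $CE_0e^{-\eta|x|}(1+t)^{-1}$, which is absorbed into $\psi_1$ on $\{0\le x\le a_n^+ t\}$ and into a time-exponential term (hence into $\psi_2$) for $x>a_n^+ t$; for the traveling-kernel pieces I would split the $s$-integral at $s=t/2$, so that the singular factor $(t-s)^{-j/2}$ causes no trouble on $[0,t/2]$, and on $[t/2,t]$ use that the Gaussian confines $t-s$ to a neighborhood of $x/a_k^+$, making $(1+s)^{-1}$ comparable there to $(1+t-x/a_k^+)^{-1}$ — this being exactly the scalar bookkeeping carried out for the boundary source in the strictly parabolic analyses of \cite{HR,RZ}. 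For the most singular factors $(t-s)^{-1}$ and $(t-s)^{-2}$, arising from $\tilde G_y$ and $\tilde G_{xy}$, I would integrate by parts in $s$, paying the bound on $h_t$, to trade a half-power of $(t-s)$ for integrability at $s=t$.

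The hard part, as everywhere in this paper, will be exactly this short-time hyperbolic singularity in the $\tilde G_y$ and $\tilde G_{xy}$ boundary terms, absent in the strictly parabolic case \cite{YZ}. I expect to control it through the combination of: (i) the structural cancellations above, which keep the transported point mass and the worst $x$-derivatives out of the $B$-terms entirely; (ii) the exponential weight $e^{-\eta|x|}$ together with the Gaussian $e^{-(x-a_k^+(t-s))^2/M(t-s)}$, which localize the singular part of the integrand to $t-s$ bounded away from $0$ whenever $x$ is bounded away from the boundary; and (iii) the decay hypotheses on $h$ and $h_t$, used via integration by parts in $s$ to dispose of the boundary-adjacent regime. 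Once these reductions are made, the surviving estimates coincide with the scalar convolution computations of the strictly parabolic analysis, and the asserted bound $CE_0(\theta+\psi_1+\psi_2)(x,t)$ follows.
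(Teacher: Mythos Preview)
Your reductions for the transported part $H$ and the case distinctions inflow/outflow are correct and match the paper.  The gap is in the last step, where you propose to cure the short-time singularity of $\tilde G_y(x,t-s;0)$ and $\tilde G_{xy}(x,t-s;0)$ by ``integrating by parts in $s$, paying the bound on $h_t$, to trade a half-power of $(t-s)$''.  This does not work from the pointwise bounds alone: to move the $s$-derivative onto $h$ you would need a time-antiderivative of the kernel, and the bounds of Proposition~\ref{prop-Greenbounds} give you no such object with improved behavior; conversely, differentiating the kernel in $s$ makes the singularity worse, not better.  The most singular piece of $\tilde G_y(x,\tau;0)$ is of order $\tau^{-1}e^{-(x-a_k^+\tau)^2/M\tau}$, which is not integrable at $\tau=0$ for $x$ near the boundary, and your localization argument (ii) only helps when $x$ is bounded away from $0$.  (Incidentally, your singularity count for $\tilde G_{xy}$ should be $(t-s)^{-3/2}$, not $(t-s)^{-2}$.)

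The paper handles the singular window $s\in[t-1,t]$ by a different, structural argument that you are missing.  Integrating the adjoint relation $-G_s-(GA)_y+GA_y=(G_yB)_y$ (equation \eqref{explicit}) in $y$ over $(0,\infty)$ yields the identity
\[
\big(\tilde G_yB+GA\big)(x,t-s;0)\;=\;-\int_0^{\infty}G(x,t-s;y)A_y(y)\,dy\;+\;\partial_s\int_0^{\infty}G(x,t-s;y)\,dy,
\]
so the troublesome boundary combination is expressed as a $y$-integral of $G$, which is uniformly bounded by $\int|G|\,dy\le C$, plus a genuine total $s$-derivative.  Now the integration by parts in $s$ is legitimate and costs only the bound on $h_t$, giving the estimate $\le C\max_{0\le\tau\le1}(|h|+|h'|)(t-\tau)$ on $\int_{t-1}^t$; this is then combined with the direct large-$|x|$ decay estimate (your localization idea, carried out as in \eqref{basicest}--\eqref{basicestGA}) to land in $CE_0(\psi_1+\psi_2)$.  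The second inequality follows by differentiating the identity in $x$ and using $\int_0^1\!\int|G_x|\,dy\,d\tau\le C$.  On $\int_0^{t-1}$, where the kernel is nonsingular, your scalar-convolution approach is fine and matches the paper.
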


\begin{proof}
The estimate on $\int_0^{t-1}$, where $G_y(x,t-s;0),\tilde
G_{xy}(x,t-s;0)$ is nonsingular, follows readily by estimates
similar to but somewhat simpler than those of Lemma
\eqref{convolutionsI}, which we therefore omit.

To bound the singular part $\int_{t-1}^t$, we integrate
\eqref{explicit} in $y$ from $0$ to $+\infty$
to obtain
\begin{equation}\label{keyrel}
\tilde G_yB + GA =
-\int_0^{+\infty}G(x,t-s;y)A_y\, dy + \int_0^{+\infty}G_s(x,t-s;y)\,
dy.
\end{equation}
Substituting in the lefthand side of \eqref{bconeq}, and integrating
by parts in $s$, we obtain
\begin{equation}\label{partsest}
\begin{aligned}
\int_{t-1}^t (\tilde G_yB + GA)h(s)\, ds
&= \int_{0}^1 \Big(\int_0^{+\infty}A_y(y)G(x,\tau ;y)\, dy \Big) h(t-\tau)\, d\tau  \\
&\quad - \int_0^1 \Big( \int_0^{+\infty}G(x,\tau;y)\, dy \Big)
h'(t-\tau)\, d\tau\\
&\quad + \Big(\int_0^{+\infty}G(x,1;y)\, dy\Big) h(t-1),
\end{aligned}
\end{equation}
which by $\int |G|  dy\le C$ is bounded by $\max_{0\le \tau\le
1}(|h|+|h'|)(t-\tau)$.

Combining this with the following more straightforward estimate (for
large $x$, $|x|>a_n^+ t$)
\begin{equation}\label{basicest}
\begin{aligned}
\Big|\int_{t-1}^{t}\tilde G_y(x,t-s;0)B h(s)\, ds\Big|&\le
\int_{0}^{1}|\tilde G_y(x,\tau ;0)|B h(t-\tau)\, d\tau \\
&\le C\max_{0\le \tau\le 1}|h(t-\tau)|
\int_{0}^{1}\tau^{-1} e^{-|x|^2/C\tau}\, d\tau\\
&=C|x|^{-2}\max_{0\le \tau\le 1}|h(t-\tau)|\\
&\quad \times
\int_{0}^{1}(|x|^2/\tau) e^{-|x|^2/C\tau}\, d\tau\\
&\le C\max_{0\le \tau\le 1}|h(t-\tau)| |x|^{-2},
\end{aligned}
\end{equation}
\begin{equation}\label{basicestGA}
\begin{aligned}
\Big|\int_{t-1}^{t}\tilde G(x,t-s;0)A h(s)\, ds\Big|&\le
\int_{0}^{1}|\tilde G(x,\tau ;0)|A h(t-\tau)\, d\tau \\
&\le C\max_{0\le \tau\le 1}|h(t-\tau)|
\int_{0}^{1}\tau^{-1/2} e^{-|x|^2/C\tau}\, d\tau\\
&\le C\max_{0\le \tau\le 1}|h(t-\tau)|
\int_{0}^{1}\tau^{-1} e^{-|x|^2/C\tau}\, d\tau\\
&\le C\max_{0\le \tau\le 1}|h(t-\tau)| |x|^{-2},
\end{aligned}
\end{equation}
and the estimate \eqref{bconeqh} for $H$ term
(thus together with \eqref{basicestGA} for $G = \tilde G + H$),
we find that the contribution from $\int_{t-1}^t$ has norm bounded
by $$ \max_{0\le \tau\le 1}(|h|+|h'|)(t-\tau)(1+|x|)^{-2} \le C
E_0(\psi_1 + \psi_2)(x,t).
$$
Combining this estimate with the one for $\int_0^{t-1}$, we obtain
the first inequality in \eqref{bconeq}. For second inequality, we
first differentiate \eqref{partsest} with respect to $x$ to get
\begin{equation}\label{partsestx}
\begin{aligned}
\int_{t-1}^t (\tilde G_{xy}B + G_xA)h(s)\, ds
&= \int_{0}^1 \Big(\int_0^{+\infty}A_y(y)G_x(x,\tau ;y)\, dy \Big) h(t-\tau)\, d\tau  \\
&\quad - \int_0^1 \Big( \int_0^{+\infty}G_x(x,\tau;y)\, dy \Big)
h'(t-\tau)\, d\tau\\
&\quad + \Big(\int_0^{+\infty}G_x(x,1;y)\, dy\Big) h(t-1),
\end{aligned}
\end{equation}
which, by $\int_{0}^1\int |G_x|  dyd\tau\le
C\int_0^1\tau^{-1/2}d\tau \le C$, is bounded by $\max_{0\le \tau\le
1}(|h|+|h'|)(t-\tau)$, similarly as above.

For the large $x$, clearly we still have
similar estimates as
\eqref{basicest} 
and \eqref{basicestGA} for $\tilde G_{xy}$ and $\tilde G_x$.
These, estimate \eqref{bconeqh} for $H_x$, and \eqref{partsestx}
yield the contribution from $\int_{t-1}^t$ as above, which together
with the estimate for $\int_0^{t-1}$ completes the proof of
\eqref{bconeq}.
\end{proof}

\subsection{Linearized stability} In this subsection, we shall give
the proof of Theorem \ref{theo-lin}. We first need the following
estimates:
\begin{lem}[\cite{MaZ4}]\label{iniconvolutions-lin}
Under the assumptions of Theorem \ref{theo-lin},
\begin{equation}\label{iniconeq-lin}
\begin{aligned}\Big|
\int_{0}^{+\infty}\tilde G(\cdot,t;y)f(y)\, dy \Big| _{L^p} &\le
C(1+t)^{-\frac 12(1-1/r)}|f|_{L^q},\\ \Big|
\int_{0}^{+\infty}H(\cdot,t;y)f(y)\, dy \Big| _{L^p} &\le Ce^{-\eta
t}|f|_{L^p},
\end{aligned}
\end{equation}
for all $t\ge 0$, some $C$, $\eta>0$, for any $1\le q\le p$ and
$f\in L^q\cap L^p$, where $1/r+1/q = 1+1/p$.
\end{lem}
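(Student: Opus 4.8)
The plan is to deduce both bounds directly from the pointwise Green-function bounds of Proposition~\ref{prop-Greenbounds} together with Young's convolution inequality, handling the delta-type hyperbolic part $H$ and the regular part $\tilde G$ separately and splitting the time interval into $0\le t\le 1$ and $t\ge 1$.

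\emph{The $H$-term.} By \eqref{Hterm}, $H(x,t;y)=\cO(e^{-\eta_0 t})\,\delta_{x-\bar a_* t}(y)\,R_*L_*^{tr}$, so that $\int_0^{+\infty}H(x,t;y)f(y)\,dy=\cO(e^{-\eta_0 t})\,f(y_*(x,t))$ up to bounded matrix factors, where $y_*(x,t)$ is the foot at time $0$ of the backward hyperbolic characteristic through $(x,t)$, i.e.\ the solution of $dz_*/dt=A_*(z_*)$, $z_*(t)=x$, evaluated at $0$. Since $A_*$ is bounded with $\sgn A_*$ fixed, the map $x\mapsto y_*(x,t)$ is a bi-Lipschitz diffeomorphism of $\RR^+$ with Jacobian bounded above and below uniformly in $t$; hence $|f(y_*(\cdot,t))|_{L^p}\le C|f|_{L^p}$ and the second bound in \eqref{iniconeq-lin} follows with $\eta=\eta_0$.

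\emph{The $\tilde G$-term.} Apply \eqref{Gbounds} with $\alpha=\gamma=0$, which gives $|\tilde G(x,t;y)|\le Ce^{-\eta(|x-y|+t)}+C\sum_k K_{a_k^+}(x-y,t)+C\sum_{a_k^+<0,\,a_j^+>0}\chi_{\{|a_k^+t|\ge|y|\}}\,t^{-1/2}e^{-(x-a_j^+(t-|y/a_k^+|))^2/Mt}$, where $K_a(z,t):=t^{-1/2}e^{-(z-at)^2/Mt}$. The exponentially localized residual $e^{-\eta(|x-y|+t)}$ is, as a convolution kernel in $x-y$, bounded in every $L^s(\RR)$ with norm $\le C$, so Young's inequality with $1/s+1/q=1+1/p$ (i.e.\ $s=r$) bounds its contribution by $Ce^{-\eta t}|f|_{L^q}\le C(1+t)^{-\frac12(1-1/r)}|f|_{L^q}$, since $e^{-\eta t}$ beats any polynomial. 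For each Gaussian one computes directly $|K_a(\cdot,t)|_{L^s}=C t^{-\frac12(1-1/s)}$ and $|K_a(\cdot,t)|_{L^1}\le C$ uniformly in $t$; thus for $t\ge1$, Young's inequality with $s=r$ gives a contribution $\le C t^{-\frac12(1-1/r)}|f|_{L^q}\le C(1+t)^{-\frac12(1-1/r)}|f|_{L^q}$, while for $0\le t\le1$ the uniform $L^1$-bound gives $\le C|f|_{L^p}$, which is absorbed into $C(1+t)^{-\frac12(1-1/r)}|f|_{L^q\cap L^p}$ because $(1+t)^{-\frac12(1-1/r)}$ is bounded below on $[0,1]$ (this small-time regime is precisely why $f\in L^q\cap L^p$ is imposed). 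The boundary-interaction sum is treated by the same recipe: for fixed $x,t$, on the support $\{|a_k^+t|\ge|y|\}$ the kernel $t^{-1/2}e^{-(x-a_j^+(t-|y/a_k^+|))^2/Mt}$ is, in the variable $w:=a_j^+|y/a_k^+|$, a Gaussian of width $\sim\sqrt t$; since $w$ depends linearly on $y$ with slope $a_j^+/|a_k^+|$ bounded above and below, the change of variables $y\mapsto w$ reduces this to the Gaussian case above with $L^s$-norm estimates unchanged up to constants. Summing the finitely many contributions yields the first bound in \eqref{iniconeq-lin}.

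\emph{Main obstacle.} The only point requiring genuine care is the boundary-interaction (reflected-characteristic) term in \eqref{Gbounds}, which is not a convolution in $x-y$: one must verify that the change of variables linearizing its Gaussian center preserves the $L^q\to L^p$ bound with a constant uniform in $t$ (and in $y$ on the support set). Everything else is bookkeeping with Young's inequality and the elementary $L^s$-norms of Gaussians.
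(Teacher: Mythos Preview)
The paper does not supply its own proof of this lemma; it is stated with attribution to \cite{MaZ4} and used as a black box. Your argument is the standard one that underlies such citations: read off the pointwise structure of $H$ and $\tilde G$ from Proposition~\ref{prop-Greenbounds} and feed each piece through Young's inequality, using the explicit $L^s$-norms of heat kernels.

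All steps are correct. For the $H$-term, the delta supported on the hyperbolic characteristic together with the exponential prefactor gives exactly a bi-Lipschitz pullback of $f$, and your Jacobian remark is the right justification. For $\tilde G$, the translation-invariant Gaussians are handled by Young directly, and your treatment of the reflected term---linearizing the center via $y\mapsto a_j^+|y/a_k^+|$ and observing that the resulting kernel is again a width-$\sqrt t$ Gaussian acting on a fixed linear rescaling of $f$---is the correct reduction; the indicator $\chi_{\{|a_k^+t|\ge|y|\}}$ only helps. Your short-time/long-time split is also right, and your parenthetical remark that the assumption $f\in L^q\cap L^p$ is exactly what covers $0\le t\le 1$ (where the kernel has bounded $L^1$ but not bounded $L^r$ norm) is the honest reading of the hypothesis; in effect the bound for small $t$ is $C|f|_{L^p}$, which is then absorbed since $(1+t)^{-\frac12(1-1/r)}\ge c>0$ there.
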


\begin{lem}\label{bdryconvolutions-lin}
Under the assumptions of Theorem \ref{theo-lin}, if $|h(t)|\le
E_0(1+t)^{-1}$,
\begin{equation}
\begin{aligned}
\Big|\int_0^t \Big(\tilde G_y(x,t-s;0)Bh(s)+&G(x,t-s;0)
Ah(s)\Big)\,ds \Big|_{L^p}\\&\le CE_0(1+t)^{-\frac 12(1-1/p)}
\end{aligned}
\end{equation}
for $0\le t\le +\infty$, some $C>0$.
\end{lem}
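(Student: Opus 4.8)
The plan is to run the argument of Lemma~\ref{boundaryconvolutionII}, replacing its pointwise template by the corresponding $L^p_x$-norm at the very end. First decompose $G=H+\tilde G$ and recall $H(x,t;0)B\equiv 0$, so that the quantity to estimate splits as
\[
\int_0^t\!\big(\tilde G_y(x,t-s;0)B+\tilde G(x,t-s;0)A\big)h(s)\,ds+\int_0^t\! H(x,t-s;0)Ah(s)\,ds .
\]
The second, purely hyperbolic, term is already controlled (using only $|h|$) by the explicit computation in the proof of Lemma~\ref{boundaryconvolutionI}: it vanishes in the outflow case and in the inflow case is bounded by $CE_0 e^{-\eta|x|}(1+|x-\bar a_*t|)^{-1}\le CE_0(\psi_1+\psi_2)(x,t)$, whose $L^p_x$-norm is $\le CE_0(1+t)^{-\frac12(1-1/p)}$ (in fact exponentially better in $t$ away from the characteristic). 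For the first term, split $\int_0^t=\int_0^{t-1}+\int_{t-1}^t$ so as to isolate the short-time (high-frequency) diagonal singularity of $\tilde G_y$.

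On $\int_0^{t-1}$ one has $\tau:=t-s\ge 1$, so the low-frequency bounds of Proposition~\ref{prop-Greenbounds} give, at $y=0$,
\[
|\tilde G_y(x,\tau;0)|+|\tilde G(x,\tau;0)|\le Ce^{-\eta(|x|+\tau)}+C\sum_{k}\tau^{-1/2}e^{-(x-a_k^+\tau)^2/M\tau}.
\]
Inserting this into $\int_0^{t-1}(\,\cdot\,)|h(s)|\,ds$ with $|h(s)|\le E_0(1+s)^{-1}$ and using the low-frequency bookkeeping already developed for Lemmas~\ref{iniconvolutions-lin} and \ref{convolutionsI} — Minkowski's integral inequality together with $\|\tau^{-1/2}e^{-(x-a\tau)^2/M\tau}\|_{L^p_x}\le C\tau^{-\frac12(1-1/p)}$, and splitting the $s$-integral at $t/2$ — bounds this piece in $L^p_x$ by $CE_0(1+t)^{-\frac12(1-1/p)}$; equivalently one may keep the full pointwise structure and land in $CE_0(\theta+\psi_1+\psi_2)(x,t)$ exactly as in the nonsingular part of Lemma~\ref{boundaryconvolutionII}, then take $L^p_x$-norms.

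On $\int_{t-1}^t$ the kernel $\tilde G_y(x,t-s;0)$ is singular as $s\uparrow t$, and the $L^p_x$-norm cannot be pulled naively through the $s$-integral; here I would invoke the identity \eqref{keyrel},
\[
\tilde G_yB+GA=-\int_0^{+\infty}G(x,t-s;y)A_y\,dy+\int_0^{+\infty}G_s(x,t-s;y)\,dy ,
\]
and argue as in the proof of Lemma~\ref{boundaryconvolutionII}, the point being that the combination $\tilde G_yB+GA$ is better behaved than $\tilde G_y$ alone. The first term is harmless because $|A_y|=|\bar A_y|\le Ce^{-\theta y}$ by Lemma~\ref{lem-profile-decay} and $\big\|\int_0^{+\infty}|G(\cdot,\tau;y)|e^{-\theta y}\,dy\big\|_{L^p_x}\le C$ uniformly for $\tau\in(0,1)$; the second term is integrated by parts in $s$ and controlled by $\int_0^{+\infty}|G(x,\tau;y)|\,dy\le C$ together with $\sup_{[t-1,t]}(|h|+|h'|)\le CE_0(1+t)^{-1}$ (in the linear setting $h$ is assumed regular enough for the IBVP to be well posed, so such a bound on $h'$ is at hand). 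Since $(1+s)^{-1}\le C(1+t)^{-1}$ on $[t-1,t]$, the whole $\int_{t-1}^t$ contribution is bounded in $L^p_x$ by $CE_0(1+t)^{-1}\le CE_0(1+t)^{-\frac12(1-1/p)}$, completing the proof. The main obstacle is precisely this singular, high-frequency piece near $s=t$: the naive absolute bound on $\tilde G_y(\cdot,\tau;0)$ fails to be time-integrable as $\tau\downarrow 0$, so one must exploit the cancellation carried by \eqref{keyrel} (equivalently, that $\int G_s\,dy$ is a total $s$-derivative) — this being the one point at which the hyperbolic, rather than strictly parabolic, character of the problem makes itself felt, and it is handled exactly as in Lemma~\ref{boundaryconvolutionII}.
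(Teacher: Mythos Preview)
Your overall strategy is the same as the paper's: the paper's proof is in fact one line, simply invoking the pointwise estimate \eqref{bconeq} of Lemma~\ref{boundaryconvolutionII} and then using $|(\theta+\psi_1+\psi_2)(\cdot,t)|_{L^p}\le C(1+t)^{-\frac12(1-1/p)}$. You have essentially chosen to reprove Lemma~\ref{boundaryconvolutionII} inline, which is fine, but there is a real gap in your treatment of the singular piece $\int_{t-1}^t$.

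You control the $G_s$-term (after integration by parts in $s$) via $\int_0^{+\infty}|G(x,\tau;y)|\,dy\le C$ times $\sup_{[t-1,t]}(|h|+|h'|)\le CE_0(1+t)^{-1}$, and then assert that this places the whole $\int_{t-1}^t$ contribution in $L^p_x$ with norm $CE_0(1+t)^{-1}$. That step fails: the bound $\int_0^{+\infty}|G(x,\tau;y)|\,dy\le C$ is merely \emph{uniform} in $x$ (indeed $\int G\,dy$ is essentially the conserved mass), so it yields a pointwise bound that is constant in $x$ and hence not in $L^p(\mathbb{R}^+)$ for $p<\infty$. The paper's proof of Lemma~\ref{boundaryconvolutionII} does not stop at this uniform bound; it combines it with the direct large-$|x|$ estimates \eqref{basicest}--\eqref{basicestGA}, which give $|x|^{-2}$ decay for $|x|$ large, to obtain the pointwise template $C E_0(\psi_1+\psi_2)(x,t)$. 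Only after securing that spatial decay can one take $L^p_x$-norms. So either cite \eqref{bconeq} directly, as the paper does, or if you insist on redoing the argument, you must include the large-$|x|$ step.
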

\begin{proof} This follows at once by the boundary estimate \eqref{bconeq} and the fact that
$|(\theta + \psi_1+\psi_2)(\cdot,t)|_{L^p}\le C(1+t)^{-\frac
12(1-1/p)}$.
\end{proof}

\begin{proof}[Proof of Theorem \ref{theo-lin}]
Sufficiency of (D) for linearized stability (the main point here)
follows easily by applying the above lemmas to the following
representation for solution $U(x,t)$ of the linearized equations
\eqref{linearized-eqs}
\begin{align*}
  U(x,t)=&\int_0^\infty G(x,t;y)U_0(y)\,dy
  \\&+\int_0^t \Big(\tilde G_y(x,t-s;0)BU(0,s)+G(x,t-s;0) AU(0,s)\Big)\,ds
  \end{align*}
where $U(y,0) = U_0(y)$
and $|U(0,s)|\le C|h(s)| \le C(1+s)^{-1}$ by \eqref{inBC-U} in the
inflow case, and $|BU(0,s)|\le C|h(s)| \le C(1+s)^{-1}$ by
\eqref{outBC-U} in the outflow case, noting that
$G(x,t;0)\equiv 0$
in this case. 
Necessity follows by a much simpler argument, restricting $x$, $y$
to a bounded set and letting $t\to \infty$, noting that $G$ is given
by the ODE evolution of the spectral projection onto the finite set
of zeros of $D$ in $\Re \lambda\ge 0$, necessarily nondecaying, plus
an $O(e^{-\eta t})$ error, $\eta>0$, from which we find that
asymptotic decay implies nonexistence of any such zeros; see
Proposition 7.7 and Corollary 7.8, \cite{MaZ3} for details.
\end{proof}

\subsection{Nonlinear argument} In this subsection, we shall give
the proof of Theorem \ref{theo-nonlin}. In fact, with the above
preparations, the proof of nonlinear stability is also
straightforward.

\begin{lemma}[$H^4$ local theory] \label{lem-shorttime} Under the hypotheses of Theorem
\ref{theo-nonlin}, then, for $T$ sufficiently small depending on the
$H^4-$norm of $U_0$, there exists a unique solution $U(x,t)\in
L^\infty(0,T;H^4(x))$ of \eqref{perteqs} satisfying
\begin{equation}|U(t)|_{H^4} \le C|U_0|_{H^4}\end{equation}
for all $0\le t\le T$. \end{lemma}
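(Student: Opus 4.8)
The statement to prove is Lemma~\ref{lem-shorttime}, a standard $H^4$ local well-posedness result for the quasilinear parabolic perturbation equation \eqref{perteqs} with inhomogeneous Dirichlet boundary data. Let me sketch the proof.

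The plan is to treat this as a routine quasilinear local well-posedness result, following the standard symmetrizer/iteration scheme for symmetric hyperbolic--parabolic systems as in \cite{MaZ4, Z2} and the references therein, with the a~priori estimates being essentially those already assembled in the proof of Proposition~\ref{prop-energy-est} (run on a short time interval, where one needs only boundedness, not time-decay). First I would pass to the $W$-coordinates of \eqref{per-varW}, in which the perturbation equation takes the partially symmetric hyperbolic--parabolic form \eqref{perturb-eqs}, and recall that $|U|_{H^r}\sim |W|_{H^r}$ for $0\le r\le 4$ as long as $|U|_{W^{2,\infty}}$ stays bounded, so it suffices to solve for $W$. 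Since the statement asks only for a solution in $L^\infty(0,T;H^4)$ on a short interval, no time-asymptotic structure is needed; I would (if desired) first subtract off a smooth extension of the boundary data $h\in C^4$ into $x>0$ to reduce to homogeneous boundary conditions modulo a forcing term, or equivalently carry the boundary terms along as in Section~4.

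Next I would build the solution by the usual linearization/Picard iteration: starting from $W^0\equiv 0$, define $W^{n+1}$ as the solution of the \emph{linear} variable-coefficient problem obtained by freezing the coefficients $A^0,A,B,M_1,M_2$ at $W^n$, with the given Dirichlet boundary conditions (B). Each such linear mixed initial--boundary value problem is solvable by standard theory --- the parabolic $v$-block provides smoothing, the hyperbolic (noncharacteristic, by (H1)) $u$-block is handled by the method of characteristics together with energy estimates, and the boundary conditions prescribe exactly the correct data (Dirichlet on the parabolic component, plus the hyperbolic component only in the inflow case $A_*>0$). Then I would establish a uniform a~priori bound $\|W^n(t)\|_{H^4}^2\le C\|W_0\|_{H^4}^2$ for $0\le t\le T$, with $T$ depending only on $\|W_0\|_{H^4}$, by the Friedrichs-type energy estimates of Section~4 applied to the linear problems --- the point is that on a short interval the terms $C(c_*)\|W\|_{L^2}^2$ etc.\ may be absorbed by Gr\"onwall's inequality without needing the favorable dissipative signs, so we simply get $\frac{d}{dt}\cE_k(W^{n+1})\le C\big(1+\|W^n\|_{H^4}\big)^p\cE_k(W^{n+1}) + (\text{bdry})$. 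Convergence of the iterates follows by showing $\{W^n\}$ is Cauchy in $L^\infty(0,T;L^2)$ via an energy estimate on the difference equation for $W^{n+1}-W^n$ (which loses no derivatives because the coefficient differences are controlled in $L^\infty$ by the uniform $H^4$ bound and Sobolev embedding in one space dimension), then interpolating with the uniform $H^4$ bound to upgrade to convergence in $H^s$, $s<4$; the limit $W$ then solves \eqref{perturb-eqs}, lies in $L^\infty(0,T;H^4)$ by weak-$*$ compactness, and inherits the bound $\|W(t)\|_{H^4}\le C\|W_0\|_{H^4}$. Uniqueness is an identical $L^2$ energy estimate on the difference of two solutions.

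The main obstacle, exactly as in the global estimate of Proposition~\ref{prop-energy-est}, is the control of the boundary terms arising from integration by parts in the higher-order energy estimates --- terms of the form $A_0\partial_x^k W(0)\cdot\partial_x^k W(0)$ and $\partial_x^k\big[b\,\partial_x w^{II}+M_2^{22}\bW_x\big](0)\,\partial_x^k w^{II}(0)$. These are handled precisely as in Section~4: one uses the parabolic equations to rewrite the normal derivatives of $w^{II}$ at $x=0$ in terms of time derivatives and tangential-order quantities, uses the invertibility of $A^{11}$ in the inflow case (or its sign in the outflow case) to control $\partial_x^k w^{I}(0)$ via the hyperbolic equation, and absorbs the remaining $|\partial_x^r w^{II}(0)|$ by the trace/Sobolev inequality $|w(0)|^2\le C\|w\|_{L^2}(\|w_x\|_{L^2}+\|w\|_{L^2})$. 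All resulting contributions are then dominated by $C(1+\|U_0\|_{H^4})^p$ times quantities already present, so the short-time Gr\"onwall argument closes. Finally, translating back to $U$-coordinates via $|U|_{H^4}\sim|W|_{H^4}$ gives the claimed estimate, completing the proof.
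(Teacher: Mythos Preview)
Your proposal is correct and follows exactly the approach the paper invokes: the paper's own proof simply cites \cite{Z2,Z4} for the ``standard (bounded high norm, contractive low norm) contraction mapping argument'' and omits all details, which is precisely the Picard iteration with uniform $H^4$ bounds plus $L^2$-Cauchy convergence that you have sketched. In fact you have supplied considerably more detail than the paper does, including the boundary-term handling drawn from Section~4, so your write-up is a faithful expansion of what the authors leave implicit.
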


\begin{proof} Short-time existence, uniqueness, and stability are described in \cite{Z2,Z4}, using a
standard (bounded high norm, contractive low norm) contraction
mapping argument. We omit the details.\end{proof}
\begin{lemma} Under the hypotheses of Theorem
\ref{theo-nonlin}, let $U\in L^\infty(0,T;H^4(x))$ satisfy
\eqref{perteqs} on $[0,T]$, and define
\begin{equation}\label{zeta2}
 \zeta(t):= \sup_{x, 0\le s \le t}\Big[
 (|U|+|U_x|)(\theta+\psi_1+\psi_2)^{-1}(x,t)
 \Big].
\end{equation}

If $\zeta(T)$ and $|U_0|_{H^4}$ are bounded by $\zeta_0$
sufficiently small, then, for some $\epsilon >0$, (i) the solution
$U$, and thus $\zeta$ extends to $[0,T+\epsilon]$, and (ii) $\zeta$
is bounded and continuous on $[0,T+\epsilon]$.
\end{lemma}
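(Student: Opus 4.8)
The plan is to run the standard continuous-induction (bootstrap) argument that in these Green-function-based stability proofs converts the a priori template \eqref{zeta2} into a closed estimate. Part (i), extension of the solution, is immediate from the local existence theory: since $\zeta(T)\le\zeta_0$ controls the sup norm of $U$ and $U_x$, and Lemma~\ref{lem-w-EE} together with Remark~\ref{rem-wpoint-bounds} controls the weighted $H^4$ norm (hence by Sobolev embedding the $W^{2,\infty}$ norm stays below the threshold $\zeta$ of Proposition~\ref{prop-energy-est}), we may apply the $H^4$ local theory of Lemma~\ref{lem-shorttime} with initial time $T$ to continue the solution to $[0,T+\epsilon]$ for some $\epsilon>0$; continuity of $\zeta$ on the extended interval follows from continuity in time of $U$ and $U_x$ in the pointwise-weighted topology, which is part of the local well-posedness statement.

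The substance is part (ii): showing $\zeta$ stays \emph{bounded} (by something like $C(\zeta_0 + E_0)$, hence still small) on $[0,T+\epsilon]$. First I would invoke the integral representations \eqref{u} and \eqref{ux} for $U$ and $U_x$. Into these I substitute the nonlinear bounds \eqref{newqbounds} on $Q$, which by definition of $\zeta$ give $|\Pi_1 Q(U,U_y)_y|, |Q(U,U_y)|\le C\zeta^2(\theta+\psi_1+\psi_2)^2 = C\zeta^2\Psi$ and, for the source terms appearing with $H$ in the $U_x$ formula, bounds of the form $C\zeta^2\Upsilon$ after using the extra decay from Remark~\ref{rem-wpoint-bounds} to produce the $s^{-1/4}$ factor (this is where the weighted energy estimate is essential). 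Then I apply, term by term: Lemma~\ref{iniconvolutionsh} to the $H$-against-$U_0$ terms (using the hypothesis $|U_0(x)|+|U_0'(x)|\le E_0(1+|x|)^{-3/2}$ recorded in the Remark after Theorem~\ref{theo-nonlin}); Lemma~\ref{iniconvolutions} to the $\tilde G$-against-$U_0$ terms; Lemmas~\ref{convolutionsI} and \ref{convolutionsII} to the nonlinear convolution terms; and Lemmas~\ref{boundaryconvolutionI} and \ref{boundaryconvolutionII} to the boundary terms, using $|U(0,s)|\le C|h(s)|+C|U(0,s)|^2$ from \eqref{BCs-in}--\eqref{BCs-out} together with $|\cB_h(t)|\le E_0(1+t)^{-1/2}$ and the smallness of $\zeta_0$ to absorb the quadratic boundary contribution. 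Every one of these produces a bound by a constant multiple of $(\theta+\psi_1+\psi_2)(x,t)$ times either $E_0$, $\zeta_0$, or $\zeta^2$. Dividing through by $(\theta+\psi_1+\psi_2)(x,t)$ and taking the supremum over $x$ and $s\le T+\epsilon$ yields
\begin{equation*}
\zeta(t)\le C_1(\zeta_0+E_0) + C_2\zeta(t)^2
\end{equation*}
on $[0,T+\epsilon]$; combined with continuity of $\zeta$ and the smallness hypothesis, a standard continuity/quadratic-root argument gives $\zeta(t)\le 2C_1(\zeta_0+E_0)$, which is bounded (and still small), completing (ii).

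The main obstacle, as in \cite{YZ} but sharpened here, is not any single lemma but the bookkeeping that makes the self-consistent estimate close: one must check that \emph{every} term in the formulas \eqref{u} and \eqref{ux}---in particular the genuinely new boundary terms $\tilde G_y B U(0,s)$, $G\,AU(0,s)$ and their $x$-derivatives, and the delta-type $H$-contributions coming from the hyperbolic block---is controlled by $(\theta+\psi_1+\psi_2)$ with a small prefactor, with no leftover term decaying more slowly. The delicate points are (a) the loss of one power of $t^{-1/4}$ in the $H$-convolutions, which forces the use of the weighted energy estimate of Lemma~\ref{lem-w-EE}/Remark~\ref{rem-wpoint-bounds} to supply the compensating decay in the source $\Upsilon$; and (b) ensuring the $H^4$ energy estimate of Proposition~\ref{prop-energy-est}, fed by the $L^2$ decay $\|U\|_{L^2}\le CE_0(1+t)^{-1/4}$ extracted from $\zeta$ and the boundary data, does not leak derivatives---so that the $W^{2,\infty}$ smallness required to run the whole scheme is maintained uniformly on the extended interval. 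Once these are in place the argument is routine.
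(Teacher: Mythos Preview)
Your proposal is correct in spirit but conflates this lemma with the subsequent \emph{Claim} in the proof of Theorem~\ref{theo-nonlin}. The paper's proof of the lemma is much shorter and does not touch the integral representations \eqref{u}--\eqref{ux} or any of the convolution lemmas at all. It proceeds in three lines: smallness of $\zeta(T)$ and $|U_0|_{H^4}$, fed into Proposition~\ref{prop-energy-est}, gives smallness of $|U(t)|_{H^4}$ on $[0,T]$; Lemma~\ref{lem-shorttime} then extends the solution to $[0,T+\epsilon]$ with $|U(t)|_{H^4}$ bounded; finally Lemma~\ref{lem-w-EE} (via Remark~\ref{rem-wpoint-bounds}) bounds $(1+|x|)^{3/2}(|U|+|U_x|)$ on the extended interval, which---since $(\theta+\psi_1+\psi_2)^{-1}(x,t)$ is controlled by $(1+|x|)^{3/2}$ for $t$ in a compact interval---yields boundedness and continuity of $\zeta$.

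The point of this lemma in the paper's architecture is only to guarantee that $\zeta$ is \emph{finite} and \emph{continuous} on a slightly larger interval, so that the continuous-induction argument can be run. The quantitative inequality $\zeta(t)\le C_2(E_0+\zeta(t)^2)$---which is the entire bootstrap you carry out---is stated and proved separately as the Claim immediately afterward. Your argument would establish both at once, which is fine, but the modular separation in the paper has the advantage that the lemma itself requires only the energy estimates and none of the pointwise Green-function machinery.
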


\begin{proof} Boundedness and smallness of $|U(t)|_{H^4}$ on $[0,T]$ follow by
 Proposition \ref{prop-energy-est}, provided smallness of $\zeta(T)$ and $|U_0|_{H^4}$. By Lemma \ref{lem-shorttime},
 this implies the existence, boundedness of $|U(t)|_{H^4}$ on
 $[0,T+\epsilon]$, for some $\epsilon>0$, and thus, by Lemma \ref{lem-w-EE}, boundedness and continuity of $\zeta$ on
 $[0,T+\epsilon]$.   \end{proof}

\begin{proof}[Proof of Theorem \ref{theo-nonlin}]
We shall establish:

{\it Claim.} For all $t\ge 0$ for which a solution exists with
$\zeta$ uniformly bounded by some fixed, sufficiently small
constant, there holds
\begin{equation}
\label{claim} \zeta(t) \leq C_2(E_0 + \zeta(t)^2).
\end{equation}
\medskip
{}From this result, provided $E_0 < 1/4C_2^2$, we have that by
continuous induction
 \begin{equation}
 \label{bd}
  \zeta(t) < 2C_2E_0
 \end{equation}
for all $t\geq 0$. From \eqref{bd} and the definition of $\zeta$ in
\eqref{zeta2} we then obtain the bounds of \eqref{pointwise}.
Thus, it remains only to establish the claim above.
\medskip


{\it Proof of Claim.} We must show that
$(|U|+|U_x|)(\theta+\psi_1+\psi_2)^{-1}$
is
bounded by $C(E_0 + \zeta(t)^2)$, for some $C>0$, all $0\le s\le t$,
so long as $\zeta$ remains sufficiently small.
First we need an estimate for $U(0,s)$ and $U_s(0,s)$. For the
inflow case, by boundary condition estimate \eqref{BCs-in} and by
the hypotheses on $h(s)$, we have
\begin{equation}\label{inBC-est1}|U(0,s)|\le C(h(s) + |U(0,s)|^2) \le
C(E_0(1+s)^{-1}+|U(0,s)|^2)\end{equation} from which by continuity
of $|U(0,t)|$ (Remark \ref{rem-wpoint-bounds}) and smallness of
$E_0$, we obtain a similar estimate to \eqref{bd}:
\begin{equation}\label{inBC-est2}|U(0,s)|\le CE_0(1+s)^{-1}.\end{equation}

Similarly for an estimate of $U_t(0,t)$, by taking the derivative of
\eqref{BCs-in}, we get
\begin{equation}\label{inBC-dest1}\begin{aligned}
|U_s(0,s)|&\le C(h'(s) +
|U||U_s|(0,s)) \\
&\le C(E_0(1+s)^{-1}+|U(0,s)||U_s(0,s)|)
\end{aligned}\end{equation} which by
the same argument as above 
yields
\begin{equation}\label{inBC-dest2}|U_s(0,s)|\le CE_0(1+s)^{-1}.\end{equation}

Next, for the outflow case with boundary condition \eqref{BCs-out},
we have
\begin{equation}\label{outBC-dest2}\begin{aligned}|BU(0,s)|&\le
CE_0(1+s)^{-1} + \cO(|U(0,s)|^2)\\|(BU)_s(0,s)|&\le CE_0(1+s)^{-1} +
\cO(|U||U_s|(0,s)).\end{aligned}\end{equation}
%

Now by \eqref{zeta2}, we have for all $t\ge 0$ and some $C>0$ that
\begin{equation}\label{ubounds}
\begin{aligned}
|U(x,t)| + |U_x(x,t)| &\le \zeta (t)(\theta +\psi_1+\psi_2)(x,t),\\
\end{aligned}
\end{equation}
and therefore
\begin{equation}\label{Nbounds}
\begin{aligned}
|Q(U,U_y)(y,s)|&\le C\zeta(t)^2 \Psi(y,s) \\
|\Pi_1Q(U,U_y)_y(y,s)|&\le C\zeta(t)^2 \Psi(y,s)
\end{aligned}
\end{equation}
with $\Psi = (\theta +\psi_1+\psi_2)^2$ as defined in
\eqref{source}, for $0\le s\le t$.

As an estimate for $U(x,t)$, we use the representation (\ref{u}) of
$U(x,t)$:
\begin{align*}
  |U&(x,t)|=\Big|\int_0^\infty G(x,t;y)U_0(y)\,dy\Big|
  \\&+\Big|\int_0^t (\tilde G_y(x,t-s;0)BU(0,s)+G(x,t-s;0)AU(0,s))\,ds\Big|\\
  &+ \Big|\int^t_0 \int^\infty_{0} H(x,t-s;y)
  \Pi_1Q(U,U_y)_y(y,s)\,dy\,ds\Big|\\
  &+\Big|\int^t_0 \int^\infty_{0} \tilde G_y(x,t-s;y)
  \Pi_2Q(U,U_y)(y,s)\,dy\,ds\Big|,
\end{align*}
where by applying Lemmas \ref{iniconvolutions}-\ref{convolutionsII}
together with \eqref{Nbounds}, we have
\begin{equation}
\begin{aligned}
\Big|\int_0^\infty &G(x,t;y)g(y)\,dy\Big| \\& \le
  E_0 \int^\infty_{0} (|\tilde G(x,t;y)| +
  |H(x,t;y)|)(1+|y|)^{-3/2}\,dy\\&\le CE_0(\theta + \psi_1+\psi_2)(x,t)
\end{aligned}
\end{equation}
\begin{equation}
\begin{aligned}
\Big|\int^t_0 \int^\infty_{0} &\tilde G_y(x,t-s;y)
  Q(U,U_y)(y,s)\,dy\,ds\Big|\\&\le C\zeta(t)^2\int^t_0 \int^\infty_{0} |\tilde
  G_y(x,t-s;y)|\Psi(y,s)\,dy\,ds\\&\le
C\zeta(t)^2(\theta + \psi_1+\psi_2)(x,t)
\end{aligned}
\end{equation}
\begin{equation}
\begin{aligned}
\Big|\int^t_0 \int^\infty_{0} &H(x,t-s;y)
  \Pi_1Q(U,U_y)_y(y,s)\,dy\,ds\Big|\\&\le C\zeta(t)^2\int^t_0 \int^\infty_{0} H(x,t-s;y)(\theta +\psi_1+\psi_2)^2\,dy\,ds
  \\&\le C\zeta(t)^2\int^t_0 \int^\infty_{0} H(x,t-s;y)\Upsilon(y,s)\,dy\,ds
  \\&\le C\zeta(t)^2(\theta + \psi_1+\psi_2)(x,t)
\end{aligned}
\end{equation}
and, for the boundary term, we apply the estimate
\eqref{inBC-est2}
 and Lemma \ref{boundaryconvolutionII}, yielding
\begin{equation}
\begin{aligned}
\Big|\int_0^t (\tilde
G_y(x,t-s;0)&BU(0,s)+G(x,t-s;0)AU(0,s))\,ds\Big|
  \\&\le C(E_0+\zeta(t)^2)(\theta + \psi_1+\psi_2)(x,t)
\end{aligned}
\end{equation} for the inflow. Whereas, for the outflow case, noting that 
$G(x,t-s;0)\equiv 0$
in the outflow case, we apply the estimate \eqref{outBC-dest2},
\eqref{ubounds} and Lemma \ref{boundaryconvolutionII} to give the
same estimate as
above, yielding 
\begin{align*}
\Big|\int_0^t &\tilde G_y(x,t-s;0)BU(0,s)\,ds\Big|
 \le C(E_0+\zeta(t)^2)(\theta + \psi_1+\psi_2)(x,t) 
\end{align*}
where we used \eqref{ubounds} for $|U(0,s)|\le
\zeta(t)(1+t)^{-1/2}$.

Therefore, combining the above estimates, we obtain
\begin{equation}\label{point-estU}|U(x,t)|(\theta+\psi_1+\psi_2)^{-1}(x,t)\le C(E_0 +
\zeta(t)^2).\end{equation}

To derive the same estimate for $|U_x (x,t)|$, we first obtain by
using Proposition \ref{prop-energy-est},
\begin{align*}|U(t)|_{H^4}^2 &\le Ce^{-\theta t}|U_0|_{H^4}^2 + C
\int_0^t e^{-\theta(t-\tau)}\Big[|U(\tau)|_{L^2}^2 +
\cB_h(\tau)\Big]d\tau
\\&\le C(E_0 + \zeta(t)^2)t^{-1/2},\end{align*} where
$\cB_h$ is the boundary function defined in Proposition
\ref{prop-energy-est},
and thus by the one dimensional Sobolev embedding:
$|U(t)|_{W^{3,\infty}} \le C|U(t)|_{H^4}$,
\begin{equation}\label{Qbounds-der}\begin{aligned}
|Q(U,U_x)_x|&\le C(\zeta^2(t)+4C^2 E_0^2)\Upsilon\\
|Q(U,U_x)_{xx}|&\le C(\zeta^2(t)+4C^2 E_0^2)\Upsilon
\end{aligned} \end{equation}
where $\Upsilon = t^{-1/4}(\theta+\psi_1+\psi_2)$.

Now again applying Lemmas
\ref{iniconvolutions}-\ref{boundaryconvolutionII} together with
\eqref{Qbounds-der},
\eqref{inBC-dest2}, and \eqref{outBC-dest2},
we have obtained the desired estimate, that is, bounded by
$(\zeta^2(t)+C E_0)(\theta+\psi_1+\psi_2)(x,t)$, for most terms in
the formulation \eqref{ux} of $U_x(x,t)$, except one boundary term:
\begin{align*}\int_0^t&H(x,t-s;0)|\Pi_1Q(U,U_y)_y(0,s)|\,ds, \end{align*}
which is bounded by $CE_0(\psi_1+\psi_2)(x,t)$ by using
\eqref{newqbounds}, \eqref{ubounds}, and Lemma
\ref{boundaryconvolutionI}, and noting
that
 \begin{align*}|\Pi_1Q(U,U_y)_y(0,s)| &\le \zeta (t)|h(s)|(\theta +\psi_1+\psi_2)(0,s)
\le C\zeta (t)|h(s)|.
\end{align*}

 Therefore, together with \eqref{point-estU}, we have obtained
\begin{equation}(|U(x,t)|+|U_x(x,t)|)(\theta+\psi_1+\psi_2)^{-1}(x,t)\le C(E_0 +
\zeta(t)^2)\end{equation} as claimed, which completes the proof of
Theorem \ref{theo-nonlin}.
\end{proof}


\end{document}